 \newcommand\id{\mathrm{id}}
 \newcommand\tr{\mathrm{tr}}
 \newcommand\tGr{\mathrm{Gr}}
 \newcommand\wt{\widetilde}
 \newcommand\tMat{\mathrm{Mat}}
 \renewcommand\l{\left}
 \renewcommand\r{\right}
 \newcommand\qRa{\quad\Rightarrow\quad}
 \newcommand\mat[2]{\left( \begin{array}{#1} #2 \end{array} \right)}
 \newcommand\bop{\bigoplus}
 \newcommand\op{\oplus}
 \newcommand\ot{\otimes}
 \newcommand\fann{\mathfrak{ann}}
 \newcommand\bu{{\mathbf u}}
 \newcommand\0{{\bf 0}}
\newcommand\bA{{\bf A}}
\newcommand\bB{{\bf B}}
\newcommand\bC{{\bf C}}
\newcommand\bD{{\bf D}}
 \newcommand\bS{{\mathbf S}} 
 \newcommand\bT{{\mathbf T}}
 \newcommand\bU{{\mathbf U}}
 \newcommand\bV{{\mathbf V}}
 \newcommand\bW{{\mathbf W}}  
 \newcommand\bX{{\mathbf X}} 
 \newcommand\bY{{\mathbf Y}}
 \newcommand\bZ{{\mathbf Z}}
 \newcommand\rnk{\mathrm{rank}}
 \newcommand\im{\mathrm{im}}
 \newcommand\fa{{\mathfrak a}} 
 \newcommand\fb{{\mathfrak b}}
 \newcommand\ff{{\mathfrak f}}
 \newcommand\fg{{\mathfrak g}}
 \newcommand\fgl{\mathfrak{gl}}
 \newcommand\fh{{\mathfrak h}}
 \newcommand\fk{{\mathfrak k}}
 \newcommand\fm{{\mathfrak m}}
 \newcommand\fp{{\mathfrak p}}
 \newcommand\fq{{\mathfrak q}}
 \newcommand\fs{{\mathfrak s}}
 \newcommand\fsl{\mathfrak{sl}}
 \newcommand\fso{\mathfrak{so}}
 \newcommand\fsp{\mathfrak{sp}}
 \newcommand\fv{{\mathfrak v}}
 \newcommand\fz{{\mathfrak z}}
 \newcommand\fS{{\mathfrak S}}
 \newcommand\fU{{\mathfrak U}}
 \newcommand\fX{{\mathfrak X}}
 \newcommand\cA{{\mathcal A}}
 \newcommand\cB{{\mathcal B}}
 \newcommand\cC{{\mathcal C}}
 \newcommand\cD{{\mathcal D}}
 \newcommand\cE{{\mathcal E}}
 \newcommand\cF{{\mathcal F}}
 \newcommand\cG{{\mathcal G}}
 \newcommand\cH{{\mathcal H}}
 \newcommand\cL{{\mathcal L}}
 \newcommand\cN{{\mathcal N}}
 \newcommand\cO{{\mathcal O}}
 \newcommand\cP{{\mathcal P}}
 \newcommand\cR{{\mathcal R}}
 \newcommand\cS{{\mathcal S}}
 \newcommand\cT{{\mathcal T}}
 \newcommand\cV{{\mathcal V}}
 \newcommand\bbC{{\mathbb C}}
 \newcommand\bbP{{\mathbb P}}
 \newcommand\bbR{{\mathbb R}}
 \newcommand\bbS{{\mathbb S}}
 \newcommand\bbU{{\mathbb U}}
 \newcommand\bbV{{\mathbb V}}
 \newcommand\bbW{{\mathbb W}}
 \newcommand\bbZ{{\mathbb Z}}
 \newcommand\half{\frac{1}{2}}
 \renewcommand\a{\alpha}
 \renewcommand\b{\beta}
 \newcommand\tspan{\mathrm{span}}
 \newcommand\Ben{\begin{enumerate}}
 \newcommand\Een{\end{enumerate}}
 \newcommand\Bex{\begin{example}}
 \newcommand\Eex{\end{example}}
 \newcommand\ra{\rightarrow}
 \def\inj{\hookrightarrow}
 \newcommand\fder{\mathfrak{der}}
 \newcommand\GL{\operatorname{GL}}
\def\tO{\text{O}}
 \newcommand\SL{\mathrm{SL}}
 \newcommand\CO{\mathrm{CO}}
 \newcommand\SO{\mathrm{SO}}
 \newcommand\SU{\text{SU}}
 \newcommand\Sp{\mathrm{Sp}}
 \newcommand\fsu{\mathfrak{su}}
 \newcommand\ad{{\rm ad}}
 \newcommand\Ad{{\rm Ad}}
 \newcommand\Aut{\text{Aut}}
 \def\aut{\mathfrak{aut}}
 \def\assoc/{associative}
 \def\arb/{arbitrary}
 \def\btw/{between}
 \def\coeff/{coefficient}
 \def\cohom/{cohomology}
 \def\coord/{coordinate}
 \def\coordsys/{coordinate system}
 \def\cpt/{compact}
 \def\cred/{completely reducible}
 \def\cts/{continuous}
 \def\dga/{differential-graded algebra}
 \def\dR/{de Rham}
 \def\Euc/{Euclidean} 
 \def\grp/{group}
 \def\hom/{homomorphism}
 \def\inv/{invariant}
 \def\iso/{isomorphism}
 \def\La/{Lie algebra}
 \def\Lag/{Lagrangian Grassmannian}
 \def\LG/{Lie group}
 \def\MA/{Monge--Amp\`ere}
 \def\MC/{Maurer--Cartan}
 \def\lintr/{linear transformation} 
 \def\mfld/{manifold}
 \def\nb/{normal bundle}
 \def\nbd/{neighbourhood}
 \def\nondeg/{non-degenerate}
 \def\posdef/{positive definite}
 \def\pu/{partition of unity}
 \def\rep/{representation}
 \def\Riem/{Riemannian}
 \def\sg/{subgroup}
 \def\ss/{semi-simple}
 \def\inv/{invariant}
 \def\irr/{irreducible}
 \def\Jacid/{Jacobi identity}
 \def\li/{linearly independent}
 \def\nd/{nowhere dependent}
 \def\nz/{nowhere zero}
 \def\on/{orthonormal}
 \def\onb/{\on/ basis}
 \def\orc/{\orth/ complement}
 \def\orth/{orthogonal}
 \def\orp/{\orth/ projection}
 \def\pde/{partial differential equation}
 \def\resp/{respectively}
 \def\seq/{sequence}
 \def\std/{standard}
 \def\SW/{Stiefel-Whitney}
 \def\uc/{universal cover}
 \def\vb/{vector bundle}
 \def\vf/{vector field}
 \def\vs/{vector space}
 \def\wrt/{with respect to}
 \renewcommand\mod{\,{\rm mod}\ }
 \newcommand\qbox[1]{\quad\mbox{#1}\quad}
 \renewcommand\dim{{\rm dim}}
\newcommand\myscale{0.8}
\newcommand\tcirc[3]{
	\ifthenelse{\equal{#1}{w}}{\filldraw[fill=white,draw=black] (#2) circle (0.075);}{}%
	\ifthenelse{\equal{#1}{b}}{\filldraw[black] (#2) circle (0.075);}{}%
	\draw (#2) ++(0,0.35) node {$#3$};
%		\draw (#2) node[above=2pt] {$#3$};
	}
\newcommand\tdots[1]{\draw (#1) ++(0.55,0) node {$\cdots$}}
\newcommand\bond[1]{\draw (#1) -- +(1,0)}
\newcommand\vbond[1]{\draw (#1) -- +(0,-1)}
\newcommand\diagbond[2]{
	\ifthenelse{\equal{#1}{u}}{
		\draw[semithick] (#2) -- +(0.5,0.865);
	}{}
	\ifthenelse{\equal{#1}{d}}{
		\draw[semithick] (#2) -- +(0.5,-0.865);
	}{}
	}
\newcommand\dbond[2]{
	\draw (#2) ++(0.03,0.03) -- +(0.94,0);
	\draw (#2) ++(0.03,-0.03) -- +(0.94,0);
	\ifthenelse{\equal{#1}{r}}{
		% Vertex at (#2) ++(0.6,0)
		\draw[semithick] (#2) ++(0.6,0) ++(-0.15,0.2) -- ++(0.15,-0.2) -- +(-0.15,-0.2);
	}{}
	\ifthenelse{\equal{#1}{l}}{
		% Vertex at (#2) ++(0.45,0)
		\draw[semithick] (#2) ++(0.45,0) ++(0.15,0.2) -- ++(-0.15,-0.2) -- +(0.15,-0.2);
	}{}
	}
\newcommand\tbond[2]{
	\draw (#2)  -- +(1,0);
	\draw (#2) ++(0.05,0.06) -- +(0.9,0);
	\draw (#2) ++(0.05,-0.06) -- +(0.9,0);
	\ifthenelse{\equal{#1}{r}}{
		% Vertex at (#2) ++(0.6,0)
		\draw[semithick] (#2) ++(0.6,0) ++(-0.15,0.2) -- ++(0.15,-0.2) -- +(-0.15,-0.2);
	}{}
	\ifthenelse{\equal{#1}{l}}{
		% Vertex at (#2) ++(0.45,0)
		\draw[semithick] (#2) ++(0.45,0) ++(0.15,0.2) -- ++(-0.15,-0.2) -- +(0.15,-0.2);
	}{}
	}
\newcommand\tcross[2]{
	\draw (#1) ++(0,0.35) node {$#2$};
	\draw[semithick] (#1) ++(-0.15,-0.15)-- +(0.3,0.3);
	\draw[semithick] (#1) ++(-0.15,0.15)-- +(0.3,-0.3);
	}
\newcommand\tsquare[2]{
		\draw[semithick,color=blue] (#1) ++(-0.15,-0.15) rectangle ++(0.3,0.3);
		\tcross{#1}{#2};
		}
\newcommand\tstar[2]{
	\draw[color=red] (#1) node {\Large$*$};
	\draw (#1) ++(0,0.35) node {$#2$};
	}
\newcommand\DDnode[3]{
\ifthenelse{\equal{#1}{w}}{\tcirc{w}{#2}{#3}}{}		% white - non-compact root (Satake diagram)
\ifthenelse{\equal{#1}{b}}{\tcirc{b}{#2}{#3}}{}		% black - compact root (Satake diagram)
\ifthenelse{\equal{#1}{x}}{\tcross{#2}{#3}}{}		% crossed root (corresponding to parabolic)
\ifthenelse{\equal{#1}{s}}{\tstar{#2}{#3}}{}		% starred root (my notation for sub-parabolic)
\ifthenelse{\equal{#1}{q}}{\tsquare{#2}{#3}}{}		% crossed square (Iw root)
}
\newcommand\Aone[2]{
 \begin{tiny}
 \begin{tikzpicture}[scale=\myscale,baseline=-3pt]
 
 \StrChar{#1}{1}[\nodetype];
 \DDnode{\nodetype}{0,0}{#2};
 \useasboundingbox (-.4,-.2) rectangle (0.4,0.55); % make bounding box bigger
 \end{tikzpicture}
 \end{tiny}
 }
\newcommand\Atwo[2]{
 \begin{tiny}
 \begin{tikzpicture}[scale=\myscale,baseline=-3pt]
 
 \bond{0,0};	% triple bond

 \StrBefore{#2}{,}[\labelone]
 \StrBehind{#2}{,}[\labeltwo]
 
 \StrChar{#1}{1}[\nodetype];
 \DDnode{\nodetype}{0,0}{\labelone};
 \StrChar{#1}{2}[\nodetype];
 \DDnode{\nodetype}{1,0}{\labeltwo};
 \useasboundingbox (-.4,-.2) rectangle (1.4,0.55); % make bounding box bigger
 \end{tikzpicture}
 \end{tiny}
 }
 \newcommand\Athree[2]{
 \begin{tiny}
 \begin{tikzpicture}[scale=\myscale,baseline=-3pt]

 \bond{0,0};		% bond
 \bond{1,0};		% double bond

 \StrBefore{#2}{,}[\labelone]
 \StrBetween[1,2]{#2}{,}{,}[\labeltwo]
 \StrBehind[2]{#2}{,}[\labelthree]

 \StrChar{#1}{1}[\nodetype];
 \DDnode{\nodetype}{0,0}{\labelone};
 \StrChar{#1}{2}[\nodetype];
 \DDnode{\nodetype}{1,0}{\labeltwo};
 \StrChar{#1}{3}[\nodetype];
 \DDnode{\nodetype}{2,0}{\labelthree};
 \useasboundingbox (-.4,-.2) rectangle (2.4,0.55); % make bounding box bigger
 \end{tikzpicture}
 \end{tiny}
 }
\newcommand\Btwo[2]{
 \begin{tiny}
 \begin{tikzpicture}[scale=\myscale,baseline=-3pt]
 
 \dbond{r}{0,0};	% double bond
 
 \StrBefore{#2}{,}[\labelone]
 \StrBehind{#2}{,}[\labeltwo]
 
 \StrChar{#1}{1}[\nodetype];
 \DDnode{\nodetype}{0,0}{\labelone};
 \StrChar{#1}{2}[\nodetype];
 \DDnode{\nodetype}{1,0}{\labeltwo};
 \useasboundingbox (-.4,-.2) rectangle (1.4,0.55); % make bounding box bigger
 \end{tikzpicture}
 \end{tiny}
 }
\newcommand\Bthree[2]{
 \begin{tiny}
 \begin{tikzpicture}[scale=\myscale,baseline=-3pt]
 \bond{0,0};		% bond
 \dbond{r}{1,0};		% double bond

 \StrChar{#1}{1}[\nodetype];
 \DDnode{\nodetype}{0,0}{\StrBefore{#2}{,}};
 \StrChar{#1}{2}[\nodetype];
 \DDnode{\nodetype}{1,0}{\StrBetween[1,2]{#2}{,}{,}};
 \StrChar{#1}{3}[\nodetype];
 \DDnode{\nodetype}{2,0}{\StrBehind[2]{#2}{,}};
 \end{tikzpicture}
 \end{tiny}
 }
\newcommand\Cthree[2]{
 \begin{tiny}
 \begin{tikzpicture}[scale=\myscale,baseline=-3pt]
 \bond{0,0};		% bond
 \dbond{l}{1,0};		% double bond

 \StrChar{#1}{1}[\nodetype];
 \DDnode{\nodetype}{0,0}{\StrBefore{#2}{,}};
 \StrChar{#1}{2}[\nodetype];
 \DDnode{\nodetype}{1,0}{\StrBetween[1,2]{#2}{,}{,}};
 \StrChar{#1}{3}[\nodetype];
 \DDnode{\nodetype}{2,0}{\StrBehind[2]{#2}{,}};
 \end{tikzpicture}
 \end{tiny}
 }
\newcommand\Gdd[2]{
 \begin{tiny}
 \begin{tikzpicture}[scale=\myscale,baseline=-3pt]
 \tbond{l}{0,0};	% triple bond

 \StrBefore{#2}{,}[\labelone]
 \StrBehind{#2}{,}[\labeltwo]
 \StrChar{#1}{1}[\nodetype];

 \DDnode{\nodetype}{0,0}{\labelone};
 \StrChar{#1}{2}[\nodetype];
 \DDnode{\nodetype}{1,0}{\labeltwo};
 \useasboundingbox (-.4,-.2) rectangle (1.4,0.55); 
 \end{tikzpicture}
 \end{tiny}
 }
\newcommand\Fdd[2]{
 \begin{tiny}
 \begin{tikzpicture}[scale=\myscale,baseline=-3pt]
 \bond{0,0};
 \dbond{r}{1,0};
 \bond{2,0};

 \StrChar{#1}{1}[\nodetype];
 \DDnode{\nodetype}{0,0}{\StrBefore{#2}{,}};
 \StrChar{#1}{2}[\nodetype];
 \DDnode{\nodetype}{1,0}{\StrBetween[1,2]{#2}{,}{,}};
 \StrChar{#1}{3}[\nodetype];
 \DDnode{\nodetype}{2,0}{\StrBetween[2,3]{#2}{,}{,}};
 \StrChar{#1}{4}[\nodetype];
 \DDnode{\nodetype}{3,0}{\StrBehind[3]{#2}{,}};
 \useasboundingbox (-.4,-.2) rectangle (3.4,0.55); 
 \end{tikzpicture}
 \end{tiny}
 }
\newcommand\Edd[2]{
 \begin{tiny}
 \begin{tikzpicture}[scale=\myscale,baseline=-3pt]
 \foreach \x in {0,1,2,3} {
	\bond{\x,0};
 }
 \vbond{2,0};
 
 \StrLen{#1}[\Ernk]
 
 \StrChar{#1}{1}[\nodetype];
 \DDnode{\nodetype}{0,0}{\StrBefore{#2}{,}};
 \StrChar{#1}{2}[\nodetype];
 \DDnode{\nodetype}{2,-1}{\StrBetween[1,2]{#2}{,}{,}};
 \StrChar{#1}{3}[\nodetype];
 \DDnode{\nodetype}{1,0}{\StrBetween[2,3]{#2}{,}{,}};
 \StrChar{#1}{4}[\nodetype];
 \DDnode{\nodetype}{2,0}{\StrBetween[3,4]{#2}{,}{,}};
 \StrChar{#1}{5}[\nodetype];
 \DDnode{\nodetype}{3,0}{\StrBetween[4,5]{#2}{,}{,}};
 \StrChar{#1}{6}[\nodetype];

 \ifthenelse{\equal{\Ernk}{6}}{
 		\DDnode{\nodetype}{4,0}{\StrBehind[5]{#2}{,}};
 		\useasboundingbox (-.4,-1.2) rectangle (4.4,0.55);
	}{}%
 
 \ifthenelse{\equal{\Ernk}{7}}{
 		\bond{4,0};
 		\DDnode{\nodetype}{4,0}{\StrBetween[5,6]{#2}{,}{,}};
		\StrChar{#1}{7}[\nodetype];
		\DDnode{\nodetype}{5,0}{\StrBehind[6]{#2}{,}};
 		\useasboundingbox (-.4,-1.2) rectangle (5.4,0.55);
	}{}%

 \ifthenelse{\equal{\Ernk}{8}}{
 		\bond{4,0};
 		\bond{5,0};
 		\DDnode{\nodetype}{4,0}{\StrBetween[5,6]{#2}{,}{,}};
		\StrChar{#1}{7}[\nodetype];
		\DDnode{\nodetype}{5,0}{\StrBetween[6,7]{#2}{,}{,}};
		\StrChar{#1}{8}[\nodetype];
		\DDnode{\nodetype}{6,0}{\StrBehind[7]{#2}{,}};
		\useasboundingbox (-.4,-1.2) rectangle (6.4,0.55);
	}{}%

 \end{tikzpicture}
 \end{tiny}
 }
 \newtheorem{appthm}{Theorem}[section]
 \newtheorem{thm}{Theorem}[subsection]
 \newtheorem{lemma}[thm]{Lemma}
 \newtheorem{cor}[thm]{Corollary}
 \newtheorem{prop}[thm]{Proposition}
 \newtheorem{defn}[thm]{Definition}
 \newtheorem{example}[thm]{Example}
 \newtheorem{ex}[subsection]{Example}
 \newtheorem{recipe}{Recipe}
 \theoremstyle{remark}
 \newtheorem{remark}[thm]{Remark}
 \numberwithin{equation}{section}
 \newcommand\euc{{\rm euc}}
 \newcommand\pp{{\rm\bf pp}}
 \newcommand\metric{{\rm g}}
 \newcommand\ppmetric[1]{\metric_\pp^{(#1)}}
 \newcommand\eucmetric[1]{\metric_\euc^{(#1)}}
 \newcommand\wW{\widetilde{W}}
 \newcommand\wC{\widetilde{C}}
 \newcommand\Kh{\kappa_H}
 \newcommand\p{\partial}
 \newcommand\rkg{\ell} 	% rank of the Lie algebra g
 \newcommand\sr{\sigma}		% simple reflection
 \newcommand\finf{\mathfrak{inf}}
 \renewcommand\ss{semisimple\xspace}
 \newcommand\prn{\operatorname{pr}}
 \newcommand\opn{\operatorname{op}}
 \newcommand\oE{\operatorname{E}} 
 \newcommand\diag{\operatorname{diag}}
 \newcommand\Jac{\operatorname{Jac}}
 \newcommand\ev{\operatorname{ev}}
 \newcommand\gr{\operatorname{gr}}
 \newcommand\Hom{\operatorname{Hom}}
 \newcommand\Lie{\operatorname{Lie}}
 \newcommand\Spec{\operatorname{Spec}}
 \newcommand\pmat[1]{{\tiny \begin{pmatrix} #1 \end{pmatrix}}}
 \newcommand\mycase[1]{\l\{ \begin{array}{@{}c@{\,\,}l@{}} #1 \end{array} \r.}
 \renewcommand\myscale{0.8}
 \newcommand\NPRC[1]{
 \begin{tiny}
 \begin{tikzpicture}[scale=\myscale,baseline=-3pt]

 \bond{0,0};		% bond
 \bond{1,0};		% bond
 \bond{2,0};		% bond
 \bond{3,0};
 \dbond{l}{4,0};		% bond
 
 \DDnode{q}{0,0}{0};
 \DDnode{x}{1,0}{-5};
 \DDnode{s}{2,0}{4};
 \DDnode{w}{3,0}{0};
 \DDnode{#1}{4,0}{0};
 \DDnode{w}{5,0}{0};

 \useasboundingbox (-.4,-.2) rectangle (5.2,0.55); % make bounding box bigger
 \end{tikzpicture}
 \end{tiny}
 }
\newcommand\projwts[2]{
 \begin{tiny}
 \begin{tikzpicture}[scale=\myscale,baseline=-3pt]
 \bond{0,0}; 
 \bond{1,0};
 \bond{2,0};
 \tdots{3,0};
 \bond{4,0};
 \StrChar{#1}{1}[\nodetype];
 \DDnode{\nodetype}{0,0}{\StrBefore{#2}{,}};
 \StrChar{#1}{2}[\nodetype];
 \DDnode{\nodetype}{1,0}{\StrBetween[1,2]{#2}{,}{,}};
 \StrChar{#1}{3}[\nodetype];
 \DDnode{\nodetype}{2,0}{\StrBetween[2,3]{#2}{,}{,}};
 \StrChar{#1}{4}[\nodetype];
 \DDnode{\nodetype}{3,0}{\StrBetween[3,4]{#2}{,}{,}};
 \StrChar{#1}{5}[\nodetype];
 \DDnode{\nodetype}{4,0}{\StrBetween[4,5]{#2}{,}{,}};
 \StrChar{#1}{6}[\nodetype];
 \DDnode{\nodetype}{5,0}{\StrBehind[5]{#2}{,}};
 \useasboundingbox (-.4,-.2) rectangle (5.4,0.55);
 \end{tikzpicture}
 \end{tiny}
 } 
 \newcommand\pathwts[2]{
 \begin{tiny}
 \begin{tikzpicture}[scale=\myscale,baseline=-3pt]
 \bond{0,0}; 
 \bond{1,0};
 \bond{2,0};
 \tdots{3,0};
 \bond{4,0};

 \StrChar{#1}{1}[\nodetype];
 \DDnode{\nodetype}{0,0}{\StrBefore{#2}{,}};
 \StrChar{#1}{2}[\nodetype];
 \DDnode{\nodetype}{1,0}{\StrBetween[1,2]{#2}{,}{,}};
 \StrChar{#1}{3}[\nodetype];
 \DDnode{\nodetype}{2,0}{\StrBetween[2,3]{#2}{,}{,}};
 \StrChar{#1}{4}[\nodetype];
 \DDnode{\nodetype}{3,0}{\StrBetween[3,4]{#2}{,}{,}};
 \StrChar{#1}{5}[\nodetype];
 \DDnode{\nodetype}{4,0}{\StrBetween[4,5]{#2}{,}{,}};
 \StrChar{#1}{6}[\nodetype];
 \DDnode{\nodetype}{5,0}{\StrBehind[5]{#2}{,}};
 \useasboundingbox (-.4,-.2) rectangle (5.4,0.55);
 \end{tikzpicture}
 \end{tiny}
 }
\begin{document}

 \title[The gap phenomenon in parabolic geometries]{The gap phenomenon in parabolic geometries}
 \author[B. Kruglikov]{Boris Kruglikov}
 \author[D. The]{Dennis The}
 % address & email appear at the end of the article
 \address{Department of Mathematics and Statistics, University of Troms\o, 90-37, Norway}
 \email{boris.kruglikov@uit.no} 
 \address{Mathematical Sciences Institute, Australian National University, ACT 0200, Australia}
 \email{dennis.the@anu.edu.au}
 \keywords{Submaximal symmetry, parabolic geometry, harmonic curvature, Tanaka theory}
 \subjclass[2010]{Primary 58J70, Secondary 53B99, 22E46, 17B70}

 % 58J70: Invariance and symmetry properties
 % 53B99: Local differential geometry
 % 22E46: Semisimple Lie groups and their representations
 %17B70  	Graded Lie (super)algebras

 % date, subject class, keywords, thanks appear in footer at the bottom of first page
 \date{\today}
 \begin{abstract}
 The infinitesimal symmetry algebra of any Cartan geometry has maximum dimension realized by the flat model, but often this dimension drops significantly when considering non-flat geometries, so a gap phenomenon arises.  For general (regular, normal) parabolic geometries of type $(G,P)$, we use Tanaka theory to derive a universal upper bound on the submaximal symmetry dimension.  We use Kostant's version of the Bott--Borel--Weil theorem to show that this bound is in fact sharp in almost all complex and split-real cases by exhibiting (abstract) models.  We explicitly compute all submaximal symmetry dimensions when $G$ is any complex or split-real simple Lie group.
  \end{abstract}
 \maketitle
 
 %%%%%%%%%%%%%%%%%%%%%%%%%%%%%%%%%%%%%%%%%%%%%%%%%%
 
 \tableofcontents
 \section{Introduction}

A Riemannian metric $\metric$ on a (connected) $n$-manifold $M$ admits at most an $\binom{n+1}{2}$-dimensional symmetry (isometry) algebra, and equality occurs precisely for the constant curvature spaces.  Below this maximum, a gap phenomenon occurs: there are forbidden symmetry dimensions.  Submaximal symmetry dimensions are given in Table \ref{F:submax-Riem}.
 More generally, for geometric structures which admit a finite maximal symmetry dimension, the determination of the submaximal symmetry dimension $\fS$ is called the {\em symmetry gap problem}.  For various geometric structures, this problem has been studied since the late 19th century by many authors, including Lie, Tresse, Fubini, Cartan, Yano, Wakakuwa, Vranceanu, Egorov, Obata, and Kobayashi -- see \cite{Tresse1896, Fub1903, Ego1978, Kob1972} and references therein.
  
  \begin{table}[h]
$\begin{array}{|c|c|c|l|} \hline
 n & {\rm Max} & {\rm Submax} & {\rm References}\\ \hline\hline
 2 & 3 & 1 & \mbox{Lie (1882) \cite{Lie1882}, Darboux (1894) \cite{Dar1894}}\\
 3 & 6 & 4 & \mbox{Bianchi (1898) \cite{Bianchi1898}, Ricci (1898) \cite{Ricci1898}}\\
 4 & 10 & 8 & \mbox{Egorov (1955) \cite{Ego1955}}\\
 \geq 5 & \binom{n+1}{2} & \binom{n}{2} + 1 & \mbox{Wang (1947) \cite{Wang1947}} \\[2pt] \hline
 \end{array}$ 
 \caption{Maximal / submaximal symmetry dimensions for Riemannian geometry}
 \label{F:submax-Riem}
 \end{table}

 A Riemannian structure $(M,\metric)$ is equivalently described as a Cartan geometry $(\cG \to M, \omega)$ of type $(\oE(n), \tO(n))$, where $\oE(n)$ is the Euclidean group and $\tO(n)$ is the orthogonal group.  Indeed, $\cG$ is the orthonormal frame bundle and the Cartan connection $\omega$ is a sum of the Levi--Civita connection of $g$ and the soldering form.  Riemannian geometries are curved versions of the flat model $\oE(n) / \tO(n) \cong \bbR^n$, which admits the maximum $\dim(\oE(n)) = \binom{n+1}{2}$ symmetries.

 Many well-known geometric structures such as conformal, projective, CR, 2nd order ODE systems, and various bracket-generating distributions, are equivalently described as {\em parabolic geometries}, i.e.\ Cartan geometries $(\cG \to M, \omega)$ of type $(G,P)$, where $G$ is a real or complex \ss Lie group and $P \subset G$ is a parabolic subgroup.  There is an equivalence of categories between {\em regular, normal} parabolic geometries and underlying structures \cite{Tan1979, Mor1993, CS2000}.  For all of these, the infinitesimal symmetry algebra $\inf(\cG,\omega)$ has maximum dimension $\dim(G)$ and this is (locally) {\em uniquely} realized by the flat model $(G \to G/P,\omega_G)$, where $\omega_G$ is the Maurer--Cartan form on $G$.  Thus, the gap problem\footnote{We study {\em infinitesimal} symmetries here.  The problem of studying {\em global} symmetry dimensions which are ``less than maximal'' is quite different.  For example, take the flat model $(G \to G/P, \omega_G)$ and remove a point on the base and the fibre over it.  This yields a flat geometry with global automorphism group isomorphic to $P$.} can be phrased as: 
{\em Among all {\bf regular, normal} parabolic geometries $(\cG \to M, \omega)$ of type $(G,P)$ that are {\bf not flat}, what is the maximum $\fS$ of $\dim(\finf(\cG,\omega))$?}
 
 The advantage of the Cartan perspective is that despite the zoo of different geometric structures downstairs on the base manifold $M$, their description upstairs on $\cG$ becomes much more uniform, based on common structural features inherent in the groups $(G,P)$, which have been well studied in representation theory.  For instance, all parabolic geometries admit a fundamental curvature quantity called {\em harmonic curvature} $\kappa_H$, valued in a certain Lie algebra cohomology group $H^2_+(\fg_-,\fg)$, and which is a complete obstruction to flatness.  (In conformal geometry, $\Kh$ is the Weyl tensor.)

Previously known symmetry gap results for parabolic geometries are given in Table \ref{F:known-submax}.   These were obtained using a variety of techniques, e.g.\ computation of the algebra of all differential invariants of a pseudogroup (scalar 2nd order ODE), Cartan's method of equivalence ($(2,3,5)$-distributions), classification of Lie algebras of contact vector fields in the plane (3rd order ODE\footnote{Sophus Lie classified all finite-dimensional irreducible Lie algebras of {\em contact} vector fields in plane.  There are only three, $L_6,L_7,L_{10}$, with $\dim(L_i) = i$ with $L_6 \subset L_7$ and $L_6 \subset L_{10}$.  The only 3rd order ODE invariant under $L_6$ (and hence $L_7$ and $L_{10}$) is the flat model $y''' = 0$.  In \cite{WMQ2002}, a {\em point} symmetry classification was carried out.  Aside from $y''' = 0$ and $y''' = \frac{3 (y'')^2}{2 y'}$ (which are {\em contact}-equivalent and have 7 and 6 point symmetries respectively), the maximal point symmetry dimension is at most 5.  Indeed, for any $a \in \bbR$, $y''' = ay' + y$ has five point (contact) symmetries.}), or studying integrability conditions for the equations characterizing symmetries (projective structures).  Some of these results relied heavily on the low-dimensional setup.

All results in Table \ref{F:known-submax} are correct, but strictly speaking, only Egorov's result has no further assumptions and is equivalent to the result we obtain here.  For example for $(2,3,5)$-geometry, modelled on $G_2 / P_1$, Cartan \cite{Car1910} identified a binary quartic invariant $\cA$, i.e.\ $\Kh$, and studied the maximal symmetry {\em among structures with constant root type}.  The largest is 7 and occurs where $\cA$ has a single root of multiplicity $4$. However, non-flatness of the geometry for us means only that $\Kh(u) \neq 0$ at {\em some} $u \in \cG$, so the result $\fS=7$ that we will obtain is a sharpening of Cartan's result.

 More recently, for arbitrary parabolic geometries, \v{C}ap \& Neusser \cite{CN2009} gave a general algebraic strategy for finding upper bounds on $\fS$ using Kostant's version of the Bott--Borel--Weil theorem \cite{Kos1961}.  However, the implementation of their strategy must be carried out on a case-by-case basis; moreover, their upper bounds are in general not sharp.  (See also Remark \ref{RM:CN}.)  For structures determined by a bracket-generating distribution (not necessarily underlying  parabolic geometries), another approach based on an elaboration of Tanaka theory \cite{Tan1970, Tan1979} was proposed in the works \cite{Kru2011,Kru2012} of the first author.

  \begin{table}[h]
 \begin{tabular}{|c|c|c|c|c|} \hline
 Geometry & Model & Max & Submax & Reference\\ \hline\hline
 \begin{tabular}{c} Scalar 2nd order ODE\\ mod point transformations\end{tabular} & $\SL_3(\bbR) / P_{1,2}$ & 8 & 3 &  Tresse (1896) \cite{Tresse1896}\\ \hline
 $2$-dim.\ projective structures & $\SL_3(\bbR) / P_1$ & 8 & 3 & Tresse (1896) \cite{Tresse1896}\\ \hline
 $(2,3,5)$-distributions & $G_2 / P_1$ & 14 & 7 & Cartan (1910) \cite{Car1910}\\ \hline
 3-dim.\ CR structures & $\SU(2,1) / B$ & 8 & 3 & Cartan (1932) \cite{Car1932}\\ \hline
 Projective structures (dim.\ $\rkg \geq 3$) & $\SL_{\rkg+1}(\bbR) / P_1$ & $\rkg^2+2\rkg$ & $(\rkg-1)^2 + 4$ & Egorov (1951) \cite{Ego1951}\\ \hline
 \begin{tabular}{c} Scalar 3rd order ODE\\ mod contact transformations \end{tabular} & $\Sp_4(\bbR) / P_{1,2}$ & 10 & 5 &  \begin{tabular}{c} Wafo Soh, Mahomed,\\ Qu (2002) \cite{WMQ2002} \end{tabular}\\ \hline
  Pairs of 2nd order ODE & $\SL_4(\bbR) / P_{1,2}$ & 15 & 9 &  \begin{tabular}{c}Casey, Dunajski,\\ Tod (2013) \cite{CDT2013} \end{tabular}\\ \hline
 \end{tabular}
 \caption{Previously known submaximal symmetry dimensions for parabolic geometries}
 \label{F:known-submax}
 \end{table}

 The main idea behind our approach is to combine Tanaka theory with the \v{C}ap--Neusser approach based on Kostant theory.  This yields a {\em uniform algebraic approach} to the gap problem which is rooted in the structure theory of \ss Lie algebras. 
 The main results of this article are:
 \begin{itemize}
 \item We establish a universal upper bound $\fS \leq \fU$ (Theorem \ref{T:upper}), where $\fU$ is algebraically determined.  In arbitrary real cases, we have $\fS \leq \fU \leq \fU^\bbC$ (Corollary \ref{C:upper}).
 \item In complex or {\em split-real}\footnote{We refer to $\fg$ as split-real if it is a split real form of its complexification, e.g. $\fsl(n,\bbR)$, but not $\fsu(n)$.} cases, we:
 \begin{itemize}
 \item exhibit models with $\dim(\finf(\cG,\omega)) = \fU$ in almost all cases (Theorem \ref{T:realize}).  Thus, $\fS = \fU$ almost always (Theorem \ref{T:main-thm2}).  Exceptions are also studied (\S \ref{S:exceptions}).
 \item give a Dynkin diagram recipe to efficiently compute $\fU$ (\S \ref{S:Dynkin}).
 \item establish local homogeneity of all submaximally symmetric models\footnote{This is not universally true outside the parabolic context, e.g. Killing fields for metrics on surfaces (see Table \ref{F:submax-Riem}).} near non-flat {\em regular} points (Theorem \ref{T:transitive}); the set of all such points is open and dense in $M$ (Lemma \ref{L:dense}).
 \end{itemize}
 \item We recover all results in Table \ref{F:known-submax}; some sample new results are given in Table \ref{F:sample-submax}.  Our complete classification when $G$ is (complex or split-real) {\em simple} is presented in Appendix \ref{App:Submax}.
 \end{itemize}  

  \begin{table}[h]
 \begin{tabular}{|c|c|c|c|c|c|} \hline
 Geometry & Range & Model & Max & Submax \\ \hline\hline
 \begin{tabular}{c} Signature $(p,q)$ conformal \\ geometry in dim.\ $n = p+q$ \end{tabular} & $p,q \geq 2$ & $\SO_{p+1,q+1} / P_1$ & $\binom{n+2}{2}$ & $\binom{n-1}{2} +6$ \\ \hline
 \begin{tabular}{c} Systems of 2nd order ODE \\ in $m$ dependent variables \end{tabular} & $m \geq 2$ & $\SL_{m+2}(\bbR) / P_{1,2}$ & $(m+2)^2-1$ & $m^2 + 5$ \\ \hline
 $(2,m)$-Segr\'e structures & $m \geq 2$ & $\SL_{m+2}(\bbR) / P_2$ & $(m+2)^2-1$ & $m^2 + 5$ \\ \hline
 \begin{tabular}{c} Generic rank $\rkg$ distributions \\ on $\half \rkg(\rkg+1)$-dim.\ manifolds \end{tabular} & $\rkg \geq 3$ & $\SO_{\rkg,\rkg+1} / P_\rkg$ & $\binom{2\rkg+1}{2}$ & $\mycase{\frac{\rkg(3\rkg-7)}{2} + 10, & \rkg \geq 4; \\ 11, & \rkg = 3 }$ \\ \hline
 Lagrangean contact structures & $\rkg \geq 3$ & $\SL_{\rkg+1}(\bbR) / P_{1,\rkg}$ & $\rkg^2 + 2\rkg$ & $(\rkg-1)^2 + 4$ \\ \hline
 Contact projective structures & $\rkg \geq 2$ & $\Sp_{2\rkg}(\bbR) / P_1$ & $\rkg(2\rkg+1)$ & $\mycase{2\rkg^2 - 5\rkg + 8, & \rkg \geq 3;\\ 5, & \rkg = 2}$\\ \hline
 Contact path geometries & $\rkg \geq 3$ & $\Sp_{2\rkg}(\bbR) / P_{1,2}$ & $\rkg(2\rkg+1)$ & $2\rkg^2 - 5\rkg + 9$\\ \hline
 \begin{tabular}{c} Exotic parabolic contact\\ structure of type $E_8$ \end{tabular} & - & $E_8 / P_8$ & 248 & 147\\ \hline
 \end{tabular}
 \caption{Sample new results of submaximal symmetry dimensions for parabolic geometries}
 \label{F:sample-submax}
 \end{table}
 
 In \S \ref{S:upper-sec}, we review some background and recent results, and define in \S \ref{S:upper} a graded subalgebra $\fa^\phi := \prn^\fg(\fg_-,\fann(\phi))$ of $\fg$ called the {\em Tanaka prolongation algebra}.  The crucial new ingredient which creates a bridge to Tanaka theory is the notion of a {\em regular point} (Definition \ref{D:reg-pt}).  Such points form an open dense subset, and  $\dim(\inf(\cG,\omega)) \leq \dim(\fa^{\Kh(u)})$ at any regular point.  For non-flat geometries, we can always find a regular point which is {\em non-flat}, i.e.\ $\Kh(u) \neq 0$.  Defining $\fU := \max\{ \dim(\fa^\phi) \mid 0 \neq \phi \in H^2_+(\fg_-,\fg) \}$ quickly leads to the result $\fS \leq \fU$.
 
 \S \ref{S:PR-analysis} is devoted to studying the Tanaka algebra $\fa^\phi$.  Over $\bbC$, if $\bbV$ is a $\fg_0$-irreducible representation (irrep) and $\phi_0$ is any extremal (i.e. highest or lowest) weight vector, then $\dim(\fann(\phi)) \leq \dim(\fann(\phi_0))$, $\forall 0 \neq \phi \in \bbV$.  In Proposition \ref{P:Tanaka-lw}, we show that this property persists for the Tanaka algebra, i.e. $\dim(\fa^\phi) \leq \dim(\fa^{\phi_0})$, $\forall 0 \neq \phi \in \bbV$.  This leads in \S \ref{S:Dynkin} to a purely combinatorial (Dynkin diagram) recipe to compute $\dim(\fa^{\phi_0})$ and hence $\fU$.
 (The  $E_8 / P_8$ case becomes a simple exercise -- see Example \ref{ex:E8P8}.)  In \S \ref{S:PR}, we study {\em prolongation-rigidity}: $(\fg,\fp)$ is PR iff $\fa^\phi_+ = 0$, $\forall 0 \neq \phi \in H^2_+(\fg_-,\fg)$; otherwise, it is NPR.  An example of an NPR parabolic geometry is the geometry of 2nd order ODE systems.  When $\fg$ is (complex) simple, a complete classification of NPR geometries is given, and the grading height $\tilde\nu$ of $\fa^\phi$ (for any $\phi \neq 0$) is highly constrained: $0 \leq \tilde\nu \leq 2$ always, and $\tilde\nu = 2$ occurs only for a couple of cases.  In \S \ref{S:corr-twistor}, we study the effect of correspondence and twistor spaces on the Tanaka prolongation.  These provide an important tool for simplifying our calculations.

 We show in \S \ref{S:submax} that $\fS = \fU$ in almost all complex and split-real cases: a model can generally be constructed by deforming the Lie algebra structure on $\fa^{\phi_0}$ by $\phi_0$ (Theorem \ref{T:realize}).  To study exceptions, {\em filtration-rigidity} is introduced in \S \ref{S:deform}.  In \S \ref{S:exceptions}, we find $\fU-1 \leq \fS \leq \fU$ for all exceptions.

 Concrete examples are considered in \S \ref{S:specific}, and submaximally symmetric models are given explicitly (in coordinates) in terms of their underlying structure on the base manifold $M$.  This includes conformal geometry, $(2,3,5)$-distributions, $(3,6)$-distributions, 2nd order ODE systems, projective structures, and $(2,m)$-Segr\'e structures.  Four-dimensional Lorentzian conformal geometry and $(2,3,5)$-distributions are investigated in finer detail: the maximum symmetry in each Petrov type and root type is identified.  In doing so, we also exhibit in \S \ref{S:4d-Lor} the first known example of a Petrov type II metric with four (conformal) Killing vectors.

 The gap problem is more subtle for general real forms since $\max\{ \dim(\fann(\phi)) \mid 0 \neq \phi \in \bbV \}$ can be difficult to determine in the absence of vectors which complexify to extremal weight vectors.  Our upper bound $\fU$ is still valid, but its realizability must in general be checked.  For instance, for conformal geometry, we exhibit local models in all signatures except Riemannian and Lorentzian which realize the complex upper bound $\fU^\bbC$, so $\fS = \fU = \fU^\bbC$ for these cases.   The conformal Riemannian and Lorentzian cases are different; they have been recently settled  in \cite{DT-Weyl}.
  
 {\bf Conventions:}  We assume throughout that $M$ is a connected manifold.  When working with real and complex parabolic geometries, our results are formulated in the  smooth and holomorphic categories, respectively.  We use {\em left} cosets and {\em right} principal bundles.  For a Lie group $G$, we identify its Lie algebra $\fg := \Lie(G)$ with the {\em left}-invariant vector fields on $G$.  Parabolic subalgebras will be denoted $\fp$, $\fq$, and corresponding parabolic subgroups are $P,Q$.  We always assume that $G$ acts on $G/P$ {\em infinitesimally effectively}, i.e.\ the kernel of the $G$-action on $G/P$ is at most discrete.  Equivalently, simple ideals of $\fg$ are not contained in $\fg_0$.  (This avoids situations like $G = G' \times G''$ and $P = P' \times G''$, where the $G''$-action is not visible on $G/P$.)

 We write $A_\rkg, B_\rkg, C_\rkg, D_\rkg, G_2, F_4, E_6, E_7, E_8$ for the complex simple Lie algebras\footnote{The classical Lie algebras are $A_\rkg \cong \fsl(\rkg+1,\bbC)$, $B_\rkg \cong \fso(2\rkg+1,\bbC)$, $C_\rkg \cong \fsp(2\rkg,\bbC)$, $D_\rkg \cong \fso(2\rkg,\bbC)$.}, {\bf or} any complex Lie groups having these as their Lie algebras.  (In \S \ref{S:specific}, we abuse this notation further by letting it refer to real forms, and specify the precise real form as necessary.)  We use the Bourbaki \cite{Bourbaki1968} / {\tt LiE} \cite{LiE} ordering of simple roots.  This differs from the ordering used in \cite{CS2009} for $E_6,E_7,E_8,F_4$; also, their definition of the Cartan matrix is the transpose of ours.  
 
 Dynkin diagrams are drawn with open white circles.  This is the same notation as the Satake diagram for the corresponding split real form, and serves to emphasize that all our results are the same in both settings.  We use the ``minus lowest weight'' Dynkin diagram convention (see \S \ref{S:rep-th}) when referring to irreducible $\fg_0$-modules.  If $\fg$ is simple, we use $\lambda_\fg$ to denote its highest weight (root).\\
  
 {\bf Acknowledgements}: We are grateful for many helpful discussions with Boris Doubrov, Mike Eastwood, Katharina Neusser, Katja Sagerschnig, and Travis Willse.  Much progress on this paper was made during the conference ``The Interaction of Geometry and Representation Theory: Exploring New Frontiers'' devoted to Mike Eastwood's 60th birthday, and held in Vienna in September 2012 at the Erwin Schr\"odinger Institute.  Boris Doubrov gave some key insights during this conference which led to the proof of Proposition \ref{P:Tanaka-lw}.
 
 The representation theory software {\tt LiE}, as well as Ian Anderson's {\tt DifferentialGeometry} package in {\tt Maple} were invaluable tools for facilitating the analysis in this paper.
 
 B.K. was supported by the University of Troms\o{} while visiting the Australian National University, where this work was initiated. The hospitality of ANU is gratefully acknowledged.  D.T. was supported under the Australian Research Council's {\em Discovery Projects} funding scheme (project number DP110100416).
 
 %%%%%%%%%%%%%%%%%%%%%%%%%%%%%%%%%%%%%%%%%%%%%%%%%%
  
 \section{A universal upper bound}
 \label{S:upper-sec}
  
 We give some brief background (\S \ref{S:Tanaka}--\ref{S:parabolic}), review recent results (\S \ref{S:CNK}), and then establish a universal (algebraic) upper bound on the submaximal symmetry dimension (\S \ref{S:upper}).
 %%%%%%%%%%%%%%%%%%%%%%%%%%%%%%%%%%%%%%%%%%%%%%%%%%

 \subsection{Tanaka theory in a nutshell}
 \label{S:Tanaka}
  
 The aim of Tanaka theory \cite{Tan1970, Tan1979, Zel2009} is to study the local equivalence problem for filtered geometric structures.  The given geometric data is a manifold $M$ with a (vector) distribution $D \subseteq TM$ endowed possibly with some additional structure on it, e.g.\ a sub-Riemannian metric or a conformal structure.  For many such structures, one can canonically associate a Cartan geometry $(\cG \to M,\omega)$ of some type $(G,K)$.  We give an outline of these ideas.
  
 Iterating Lie brackets of sections of $D$, we form the weak derived flag
 \[
 D =: D^{-1} \subset D^{-2} \subset D^{-3} \subset...\,\,\,, \qbox{where} \Gamma(D^{i-1}) := [\Gamma(D^i), \Gamma(D^{-1})].
 \]
 We assume that all $D^i$ have constant rank, and $D$ is bracket-generating in $TM$, i.e.\ $D^{-\nu} = TM$ for some minimal $\nu \geq 1$ (called the {\em depth}).  The Lie bracket restricts to a map $\Gamma(D^i) \times \Gamma(D^j) \to \Gamma(D^{i+j})$, so $(M,\{ D^i \})$ is a {\em filtered manifold}, and induces a tensorial bracket on the associated-graded,
 \[
 \fm(x) = \fg_-(x) = \bop_{i < 0} \fg_i(x), \qquad  \fg_i(x) = D^i(x) / D^{i+1}(x).
 \]
 This {\em Levi bracket} turns $\fm(x)$ into a graded nilpotent Lie algebra called the {\em symbol algebra at $x$}. We further assume that $D$ is of {\em constant type}, i.e.\ there is a fixed $\fm = \fg_-$ such that $\fm(x) \cong \fm$, $\forall x \in M$.
 
 Consider the frame bundle $\cF_{gr}(M) \stackrel{\pi}{\to} M$ with $\pi^{-1}(x)$ consisting of all graded isomorphisms $u : \fm \to \fm(x)$.  The structure group $\Aut_{gr}(\fm)$ of this principal bundle is the group of graded automorphisms of $\fm$.  If $D$ is endowed with additional structure, we can reduce to a principal subbundle $\cG_0 \to M$ with structure group $G_0 \subseteq \Aut_{gr}(\fm)$.  (In conformal geometry, $D = TM$ and $G_0 = \CO(\fg_{-1})$; for $(2,3,5)$-distributions (\S \ref{S:G2P1}), $G_0 \cong \Aut_{gr}(\fm) \cong \GL(\fg_{-1}) \cong \GL_2$.)  From here, one constructs the {\em geometric prolongation} of the given structure, namely a tower of adapted bundles $...\to \cG_2 \to \cG_1 \to \cG_0 \to M$.  We refer to \cite{Zel2009} for details on this procedure.  This geometric prolongation is controlled by an {\em algebraic prolongation} which we describe below.

 The Lie algebra $\fg_0$ is contained in the algebra $\fder_{gr}(\fm)$ of grade-preserving derivations of $\fm$. Given $(\fm,\fg_0)$, the axioms for Tanaka's algebraic prolongation $\prn(\fm,\fg_0) := \bop_{i \in \bbZ} \fg_i(\fm,\fg_0)$ are:
 \begin{align}
 & \fg_{\leq 0}(\fm,\fg_0) = \fm \op \fg_0; \tag{T.1} \label{T1}\\
 & \mbox{If $X \in \fg_i(\fm,\fg_0)$ for $i \geq 0$ satisfies $[X,\fg_{-1}] = 0$, then $X = 0$;} \tag{T.2} \label{T2}\\
 & \mbox{$\prn(\fm,\fg_0)$ is the maximal graded Lie algebra  satisfying \eqref{T1} and \eqref{T2}.} \tag{T.3} \label{T3}
 \end{align}
 Write $\prn(\fm) := \prn(\fm,\fder_{gr}(\fm))$.  Up to isomorphism, there is a unique graded Lie algebra satisfying \eqref{T1}--\eqref{T3}.  In fact, Tanaka gives an explicit inductive realization of $\fg := \prn(\fm,\fg_0)$; for $i > 0$,
  \begin{align}
 \fg_i = \l\{ f \in \bop_{j < 0} \fg_j^* \ot \fg_{j+i} \mid f([v_1,v_2]) = [f(v_1),v_2] + [v_1,f(v_2)], \,\, \forall v_1,v_2 \in \fm \r\}.
 \label{E:alg-pr-explicit}
 \end{align}
 The brackets on $\fg$ are: (i) The given brackets on $\fm$ and $\fg_0$; (ii) If $f \in \fg_i$, $i \geq 0$, and $v \in \fm$, then define $[f,v] := f(v)$; (iii) Brackets on the non-negative part are defined inductively: If $f_1 \in \fg_i$ and $f_2 \in \fg_j$, for $i,j \geq 0$, then define $[f_1,f_2] \in \fg_{i+j}$ by $ [f_1,f_2](v) := [f_1(v),f_2] + [f_1,f_2(v)]$, $v \in \fm$.
% (Note $f_1(v) \in \fg_{<i}$ and $f_2(v) \in \fg_{<j}$, so the brackets on the right side are known by induction.)
 
 \begin{remark} \label{RM:Hom}
 Since $\fg_-$ is generated by $\fg_{-1}$, any $X \in \fg_i$, $i \geq 0$, is determined by its action on $\fg_{-1}$.  By \eqref{T2} the Lie bracket on $\fg$ induces $\fg_i \inj \fg_{-1}^* \ot \fg_{i-1}$, so $\fg_i \inj \Hom(\ot^i \fg_{-1}, \fg_0) \inj \Hom(\ot^{i+1} \fg_{-1}, \fg_{-1})$.
 \end{remark}
 
 Note that by \eqref{T2}, if $\fg_r = 0$ for some $r \geq 0$, then $\fg_i = 0$ for all $i > r$.  This case of finite termination is particularly important.
   
  \begin{thm} \cite[Thm.\ 8.4]{Tan1970}, \cite{Zel2009} Let $\cG_0 \to M$ be a structure of constant type $(\fm,\fg_0)$ and $\fg = \prn(\fm,\fg_0)$.  Suppose $r \geq 0$ is minimal such that $\fg_{r+1} = 0$.  Then there exists a canonical frame on the $r$-th geometric prolongation $\cG_r$ of $\cG_0$, and the symmetry dimension is bounded above by $\dim(\fg)$. %Moreover, any such structure can be described as a Cartan geometry of some type $(G,K)$, where $\fg = \Lie(G)$ and $\fg_{\geq 0} =: \fk = \Lie(K)$.
 \end{thm}

 Often this framing is a Cartan connection.  (This is the case for all structures in this article.)
 
 \begin{defn} \label{D:Cartan} A Cartan geometry $(\cG \to M, \omega)$ of type $(G,K)$ (or a ``$G/K$ geometry'') is a principal $K$-bundle $\cG \to M$ endowed with a Cartan connection $\omega \in \Omega^1(\cG,\fg)$ with defining properties: 
 \begin{enumerate}
 \item[\rm (i)] $\omega$ is $K$-equivariant;
 \item[\rm (ii)] $\omega(\zeta_Y) = Y$, $\forall Y \in \fk$, where $\zeta_Y(u) = \l.\frac{d}{dt}\r|_{t=0} u \cdot \exp(tY)$;
 \item[\rm (iii)] $\omega_u : T_u \cG \to \fg$ is a linear isomorphism, $\forall u \in \cG$.
 \end{enumerate}
 The curvature of $\omega$ is $d\omega + \frac{1}{2} [\omega,\omega] \in \Omega^2(\cG,\fg)$.  The isomorphism (iii) identifies this 2-form with the curvature function $\kappa : \cG \to \bigwedge^2 \fg^* \ot \fg$, which is horizontal, so $\kappa : \cG \to \bigwedge^2 (\fg/\fk)^* \ot \fg$.  The infinitesimal symmetries are $\finf(\cG,\omega) = \{ \xi \in \fX(\cG)^K \mid \cL_\xi \omega = 0 \}$.
 \end{defn}

A Cartan geometry is {\em flat} if $\kappa\equiv 0$. A fundamental fact is that $(\cG \to M, \omega)$ is flat iff it is locally equivalent to the {\em flat model} $(G \to G/K,\omega_G)$ (equipped with the Maurer--Cartan form $\omega_G$).

 While calculating the Tanaka prolongation from given data $(\fm,\fg_0)$ is algorithmic, a naive application of \eqref{E:alg-pr-explicit}  generally leads to a computationally intensive exercise in linear algebra.  Moreover, even if this task is completed, there still remains the general problem of understanding the structure of the resulting Tanaka algebra.  However, in the context of parabolic geometries, these problems are resolved by Yamaguchi's prolongation theorem \cite{Yam1993}; see Theorem \ref{T:Y-pr-thm} of Appendix \ref{App:Yam}.

Computing the Tanaka prolongation is much simpler when one knows $\fg = \prn(\fm,\fg_0)$, and wants $\fa = \prn(\fm,\fa_0)$ for some subalgebra $\fa_0 \subseteq \fg_0$.  By Remark \ref{RM:Hom}, for $k > 0$, we have $\fa_k \inj \fg_{-1}^* \ot \fa_{k-1} \inj \fg_{-1}^* \ot \fg_{k-1}$, and $\fa_k \inj \Hom(\ot^k \fg_{-1},\fa_0)$.  The inclusion $\fa_0 \inj \fg_0$ induces inclusions $\fa_k \inj\fg_k$.  Hence,
  
 \begin{lemma} \label{L:T-subalg} Suppose $\fm = \fg_-$ is generated by $\fg_{-1}$.  If $\fa_0 \subseteq \fg_0$ is any subalgebra, then $\fa := \prn(\fm,\fa_0) \inj \fg := \prn(\fm,\fg_0)$.  Indeed for $k > 0$,
 \begin{align*}
  \fa_k = \{ X \in \fg_k \mid [X,\fg_{-1}] \subset \fa_{k-1} \} = \{ X \in \fg_k \mid \ad_{\fg_{-1}}^{k}(X) \subset \fa_0 \}.
 \end{align*}
 More precisely, $\fa_k = \{ X \in \fg_k \mid \ad_{u_1} \circ ... \circ \ad_{u_k}(X) \in \fa_0,\,\, \forall u_i \in \fg_{-1} \}$.
 \end{lemma}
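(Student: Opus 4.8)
The plan is to leverage the explicit inductive realization \eqref{E:alg-pr-explicit} of Tanaka's prolongation, which lets one build $\fa := \prn(\fm,\fa_0)$ and $\fg := \prn(\fm,\fg_0)$ by the \emph{same} recipe, so that the inclusion $\fa_0 \inj \fg_0$ propagates to all positive degrees. Concretely, for $k > 0$ one has
\[
 \fg_k = \l\{ f \in \bop_{j<0} \fg_j^* \ot \fg_{j+k} \mid f([v_1,v_2]) = [f(v_1),v_2]+[v_1,f(v_2)] \r\},
\]
and the identical formula with every $\fg_{j+k}$ replaced by $\fa_{j+k}$ (and $\fa_{-i} := \fg_{-i}$ for $i \geq 1$) computes $\fa_k$. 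An induction on $k$, with base case $\fa_0 \subseteq \fg_0$, then shows $\fa_{j+k} \subseteq \fg_{j+k}$ for all $j$, whence $\fa_k$ sits inside $\fg_k$ and the brackets of $\fg$ restrict to those of $\fa$; this is the embedding $\fa \inj \fg$. The same bookkeeping shows that, as a subspace of $\fg_k$,
\[
 \fa_k = \{ X \in \fg_k \mid X(\fg_j) \subseteq \fa_{j+k} \quad \forall j < 0 \}.
\]

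The next step is to cut this down to the single condition at $j = -1$, i.e.\ to show the set above equals $\{X \in \fg_k \mid [X,\fg_{-1}] \subseteq \fa_{k-1}\}$; here is where the hypothesis that $\fm = \fg_-$ is generated by $\fg_{-1}$ enters. One inclusion is trivial. For the other, I would argue by downward induction on $j \leq -1$: every element of $\fg_j$ with $j \leq -2$ is a sum of Levi brackets $[u,w]$ with $u \in \fg_{-1}$ and $w \in \fg_{j+1}$, so the derivation identity gives $X([u,w]) = [X(u),w] + [u,X(w)]$; by the inductive hypothesis $X(w) \in \fa_{j+1+k}$, and since $\fa$ is a graded Lie algebra with negative part $\fg_-$ both terms $[X(u),w]$ and $[u,X(w)]$ lie in $\fa_{j+k}$, giving $X(\fg_j) \subseteq \fa_{j+k}$. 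Rewriting $X(v) = [X,v]$, this yields the first displayed identity of the lemma. The second, $\fa_k = \{X \in \fg_k \mid \ad_{\fg_{-1}}^k(X) \subseteq \fa_0\}$, and the pointwise version $\fa_k = \{X \mid \ad_{u_1}\circ\cdots\circ\ad_{u_k}(X) \in \fa_0,\ \forall u_i \in \fg_{-1}\}$, follow by iterating the first description through $\fa_{k-1},\dots,\fa_0$, the sign in $\ad_{u_1}\circ\cdots\circ\ad_{u_k}(X) = \pm [[\cdots[X,u_k],\dots],u_1]$ being irrelevant for membership in the subspace $\fa_0$.

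The only genuinely non-formal point, and hence the anticipated obstacle, is the downward induction just sketched: one must keep track that in $[X(u),w] + [u,X(w)]$ every bracket is taken between elements of the already-constructed graded pieces of $\fa$, so that closedness of $\fa$ under brackets and the inductive hypothesis both apply. This in turn relies on $\fa$ being a genuine graded subalgebra of $\fg$, not merely a graded subspace, which is exactly what the common inductive realization \eqref{E:alg-pr-explicit} guarantees.

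An alternative route, avoiding \eqref{E:alg-pr-explicit} entirely, is to \emph{define} $\fa := \fg_- \oplus \fa_0 \oplus \bigoplus_{k \geq 1}\{X \in \fg_k \mid \ad_{\fg_{-1}}^k(X) \subseteq \fa_0\}$, verify directly by an induction on total degree that it is closed under the bracket of $\fg$ and satisfies \eqref{T1}--\eqref{T3}, and then invoke uniqueness of the Tanaka prolongation to identify it with $\prn(\fm,\fa_0)$; the bracket-closedness check is the main effort there, splitting into the cases ``negative with negative'' (trivial, since $\fg_-$ is a subalgebra), ``positive with negative'' (downward induction on the negative degree, using the definition of $\fa_k$ and $\fg_- = \langle \fg_{-1}\rangle$), and ``non-negative with non-negative'' (induction on the sum of degrees, using the defining condition $[\fa_{k},\fg_{-1}] \subseteq \fa_{k-1}$ to detect membership of a bracket in $\fa_{k+l}$).
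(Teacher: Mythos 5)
Your proposal is correct and follows essentially the same route as the paper: the paper's argument is exactly the observation (via Remark \ref{RM:Hom} and the explicit realization \eqref{E:alg-pr-explicit}) that $\fa_k \inj \fg_{-1}^*\ot\fa_{k-1} \inj \fg_{-1}^*\ot\fg_{k-1}$, so the inclusion $\fa_0\inj\fg_0$ propagates inductively and the stated characterizations of $\fa_k$ inside $\fg_k$ follow. You merely fill in the downward induction on negative degrees (reducing the condition on all $\fg_j$, $j<0$, to the single condition on $\fg_{-1}$) that the paper leaves implicit.
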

 
 %%%%%%%%%%%%%%%%%%%%%%%%%%%%%%%%%%%%%%%%%%%%%%%%%%
 
 \subsection{Parabolic geometry in a nutshell}
 \label{S:parabolic}
 
 The following are standard facts from parabolic geometry -- see \cite[\S 3]{CS2009} for further details.
 
 Given a real or complex \ss Lie algebra $\fg$, a  {\em $\bbZ$-grading} (or $\nu$-grading) is a decomposition $\fg = \fg_{-\nu} \op ... \op \fg_\nu$ with $\fg_{\pm \nu} \neq 0$, $[\fg_i, \fg_j] \subset \fg_{i+j}$, and $[\fg_j, \fg_{-1}] = \fg_{j-1}$ for $j < 0$, i.e.\ $\fg_{-1}$ is {\em bracket-generating} in $\fg_-$.  The subalgebra $\fg_0 = \fz(\fg_0) \op \fg_0^{ss}$ is reductive, each $\fg_j$ is a $\fg_0$-module, and there exists a unique {\em grading element} $Z \in \fz(\fg_0)$, i.e.\ $[Z,X] = j X$ for $X \in \fg_j$.  (The eigenvalues of $Z$ on a given $\fg_0$-module will often be referred to as its {\em homogeneities}.)  Defining $\fg^i := \bop_{j \geq i} \fg_j$, we have $[\fg^i, \fg^j] \subseteq \fg^{i+j}$, so $\fg$ is canonically a filtered Lie algebra with associated-graded $\gr(\fg) \cong \fg$.  A subalgebra $\fp \subset \fg$ is {\em parabolic} if $\fp = \fg^0 = \fg_{\geq 0}$ for some $\nu$-grading of $\fg$, and then $\fp^{\opn} = \fg_{\leq 0}$ is the {\em opposite} parabolic.  Each filtrand $\fg^i$ is $\fp$-invariant, and the quotient $\fg / \fp$ is naturally filtered.  The Killing form $B$ on $\fg$ induces the dualities $\fg_{-i} \cong (\fg_i)^*$ (as $\fg_0$-modules) and $\fg^i = (\fg^{-i+1})^\perp$ (as $\fp$-modules).  The nilradical of $\fp$ is $\fp_+ := \fg^1 = \fp^\perp$, so $(\fg / \fp)^* \cong \fp_+$ (as $\fp$-modules).

  Let $G$ be a \ss Lie group with Lie algebra $\fg$ and parabolic subalgebra $\fp \subset \fg$.  A subgroup $P \subset G$ is {\em parabolic} if it lies between the normalizer $N_G(\fp)$ and its connected component of the identity.  Under the adjoint action, $P$ preserves the filtration on $\fg$.  Define $G_0 \subset P$ to be the subgroup which preserves the grading on $\fg$; its Lie algebra is $\fg_0$.  There is a decomposition $P = G_0 \ltimes P_+$ for some closed normal subgroup $P_+ \subset P$ with Lie algebra $\fp_+$.
  
  A {\em parabolic geometry} is a Cartan geometry $(\cG \to M, \omega)$ of type $(G,P)$.  Since $T\cG \cong \cG \times \fg$ via $\omega$ and $TM \cong \cG \times_P (\fg / \fp)$, the filtration $\fg = \fg^{-\nu} \supset ... \supset \fg^\nu$ induces a ($P$-invariant) filtration $T\cG = T^{-\nu} \cG \supset ... \supset T^\nu \cG$ which projects to a filtration $TM = T^{-\nu} M \supset ... \supset T^{-1} M$.  Given the principal $G_0$-bundle $\cG_0 := \cG / P_+$ over $M$, we have $\gr(TM) \cong \cG_0 \times_{G_0} \fg_-$.  Let $\kappa : \cG \to \bigwedge^2(\fg/\fp)^* \ot \fg$ be the curvature function of $\omega$.  The parabolic geometry is:
  \begin{enumerate}
  \item {\em regular} iff $\kappa(\fg^i,\fg^j) \subset \fg^{i+j+1}$, $\forall i, j < 0$, i.e.\ $\kappa$ has {\em positive} homogeneity.
  \item {\em normal} iff $\partial^* \kappa = 0$.  Here, $\partial^*$ is the ($P$-equivariant) Kostant codifferential on $\bigwedge^2 \fp_+ \ot \fg$, which is the negative of the Lie algebra homology differential.
  \end{enumerate}

  Regularity is equivalent to: (i) $M$ being a filtered manifold with respect to the above filtration, and (ii) the algebraic bracket on $\gr(TM)$ induced from $\fg_-$ is the same as the Levi bracket.  Indeed, we obtain a {\em regular infinitesimal flag structure} of type $(G,P)$ on $M$, i.e.\ a filtration of $TM$ generated by $D = T^{-1} M$ together with a reduction of structure group of the (graded) frame bundle of $\gr(TM)$ to $\cG_0$ corresponding to $\Ad: G_0 \to \Aut_{gr}(\fg_-)$.  For many parabolic geometries\footnote{See \S \ref{S:rep-th} for an explanation of notations from parabolic geometry.} such as $(2,3,5)$-geometry $(G_2 / P_1)$, the underlying structure is a bracket-generating distribution with no reduction of structure group.  For conformal structures $(B_\rkg / P_1, D_\rkg / P_1)$, the filtration on $M$ is trivial, so the geometry is determined by a reduction of structure group alone.  Parabolic contact structures have a non-trivial filtration as well as a reduction of structure group, e.g.\ CR and Lagrangean contact structures $(A_\rkg / P_{1, \rkg})$, and Lie contact structures $(B_\rkg / P_2, D_\rkg /P_2)$.  
  
 A fundamental result of Tanaka \cite{Tan1979}, Morimoto \cite{Mor1993}, \v{C}ap--Schichl \cite{CS2000} is that there is an equivalence of categories between regular, normal parabolic geometries and {\em underlying structures}.  According to Yamaguchi \cite{Yam1993} (see Appendix \ref{App:Yam}), in most cases the Tanaka prolongation satisfies $\prn(\fg_-,\fg_0) \cong \fg$.  Here, the ``underlying structure'' is precisely the regular infinitesimal flag structure.  Two notable exceptions are projective structures $(A_\rkg / P_1)$ and contact projective structures $(C_\rkg / P_1)$.  In these cases $\prn(\fg_-,\fg_0)$ is infinite-dimensional, so the grading one component needs to be constrained to $\fg_1$ through additional structure.
 
 Since $(\p^*)^2 = 0$, then for regular, normal parabolic geometries there is a fundamental quantity called {\em harmonic curvature} $\Kh : \cG \to \ker(\p^*) / \im(\p^*)$, which is much simpler than $\kappa$ and is still a complete obstruction to flatness.  
Namely, $\Kh \equiv 0$ iff the geometry is locally equivalent to the flat model $(G \to G/P,\omega_G$).  Since $\Kh$ is $P$-equivariant and $\ker(\p^*) / \im(\p^*)$ is completely reducible (so $P_+$ acts trivially), then $\Kh$ descends to a $G_0$-equivariant function on $\cG_0$.  As $G_0$-modules $(\fg/\fp)^* \cong (\fg_-)^* \cong \fp_+$, so consider the ($G_0$-equivariant) Lie algebra cohomology differential $\p$ and the {\em Kostant Laplacian} $\Box := \p \p^* + \p^* \p$ acting on co-chains $\bigwedge^k (\fg_-)^* \ot \fg$.  By a lemma of Kostant, we have the following $G_0$-module isomorphisms:
 \[
 \bigwedge{\!}^k (\fg_-)^* \ot \fg = \lefteqn{\overbrace{\phantom{\im(\partial^*) \op \ker(\Box)}}^{\ker(\partial^*)}}\im(\partial^*) \op \underbrace{\ker(\Box) \op  \im(\partial)}_{\ker(\partial)}, \qquad
  \ker(\Box) \cong \frac{\ker(\p^*)}{\im(\p^*)} \cong \frac{\ker(\p)}{\im(\p)} =: H^k(\fg_-,\fg).
 \]
 By $G_0$-equivariancy, $\Kh : \cG_0 \to H^2(\fg_-,\fg)$ maps fibres of $\cG_0 \to M$ to $G_0$-orbits in $H^2(\fg_-,\fg)$.    By regularity, $\Kh$ is valued in the $\fg_0$-submodule $H^2_+(\fg_-,\fg) \subseteq H^2(\fg_-,\fg)$ on which the grading element $Z$ acts with positive homogeneities.  In the complex case, Kostant's theorem (Recipe \ref{R:Kostant}) completely describes $H^2(\fg_-,\fg)$.
 
 %%%%%%%%%%%%%%%%%%%%%%%%%%%%%%%%%%%%%%%%%%%%%%%%%%
 \subsection{Filtration and embedding of the symmetry algebra} 
 \label{S:CNK} 
 
 We review some recent work.

 \subsubsection{\v{C}ap and Neusser \cite{Cap2005b, CN2009}} Let $(\cG \stackrel{\pi}{\to} M, \omega)$ be a regular, normal geometry of type $(G,P)$.  Then:
 \begin{enumerate}
 \item[(i)]  Let $u \in \cG$, and $x = \pi(u)$.  The map $\xi\mapsto\omega(\xi(u))$ is a linear injection $\widetilde\cS := \finf(\cG, \omega) \inj \fg$.\footnote{They state this for the global automorphism algebra $\aut(\cG,\omega)$, but their proof works also for $\finf(\cG, \omega)$.}
 \item[(ii)]   Letting $\ff(u) := \im(\omega_u|_{\finf(\cG,\omega)}) \subset \fg$, the bracket on $\finf(\cG, \omega)$ is mapped to the operation
 \begin{align} \label{E:def-bracket}
  [X,Y]_{\ff(u)} := [X,Y] - \kappa_u(X,Y), \qquad \forall X,Y \in \ff(u).
 \end{align}
 \item[(iii)] By regularity, restricting the canonical filtration on $\fg$ to $\ff(u)$ makes the latter into a filtered Lie algebra (which is generally not a filtered Lie subalgebra of $\fg$). 
 \item[(iv)]  $\fs(u) := \gr(\ff(u))$ is a graded subalgebra of $\fg$.  Note $\dim(\finf(\cG,\omega)) = \dim(\fs(u))$.
 \item[(v)]$\fs_0(u) \subseteq \fann(\Kh(u)) \subseteq \fg_0$.
 \item[(vi)] If $\Kh(u) \neq 0$, then (v) and Kostant's theorem (Recipe \ref{R:a0}) yield a bound on $\dim(\fs_0(u))$.
 \end{enumerate}
  
 Although (v) is stated in \cite[Cor.\ 5 (3)]{CN2009} for bracket-generating distributions, it holds in general.  Indeed, given $\xi \in \finf(\cG,\omega)$, if $Y = \omega(\xi(u)) \in \fp$, then the curve $\varphi_t(u) = u \cdot \exp(tY)$ has vertical tangent vector $\xi(u) = \zeta_Y(u) = \l.\frac{d}{dt}\r|_{t=0} \varphi_t(u)$.  By $P$-equivariancy of $\Kh$,
 \begin{align*}
 0 = (\xi \cdot \Kh)(u) =  \l.\frac{d}{dt}\r|_{t=0} \Kh(\varphi_t(u)) = \l.\frac{d}{dt}\r|_{t=0} \exp(-tY) \cdot (\Kh(u)) = -Y \cdot (\Kh(u)).
 \end{align*}
 Complete reducibility of $H^2_+(\fg_-,\fg)$ implies that this action depends only on $Y \mod \fp_+ \in \fp / \fp_+ \cong \fg_0$.  
  
 \begin{remark} \label{RM:CN}
 For non-flat geometries, \v{C}ap \& Neusser bound $\dim(\finf(\cG,\omega))$ via the maximum dimension of any proper graded subalgebra $\fb \subset \fg$ with $\dim(\fb_0)$ no larger than the Kostant bound.  However, this strategy lacks finer information about the annihilating subalgebra, i.e.\ they use \eqref{E:dim-a0}, while we will additionally use \eqref{E:a0}.  Thus, in general, their upper bounds are not sharp.
 \end{remark}

 The following result is stated in \cite[\S 2.5]{Cap2005b}.

 \begin{prop} \label{P:loc-flat}
 Let $(\cG \to M, \omega)$ be a regular, normal parabolic geometry of type $(G,P)$.  If $\dim(\finf(\cG,\omega)) = \dim(\fg)$, then the geometry is flat.
 \end{prop}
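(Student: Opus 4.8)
\emph{Proof proposal.} The plan is to show that the curvature function $\kappa$ vanishes identically on $\cG$; since $\kappa$ (hence the harmonic curvature $\Kh$) is, as recalled above, a complete obstruction to local flatness, this will give the statement. I would first record two facts. (a) As recalled in the opening of Section~\ref{S:main}, every $\xi\in\finf(\cG,\omega)$ is uniquely determined by its value at a single point $u\in\cG$, so the evaluation map $\ev_u\colon\finf(\cG,\omega)\to T_u\cG$, $\xi\mapsto\xi(u)$, is injective; composing with the linear isomorphism $\omega_u\colon T_u\cG\to\fg$ gives $\dim(\finf(\cG,\omega))\le\dim(\fg)$. Since $\dim(\cG)=\dim(\fg)$, the hypothesis $\dim(\finf(\cG,\omega))=\dim(\fg)$ makes $\ev_u$ a linear isomorphism, so that $\{\xi(u)\mid\xi\in\finf(\cG,\omega)\}=T_u\cG$ for \emph{every} $u\in\cG$. (b) Each $\xi\in\finf(\cG,\omega)$ has a local flow by automorphisms of $(\cG\to M,\omega)$, and any such automorphism pulls $\omega$ back to itself, hence preserves the curvature $d\omega+\tfrac12[\omega,\omega]$ and the curvature function $\kappa$ \cite{CS2009}; differentiating the flow at $t=0$ gives $\xi\cdot\kappa=0$.

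I would then combine (a) and (b): at each $u\in\cG$ the differential $d\kappa_u$ annihilates the spanning set $\{\xi(u)\mid\xi\in\finf(\cG,\omega)\}=T_u\cG$, so $\kappa$ is locally constant on $\cG$. Fixing $u$ and setting $\kappa_0:=\kappa(u)$, the connected component of $u$ --- on which $\kappa\equiv\kappa_0$ --- contains the curve $t\mapsto u\cdot\exp(tZ)$ for the grading element $Z\in\fg_0\subset\fp$, so $P$-equivariance of $\kappa$ yields $\kappa_0=\kappa(u\cdot\exp(tZ))=\exp(-tZ)\cdot\kappa_0$ for all $t\in\bbR$. Differentiating at $t=0$ gives $Z\cdot\kappa_0=0$, i.e.\ $\kappa_0$ has homogeneity $0$ in $\bigwedge^2(\fg/\fp)^*\ot\fg$. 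Finally I would invoke regularity, which is equivalent to $\kappa$ taking values in the part of homogeneity $\ge 1$ (the condition $\kappa(\fg_i,\fg_j)\subseteq\fg^{i+j+1}$ for $i,j<0$; equivalently $\im(\Kh)\subseteq H^2_+(\fg_-,\fg)$): then $\kappa_0$ has homogeneity both $0$ and $\ge 1$, forcing $\kappa_0=0$. As $u$ is arbitrary, $\kappa\equiv 0$ and the geometry is locally flat.

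The hard part --- really the only step that is not purely formal, and the one showing the \emph{regularity} hypothesis is indispensable --- will be the final homogeneity argument: local constancy together with $P$-equivariance force $\kappa_0$ only to be $\ad(Z)$-invariant, and for depth $\nu\ge 2$ the space $\bigwedge^2(\fg/\fp)^*\ot\fg$ can carry nonzero $\ad(Z)$-invariants of homogeneity $0$, which only the regularity constraint $\im(\Kh)\subseteq H^2_+(\fg_-,\fg)$ rules out. The remaining ingredients --- determination of a symmetry by a single value, automorphism-invariance of curvature, and local constancy of $\kappa$ --- are soft. If one prefers to argue downstairs, the same steps applied to $\Kh$ on $\cG_0=\cG/P_+$, with $G_0$-equivariance in place of $P$-equivariance, give $\Kh\equiv 0$ directly.
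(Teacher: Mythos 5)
Your proof is correct, and it reaches the conclusion by a genuinely different (if closely related) route than the paper. The paper argues through the harmonic curvature: it invokes the two facts from Section \ref{S:CNK} that $\ff(u) = \im(\omega_u|_{\finf(\cG,\omega)})$ is a filtered Lie algebra whose associated-graded $\fs(u)$ is a graded subalgebra of $\fg$, and that $\fs_0(u) \subseteq \fann(\Kh(u))$; maximal symmetry dimension forces $\fs(u) = \fg$, hence $Z \in \fs_0(u)$ annihilates $\Kh(u)$, which lies in $H^2_+(\fg_-,\fg)$ by regularity and therefore vanishes. You instead work with the full curvature function $\kappa$: transitivity of the evaluated symmetry algebra on each $T_u\cG$ together with $\xi\cdot\kappa = 0$ gives local constancy of $\kappa$, and $P$-equivariance along $u\cdot\exp(tZ)$ then places $\kappa(u)$ in homogeneity $0$, which regularity excludes. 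The underlying mechanism is the same in both arguments --- the grading element must annihilate a curvature quantity that regularity forces into positive homogeneity --- but your version buys something real: it never uses normality or the harmonic curvature, so it proves the statement for regular (not necessarily normal) parabolic geometries, and it obtains $\kappa \equiv 0$ directly rather than via the normality-dependent fact that $\Kh$ is a complete obstruction to flatness. The paper's version is shorter given that the filtration machinery of Section \ref{S:CNK} is needed throughout anyway, and it previews the key inclusion $\fs_0(u) \subseteq \fann(\Kh(u))$ that drives the rest of the paper. Your closing remark about the indispensability of regularity is also apt: for depth $\nu \geq 2$ the $Z$-invariant (homogeneity-zero) part of $\bigwedge^2(\fg/\fp)^*\ot\fg$ is nonzero, so local constancy plus equivariance alone would not suffice.
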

 
 \begin{proof}
 Let $u \in \cG$, so $\dim(\finf(\cG,\omega)) = \dim(\fs(u)) \leq \dim(\fg)$.  If equality holds, then the grading element has $Z \in \fs_0(u) \subseteq \fann(\Kh(u))$.  Since $\Kh(u) \in H^2_+(\fg_-,\fg)$, this forces $\Kh(u) = 0$.
 \end{proof}

 The filtration on $\fg$ induces ($P$-invariant) filtrations on $T\cG = T^{-\nu} \cG \supset ... \supset T^\nu \cG$ and $\wt\cS$:
  \begin{align} \label{E:S-filtration}
 \wt\cS(x)^i &= \{ \xi \in \wt\cS \mid \xi(u) \in T^i_u \cG,\,\, \forall u \in \pi^{-1}(x) \}, \qquad -\nu \leq i \leq \nu.
 \end{align}
 This projects to a filtration $\cS = \cS(x)^{-\nu} \supset ... \supset \cS(x)^\nu$ of the symmetry algebra $\cS$ of the underlying structure on $M$.  Note $\omega_u(\wt\cS(x)^i) = \ff(u)^i$, and $\fs(u) = \gr(\ff(u)) \cong \gr(\cS(x))$ is a canonical isomorphism, so the latter will be denoted by $\fs(x)$.

 \subsubsection{Kruglikov \cite{Kru2011}}   For bracket-generating distributions (not necessarily underlying parabolic geometries), we have a filtration $\{ \cS(x)^i \}$ of $\cS$.  Let $TM = D^{-\nu} \supset ... \supset D^{-1} =: D$ generated by $D$ with each $D^i$ of constant rank.\footnote{These conditions hold for underlying structures of parabolic geometries.}  Given $x \in M$, define $\fg_i(x) := D^i_x / D^{i+1}_x$ for $i < 0$.  The Tanaka prolongation $\fg(x) = \prn(\fg_-(x))$ has $\fg_i(x) \inj \Hom(\ot^{i+1} \fg_{-1}(x), \fg_{-1}(x))$ for $i \geq 0$, cf. Remark \ref{RM:Hom}.
 
 Letting $\ev_x : \cS \to T_x M$ be the evaluation map, define $\cS(x)^i :=\ev_x^{-1}(D^i)$ for $i < 0$, and $\cS(x)^0 := \ker(\ev_x)$.  Inductively, for $i \geq 0$, given $\bX\in\cS(x)^i$, there is a map $\Psi^{i+1}_\bX:\ot^{i+1}D_x\to T_xM$ given by
 \begin{align*}
 \Psi^{i+1}_\bX(Y_1,\dots,Y_{i+1})=\bigl[\bigl[\dots\bigl[[\bX,\bY_1],\bY_2\bigr],\dots\bigr],\bY_{i+1}\bigr](x),
 \end{align*}
 where $\bY_j \in \Gamma(D)$ and $\bY_j(x) = Y_j$.  Moreover, $\im(\Psi^{i+1}_\bX) \subseteq D_x$.  Define $\cS(x)^{i+1} :=\{\bX\in\cS(x)^i \mid \Psi_\bX^{i+1}=0\}$.  Then $\cS = \cS(x)$ is a filtered Lie algebra, and $\fs_i(x) \inj \fg_i(x)$ via $\bX \mod \cS(x)^{i+1} \mapsto \Psi^{i+1}_\bX$.
 
 There is a filtration\footnote{In the complex setting, we replace $C^\infty(M)$ by the algebra of germs of holomorphic functions about the point $x$.} on $C^\infty(M)$ by ideals $\mu^i_x$.  Let $\mu^1_x = \{ f \in C^\infty(M) \mid f(x) = 0 \}$ and for $i \geq 0$,
 \begin{align*}
 \mu^{i+1}_x = \{ f \in C^\infty(M) \mid (\bY_1 \cdots \bY_t \cdot f)(x) = 0, \,\forall \bY_j \in \Gamma(D), \, 0 \leq t \leq i \}.
 \end{align*}
 For any $\bY \in \Gamma(D)$, we have $\bY \cdot \mu^{i+1}_x \subset \mu^i_x$.  Let $\{ \bZ_{jk} \}_{j=1,...,\nu}$ be a local framing near $x$ with $\bZ_{jk} \in \Gamma(D^{-j}) \backslash \Gamma(D^{-j-1})$.  Let $\bX \in \cS$.  Then
 \begin{align} \label{E:Boris-filtration}
 i \geq 0: \qquad \bX = \sum_{j,k} f_{jk} \bZ_{jk}\in\mathcal{S}(x)^i \qbox{iff} f_{jk}\in\mu_x^{i+j},\quad \forall j,k.
 \end{align}
  The ``only if'' direction is proved in \cite[Lemma 1]{Kru2011}, while an easy induction establishes the converse.
 
 \begin{example}[$(2,3,5)$-geometry] \label{EX:G2P1-S-filtration} On a 5-manifold $(x,y,p,q,z)$, consider $D$ locally spanned by 
 \[
 \p_q, \qquad \tilde\p_x := \p_x + p \p_y + q \p_p + (p^3+q^2) \p_z.
 \]
   Then the symmetry algebra $\cS$ of $D$ is spanned by
 \[
 \bX_1 = \p_x, \quad \bX_2 = \p_y, \quad \bX_3 = \p_z, \quad \bX_4 = x\p_x - y\p_y - 2p\p_p - 3q\p_q - 5z\p_z.
 \]
 At $\bu_0 = (x_0,y_0,p_0,q_0,z_0)$, the dimensions of $\fs_i = \fs_i(\bu_0)$ are
 \[
 \begin{array}{c|cccc}
 & \fs_{-3} &  \fs_{-2} &  \fs_{-1} &  \fs_{0} \\ \hline
 q_0 \neq 0 \mbox{ or } p_0 \neq 0 & 2 & 1 & 1 & 0\\
 q_0 = p_0 = 0 & 2 & 0 & 1 & 1
 \end{array} 
 \]
 When $p_0=q_0=0$, $\cS(\bu_0)^0$ is spanned by $\bT = \bX_4 - x_0 \bX_1 + y_0 \bX_2 + 5z_0 \bX_3$.  Writing $\bT = \sum_{j,k} f_{jk} \bZ_{jk}$ in the framing $(\bZ_{11}, \bZ_{12}, \bZ_{21}, \bZ_{31}, \bZ_{32}) = (\p_q, \tilde\p_x, \p_p + 2q\p_z, \p_y, \p_z)$ shows $f_{jk} \in \mu^j_x$, so $\bT \not\in \cS(\bu_0)^1$.  Note $[\bT,\p_q] = 3\p_q$, which at $\bu_0$ is not contained in $\fs_{-1}$.  Thus, $[\fs_0,\fg_{-1}] \not\subset \fs_{-1}$ when $p_0=q_0=0$.
 \end{example}

 \begin{remark}
 The above results in Kruglikov's approach still hold if $D$ admits additional structure.  In the parabolic case, Kruglikov's filtration $\{ \cS(x)^i \}$ agrees with that of \v{C}ap--Neusser, and \eqref{E:Boris-filtration} gives a means to compute it explicitly.  Henceforth, we will mainly rely on the \v{C}ap--Neusser presentation.
 \end{remark}
 
 \subsection{Proof of the upper bound}
 \label{S:upper}
 
 Fixing $(G,P)$, the {\em submaximal symmetry dimension} $\fS$ is the maximum of  $\dim(\finf(\cG,\omega))$ among all {\bf regular, normal} geometries $(\cG \stackrel{\pi}{\to} M, \omega)$ of type $(G,P)$ which are {\bf not flat}, i.e.\ $\Kh \not\equiv 0$.  Equivalently, $\fS$ maximizes $\dim(\cS)$, where $\cS$ is the symmetry algebra of an underlying structure which is not flat.
 
 \begin{defn} \label{D:reg-pt}
 We say that $x \in M$ is a {\em regular point} if there exists a neighbourhood $N_x \subset M$ of $x$ such that for $-\nu \leq j \leq \nu$, $\dim(\fs_j(y))$ is a constant function of $y \in N_x$.  Otherwise, $x$ is {\em irregular}.
 \end{defn}
 
 We have a tower of bundles $\cG =: \cG_\nu \ra ... \ra \cG_0 \to M$, with $\cG_i = \cG / P_+^{i+1} = \cG / \exp(\fg^{i+1})$, projections $p_i : \cG \to \cG_i$ and $\pi_i : \cG_i \to M$, where $i=-1,..., \nu$.  (If $i=-1$, let $p_{-1} = \pi$, i.e.\ the projection to $M$.)  Given any $\xi \in \fX(\cG)^P$, let $\xi^{(i)} = (p_i)_* \xi \in \fX(\cG_i)$.  By $P$-invariancy, $\wt\cS$ descends to $\cS^{(i)} \subset \fX(\cG_i)^{P / P^{i+1}_+}$.  Given $x \in M$, the filtration $\wt\cS(x)$ projects to a filtration $\cS^{(i)}(x)$, and from \eqref{E:S-filtration}, 
 \[
 \cS^{(i)}(x)^{i+1} = \{ \xi^{(i)} \in \cS^{(i)} \mid \xi^{(i)}(u_i) = 0, \,\forall u_i \in \pi_i^{-1}(x) \}.
 \]
For any $u_i \in \pi_i^{-1}(x)$, $\cS^{(i)}|_{u_i} \subset T_{u_i} \cG_i$ has dimension $\dim\, \cS - \sum_{j=i+1}^\nu \dim\, \fs_j(x)$.  If $x$ is a regular point, then $\cS^{(i)}$ yields a constant rank distribution on a neighbourhood of $\pi_i^{-1}(x) \subset \cG_i$.  By the Frobenius theorem, there exist local rectifying coordinates and a local foliation by maximal integral submanifolds of $\cS^{(i)}$.  
 
 Motivated by Lemma \ref{L:T-subalg} and Theorem \ref{T:Y-pr-thm}, we define a variant of Tanaka prolongation:
 
  \begin{defn} \label{D:g-prolong} Let $\fg$ be a $\bbZ$-graded semisimple Lie algebra, and $\fa_0 \subset \fg_0$ a subalgebra.  Define:
  \[
   {\rm (i)}: \,\, \fa_{\leq 0} := \fg_{\leq 0}; \qquad {\rm (ii)}:\,\, \fa_i = \{ X \in \fg_i \mid [X,\fg_{-1}] \subseteq \fa_{i-1} \}, \quad i > 0. 
  \]
  Then $\fa = \bop_i \fa_i$ is a graded subalgebra of $\fg$ denoted by $\prn^\fg(\fg_-,\fa_0)$.  (In particular, $\prn^\fg(\fg_-,\fg_0) = \fg$.)
 \end{defn}

 If $G$ is (complex) simple and $G/P$ is not $A_\rkg / P_1$ (projective) or $C_\rkg / P_1$ (contact projective), then $\prn^\fg(\fg_-,\fa_0) \cong \prn(\fg_-,\fa_0)$ by Theorem \ref{T:Y-pr-thm} and Lemma \ref{L:T-subalg}.
 
 We have the following ``filtered'' generalization of a result of Morimoto \cite[Prop.\ 4.1]{Mor1977}.

 \begin{prop} \label{P:key-bracket} Let $x \in M$ be a regular point and $u \in \pi^{-1}(x)$.  Then $[\fs_{i+1}(u),\fg_{-1}] \subset \fs_i(u)$, $\forall i$.  In particular, $\fs_{i+1}(u) \subseteq \prn^\fg_{i+1}(\fg_-,\fs_0(u))$.
 \end{prop}
 
 \begin{proof}
 Suppose $i \geq -1$.  For any $\xi \in \wt\cS = \finf(\cG,\omega)$ and $\eta \in \fX(\cG)$, we have $0 = (\cL_\xi \omega)(\eta) = \xi \cdot \omega(\eta) - \omega([\xi,\eta])$.  Let $\xi \in \wt\cS(x)^{i+1}$ and $\eta \in \Gamma(T^{-1} \cG)^P$, so $X = \omega(\xi(u)) \in \ff(u)^{i+1} \subset \fg^{i+1}$ and $Y = \omega(\eta(u)) \in \fg^{-1}$.  Since $i \geq -1$, then $X \in \fp$, so $X =  \omega(\zeta_X(u))$, where $\zeta_X$ is a fundamental vertical vector field.  By $P$-equivariancy of $\omega(\eta)$, 
 \begin{align*}
 \omega([\xi,\eta](u)) &= (\xi \cdot \omega(\eta))(u) = (\zeta_X \cdot \omega(\eta))(u) = \l.\frac{d}{dt}\r|_{t=0} \omega(\eta)(u \cdot \exp(tX)) = -[X,Y].
 \end{align*}
 In particular, since $[X,Y] \in \fg^i$, then $[\xi,\eta](u) \in T^i_u \cG$.
 
 Let $u_i = p_i(u) \in \cG_i$.  Since $\xi \in \wt\cS(x)^{i+1}$, then $\xi^{(i)}(u_i) = 0$.  Since $x$ is a regular point, there exist local (functionally independent) functions $F_1,..., F_{t_i}$ on $\cG_i$ (smooth in the real setting or holomorphic in the complex setting), where $t_i = \dim\,\cG_i - \sum_{-\nu}^i\dim\,\fs_j(x)$, whose joint level sets define the local foliation by maximal integral submanifolds of $\cS^{(i)}$.  In particular, for our $\xi \in \wt\cS(x)^{i+1}$, we have $\xi^{(i)} \cdot F_j = 0$ for $j=1,..., t_i$, and since $\xi^{(i)}(u_i) = 0$, we have $ ([\xi^{(i)},\eta^{(i)}] \cdot F_j)(u_i) = 0$. Thus, $[\xi^{(i)},\eta^{(i)}](u_i) \in \cS^{(i)}|_{u_i} = (p_i)_*( \wt\cS|_u )$ is tangent to the foliation.  Since any element of $\wt\cS$ is uniquely determined by its value at $u$, then $[\xi^{(i)},\eta^{(i)}](u_i) = \xi'^{(i)}(u_i)$ for some $\xi' \in \wt\cS$.  Equivalently,
 \[
 [\xi,\eta](u) = \xi'(u) + \chi(u) \in T^i_u \cG, \qquad \xi'(u) \in \wt\cS|_u, \quad \chi(u) \in T^{i+1}_u \cG.
 \]
 Hence, $\omega([\xi,\eta](u)) \in \ff(u)^i + \fg^{i+1}$.  Thus, $[\ff(u)^{i+1}, \fg^{-1}] \subset \ff(u)^i + \fg^{i+1}$, so $[\fs_{i+1}(u),\fg_{-1}] \subset \fs_i(u)$.
 
Now suppose $i \leq -2$.  Since $x$ is regular, then $D^j_\cS|_y := D^j_y \cap \cS|_y$, $j \leq -1$, define constant rank distributions near $x$.  Let $\bX = \sum_i f_i \bX_i \in \Gamma(D^{i+1}_\cS)$ where $\{ \bX_i \}$ is basis of $\cS$.  Given $\bY \in \Gamma(D)$,
\[
 [\bX,\bY](x) = \sum_i f_i(x) [\bX_i,\bY](x) - (\bY \cdot f_i)(x) \bX_i(x) \in (D_x + \cS|_x) \cap D^i_x.
\]
 Quotient by $D^{i+1}_x \supset D_x$, so with respect to the Levi bracket, $[\fs_{i+1}(x),\fg_{-1}(x)] \subset \fs_i(x)$, hence the first claim.  The second claim then follows immediately.
 \end{proof}

 If $\cS$ is {\em transitive at $x$}, i.e.\ $\ev_x(\cS) = T_x M$, then $[\fs_{i+1}(x),\fg_{-1}(x)] = [\fs_{i+1}(x),\fs_{-1}(x)] \subset \fs_i(x)$ immediately follows since $\fs(x)$ is a graded Lie algebra.  Equivalently, $[\fs_{i+1}(u),\fg_{-1}] \subset \fs_i(u)$.

 \begin{example} In Example \ref{EX:G2P1-S-filtration}, the regular points are precisely those with $p_0 \neq 0$ or $q_0 \neq 0$.  At all irregular points, we saw that $[\fs_0, \fg_{-1}] \not\subset \fs_{-1}$.
 \end{example}

 \begin{lemma} \label{L:dense} The set of regular points is open and dense in $M$.
 \end{lemma}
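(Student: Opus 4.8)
The plan is to show that \emph{irregularity} is a closed condition with empty interior, so its complement---the set of regular points---is open and dense. Recall that by Definition \ref{D:reg-pt}, $x$ is regular precisely when each $y \mapsto \dim(\fs_j(y))$ is locally constant near $x$ for all $-\nu \le j \le \nu$. So I would first argue that for each $j$, the function $d_j(y) := \dim(\fs_j(y))$ is upper semicontinuous. Over the bottom part ($j < 0$) this is because $\fs_j(y) = \fg_j(y) = D^j_y / D^{j+1}_y$ and the $D^j$ have constant rank by hypothesis (these conditions hold for underlying structures of parabolic geometries), so in fact $d_j$ is \emph{constant} there. For $j \ge 0$, I would use the description via the curvature/harmonic curvature: $\fs(u) = \gr(\ff(u))$, and $\ff(u)^i/\ff(u)^{i+1}$ is cut out inside $\fg_i$ by the vanishing of finitely many (algebraic in $\Kh(u)$ and its covariant derivatives, which vary smoothly/holomorphically with $u$) conditions, exactly as in the argument for Lemma \ref{L:Tanaka-lw}: a matrix $M(y)$ depending continuously on $y$ has lower semicontinuous rank, so $d_j(y) = \dim\fg_j - \rnk(M(y))$ is upper semicontinuous. (Alternatively, via Kruglikov's description: $\fs_i(x) \inj \fg_i(x)$ as the kernel of the maps $\Psi^{i+1}_\bX$, whose coefficients depend continuously on $x$.)

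Given upper semicontinuity of each $d_j$ and the fact that $\dim M$ is finite, the total $d(y) := \sum_j d_j(y)$ is upper semicontinuous and $\bbZ$-valued, hence on any open set it attains its maximum on a \emph{dense open} subset: the locus where $d$ equals its supremum over a small ball is open, and it is dense because near any point one can find a point where $d$ is maximal in a neighbourhood (take a point at which $d$ achieves its local maximum value, which exists by discreteness of values and the finite range). At such a point all the individual $d_j$ are also locally constant: indeed if $d = \sum_j d_j$ is locally constant and each summand is upper semicontinuous, then each summand must be locally constant (an upper semicontinuous function cannot strictly increase on a dense set while the others compensate, since near such a point $d$ would have to drop). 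So the point is regular. This shows regular points are dense; openness is immediate from the definition (if the $d_j$ are constant on $N_x$, then every $y \in N_x$ is also regular, using a slightly smaller neighbourhood).

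Let me make the density argument precise to avoid circularity. Work on a fixed open set $U \subseteq M$ (it suffices to treat small coordinate balls, and connectedness of $M$ is not needed here). The key point: let $V \subseteq U$ be the set of $y$ such that $d$ is constant in some neighbourhood of $y$. Then $V$ is open by construction, and $V$ is exactly the regular locus in $U$ (after checking each $d_j$ separately, as above). For density: given any $y_0 \in U$ and any ball $B \subseteq U$ around $y_0$, the function $d$ restricted to $B$ takes finitely many integer values, so it has a minimum value $m$; let $B' \subseteq B$ be a ball contained in $\{y : d(y) \le m\}$ around a point where $d = m$ (such a point and ball exist since $\{d \le m\}$ is open by upper semicontinuity and nonempty). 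On $B'$ we have $d \equiv m$, so every point of $B'$ is regular, i.e. $B' \subseteq V$; hence $V \cap B \ne \emptyset$. Thus $V$ is dense in $U$, and since this holds for arbitrary $U$, the regular points form an open dense subset of $M$.

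The main obstacle I anticipate is the bookkeeping for $j \ge 0$: establishing that $\fs_j(y)$ really is cut out by conditions depending \emph{continuously} (or holomorphically) on $y$, so that semicontinuity of rank applies uniformly in a neighbourhood. This requires choosing local trivialisations/framings consistently---e.g. a local section of $\cG \to M$ and the induced framing $\{\bZ_{jk}\}$ of $TM$ as in the Kruglikov setup---and tracking that the structure maps $\Psi^{i+1}_\bX$ (equivalently the curvature function $\kappa$ and its iterated covariant derivatives) depend smoothly on the base point. This is essentially routine given the machinery already set up in Section \ref{S:CNK}, but it is where the details live; everything downstream (upper semicontinuity $\Rightarrow$ generic constancy $\Rightarrow$ openness and density) is soft and formal.
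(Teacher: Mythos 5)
Your overall skeleton (integer-valued semicontinuous functions are locally constant on a dense open set) is the right soft-topology idea and is essentially the one the paper uses, but you have applied it to the wrong functions, and this breaks the proof at its foundation. First, for $j<0$ it is not true that $\fs_j(y)=\fg_j(y)$: by construction $\fs_j(y)=\cS(y)^j/\cS(y)^{j+1}$ embeds into $\fg_j(y)=D^j_y/D^{j+1}_y$ but is in general a proper subspace whose dimension genuinely varies; see Example \ref{EX:G2P1-S-filtration}, where $\dim\fs_{-2}$ equals $1$ at generic points and $0$ at the irregular ones, so $d_{-2}$ is neither constant nor upper semicontinuous. Second, for $j\geq 0$ the quantity $d_j(y)=\dim\fs_j(y)$ is the difference $\dim\cS(y)^{j}-\dim\cS(y)^{j+1}$ of two kernel dimensions; a difference of upper semicontinuous functions need not be semicontinuous in either direction, and your ``cut out by continuously varying conditions'' argument applies to the filtrands $\cS(y)^{j}$ themselves, not to the graded quotients. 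Third, and fatally, your aggregate $d(y)=\sum_j d_j(y)$ is identically equal to $\dim\cS$ (the associated graded of a fixed finite-dimensional filtered vector space has constant total dimension), so the locus where $d$ is locally constant is all of $M$; combined with your (false) semicontinuity premise this would show that every point is regular, contradicting the example just cited.

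The repair is exactly what the paper does: the correct upper semicontinuous quantities are the cumulative dimensions $q_i^+(x)=\dim\cS(x)^{i+1}=\sum_{j>i}\dim\fs_j(x)$. Equivalently, $q_i^-(x)=\dim\cS-q_i^+(x)$ is the rank of the evaluation map of $\cS^{(i)}$ at a point of $\cG_i$, hence lower semicontinuous because linear independence of finitely many vector fields persists nearby. A point is regular iff all the $q_i^+$ are simultaneously locally constant (then each $d_j=q^+_{j-1}-q^+_j$ is too), and one can then run your density argument either on the single upper semicontinuous function $\sum_i q_i^+$ (local constancy of a sum of u.s.c.\ functions forces local constancy of each summand) or, as the paper does, via the iterated minimization $m_{-\nu}\circ\cdots\circ m_\nu$ over shrinking open sets.
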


 \begin{proof} For $-\nu \leq i \leq \nu$, define $q_i^\pm : M \to \bbZ$ by $q^-_i(x)= \sum_{j=-\nu}^i \dim\,\fs_j(x)$ and $q^+_i(x)=\dim\,\cS-q^-_i(x)=\dim\,\cS(x)^{i+1} = \sum_{j=i+1}^\nu \dim\,\fs_j(x)$.  Then $q_i^-$ is lower semi-continuous because linear independence of $\xi^{(i)}_1,\dots,\xi^{(i)}_s \in \cS^{(i)}$ at $u_i \in \cG_i$ persists near $u_i$.  Hence, $q_i^+$ is upper semi-continuous.  Given $U \subset M$, define $m_i(U) = \{ x \in U \mid q_i^+(x) \leq q_i^+(y),\, \forall y \in U \}$.  Upper semi-continuity implies that if $\emptyset \neq U \subset M$ is open, then $\emptyset \neq m_i(U) \subset M$ is open.
 
 Denote $M_0 := M$, and define $N_{j+1} = m_{-\nu} \circ\, \cdots\, \circ m_\nu(M_j)$ and $M_{j+1} = M \backslash (\bar{N}_1 \cup \cdots \cup \bar{N}_j)$ for $j \geq 1$.  Each $M_j \subset M$ is open, so $N_{j+1} \subset M$ is open; when the former is non-empty, so is the latter.  Each $\im(q_i^+) \subset [0,\dim\,\cS] \cap \bbZ$ is finite, so there exists a minimal $k \geq 0$ with $M_{k+1} = \emptyset$.  The open set $N = N_1 \cup \cdots \cup N_k$ is the set of all regular points, which is dense in $M$ since $\bar{N} = M$.
 \end{proof}
 
 We are now in a position to establish our universal upper bound.  For $0 \neq \phi \in H^2_+(\fg_-,\fg)$, define
 \begin{align} \label{E:fU}
 \fa^\phi := \prn^\fg(\fg_-,\fann(\phi)), \qquad \fU := \max\l\{  \dim(\fa^\phi) \mid 0 \neq \phi \in H^2_+(\fg_-,\fg) \r\} < \dim(\fg).
 \end{align}
 (If $\fU = \dim(\fg)$, then $\fa^\phi = \fg$ for some $\phi \neq 0$, and $Z \in \fann(\phi)$, which is a contradiction.)

 \begin{thm} \label{T:upper} Let $G$ be a semisimple Lie group and $P \subset G$ a parabolic subgroup.  
 Let $(\cG \stackrel{\pi}{\to} M, \omega)$ be a regular, normal geometry of type $(G,P)$, $x \in M$ a regular point, and $u \in \pi^{-1}(x)$.  Then $\fs(u) \subseteq \fa^{\Kh(u)}$ is a graded subalgebra, and $\dim(\finf(\cG,\omega)) \leq \dim(\fa^{\Kh(u)})$.  Moreover, $\fS \leq \fU < \dim(\fg)$.
 \end{thm}
 
 \begin{proof} We have $\fs(u) \subseteq \fg_- \op \fs_{\geq 0}(u) \subseteq \prn^\fg(\fg_-,\fs_0(u)) \subseteq \fa^{\Kh(u)}$, using Proposition \ref{P:key-bracket} and $\fs_0(u) \subseteq \fann(\Kh(u))$.  Thus, $\dim(\finf(\cG,\omega)) = \dim(\fs(u)) \leq \dim(\fa^{\Kh(u)})$.
 
 If the geometry is not flat, then $N = \{ x \in M \mid \Kh(u) \neq 0, \, \forall u \in \pi^{-1}(x) \}$ is non-empty and open, so by Lemma \ref{L:dense}, $N$ contains a non-flat regular point $x$.  At any $u \in \pi^{-1}(x)$, the previous bound applies.  Since $\Kh(u) \in H^2_+(\fg_-,\fg)$, then $\fS \leq \fU < \dim(\fg)$.
 \end{proof}
 
 \begin{remark}
 Only $[\fs_{i+1}(u),\fg_{-1}] \subseteq \fs_i(u)$ for $i \geq 0$ was essential for proving Theorem \ref{T:upper}.  This relied only on local constancy of $\dim(\fs_j(y))$ for $j > 0$, so our ``regular point'' definition could be weakened.  However, $[\fs_0(u),\fg_{-1}] \subseteq \fs_{-1}(u)$ will be used in proving Theorem \ref{T:Petrov}.
 \end{remark}

 Let $(\fg,\fp)$ be real with complexification $(\fg^\bbC,\fp^\bbC)$. Define 
 \[
 \fU^\bbC := \max\l\{  \dim(\fa^\phi) \mid 0 \neq \phi \in H^2_+(\fg^\bbC_-,\fg^\bbC) \r\}.
 \]
 
 \begin{cor} \label{C:upper}
 For {\em real} (regular, normal) parabolic geometries of type $(G,P)$, $\fS \leq \fU \leq \fU^\bbC$.
 \end{cor}
 
 \begin{proof}
 We have $H^2_+(\fg^\bbC_-,\fg^\bbC) \cong H^2_+(\fg_-,\fg) \ot_\bbR \bbC$.  In Proposition \ref{P:Tanaka-lw}, we show that over $\bbC$, $\dim(\fa^\phi)$ is maximized on some extremal vector in some sub-irrep.  Since there may not exist vectors $H^2_+(\fg_-,\fg)$ which complexify to an extremal vector of $H^2_+(\fg^\bbC_-,\fg^\bbC)$, then we immediately obtain $\fU \leq \fU^\bbC$.
 \end{proof}
 
 \begin{cor} \label{C:transitive} For regular, normal geometries of type $(G,P)$, suppose that $\fS = \fU$.  Then any submaximally symmetric model $(\cG \stackrel{\pi}{\to} M, \omega)$ is locally homogeneous near a non-flat regular point $x \in M$.
 \end{cor}
 
 \begin{proof} By Theorem \ref{T:upper}, $\fU = \fS = \dim(\finf(\cG,\omega)) = \dim(\fs(u)) \leq \dim(\fa^{\Kh(u)}) \leq \fU$, $\forall u \in \pi^{-1}(x)$.  Hence, $\fs(u) = \fa^{\Kh(u)} \supset \fg_-$, so $\fs(x) \supset \fg_-$, i.e.\ $\cS$ is transitive at $x$. Lie's third theorem implies the result.
 \end{proof}
 
 \begin{remark} \label{RM:constrained} Recall that the fibres of $\cG \to M$ are mapped by $\Kh$ into $G_0$-orbits in $H^2_+(\fg_-,\fg)$.  Let $\{ 0 \} \neq \cO \subset H^2_+(\fg_-,\fg)$ be a $G_0$-invariant subset, e.g.\ a $G_0$-orbit or a $G_0$-submodule.  Analogously defining $\fS_\cO$ and $\fU_\cO$ using the constraint $\im(\Kh) \subset \cO$, we obtain $\fS_\cO \leq \fU_\cO$ similar to Theorem \ref{T:upper}.  If $\fS_\cO = \fU_\cO$, then any model realizing the equality must be locally homogeneous near a non-flat regular point.
 \end{remark}
 
 We will use Remark \ref{RM:constrained} in \S \ref{S:4d-Lor} to study the Petrov types in 4-dimensional Lorentzian conformal geometry, and in \S \ref{S:G2P1} for $(2,3,5)$-distributions with $\Kh$ having constant root type.
 
 \section{Prolongation analysis}
 \label{S:PR-analysis}
 
 The universal upper bound $\fU$ of Theorem \ref{T:upper} was defined in \eqref{E:fU} via the Tanaka algebra $\fa^\phi$.  Observe that if $H^2_+(\fg_-,\fg) = \bop_i \bbV_i$ is the decomposition into $\fg_0$-irreps and $\phi = \sum_i \phi_i$, where $\phi_i \in \bbV_i$, $\forall i$, then $\fann(\phi) \subset \fann(\phi_i)$ and $\fa^\phi \subset \fa^{\phi_i}$, so $\fU = \max_i \{ \max\{ \dim(\fa^{\phi_i}) \mid 0 \neq \phi_i \in \bbV_i \} \}$.
 Thus, it suffices to understand $\max\{ \dim(\fa^\phi) \mid 0 \neq \phi \in \bbV \}$, where $\bbV$ is a $\fg_0$-irrep.  
 In the complex case, this maximum is realized on any extremal weight vector $\phi_0 \in \bbV$ (\S \ref{S:max-Tanaka}).  The structure of $\fa^{\phi_0}$ (in particular, its dimension) can be efficiently deduced from a Dynkin diagram recipe (\S \ref{S:Dynkin}).  We investigate the notion of {\em prolongation-rigidity} in \S \ref{S:PR}, and the behaviour of $\fa^{\phi_0}$ under correspondence and twistor space constructions in \S \ref{S:corr-twistor}.

 %%%%%%%%%%%%%%%%%%%%%%%%%%%%%%%%%%%%%%%%%%%%%%%%%%
  
  \subsection{Maximizing the Tanaka prolongation}
  \label{S:max-Tanaka}

 In the {\em complex} setting, given a $G_0$-irrep $\bbV$, where $G_0$ is reductive (with non-trivial semisimple part), a well-known consequence of the Borel fixed point theorem is that $\bbP(\bbV)$ has a {\em unique} Zariski closed $G_0$-orbit $\cO$, namely the orbit of any extremal line $[\phi_0]$.  This is the unique orbit of minimal dimension, so $\max\{ \dim(\fann(\phi) ) \mid 0 \neq \phi \in \bbV \}$ is realized precisely when $[\phi] \in \cO$.  This is the $k=0$ assertion in the proposition below.

 \begin{prop} \label{P:Tanaka-lw} Let $G$ be a complex \ss Lie group, and $P$ a parabolic subgroup.  Let $\bbV$ be an $G_0$-irrep, and $\phi_0 \in \bbV$ an extremal $\fg_0$-weight vector.  Then $\forall 0 \neq \phi \in \bbV$,  $\dim(\fa^\phi_k) \leq \dim(\fa^{\phi_0}_k)$, $\forall k \geq 0$.
 Thus, 
 \[
 \max\{ \dim(\fa^\phi) \mid 0 \neq \phi \in \bbV \} = \dim(\fa^{\phi_0}).
 \]
 Moreover, $\dim(\fa^\phi) = \dim(\fa^{\phi_0})$ iff $[\phi] \in G_0 \cdot [\phi_0] \subset \bbP(\bbV)$.
 \end{prop}
  
  \begin{proof} If $\fg_0^{ss} = 0$, then irreducibility implies $\bbV \cong \bbC$, and the result is immediate.  So suppose $\fg_0^{ss} \neq 0$ and let $k \geq 1$. Define $\Psi_k : \bbP( \bbV ) \to \bbZ_{\geq 0}$ by $\Psi_k([\phi]) = \dim(\fa_k^\phi)$, which is constant on $G_0$-orbits since $\fann(g \cdot \phi) = \Ad_g(\fann(\phi))$ and $\fa_k^{g \cdot \phi} = \Ad_g(\fa_k^\phi)$. From the definition \eqref{E:fU} of $\fa^\phi$,
  \begin{align*}
  \fa_k^\phi &= \prn_k^\fg(\fg_-,\fann(\phi)) = \{ X \in \fg_k \mid \ad_{\fg_{-1}}^k(X) \cdot \phi = 0 \}\\
  &= \{ X \in \fg_k \mid (\ad_{Y_{i_1}} \circ ... \circ \ad_{Y_{i_k}}(X)) \cdot \phi = 0,\,\, \forall Y_{i_j} \in \fg_{-1} \}.
  \end{align*}

 The function $(\ad_{Y_{i_1}} \circ ... \circ \ad_{Y_{i_k}}(X)) \cdot \phi$ is a multilinear function of $X,Y_{i_1},...,Y_{i_k},\phi$.  Its vanishing is determined by evaluating all $Y_{i_1},...,Y_{i_k}$ on all tuples of basis elements $\{ e_i \}$ of $\fg_{-1}$.  So there is a bilinear function $T(X,\phi)$ such that $\fa_k^\phi = \{ X \in \fg_k \mid T(X,\phi) = 0 \}$.  In any basis of $\fg_k$, there is a matrix $M(\phi)$ such that $X \in \fa_k^\phi$ iff its coordinate vector is in $\ker(M(\phi))$.  Since the rank of a matrix is a lower semi-continuous function of its entries, and $M(\phi)$ depends linearly on $\phi$, then $\rnk(M(\phi))$ is lower semi-continuous in $\phi$ (and depends only on $[\phi]$). We have $\dim(\fa_k^\phi) = \dim(\fg_k) - \rnk(M(\phi))$, so $\Psi_k$ is upper semi-continuous.

 We know that $\cO = G_0 \cdot [\phi_0]$ is the {\em unique} Zariski closed $G_0$-orbit in $\bbP(\bbV)$, so $[\phi_0]$ is in the Zariski closure of {\em every} $G_0$-orbit in $\bbP(\bbV)$.  Hence, $\forall 0 \neq \phi \in \bbV$, there is a sequence $\{ g_n \} \subset G_0$ such that $g_n \cdot [\phi] = [g_n \cdot \phi] \to [\phi_0]$ as $n \to \infty$.  Thus, $\Psi_k([\phi]) = \Psi_k([g_n\cdot\phi]) = \Psi_k(g_n \cdot [\phi]) \leq \Psi_k([\phi_0])$ by upper semi-continuity of $\Psi_k$.
 
 The final claim follows from the discussion preceding this lemma on the $k=0$ case.
 \end{proof}
 
  \begin{remark} In general, $\fann(\phi) \not\subset \fann(\phi_0)$, and $\fa^\phi_k \not\subset \fa^{\phi_0}_k$.  
 \end{remark}
 
 The recipe to compute $\fa_0^{\phi_0} = \fann(\phi_0)$ is well-known (see Recipe \ref{R:a0}).  In \S \ref{S:Dynkin}, we show that $\fa_+^{\phi_0}$ is also easily determined.  First, we fix notations from representation theory.

% Kostant's theorem (see Recipe \ref{R:Kostant}) gives an explicit description of all $\fg_0$-irreps in $H^2_+(\fg_-,\fg)$, but
 
 \subsection{Representation theory notations and conventions}
 \label{S:rep-th}
 
 Fix a Borel subalgebra $\fb \subset \fg$ with associated Cartan subalgebra $\fh \subset \fb$, root system $\Delta \subset \fh^*$, and simple roots $\Delta^0 = \{ \alpha_1,..., \alpha_\rkg \}$, where $\rkg = \dim(\fh) = \rnk(\fg)$.  (We use the Bourbaki ordering.)   Let $\{ Z_1,...,Z_\rkg \} \subset \fh$ be the dual basis to $\Delta^0 \subset \fh^*$. If $S \subset \{ 1,..., \rkg \}$, then we let $Z_S := \sum_{k \in S} Z_k$.
  For any $\alpha \in \Delta$, let $e_\alpha$ denote a root vector and let $\fg_\alpha$ be its root space. If a subspace $U \subset \fg$ is a direct sum of root spaces and possibly some subspace of $\fh$, let $\Delta(U)$ denote the corresponding roots.  Thus, $\fb = \fh \op \bop_{\alpha \in \Delta^+} \fg_\alpha$ and $\Delta(\fb) = \Delta^+$.  If $\fg$ is simple, the unique highest root is $\lambda_\fg$. The Killing form $B$ induces a symmetric pairing $\langle \cdot, \cdot \rangle$ on $\fh^*$, which determines the Cartan matrix $c_{ij} = \langle \alpha_i, \alpha_j^\vee \rangle \in \bbZ$ (where $\alpha^\vee = \frac{2\alpha}{\langle \alpha, \alpha \rangle}$) and Dynkin diagram $\cD(\fg)$.  The nodes $\cN(j) = \{ i \mid c_{ij} \leq -1 \}$ are the neighbours of the $j$-th node of $\cD(\fg)$.  Let $\{ \lambda_1,..., \lambda_\rkg \} \subset \fh^*$ denote the fundamental weights of $\fg$, i.e.\ $\langle \lambda_i, \alpha_j^\vee \rangle = \delta_{ij}$. Given a weight $\lambda \in \fh^*$, we have $\lambda = \sum_i r_i(\lambda) \lambda_i$, where $r_i(\lambda) = \langle \lambda, \alpha_i^\vee \rangle$, and $\lambda$ is $\fg$-dominant iff $r_i(\lambda) \in \bbZ_{\geq 0}$, $\forall i$.  Encode $\lambda$ on $\cD(\fg)$ by inscribing $r_i(\lambda)$ over the $i$-th node.  The Cartan matrix is the transition matrix between $\{ \alpha_i \}$ and $\{ \lambda_i \}$, i.e.\ if $\lambda = \sum_i m_i \alpha_i = \sum_i r_i \lambda_i$, then $\sum_i m_i c_{ij} = r_j$. 
 
 To each (standard) parabolic subalgebra $\fp \supset \fb$, there is an index set $I_\fp := \{ i \mid \fg_{-\alpha_i} \not\subseteq \fp \} \subseteq \{ 1,..., \rkg \}$.  Conversely, given $I \subset \{ 1,..., \rkg \}$, the corresponding parabolic is $\fp_I$, which is the sum of $\fb$ and the negative root spaces $\fg_{-\alpha}$ with $\alpha = \sum_i m_i \alpha_i$ satisfying $m_j = 0$, $\forall j \in I_\fp$.   We encode $\fp$ on $\cD(\fg)$ by crossing all nodes in $I_\fp$, and denote this by $\cD(\fg,\fp)$. Thus, $I_\fb = \{ 1, ..., \rkg \}$ and all nodes are crossed.  The grading element is $Z = Z_{I_\fp}$, the $\bbZ$-grading is $\fg = \bop_j \fg_j = \fg_- \op \fp$ with $\fg_j = \{ x \in \fg \mid [Z,x] = j x \}$, and $\fz(\fg_0)$ has basis $\{ Z_i \}_{i \in I_\fp}$.  The grading has depth $\nu = \max\{ Z(\lambda_{\fg'}) \mid \fg' \subset \fg \mbox{ simple ideal} \}$.  Given $\lambda \in \fh^*$, its {\em homogeneity} is $Z(\lambda)$.  If $r_i(\lambda) \in \bbZ_{\geq 0}$, $\forall i \in \{ 1,..., \rkg \} \backslash I_\fp$, then $\lambda$ is $\fp$-dominant.

  Given $S \subset \{ 1, ..., \rkg \}$, the tuple $\widetilde{Z}_S := (Z_k)_{k \in S}$ induces the multi-grading $\fg = \bop_A \fg_A$, where $A = (a_k)_{k \in S} \in \bbZ^S$ is a multi-index, and $\fg_A = \{ X \in \fg \mid [Z_k,X] = a_k X,\, \forall k \in S \}$.  (Note $\fg_0$ with respect to $Z_S$ agrees with $\fg_\0$ with respect to $\widetilde{Z}_S$.  We use the boldface \0 when emphasizing the multi-grading.)  Then $\fz(\fg_\0)$ is spanned by $\{ Z_k \}_{k \in S}$, and an important result \cite[Thm.8.13.3]{Wolf1984} is that $\fg_A$ is a $\fg_\0$-irrep, $\forall A \neq \0$.

 The Weyl group $W \subset \operatorname{O}(\fh^*)$ preserves $\langle \cdot, \cdot \rangle$ and is generated by simple reflections $\sigma_i$, where $\sigma_i(\beta) = \beta - \langle \beta, \alpha_i^\vee \rangle \alpha_i$.  We write $(ij) := \sr_i \circ \sr_j$.  Given $w \in W$, let $|w|$ denote its length.  The affine $W$-action is $w \cdot \lambda := w(\lambda + \rho) - \rho$, where $\rho = \sum_{i=1}^\rkg \lambda_i$.  The inversion set $\Phi_w = w(\Delta^-) \cap \Delta^+$ satisfies $|\Phi_w| = |w|$ and $\sum_{\alpha \in \Phi_w} \alpha = -w\cdot 0$. The {\em Hasse diagram} is $W^\fp = \{ w \in W \mid \Phi_w \subset \Delta(\fg_+) \}$ with length $r$ elements $W^\fp(r)$.  (Equivalently, $w \in W^\fp$ sends $\fg$-dominant weights to $\fp$-dominant weights.)  Every $w \in W^\fp$ corresponds precisely to an element of the $W$-orbit through $\rho^\fp = \sum_{i \in I_\fp} \lambda_i$ under the {\em right} action $(\rho^\fp,w) \mapsto w^{-1} \rho^\fp$.  In particular, $W^\fp(2)$ is efficiently described by Recipe \ref{R:Hasse}.

Dynkin diagram recipes will play an important role in our analysis -- see Appendix \ref{App:Dynkin} and \S \ref{S:Dynkin}.  To compute $H^2(\fg_-,\fg)$ via Kostant's theorem (see Recipe \ref{R:Kostant}), we adopt the commonly used:
 
 \begin{framed}
  {\bf ``Minus lowest weight'' convention:}  We let $\bbV_\mu$ denote the unique $\fg_0$-irrep with {\em lowest} weight $\mu$, so  $-\mu$ is $\fp$-dominant, and $\forall i \in I_\fp$, $Z_i$ acts by the scalar $Z_i(\mu)$.  We denote $\bbV_\mu$ by the Dynkin diagram notation for $-\mu$.  The {\em homogeneity} of $\bbV_\mu$ is $Z(\mu)$.
 \end{framed}

  By Kostant's theorem, each $\fg_0$-irrep in $H^2(\fg_-,\fg)$ is of the form $\bbV_\mu$, where $\mu = -w\cdot\lambda$, with $\lambda$ the highest weight of a simple ideal in $\fg$, and $w \in W^\fp(2)$.  (In particular, $w\cdot\lambda$ is $\fp$-dominant.)
  
 Given $\mu \in \fh^*$ where $-\mu$ is $\fp$-dominant, let 
 \begin{align}
 J_\mu &:= \{ j \not\in I_\fp \mid \langle \mu, \alpha_j^\vee \rangle \neq 0 \} && \mbox{(uncrossed nodes with a nonzero coefficient)}, \label{E:J-mu}\\
 I_\mu &:= \{ j \in I_\fp \mid \langle \mu, \alpha_j^\vee \rangle = 0 \} && \mbox{(crossed nodes with a zero coefficient)}. \label{E:I-mu}
 \end{align}
 We augment our Dynkin diagram notation for $-\mu$ on $\cD(\fg,\fp)$ by putting a {\em square} around $I_\mu$ nodes, an {\em asterisk} on $J_\mu$ nodes, and denote this by $\cD(\fg,\fp,\mu)$.  Define $\fU_\mu := \max\{ \dim(\fa^\phi) \mid 0 \neq \phi \in \bbV_\mu \}$, so that by Proposition \ref{P:Tanaka-lw}, if $\phi_0$ is a lowest weight vector of $\bbV_\mu$, then
 \[
 \fU_\mu = \dim(\fa(\mu)), \qquad \fa(\mu) := \fa^{\phi_0} = \prn^\fg(\fg_-,\fann(\phi_0)).
 \]

 If $\fg$ is {\em simple}, then for $w \in W^\fp(2)$ and $\mu := -w\cdot\lambda_\fg$, we define $J_w := J_\mu$, $I_w := I_\mu$, $\fa(w) := \fa(\mu)$, and $\fU_w := \fU_\mu$. Let $W^\fp_+(2) := \{ w \in W^\fp(2) \mid Z(-w\cdot \lambda_\fg) \geq 1 \}$, so that $H^2_+(\fg_-,\fg) = \bop_{w \in W^\fp_+(2)} \bbV_{-w\cdot \lambda_\fg}$.

 %%%%%%%%%%%%%%%%%%%%%%%%%%%%%%%%%%%%%%%%%%%%%%%%%%

 \subsection{A Dynkin diagram recipe}
 \label{S:Dynkin}
 
 In this section, we work over $\bbC$.
 
 \begin{lemma} \label{L:Tan-h-mod} Let $\fg$ be a complex semisimple Lie algebra, and $\fp \subset \fg$ a parabolic subalgebra.  Let $\fa_0 \subset \fg_0$ be a subalgebra and $\fa := \prn^\fg(\fg_-,\fa_0)$.
 \begin{enumerate}
 \item[\rm (a)] For $k \geq 0$, if $\fa_k = 0$, then $\fa_{k+1} = 0$.\footnote{Recall that for $k=0$, we assume under ``Conventions'' that no simple ideal of $\fg$ is contained in $\fg_0$.}
 \item[\rm (b)] If $\fa_0$ is an $\fh$-module, then $\fa_k$ is a direct sum of root spaces, $\forall k > 0$.
 \end{enumerate}
 \end{lemma}
 
 \begin{proof} Let $X \in \fa_{k+1}$, so $[X,\fg_{-1}] \subset \fa_k = 0$.  By \cite[Prop.\ 3.1.2 (5)]{CS2009}, $X=0$, so we obtain claim (a).
  
  Since $\fa_0$ and $\fg_-$ are $\fh$-modules, then so is $\fa_k$, $\forall k > 0$.  Fix $k > 0$.  All root spaces are 1-dimensional, so let $v = \sum_{\alpha \in \cA} e_\alpha \in \fa_k$ be a sum of root vectors, where $\cA \subset \Delta(\fg_k)$.    We show by induction on $N = |\cA|$ that  $e_\alpha \in \fa_k$, $\forall \alpha \in \cA$.   The result is trivial for $N=1$, so assume the general case.  For any $h \in \fh$, $[h,v] = \sum_{\alpha \in \cA} \alpha(h) e_\alpha \in \fa_k$.  Fix $\beta \in \cA$.  Since all $\alpha \in \Delta^+$ (and hence $\cA$) are distinct, then $\exists h \in \fh$ so that $\alpha(h) - \beta(h) \neq 0$, $\forall \alpha \in \cA \backslash \{ \beta \}$.  Hence, $[h,v] - \beta(h) v = \sum_{\alpha \in \cA \backslash \{ \beta \} } (\alpha(h) - \beta(h)) e_\alpha \in \fa_k$.  The induction hypothesis implies claim (b).
 \end{proof}
 
 Consider $(\fg,\fp)$ and $-\mu$ a $\fp$-dominant weight.  The $\bbZ$-grading $\fg = \bop_j \fg_j$ is induced by $Z_{I_\fp}$, but in general $\fg_j$ are not $\fg_0$-irreducible.  Each $\fg_j$ decomposes into $\fg_\0$-irreps using $\widetilde{Z}_{I_\fp}$.
Each of these further decomposes into $\fg_{\0,\0}$-irreps using $\widetilde{Z}_{J_\mu}$.  Writing $1_i = (\delta_{ij})_{j \in I_\fp} \in \bbZ^{I_\fp}$, we observe that for $\fg_1$, we have the {\em irreducible} decompositions:
 \begin{align} \label{E:g00-decomp}
 \begin{array}{lll}
 \mbox{As $\fg_\0$-modules:} & \displaystyle\fg_1 = \bop_{i \in I} \fg_{1_i}, & \Delta(\fg_{1_i}) = \{ \alpha \in \Delta(\fg_1) \mid Z_i(\alpha) = 1 \};\\
 \mbox{As $\fg_{\0,\0}$-modules:}& \displaystyle\fg_{1_i} = \bop_{A \geq \0} \fg_{1_i,A}, & \Delta(\fg_{1_i,A}) = \{ \alpha \in \Delta(\fg_{1_i}) \mid \widetilde{Z}_{J_\mu}(\alpha) = A \}.
 \end{array}
 \end{align}
 Here ``$A \geq \0$'' means {\em all} entries are non-negative.
 The unique lowest root in $\Delta(\fg_{1_i,\0})$ is $\alpha_i$.
 
 \begin{defn}
 If $\cA,\cB \subset \Delta$, define $\cA \,\dot{+}\, \cB := \{ \alpha + \beta \mid \alpha \in \cA, \beta \in \cB \} \cap \Delta$.
 \end{defn}

% \begin{align} \label{E:fU-mu}
% \fU = \max_{\bbV_\mu \subset H^2_+(\fg_-,\fg)} \fU_\mu, \qquad
% \fU_\mu := \max\l\{  \dim(\fa^\phi) \mid 0 \neq \phi \in \bbV_\mu \r\}.
% \end{align}

 \begin{thm} \label{T:DD} Let $\fg$ be a complex semisimple Lie algebra, and $\fp \subset \fg$ a parabolic subalgebra.  If $-\mu$ is a $\fp$-dominant weight and $\fa := \fa(\mu)$, then
 \begin{enumerate}
 \item[\rm (a)] $\fa_k$ is a direct sum of root spaces, $\forall k > 0$;
 \item[\rm (b)] $\Delta(\fa_1) = \bigcup_{i \in I_\mu} \Delta(\fg_{1_i,\0})$.  In particular, $I_\mu = \emptyset$ iff $\fa_+ = 0$.
 \item[\rm (c)] If $I_\mu \neq \emptyset$, then $\fa_1$ is bracket-generating in $\fa_+$, and 
 \[
 \Delta(\fa_k) = \{ \alpha \in \Delta(\fg_k) \mid Z_{I_\mu}(\alpha) = k,\, Z_{J_\mu}(\alpha) = 0 \}, \qquad k \geq 1.
 \]
 In particular, $\forall \alpha \in \Delta(\fa_k)$, we have $Z_j(\alpha) = 0$, $\forall j \in (I_\fp \backslash I_\mu) \cup J_\mu$.
 \end{enumerate}
 \end{thm}
 
 \begin{proof} From Recipe \ref{R:a0}, $\fa_0 = \fann(\phi_0) = \{ H \in \fh \mid \mu(H) = 0 \} \op \bop_{\gamma \in \Delta(\fg_{0,\leq0})} \fg_\gamma$ is an $\fh$-module, so claim (a) follows from Lemma \ref{L:Tan-h-mod}.

 Recall that $\forall 0 \neq \gamma \in \fh^*$, $\exists ! H_\gamma \in \fh$ such that $\gamma(H) = B(H,H_\gamma)$.  Hence, $\alpha(H_\gamma) = \gamma(H_\alpha) = B(H_\alpha,H_\gamma) =: \langle \alpha, \gamma \rangle$.  Any root vectors $e_\gamma, e_{-\gamma}$ satisfy $[e_\gamma,e_{-\gamma}] = n_\gamma H_\gamma$, where $n_\gamma = B(e_\gamma, e_{-\gamma}) \neq 0$.
 
  Now $\fa_1 = \{ X \in \fg_1 \mid [X,\fg_{-1}] \subset \fa_0 \}$, so test $\{ e_\alpha \}_{\alpha \in \Delta(\fg_1)}$. Let $\beta \in \Delta(\fg_1)$.  Using $\fa_0$ from \eqref{E:a0},
 \Ben[(i)]
 \item $\beta = \alpha$: $[e_\alpha,e_{-\alpha}] = n_\alpha H_\alpha$.   Thus, if $H_\alpha \not\in \fa_0$, i.e.\ $\mu(H_\alpha) = \langle \mu,\alpha \rangle \neq 0$, then $\alpha \not\in \Delta(\fa_1)$.
 \item  $\beta \neq \alpha$: If $\alpha - \beta \not\in \Delta$, then $[e_\alpha, e_{-\beta}] = 0$ (so no constraints).  Otherwise, if $\alpha - \beta \in \Delta$, then $[e_\alpha, e_{-\beta}]$ is a nonzero multiple of $e_{\alpha - \beta}$.  Thus, $\alpha - \beta \in \Delta(\fg_{0,+})$ iff $e_{\alpha - \beta} \not\in \fa_0$ iff $\alpha \not\in \Delta(\fa_1)$.
 \Een
 Defining $\cT_1 = \{ \alpha \in \Delta(\fg_1) \mid \langle \mu, \alpha^\vee \rangle \neq 0 \}$ and
 $\cT_2 = \{ \alpha \in \Delta(\fg_1) \mid \exists \beta \in \Delta(\fg_1) \mbox{ with } \alpha - \beta \in \Delta(\fg_{0,+}) \}$, we have $\Delta(\fa_1) = \Delta(\fg_1) \backslash (\cT_1 \cup \cT_2)$.
 
 Fix $i \in I_\fp$.  By Schur's lemma, for each $\fg_{1_i,A}$ in \eqref{E:g00-decomp}, either $\fg_{1_i,A} \subset \fa_1$ or $\fa_1\cap \fg_{1_i,A} = 0$.  By $\fg_{\0,\0}$-irreducibility, it suffices to test the lowest root in $\Delta(\fg_{1_i,A})$.
 \begin{enumerate}
 \item[(i)] $A > \0$ (i.e.\ {\em at least} one entry of $A$ is positive): The lowest $\alpha \in \Delta(\fg_{1_i,A})$ is not the lowest in $\Delta(\fg_{1_i})$, i.e.\ $\alpha \neq \alpha_i$.  Since $\fg_{1_i}$ is a $\fg_\0$-irrep, there are simple roots $\{ \alpha_{j_k} \}_{k=1}^m \subset \Delta(\fg_\0)$ such that $\beta_k := \alpha - \alpha_{j_1} - ... - \alpha_{j_k} \in \Delta$, $\forall k$ and $\beta_m = \alpha_i$.  In particular, $\beta_1 = \alpha - \alpha_{j_1} \in \Delta(\fg_1)$.  Since $\alpha$ is the lowest in $\Delta(\fg_{1_i,A})$, then $j_1 \in J_\mu$ and $\alpha_{j_1} \in \Delta(\fg_{\0,+})$.  Hence, $\alpha \in \cT_2$, so $\Delta(\fg_{1_i,A}) \subset \cT_2$. 
 \item[(ii)] $A=\0$: the lowest root of $\Delta(\fg_{1_i,\0})$ is $\alpha_i$, and clearly $\alpha_i \not\in \cT_2$.  We have $i \not\in I_\mu$ iff $\alpha_i \in \cT_1$.  
 \end{enumerate}
 Thus, claim (b) follows.  (If $I_\mu = \emptyset$, then $\fa_1 = 0$, so $\fa_+ = 0$ by Lemma \ref{L:Tan-h-mod}.)
 
 Let $\cA_k := \{ \alpha \in \Delta(\fg_k) \mid Z_{I_\mu}(\alpha) = k,\, Z_{J_\mu}(\alpha) = 0 \}$.  Note $\Delta(\fa_1) = \cA_1$ follows from (b).  Let $k \geq 2$ and $\gamma \in \Delta(\fa_k) \subset \Delta(\fg_k)$.  Then $\gamma = \alpha + \beta$ for some $\alpha \in \Delta(\fg_{k-1})$ and $\beta \in \Delta(\fg_1)$, since $\fg_1$ is bracket-generating in $\fg_+$.  By definition, $\fa_k = \{ X \in \fg_k \mid [X,\fg_{-1}] \subset \fa_{k-1} \}$, so $\alpha = \gamma - \beta \in \Delta(\fa_{k-1})$.  Since $\fa$ is graded, then $[\fa_k, \fa_{-(k-1)}] \subset \fa_1$, so $\beta = \gamma - \alpha \in \Delta(\fa_1)$.  Thus, $\Delta(\fa_k) \subseteq \Delta(\fa_{k-1}) \,\dot{+}\, \Delta(\fa_1) \subseteq \cA_k$.
 
 Use induction on $k$ to show that $\Delta(\fa_k) = \cA_k$.  Let $\gamma \in \cA_k$ and $\beta \in \Delta(\fg_1)$.  If $\alpha = \gamma - \beta \in \Delta$, then $\alpha \in \Delta(\fg_{k-1})$.  Since $\alpha \in \Delta^+$, then $0 \leq Z_{I_\fp \backslash I_\mu}(\alpha) \leq Z_{I_\fp \backslash I_\mu}(\gamma) = 0$ and $0 \leq Z_{J_\mu}(\alpha) \leq Z_{J_\mu}(\gamma) = 0$, so $Z_{I_\fp \backslash I_\mu}(\beta) = 0$, $Z_{I_\mu}(\beta) = 1$, $Z_{J_\mu}(\beta) = 0$.  Thus, $\alpha \in \cA_{k-1}$, so $\alpha \in \Delta(\fa_{k-1})$ by induction.  Hence, $\gamma \in \Delta(\fa_k)$.  Thus, $\Delta(\fa_k) = \Delta(\fa_{k-1}) \,\dot{+}\, \Delta(\fa_1) = \cA_k$ for $k \geq 2$.  This proves claim (c).
 \end{proof} 
 
 From $\cD(\fg)$, remove the nodes in $I_\fp \backslash I_\mu$ and $J_\mu$.  In this diagram, any node $j \not\in I_\mu$ that is not in the connected component of an $I_\mu$ node corresponds to a simple root $\alpha_j \in \Delta(\fg_{\0,\0})$ with $\Delta(\fa_1) \,\dot{+}\, \{ \alpha_j \} = \emptyset$.  Such nodes play no role in the description of $\Delta(\fa_1)$ (and hence $\Delta(\fa_k)$ for $k > 0$), so can be removed.  Thus, to the data $(\fg,\fp,\mu)$ when $I_\mu \neq \emptyset$, we associate  a {\em reduced geometry} $(\overline\fg,\overline\fp)$ via Recipe \ref{R:red-geom}, and it is clear that $\Delta(\fa_+(\mu))$ and $\Delta(\overline\fg_+)$ naturally correspond, so $\fU_\mu = \dim(\fa(\mu))$ is easily computable.  In Theorem \ref{T:main-thm}, we show that this symmetry dimension is almost always realizable.

 When $\fg$ is simple, write $I_w := I_{-w\cdot\lambda_\fg}$, $\fa(w) := \fa(-w\cdot\lambda_\fg)$, etc.  For $G_2 / P_1$, see Example \ref{EX:G2P1}.
 
 \begin{example}[$E_8 / P_8$, $w = (87)$] \label{ex:E8P8} $\lambda_\fg = \lambda_8$ and $w\cdot \lambda_\fg = \lambda_6 +  \lambda_7 - 4\lambda_8$, so 
 \[
 \bbV_{-w\cdot\lambda_\fg} = \Edd{wwwwwssx}{0,\quad 0,0,0,0,1,1,-4}, \qquad J_w = \{ 6,7 \}, \quad I_w = \emptyset.
 \]
 By Recipe \ref{R:dim}, $\dim(\fg_0) = 1 + \dim(E_7) = 134$, so $\dim(\fg_-) = \half( \dim(E_8) - \dim(\fg_0)) = \half(248 - 134) = 57$.  By Recipe \ref{R:a0}, $\fp^{\opn}_w \cong \fp_{6,7} \subset E_7$ with $\dim(\fa_0) = \dim(\fp^{\opn}_w) =  \half(\dim(E_7) + 2 + \dim(D_5)) = 90$.  Since $I_w = \emptyset$, then $\fa_+ = 0$ (Recipe \ref{R:squares}).  Thus, $\dim(\fa(w)) = 57 + 90 = 147$.
 \end{example}

 \begin{example}[$C_6 / P_{1,2}$, $w = (21)$] $\lambda_\fg = 2\lambda_1$ and $w\cdot \lambda_\fg = -5\lambda_2 + 4 \lambda_3$, so $J_w = \{ 3 \}$, $I_w = \{ 1 \}$:
 \begin{align*}
 \bbV_{-w\cdot \lambda_\fg} &= \NPRC{w} 
 \quad \stackrel{\mbox{Recipe \ref{R:red-geom}}}{\leadsto} \quad 
 \Aone{x}{} \qquad (\bar\fg / \bar\fp = A_1 / P_1).
 \end{align*}
 Note $\Delta(\fa_1) = \{ \alpha_1 \}$. We have $\dim(\fg_-) = \half( \dim(C_6) - 2 - \dim(C_4) ) = \half ( 78 - 2 - 36 )  = 20$.  Since $\fp^{\opn}_w \cong \fp_1  \subset C_4$, then $\dim(\fa_0) = \dim(\fp^{\opn}_w) = 30$.  Thus, $\dim(\fa(w)) = 20 + 30 + 1 = 51$.
 \end{example}

 \begin{example}[$C_6 / P_{1,2,5}$, $w = (21)$] As above, but now $J_w = \{ 3 \}$, $I_w = \{ 1, 5 \}$:
 \begin{align*}
 \bbV_{-w\cdot \lambda_\fg} &= \NPRC{q}
 \quad \stackrel{\mbox{Recipe \ref{R:red-geom}}}{\leadsto} \quad
 \Aone{x}{} \quad \Cthree{wxw}{} \qquad (\bar\fg / \bar\fp = A_1 / P_1 \times C_3 / P_2).
 \end{align*}
 Note $\bar\fg$ is 2-graded, with $\dim(\fa_1) = \dim(\bar\fg_1) = 5$ and $\dim(\fa_2) = \dim(\bar\fg_2) = 3$.  Indeed,
 \begin{align*}
 \Delta(\fa_1) : &\quad  \alpha_1, \quad \alpha_5, \quad \alpha_4 + \alpha_5, \quad \alpha_5 + \alpha_6, \quad \alpha_4 + \alpha_5 + \alpha_6;\\
  \Delta(\fa_2) : &\quad 2\alpha_5 + \alpha_6, \quad \alpha_4 + 2\alpha_5 + \alpha_6, \quad 2\alpha_4 + 2\alpha_5 + \alpha_6.
 \end{align*}
% We have $\dim(\fg_-) = \half( \dim(C_6) - 3 - \dim(A_2 \times A_1) ) = \half ( 78 - 3 - 11 )  = 32$.  Since $\fp^{\opn}_w \cong \fp_1 \times A_1 \subset A_2 \times A_1$, then $\dim(\fa_0) = 2 + \dim(\fp^{\opn}_w) = 11$.  Thus, $\dim(\fa(w)) = 32 + 11 + 8 = 51$. 
 We calculate $\dim(\fa(w)) = \dim(\fg_-) + \dim(\fa_0) + \dim(\fa_+) = 32 + 11 + 8 = 51$.
 By Theorem \ref{T:corr-Tanaka}, it is no coincidence that this agrees with the result of the previous example.  Also, the property $\fa_2(w) \neq 0$ is rather exceptional: if $\fg$ is simple and $w \in W^\fp_+(2)$, then $\fa_2(w) = 0$ almost always (Theorem \ref{T:NPR}).
 \end{example}

 %%%%%%%%%%%%%%%%%%%%%%%%%%%%%%%%%%%%%%%%%%%%%%%%%%
 \subsection{Prolongation-rigidity} 
 \label{S:PR}
 
  For regular, normal parabolic geometries of type $(G,P)$, harmonic curvature $\Kh$ is valued in $H^2_+(\fg_-,\fg)$.  If $H^2_+(\fg_-,\fg) = 0$, then $\Kh \equiv 0$, so the geometry is flat: we say $(\fg,\fp)$ is {\em Yamaguchi-rigid}; otherwise, $(\fg,\fp)$ is {\em Yamaguchi-nonrigid}.   For our study of symmetry gaps, we need only study the latter, and when $G$ is simple these were classified by Yamaguchi \cite{Yam1993,Yam1997} (see Appendix \ref{App:Yam}).

  \begin{defn} \label{D:PR}  If $\fg$ is complex \ss and $-\mu$ is $\fp$-dominant, then $(\fg,\fp,\mu)$ (or $(\fg,\fp,w)$ with $\mu = -w\cdot \lambda_\fg$ if $\fg$ is simple) is {\em PR} {\em (prolongation-rigid)} if $\fa^\phi_+ = 0$ whenever $0 \neq \phi \in \bbV_{\mu}$; otherwise, $(\fg,\fp,\mu)$ is {\em NPR}.\footnote{Here, $\mu$ is {\em not} assumed to have positive homogeneity.}  
If $\fg$ is real or complex, $(\fg,\fp)$ is {\em PR} if $\fa^\phi_+ = 0$ whenever $0 \neq \phi \in H^2_+(\fg_-,\fg)$.  Otherwise, it is {\em NPR}.
 \end{defn}
  
 \begin{remark} \label{RM:PR} The following observations are immediate:
 \Ben
 \item $(\fg,\fp)$ is PR if and only if $(\fg,\fp,\mu)$ is PR for all $\bbV_\mu \subset H^2_+(\fg_-,\fg)$.
 \item Any Yamaguchi-rigid $(\fg,\fp)$ has $H^2_+(\fg_-,\fg) = 0$, so is (vacuously) PR.
 \item If $(\fg,\fp)$ is real and is NPR, then its complexification $(\fg^\bbC,\fp^\bbC)$ will also be NPR.
 \Een
 \end{remark}
 
 \begin{lemma} \label{L:PRw} $(\fg,\fp,\mu)$ is PR iff $I_\mu = \emptyset$ iff $\cD(\fg,\fp,\mu)$ has no squares.  (This is Recipe \ref{R:squares}.)
 \end{lemma}
 
 \begin{proof}
 This follows immediately from part (b) of Theorem \ref{T:DD}.
 \end{proof}
 
 \begin{example} Since $F_4 / P_{1,2}$ is Yamaguchi-rigid (see Theorem \ref{T:Y-rigid}), it is PR.  However, $w=(21) \in W^\fp(2) \backslash W^\fp_+(2)$ gives the (non-regular) irrep $\bbV_{-w \cdot \lambda_1} = \Fdd{qxsw}{0,-4,6,0}$, so $(\fg,\fp,w)$ is NPR. Its reduced geometry is $A_1/P_1$, so $\fa = \fa(w)$ has 1-dimensional $\fa_{+}=\fa_1$.
 \end{example}
 
 In terms of the grading element $Z = Z_{I_\fp}$, we have:
 
 \begin{lemma}
 Suppose $\lambda = \sum_a r_a \lambda_a$ is the highest weight of a simple ideal of $\fg$.  Let $w = (jk) \in W^\fp(2)$ and $\mu = -w\cdot \lambda$.  The regularity condition $Z(\mu) \geq 1$ is equivalent to:
 \begin{align} \label{E:reg}
 Z(\lambda) \leq r_j + (r_k +1) (Z(\alpha_k) - c_{kj}).
 \end{align}
 Independent of regularity, we have $i \in I_\mu$ iff
 \begin{align}
 \l\{\begin{array}{rll} 
 {\rm (a):} & i,j \in I_\fp \mbox{ distinct}, & k \in \cN(j) \backslash I_\fp, \quad 0 = r_i = c_{ji} = c_{ki}; \quad\mbox{ or},\\
 {\rm (b):} & i,j,k \in I_\fp \mbox{ distinct}, & 0 = r_i = c_{ji} = c_{ki}; \quad\mbox{ or},\\
 {\rm (c):} & i,j \in I_\fp \mbox{ distinct}, & k=i,\quad c_{ij} = c_{ji} = -1,\quad r_j = 0.
 \end{array} \r. \label{E:F1}
 \end{align}
 \end{lemma}
 
 \begin{proof}
 Since $\lambda \in \Delta^+$, then $r_a = \langle \lambda, \alpha_a^\vee \rangle \geq 0$, $\forall a$.  By Recipe \ref{R:Kostant}, $\Phi_w = \{ \alpha_j, \sr_j(\alpha_k) \}$, so:
 \begin{align}
 -\mu = w\cdot \lambda &= w(\lambda) + w\cdot 0 = w(\lambda) - \sum_{\alpha \in \Phi_w} \alpha
 = \sr_j (\sr_k(\lambda)) - \alpha_j - \sr_j(\alpha_k) \label{E:w-lambda}\\
 &= \sr_j(\lambda - r_k \alpha_k) - \alpha_j - \sr_j(\alpha_k) 
 = \lambda - (r_j + 1) \alpha_j - (r_k +1)(\alpha_k - c_{kj} \alpha_j). \nonumber
 \end{align}
 Simplify $Z(\mu) \geq 1$ using $j \in I_\fp$ (Recipe \ref{R:Hasse}) to obtain \eqref{E:reg}.
 
 Now $i \in I_\mu$ iff $\langle \mu,\alpha_i^\vee \rangle = 0$, so using \eqref{E:w-lambda},
 \begin{align} \label{E:PR-a}
 0 = r_i - (r_j + 1 ) c_{ji} - (r_k +1) (c_{ki} - c_{kj} c_{ji}).
 \end{align}
 If $i=j$, then $c_{ji} = 2$ and $c_{kj} \leq 0$ (since $k \neq j$ by Recipe \ref{R:Hasse}), so \eqref{E:PR-a} implies $0 = r_j + 2 - (r_k +1) c_{kj} \geq 2$,
 a contradiction.  Thus, we take $i \neq j$ below, i.e.\ $c_{ij}, c_{ji} \leq 0$.
 
 From Recipe \ref{R:Hasse}, $j \in I_\fp$ and $j \neq k \in I_\fp \cup \cN(j)$, where $\cN(j) = \{ i \mid c_{ij} \leq -1 \}$.
 \begin{itemize}
 \item If $k \in \cN(j) \backslash I_\fp$ or $i,j,k\in I_\fp$ are distinct, then $c_{ki}, c_{kj} \leq 0$, so \eqref{E:PR-a} implies
  \begin{align*}
  0 &= \underbrace{r_i}_{\geq 0} - \underbrace{(r_j + 1)}_{\geq 1} \underbrace{c_{ji}}_{\leq 0} - \underbrace{(r_k +1)}_{\geq 1} (\underbrace{c_{ki} - c_{kj} c_{ji}}_{\leq 0}) \qRa r_i = c_{ji} = c_{ki} = 0.
  \end{align*}
 \item If $k=i$, then \eqref{E:PR-a} implies $(r_i+1) (2 - c_{ij} c_{ji}) = r_i - (r_j+1) c_{ji} \geq 0$.  Thus, $0 \leq c_{ij} c_{ji} \leq 2$.  If $c_{ij} c_{ji} = 0$, then $c_{ij} = c_{ji} = 0$, so $r_i = -2$.  If $c_{ij} c_{ji} = 2$, then $r_i = c_{ji} = 0$.  Both give contradictions, so $c_{ij} c_{ji} = 1$ remains, i.e.\ $c_{ij} = c_{ji} = -1$, and \eqref{E:PR-a} reduces to $r_j = 0$.
 \end{itemize}
 Hence, \eqref{E:PR-a} is equivalent to \eqref{E:F1}.
 \end{proof}

 \begin{example} \label{E:reg-quick}
 Condition \eqref{E:reg} can be used to quickly calculate $W^\fp_+(2)$.  Consider $A_\rkg / P_{1,2,s,t}$ for $4 \leq s < t < \rkg$.  Since $\lambda_\fg = \lambda_1 + \lambda_\rkg = \alpha_1 + ... + \alpha_\rkg$, then $Z(\lambda_\fg) = 4$.  Since $r_j, r_k, Z(\alpha_k), -c_{kj} \in \{ 0, 1 \}$, then \eqref{E:reg} forces $r_k = Z(\alpha_k) = -c_{kj} = 1$, so $k=1$, $j=2$, and $W^\fp_+(2) = \{ (21) \}$, while $|W^\fp(2)| = 11$.
 \end{example}

 \begin{prop} \label{P:PR} Let $\lambda$ be the highest weight of a simple ideal in $\fg$.  Let $w = (jk) \in W^\fp(2)$ and $\mu = -w\cdot \lambda$.  Then $(\fg,\fp,\mu)$ is PR if: (a) $|I_\fp| = 1$, or (b) $I_\fp = \{ j,k \}$ and $c_{jk}  =0$.
 \end{prop}
 
 \begin{proof}
 In both cases, it is impossible to satisfy \eqref{E:F1}, so $I_\mu = \emptyset$.  Lemma \ref{L:PRw} gives the result.
 \end{proof}
  
 Suppose $\fg = \fg' \times \fg''$ is a product of simple ideals, $\fp = \fp' \times \fp''$, with $I_{\fp'},I_{\fp''} \neq \emptyset$.  Let $\lambda = \lambda_{\fg'}$.  For any $i'' \in I_{\fp''}$, we have $r_{i''} :=  \langle \lambda, \alpha_{i''}^\vee \rangle = 0$.  Pick any $w = (j'k') \in W^{\fp'}(2)$ and let $\mu = -w\cdot\lambda$.  Then \eqref{E:F1}(a) or (b) is satisfied, so $i'' \in I_\mu$, i.e.\ $(\fg,\fp,\mu)$ is NPR.  Indeed, $\bbV_\mu \subset H^2(\fg_-',\fg') \boxtimes \bbC$ (external tensor product with trivial $\fg_0''$-action on $\bbC$), so $\fa(\mu) \supset \fg''$.  If $(\fg',\fp')$ is Yamaguchi non-rigid, then $w$ can be chosen so that $Z(\mu) \geq 1$.  Thus, for most {\em \ss} cases of interest, $(\fg,\fp)$ is NPR.

 If $\fg$ is {\em simple}, the NPR condition on $(\fg,\fp)$ is more restrictive.  In particular, the highest weight (root) $\lambda_\fg = \sum_{a=1}^\rkg m_a \alpha_a$ satisfies $m_a \geq 1$, so $|I_\fp| \leq Z(\lambda_\fg)$, which constrains the right side of \eqref{E:reg}.

 \begin{cor} \label{C:g-PR} Let $\fg$ be complex simple.  Then $(\fg,\fp)$ is PR if: {\rm (a)} $\fg$ is 1-graded; {\rm (b)} $\fg$ has a contact gradation; or {\rm (c)} $\fg$ is a Lie algebra of exceptional type.
 \end{cor}
 
 \begin{proof}
 All Yamaguchi-rigid $(\fg,\fp)$ are PR.  By Proposition \ref{P:PR}, consider those Yamaguchi-nonrigid $(\fg,\fp)$ with $|I_\fp| > 1$ and $\fp \neq \fp_{j,k}$ for $c_{jk} = 0$.  Among 1-gradings and contact gradings, this leaves $A_2 / P_{1,2}$ (see Theorem \ref{T:Y-pr-thm}), and among Lie algebras of exceptional type, this leaves $G_2 / P_{1,2}$ (see Theorem \ref{T:Y-rigid}).  Both fail \eqref{E:F1}.
 \end{proof}

 Hence, with the exception of pairs of 2nd order ODE $(A_3 / P_{1,2})$, all known symmetry gap results for parabolic geometries (see Table \ref{F:known-submax}) have been for PR geometries.  (Note $C_2 / P_{1,2}$ is also PR.) Table \ref{F:NPR} gives a classification of NPR geometries.  (See Appendix \ref{App:NPR} for details.)

 \begin{table}[h]
 \begin{tabular}{cc} $\begin{array}{|c|c|c|} \hline
 G & P & \mbox{Range} \\ \hline\hline
 A_\rkg & P_{1,2} & \rkg \geq 3 \\
%         & P_{1,2,s} & 3 \leq s \leq \rkg & (21)\\
         & P_{1,2,s} & 3 \leq s \leq \rkg \\
         & P_{1,2,s,t} & 3 \leq s < t \leq \rkg \\
         & P_{1,s,\rkg} & 3 \leq s \leq \rkg-2 \\
         & P_{i,i+1} & 2 \leq i \leq \rkg-2 \\
         & P_{1,i} & 4 \leq i \leq \rkg-1 \\
         & P_{2,i} & 4 \leq i \leq \rkg-1 \\ \hline
 B_\rkg & P_{1,2} & \rkg \geq 3 \\
 		& P_{2,3} & \rkg \geq 4 \\ \hline
 \end{array}$ &
 $\begin{array}{|c|c|c|c|} \hline
 G & P & \mbox{Range} \\ \hline\hline
  C_\rkg & P_{1,2} & \rkg \geq 3 \\
 	& P_{1,\rkg} & \rkg \geq 4 \\
 	& P_{2,\rkg} & \rkg \geq 4 \\
         & P_{1,2,s} & 3 \leq s \leq \rkg \\ \hline
 D_\rkg & P_{1,2} & \rkg \geq 4 \\
         & P_{1,\rkg} & \rkg \geq 5 \\
  	& P_{2,3} & \rkg \geq 5 \\
	& P_{1,2,\rkg} & \rkg \geq 4 \\ \hline
 \end{array}$
 \end{tabular}
 \caption{Classification of NPR geometries $G/P$ with $G$ simple}
 \label{F:NPR}
 \end{table}
 
 The {\em height} of (the $Z$-grading on) $\fa$ is the maximal $\tilde\nu \geq 0$ such that $\fa_{\tilde\nu} \neq 0$.

 \begin{thm} \label{T:NPR} Let $\fg$ be a complex simple Lie algebra, $\fp \subset \fg$ a parabolic subalgebra.  If $(\fg,\fp)$ is NPR, then the height of $\fa(w)$, where $w \in W^\fp_+(2)$, always satisfies $0 \leq \tilde\nu \leq 1$, except $\tilde\nu = 2$ for:
 \[
 \begin{array}{c|c|c|c|c|c}
 \mbox{Label} & G/P & \mbox{Range} & w & J_w & I_w \\ \hline
 \mbox{{\rm NPR--A}} & A_\rkg / P_{1,2,s,t} & 4 \leq s < t < \rkg & (21) & \{ 3, \rkg \} & \{ 1, s, t \} \\
 \mbox{{\rm NPR--C}} & C_\rkg / P_{1,2,s} & 4 \leq s < \rkg & (21) & \{ 3 \} & \{ 1 , s \}
 \end{array}
 \]
%  \[
% \begin{array}{c|c|c|c}
%  & \dim(\fa_0) & \dim(\fa_1) & \dim(\fa_2)\\ \hline
% {\rm (A):} & (s-3)(s-2) + (t-s)^2 + 2 + (\rkg - t)(\rkg-t+1) & 1 + (t-s)(\rkg - 3 - t + s) & (s-3)(\rkg - t)\\
% {\rm (C):} & 2 + (s-3)(s-2) + (\rkg-s)(2\rkg-2s+1)& 1 + 2(s-3)(\rkg - s) & \binom{s-2}{2}
% \end{array}
% \]
 \end{thm}
 
 \begin{proof}
 Using Table \ref{F:NPR}, examine all NPR $(\fg,\fp,w)$, $w \in W^\fp_+(2)$, in Tables \ref{F:AB} and \ref{F:CDG}.  The reduced geometry $(\overline\fg,\overline\fp)$ is 1-graded in all cases, except for NPR-A and NPR-C where it is 2-graded.
 \end{proof}
 
 If we remove the regularity assumption, higher prolongations can exist:
 
 \begin{example} For $\rkg \geq 5$, consider $A_\rkg / B = A_\rkg / P_{1,2, \ldots, \rkg}$ and $w = (21) \in W^\fp(2) \backslash W^\fp_+(2)$.  Then:
 \[
 \bbV_{-w\cdot\lambda_\fg} = 
 \begin{tiny}
 \begin{tikzpicture}[scale=\myscale,baseline=-3pt]
 \DDnode{q}{0,0}{0};
 \bond{0,0}; 
 \DDnode{x}{1,0}{-4};
 \bond{1,0};
 \DDnode{x}{2,0}{3};
 \bond{2,0};
 \DDnode{q}{3,0}{0};
 \tdots{3,0};
 \DDnode{q}{4,0}{0};
 \bond{4,0};
 \DDnode{x}{5,0}{1};
 \useasboundingbox (-.4,-.2) rectangle (5.4,0.55);
 \end{tikzpicture}
 \end{tiny}, \qquad Z(-w\cdot\lambda_\fg) = 5 - \rkg.
 \]
 Thus, $J_w = \emptyset$, $I_w = \{ 1, 4, 5, \ldots, \rkg-1 \}$, so $\bar\fg / \bar\fp \cong A_1 / P_1 \times A_{\rkg-4} / P_{1,...,\rkg - 4}$.  Note $\nu = \rkg$, while $\tilde\nu = \rkg - 4$.
 \end{example}

 %%%%%%%%%%%%%%%%%%%%%%%%%%%%%%%%%%%%%%%%%%%%%%%%%%

 \subsection{Correspondence and twistor spaces}
 \label{S:corr-twistor}

  We review correspondence and twistor space constructions from \cite{Cap2005,CS2009}.
 Let $G$ be a \ss Lie group $G$, and $Q \subset P \subset G$ parabolic subgroups, so there is a projection $G / Q \to G / P$.  Write the decompositions of $\fg$ corresponding to $\fq,\fp$ as
 \begin{align} \label{E:pq-decomp}
 \fg = \fq_- \op \fq_0 \op \fq_+ = \fp_- \op \fp_0 \op \fp_+,
 \end{align}
 where (see Lemma 4.4.1 in \cite{CS2009}), 
 \begin{align} \label{E:pq-grading}
 I_\fp \subset I_\fq, \qquad \fp_\pm \subset \fq_\pm, \qquad \fq_0 \subset \fp_0, \qquad \fz(\fp_0) \subset \fz(\fq_0).
 \end{align}
 Given a $G/P$ geometry $(\cG \to N, \omega)$, the {\em correspondence space} for $Q \subset P$ is $\cC N = \cG / Q$.  This carries a canonical $G/Q$ geometry $(\cG \to \cC N, \omega)$, and the fibers of $\cC N \to N$ are diffeomorphic to $P/Q$.  The vertical subbundle $V \cC N \subset T\cC N$ corresponds to the $Q$-submodule $\fp/\fq \subset \fg / \fq$, and the Cartan curvature $\kappa^{\cC N}$ of $(\cG \to \cC N,\omega)$ satisfies $i_\xi \kappa^{\cC N} = 0$ for any $\xi \in \Gamma(V\cC N)$.
 
 Conversely, given a $G/Q$ geometry $(\cG \to M, \omega)$, we may ask if there exists a $G/P$ geometry $(\cG \to N,\omega)$ such that $M$ is locally isomorphic to $\cC N$.  If so, we call $N$ the (local) {\em twistor space} for $M$.  From above, a necessary condition is that the subbundle $VM \subset TM$ induced from $\fp/\fq \subset \fg / \fq$ must satisfy $i_\xi \kappa = 0$ for all $\xi \in \Gamma(VM)$.  Locally, this is sufficient, and moreover $(\cG \to N,\omega)$ is unique when $P/Q$ is assumed to be connected \cite{Cap2005} (or Corollary 4.4.1 in \cite{CS2009}).  More convenient still is that it suffices to only check that $\im(\Kh)$ vanishes on $\fp \cap \fq_-$, cf. Theorem 3.3 in \cite{Cap2005}).
 
 Two additional nice properties of correspondence spaces include:
 \begin{itemize}
 \item $\finf(\cG \to N, \omega) = \finf(\cG \to \cC N, \omega)$, provided $P/Q$ is connected.
 \item $(\cG \to N, \omega)$ is normal iff $(\cG \to \cC N, \omega)$ is normal.
 \end{itemize}
 In contrast to normality, regularity is not preserved in passing to a correspondence space.
  
   \begin{prop} \label{P:twistor} Let $G$ be a complex semisimple Lie group, and $Q \subset G$ a parabolic subgroup.  Consider a $G/Q$ geometry $(\cG \to M, \omega)$ with $\im(\Kh) \subset \bbV_{\mu}$, where $\mu = -w\cdot \lambda$, with $\lambda$ the highest weight of a simple ideal in $\fg$, and $w = (jk) \in W^\fq(2)$.  Let {\rm (a)} $\fp := \fp_j$ if $c_{kj} < 0$; or, {\rm (b)} $\fp := \fp_{j,k}$ if $c_{kj} = 0$.  Then $\fq \subset \fp \subset \fg$ and $\im(\Kh)$ consists of cochains which vanish using  $\fp \cap \fq_- \subset \fp_0$ insertions.
 \end{prop}
 
 \begin{proof}  By Recipe \ref{R:Hasse}, $j \in I_\fq$ and $j \neq k \in I_\fq \cup \cN(j)$.  From Recipe \ref{R:Kostant}, the lowest $\fq_0$-weight vector in the $\fq_0$-module $\bbV_{\mu}$ is $\phi_0 = e_{\alpha_j} \wedge e_{\sr_j(\alpha_k)} \ot e_{w(-\lambda)} \in \bigwedge^2 \fq_-^* \ot \fg$.
 
 If $c_{kj} < 0$, take $\fp := \fp_j$.  Then $\sr_j(\alpha_k) = \alpha_k - c_{kj} \alpha_j$ has positive $Z_{I_\fp}$-grading, so $\phi_0 \in \fp_1 \wedge \fp_+ \ot \fg$.  Now recall that the $\fq_0$-module $\bbV_{\mu}$ is generated by applying $\fq_0$-raising operators to $\phi_0$.  Since $\fq_0 \subset \fp_0$, and $\fp_1 \wedge \fp_+ \ot \fg$ is $\fp_0$-invariant (hence $\fq_0$-invariant), then $\bbV_{\mu} \subset \fp_1 \wedge \fp_+ \ot \fg$.  Thus, any map in $\im(\Kh) \subset \bbV_{\mu}$ vanishes on $\fp_0$ and hence on $\fp \cap \fq_- \subset \fp_0$.
  
  If $c_{kj} = 0$, take $\fp := \fp_{j,k}$.  Then $\sr_j(\alpha_k) = \alpha_k$ has positive $Z_{I_\fp}$-grading, and proceed as before.
 \end{proof}

\begin{cor} \label{C:P-exists}
 Under the hypothesis of Proposition \ref{P:twistor}, if there exists a parabolic subgroup $P$ with $Q \subset P \subset G$, and Lie algebra $\fp$ given as in (a) or (b), then $(\cG \to M, \omega)$ admits a twistor space $N$ with $(\cG \to N, \omega)$ of type $(G,P)$.
 \end{cor}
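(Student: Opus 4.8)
The plan is to combine Proposition \ref{P:twistor} with the twistor-space existence criterion recalled in Section \ref{S:corr-twistor}. Recall from there (following \cite{Cap2005}, or Corollary 4.4.1 in \cite{CS2009}) that for a $G/Q$ geometry $(\cG \to M, \omega)$ and parabolic subgroups $Q \subset P \subset G$, a local twistor space $N$ carrying a $G/P$ geometry $(\cG \to N, \omega)$ with $\cC N$ locally isomorphic to $M$ exists provided the subbundle $VM \subset TM$ induced from the $Q$-submodule $\fp/\fq \subset \fg/\fq$ satisfies $i_\xi \kappa = 0$ for all $\xi \in \Gamma(VM)$, where $\kappa$ is the Cartan curvature; and that by Theorem 3.3 of \cite{Cap2005} it in fact suffices to check that $\im(\Kh)$ vanishes on $\fp \cap \fq_- \subset \fg$. (Here one uses that $\fp/\fq \cong \fp \cap \fq_-$ as $\fq_0$-modules, and that for a normal geometry $\Kh$ encodes the same vanishing information as $\kappa$ via Kostant theory.)

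First I would observe that the standing hypotheses of Proposition \ref{P:twistor}, together with the assumed existence of $P$, give $Q \subset P \subset G$ with Lie algebra $\fp = \fp_j$ (case (a), $c_{kj} < 0$) or $\fp = \fp_{j,k}$ (case (b), $c_{kj} = 0$), so that in particular $\fq \subset \fp$. Then I would simply quote the conclusion of Proposition \ref{P:twistor}: $\im(\Kh) \subseteq \bbV_\mu$ consists precisely of maps vanishing on $\fp \cap \fq_- \subset \fp_0$. Feeding this into the criterion above yields $i_\xi \kappa = 0$ for all $\xi \in \Gamma(VM)$, whence the desired local twistor space $N$ exists (uniquely, once $P/Q$ is assumed connected), and by construction the canonical geometry on $N$ is of type $(G,P)$.

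The proof is thus immediate; there is essentially no analytic obstacle, the only genuine content being the matching of the subbundle $VM$ with the subspace $\fp \cap \fq_-$ on which Proposition \ref{P:twistor} already provides the required vanishing. The one hypothesis that cannot be produced purely algebraically — and the reason this is isolated as a separate corollary rather than folded into Proposition \ref{P:twistor} — is the existence of an honest parabolic subgroup $P$ integrating $\fp$ inside the given group $G$; for particular group-level or real-form choices this can fail even when the subalgebra $\fp$ is available. I would close by noting that the two properties of correspondence spaces recalled in Section \ref{S:corr-twistor} then transfer automatically: $(\cG \to N, \omega)$ is normal because $(\cG \to M, \omega)$ is, and $\finf(\cG \to N, \omega) = \finf(\cG \to M, \omega)$ when $P/Q$ is connected, which is exactly what makes this construction a useful tool for the gap problem.
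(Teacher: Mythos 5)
Your proposal is correct and follows exactly the route the paper intends: Corollary \ref{C:P-exists} is an immediate consequence of Proposition \ref{P:twistor} together with the twistor-space existence criterion recalled in Section \ref{S:corr-twistor} (that it suffices for $\im(\Kh)$ to vanish on $\fp \cap \fq_-$), which is why the paper gives no separate proof. Your additional observation that the only non-automatic hypothesis is the group-level existence of $P$ matches Remark \ref{RM:Q-connected}.
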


 \begin{remark} \label{RM:Q-connected}
 If $Q$ is connected, then by the subalgebra--subgroup correspondence, there exists a unique connected (parabolic) subgroup $P$ such that $Q \subset P \subset G$ with $\Lie(P) = \fp$.  This guarantees the existence of a twistor space, and will play a substantial simplifying step in the calculation of submaximal symmetry dimensions -- see Remark \ref{E:coverings}.
 \end{remark}
 %%%%%%%%%%%%%%%%%%%%%%%%%%%%%%%%%%%%%%%%%%%%%%%%%%
 
 \begin{thm} \label{T:corr-Tanaka} Let $\fg$ be complex \ss.  Let $\fq \subset \fp \subset \fg$ be parabolic subalgebras, and let $\lambda$ be the highest weight of a simple ideal in $\fg$.  Let $w \in W^\fp(2) \subset W^\fq(2)$ and $\mu = -w\cdot \lambda$.  Choosing root vectors in $\fg$, let $\phi_0$ be the Kostant lowest weight vector as in \eqref{E:phi-0}.  Let $\fa = \prn^\fg(\fp_-,\fann_{\fp_0}(\phi_0))$ and $\fb = \prn^\fg(\fq_-, \fann_{\fq_0}(\phi_0))$, which are respectively $Z_{I_\fp}$-graded and $Z_{I_\fq}$-graded.  Then as $Z_{I_\fp}$-graded Lie algebras, $\fa = \fb \subset \fg$.
 \end{thm}
 
 \begin{proof}
 Write $w = (jk) \in W^\fp(2) \subset W^\fq(2)$, so by Recipe \ref{R:Hasse}, $k \neq j \in I_\fp$, and if $c_{kj} = 0$, then $k \in I_\fp$.  We always have $\fq \subset \fp \subset \fp_j \subset \fg$, but if $c_{kj} = 0$, we also have $ \fq \subset \fp \subset \fp_{j,k} \subset \fg$.  Hence, it suffices to consider either $\fp = \fp_j$ if $c_{kj} < 0$, or $\fp = \fp_{j,k}$ if $c_{kj} = 0$.  We assume this below.
 
 By Proposition \ref{P:PR}, $(\fg,\fp,\mu)$ is PR, so $\fa = \fp_- \op \fa_0$.  Thus, $I^\fp_\mu = \emptyset$.  We know $\fp_- \subset \fq_-$, and by Recipe \ref{R:a0}, $\ker(\mu) \subset \fa_0 \cap \fb_0$.  Let us show $\Delta(\fa_0) \subset \Delta(\fb)$.
 Given $\alpha \in \Delta(\fa_0)$, we have $Z_{I_\fp}(\alpha) = 0$ and $Z_{J_\mu^\fp}(\alpha) \leq 0$.  Since $\fq_- \op \fq_{0,\leq 0} \subset \fb$, we may assume $\alpha \in \Delta(\fq_{0,+} \op \fq_+) \subset \Delta^+$, hence $Z_{J_\mu^\fp}(\alpha) = 0$.  (The nonzero coefficients of a root must all have the same sign.)  Since $J^\fp_\mu = J^\fq_\mu \,\dot\cup\, (I_\fq \backslash (I_\fp \cup I^\fq_\mu))$, then:
 \Ben[(i)]
 \item $Z_{J_\mu^\fq}(\alpha) = 0$.  Thus, $\alpha \not\in \Delta(\fq_{0,+})$, so $\alpha \in \Delta(\fq_+)$, i.e.\ $r = Z_{I_\fq}(\alpha) > 0$;
 \item $Z_{I_\fq \backslash (I_\fp \cup I^\fq_\mu)}(\alpha) = 0$.  Since $Z_{I_\fp}(\alpha) = 0$, then $Z_{I_\fq \backslash I_\mu^\fq}(\alpha) = 0$.  
 \Een
 Therefore, $r = Z_{I_\mu^\fq}(\alpha) > 0$.  By Theorem \ref{T:DD}, $\alpha \in \Delta(\fb_r)$.  Thus, $\fa_0 \subset \fb$.  Hence, $\fa \subset \fb$.

Conversely, let $\alpha \in \Delta(\fb)$.  We show that $\alpha \in \Delta(\fa)$.  We have three cases:
 \Ben[(i)]
 \item $Z_{I_\fp}(\alpha) < 0$:  Then $\alpha \in \Delta(\fp_-) \subset \Delta(\fa)$.  
 \item $Z_{I_\fp}(\alpha) = 0$:  Assume $Z_{J_\mu^\fp}(\alpha) > 0$, so $Z_{J^\fq_\mu}(\alpha) \geq 0$ since $J^\fq_\mu \subset J^\fp_\mu$.  Hence, $\alpha \in \Delta(\fb_{\geq 0})$, so by Theorem \ref{T:DD}, $Z_{J_\mu^\fq}(\alpha) \leq 0$.  Thus, $Z_{J_\mu^\fq}(\alpha) = 0$ and $Z_{J_\mu^\fp \backslash J_\mu^\fq}(\alpha) > 0$.  Since $J^\fp_\mu \backslash J^\fq_\mu \subset I_\fq \backslash I^\fq_\mu$, then $Z_{I_\fq \backslash I^\fq_\mu}(\alpha) > 0$.  But by Theorem \ref{T:DD}, $Z_{I_\fq \backslash I^\fq_\mu}(\alpha) = 0$, a contradiction.  Thus, $Z_{J_\mu^\fp}(\alpha) \leq 0$, so $\alpha \in \Delta(\fa_0)$.
 \item $Z_{I_\fp}(\alpha) > 0$:  Since $I_\fp \subset I_\fq \backslash I^\fq_\mu$, then $Z_{I_\fq \backslash I^\fq_\mu}(\alpha) > 0$ and $\alpha \in \Delta(\fb_+)$.  But the latter implies $Z_{I_\fq \backslash I^\fq_\mu}(\alpha) = 0$ by Theorem \ref{T:DD}, a contradiction.
 \Een
 Thus,  $\fb = \fp_- \op \fa_0 = \fa$.
 \end{proof}
  
 \newcommand\ph{\hphantom{-1}}
 \begin{example} For each of $A_6 / P_2$, $A_6 / P_{1,2}$, $A_6 / P_{1,2,4}$, $A_6 / P_{1,2,4,5}$, we have $w = (21) \in W^\fp_+(2)$, with respective gradings on $\fa(w) \subset A_6 = \fsl_7(\bbC)$ given below. (Diagonal entries are suppressed.)
 \begin{tiny}
 \begin{center}
 \begin{tabular}{cccc}
 \begin{tabular}{|@{\,\,}c@{\,\,}|@{\,\,}c@{\,\,}|@{\,\,}c@{\,\,}|@{\,\,}c@{\,\,}|@{\,\,}c@{\,\,}|@{\,\,}c@{\,\,}|@{\,\,}c@{\,\,}|}
\hline
 $*$ & 0 & \ph & \ph & \ph & \ph & \ph \\ \hline
 0 & $*$ & \ph & \ph & \ph & \ph & \ph \\ \hline
 -1 & -1 & $*$ & \ph & \ph & \ph & \ph \\ \hline
 -1 & -1 & 0 & $*$ & 0 & 0 & \ph \\ \hline
 -1 & -1 & 0 & 0 & $*$ & 0 & \ph \\ \hline
 -1 & -1 & 0 & 0 & 0 & $*$ & \ph \\ \hline
 -1 & -1 & 0 & 0 & 0 & 0 & $*$ \\ \hline
 \end{tabular} &
 \begin{tabular}{|@{\,\,}c@{\,\,}|@{\,\,}c@{\,\,}|@{\,\,}c@{\,\,}|@{\,\,}c@{\,\,}|@{\,\,}c@{\,\,}|@{\,\,}c@{\,\,}|@{\,\,}c@{\,\,}|} \hline
 $*$ & 1 & \ph & \ph & \ph & \ph & \ph \\ \hline
 -1 & $*$ & \ph & \ph & \ph & \ph & \ph \\ \hline
 -2 & -1 & $*$ & \ph & \ph & \ph & \ph \\ \hline
 -2 & -1 & 0 & $*$ & 0 & 0 & \ph \\ \hline
 -2 & -1 & 0 & 0 & $*$ & 0 & \ph \\ \hline
 -2 & -1 & 0 & 0 & 0 & $*$ & \ph \\ \hline
 -2 & -1 & 0 & 0 & 0 & 0 & $*$ \\ \hline
 \end{tabular} &
 \begin{tabular}{|@{\,\,}c@{\,\,}|@{\,\,}c@{\,\,}|@{\,\,}c@{\,\,}|@{\,\,}c@{\,\,}|@{\,\,}c@{\,\,}|@{\,\,}c@{\,\,}|@{\,\,}c@{\,\,}|} \hline
 $*$ & 1 & \ph & \ph & \ph & \ph & \ph \\ \hline
-1 & $*$ & \ph & \ph & \ph & \ph & \ph \\ \hline
-2 & -1 & $*$ & \ph & \ph & \ph & \ph \\ \hline
-2 & -1 & 0 & $*$ & 1 & 1 & \ph \\ \hline
-3 & -2 & -1 & -1 & $*$ & 0 & \ph \\ \hline
-3 & -2 & -1 & -1 & 0 & $*$ & \ph \\ \hline
-3 & -2 & -1 & -1 & 0 & 0 & $*$ \\ \hline
 \end{tabular} &
 \begin{tabular}{|@{\,\,}c@{\,\,}|@{\,\,}c@{\,\,}|@{\,\,}c@{\,\,}|@{\,\,}c@{\,\,}|@{\,\,}c@{\,\,}|@{\,\,}c@{\,\,}|@{\,\,}c@{\,\,}|} \hline
 $*$ & 1 & \ph & \ph & \ph & \ph & \ph\\ \hline
-1 & $*$ & \ph & \ph & \ph & \ph & \ph \\ \hline
-2 & -1 & $*$ & \ph & \ph & \ph & \ph \\ \hline
-2 & -1 & 0 & $*$ & 1 & 2 & \ph \\ \hline
-3 & -2 & -1 & -1 & $*$ & 1 & \ph \\ \hline
-4 & -3 & -2 & -2 & -1 & $*$ & \ph \\ \hline
-4 & -3 & -2 & -2 & -1 & 0 & $*$ \\ \hline
 \end{tabular} \\
 $A_6 / P_2$ & $A_6 / P_{1,2}$ & $A_6 / P_{1,2,4}$ & $A_6 / P_{1,2,4,5}$
  \end{tabular}
 \end{center}
 \end{tiny}
 \end{example}
 
 %%%%%%%%%%%%%%%%%%%%%%%%%%%%%%%%%%%%%%%%%%%%%%%%%%

 %%%%%%%%%%%%%%%%%%%%%%%%%%%%%%%%%%%%%%%%%%%%%%%%%%
  
 \section{Submaximal symmetry dimensions}
 \label{S:submax}

 Consider the {\em complex} or {\em split-real} case.  Define $\fS_\mu$ analogous to $\fS$ but with the constraint $\im(\Kh) \subset \bbV_\mu \subset H^2_+(\fg_-,\fg)$.  As a consequence of Theorem \ref{T:upper}, using $\cO = \bbV_\mu$ in Remark \ref{RM:constrained}, we obtain $\fS_\mu \leq \fU_\mu = \dim(\fa(\mu))$.  We now show that this upper bound is almost always sharp.

 %%%%%%%%%%%%%%%%%%%%%%%%%%%%%%%%%%%%%%%%%%%%%%%%%%
 
  \subsection{Establishing submaximal symmetry dimensions}
  \label{S:realize}

 Kostant's theorem (Recipe \ref{R:Kostant}) gives an explicit description of the lowest $\fg_0$-weight vector $\phi_0 \in \bbV_\mu$.  Using the $\fg_0$-module isomorphism $H^2(\fg_-,\fg) \cong \ker(\Box) \subset \bigwedge^2 \fg_-^* \ot \fg$, we have $\phi_0$ naturally realized as a 2-cochain.  Suppose that $\im(\phi_0) \subset \fa := \fa^{\phi_0}$.  In view of \eqref{E:def-bracket}, our strategy for producing a model is to {\em deform the Lie bracket on $\fa$ by $\phi_0$}.  Namely, define $\ff$ to be the vector space $\fa$ with the deformed bracket
 \begin{align} \label{E:f-bracket}
 [\cdot,\cdot]_\ff := [\cdot,\cdot] - \phi_0(\cdot,\cdot).
 \end{align}
 Lemma \ref{L:def-a} guarantees when $[\cdot,\cdot]_\ff$ is a Lie bracket.  Since $\phi_0$ vanishes on $\fp = \fg_{\geq 0}$, then $\fk := \fa_{\geq 0}$ is a subalgebra of both $\fa$ and $\ff$.  We construct a {\em local} homogeneous space $M = F/K$ (i.e.\ $F$ may only be a local Lie group) and $P$-bundle $\cG = F \times_K P$ with an $F$-invariant normal Cartan connection $\omega$ whose curvature is determined by $\phi_0$.  Thus, $\fS_\mu \geq \dim(\finf(\cG,\omega)) \geq \dim(\ff) = \dim(\fa)$.  (In fact, this construction does not depend on regularity.)
  
 \begin{lemma} \label{L:def-a} Let $\lambda$ be the highest weight of a simple ideal in $\fg$ and $w \in W^\fp(2)$.  Suppose that $w(-\lambda) \in \Delta^-$.  Let $\phi_0$ be the lowest weight vector of $\bbV_{-w\cdot\lambda}$ and $\fa := \fa^{\phi_0}$.  Define $\ff$ to be the vector space $\fa$ with deformed Lie bracket \eqref{E:f-bracket}.  Then $\ff$ is a Lie algebra.
 \end{lemma}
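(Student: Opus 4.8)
The claim is that the deformed bracket $[\cdot,\cdot]_\ff = [\cdot,\cdot] - \phi_0(\cdot,\cdot)$ on the vector space $\fa = \fa^{\phi_0}$ satisfies the Jacobi identity. The hypothesis $w(-\lambda) \in \Delta^-$ guarantees (via Recipe \ref{R:Kostant}, $v = e_{w(-\lambda)} \in \fg_{w(-\lambda)}$ with $w(-\lambda)$ a negative root) that $\phi_0 \in \bigwedge^2 \fg_-^* \ot \fa$, so that the deformed bracket indeed takes values in $\fa$ and the expression makes sense; I should open by recording this and noting $\phi_0$ vanishes on $\fp = \fg_{\geq 0}$. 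The plan is then to verify antisymmetry (immediate, since both $[\cdot,\cdot]$ and $\phi_0$ are antisymmetric) and to reduce the Jacobi identity to a cocycle-type condition on $\phi_0$.

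The key computation: for $X,Y,Z \in \fa$, expanding the cyclic sum of $[X,[Y,Z]_\ff]_\ff$ and using that $[\cdot,\cdot]$ already satisfies Jacobi on $\fa \subseteq \fg$, one finds the obstruction is
\[
\sum_{\mathrm{cyc}} \bigl( [X,\phi_0(Y,Z)] + \phi_0([X,Y],Z) \bigr) \;-\; \sum_{\mathrm{cyc}} \phi_0(\phi_0(X,Y),Z).
\]
The first sum is exactly $(\partial \phi_0)(X,Y,Z)$ for the Chevalley--Eilenberg differential $\partial$ on $\bigwedge^\bullet \fg_-^* \ot \fg$ appearing in Section \ref{S:reg-nor}; since $\phi_0$ is a lowest weight vector of $\bbV_{-w\cdot\lambda} \subseteq H^2(\fg_-,\fg) \cong \ker(\Box)$, it lies in $\ker(\partial)$, so this term vanishes. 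The second sum is the obstruction to $\phi_0$ being ``associative''; here I would argue it vanishes for homogeneity reasons: $\phi_0$ has some fixed positive $Z$-homogeneity $c$ (equal to $Z(-w\cdot\lambda) = Z(\mu)$), so $\phi_0(\phi_0(X,Y),Z)$ raises homogeneity by $2c$ while mapping into $\fg$, and one checks that $\phi_0$ restricted to $\fg_-$ lands in $\fg_{\geq -\nu + c}$ in such a way that $\phi_0(\phi_0(\cdot,\cdot),\cdot)$ identically vanishes — more precisely, $\phi_0$ takes values supported in $\fa_{w(-\lambda)}$-type weight spaces on which $\phi_0$ itself vanishes because its arguments must be drawn from $\fg_-$ but the values lie outside $\fg_-$ once $c \geq 1$. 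I would make this precise by tracking the single root vector $v = e_{w(-\lambda)}$: $\im(\phi_0)$ is spanned by weight vectors of $\bbV_\mu$, all of which have the form $e_\gamma \ot v'$ with $v'$ a weight vector of $\fg$ of weight in $\{w(-\lambda) + (\text{positive roots})\}$; since $\phi_0$ is a $\fg_-$-cochain it only pairs against $\fg_-$, and plugging its own output back in forces a weight in $\fg$ that cannot be reached.

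The main obstacle I anticipate is the second (``quadratic'') term $\sum_{\mathrm{cyc}} \phi_0(\phi_0(X,Y),Z)$: the cocycle condition $\partial \phi_0 = 0$ handles the linear term cleanly, but ruling out the quadratic term requires care about exactly where $\im(\phi_0)$ sits inside $\fa$ and whether it meets $\fg_-$. The cleanest route is to use that $\im(\phi_0)$ lies in the $\fg_0$-submodule of $\fg$ generated by the single root vector $e_{w(-\lambda)}$ — when $\fg$ is simple this is all of $\fg$, so one genuinely must use a homogeneity/filtration estimate: $\phi_0 \in \bigwedge^2 \fg_-^* \ot \fg$ has homogeneity $c := Z(\mu) \geq 1$ by regularity (or $\geq 1$ under the stated hypothesis), hence $\phi_0(\fg_i,\fg_j) \subseteq \fg_{i+j+c}$; iterating, $\phi_0(\phi_0(\fg_i,\fg_j),\fg_k) \subseteq \fg_{i+j+k+2c}$. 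But the first argument $\phi_0(X,Y)$ must itself lie in $\fg_-$ for $\phi_0$ to act on it, i.e. $i+j+c \leq -1$, while $\phi_0(X,Y) \in \fg_{\geq -\nu}$; combined with the constraint that $\phi_0(X,Y)$ simultaneously be a value of $\phi_0$ (so of homogeneity $\geq -\nu + c$) and an admissible input, one gets $-\nu + c \leq i+j+c \leq -1$, and then the third bracket has homogeneity $\geq -\nu + 2c + k \geq -\nu + 2c - \nu$; for $c$ large this exceeds the top homogeneity $\nu$ of $\fg$ and the term vanishes outright, and for the borderline small-$c$ cases one checks directly (the cases with $c=1$ are precisely the 1-graded and contact situations, where $\phi_0$ is already homogeneous in a way that makes $\phi_0\circ\phi_0 = 0$ for degree reasons since $\fg_{\geq 2} = 0$ or $\fg_{-2}$ is one-dimensional and killed). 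I would therefore structure the proof as: (1) antisymmetry and well-definedness; (2) reduction of Jacobi to $\partial\phi_0 = 0$ plus vanishing of the quadratic term; (3) $\partial\phi_0 = 0$ from Kostant; (4) vanishing of the quadratic term by the homogeneity estimate above, with the finitely many small-$c$ cases dispatched by inspection.
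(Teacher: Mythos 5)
Your reduction of the Jacobi identity to $\partial\phi_0 = 0$ plus the vanishing of the quadratic term $\sum_{\mathrm{cyc}}\phi_0(\phi_0(x,y),z)$ is the right skeleton, and your well-definedness step (using $w(-\lambda)\in\Delta^-$ to get $\im(\phi_0)\subset\fa$) matches the paper. The genuine gap is your treatment of the quadratic term. First, the lemma does not assume $Z(\mu)\geq 1$: regularity is deliberately omitted from the hypotheses (compare Theorem \ref{T:realize}(c), and the example in Section \ref{S:height} where the homogeneity is $5-\rkg<0$), so ``$c\geq 1$ by regularity'' is not available. Second, even when $c\geq 1$, your estimate requires roughly $2c-2\nu>\nu$, which fails in essentially every case of interest ($c$ is typically between $1$ and $4$ while $\nu$ can be arbitrary), so the ``finitely many borderline small-$c$ cases to be dispatched by inspection'' are in fact almost all cases, and no inspection is carried out. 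What closes the argument is not a homogeneity estimate but the rank-one, decomposable structure of $\phi_0 = e_{\alpha_j}\wedge e_{\sr_j(\alpha_k)}\ot e_{w(-\lambda)}$: its image is the single line spanned by $e_{w(-\lambda)}$, and $\phi_0$ pairs nontrivially only against the root spaces $\fg_{-\alpha_j}$ and $\fg_{-\sr_j(\alpha_k)}$, so $\phi_0(\phi_0(x,y),z)\neq 0$ would force $w(-\lambda)\in\{-\alpha_j,-\sr_j(\alpha_k)\}$; either alternative gives $\lambda\in\Delta^-$, contradicting $\lambda\in\Delta^+$. This needs no case analysis and no positivity of $Z(\mu)$.

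Two smaller points. When one argument, say $z$, lies in $\fa_0$, the cyclic sum you write is not $(\partial\phi_0)(x,y,z)$ — the Lie algebra cohomology differential only takes arguments in $\fg_-$ — but rather $(z\cdot\phi_0)(x,y)$, which vanishes because $z\in\fann(\phi_0)$; this case must be separated out. And you never address arguments in $\fa_+$, which can be nonzero for NPR triples; the paper sidesteps this by first invoking Theorem \ref{T:corr-Tanaka} and Proposition \ref{P:PR} to replace $\fp$ by $\fp_j$ (or $\fp_{j,k}$), for which $(\fg,\fp,\mu)$ is PR and hence $\fa=\fg_-\op\fa_0$, reducing the verification to the two cases above.
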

 
 \begin{proof} 
 Write $w = (jk)$.  From \eqref{E:phi-0}, $\phi_0 = e_{\alpha_j} \wedge e_{\sr_j(\alpha_k)} \ot e_{w(-\lambda)}$ with $w(-\lambda) \in \Delta^- = \Delta(\fg_-) \cup \Delta^-(\fg_0)$.  Root vectors from $\Delta^-(\fg_0)$ annihilate the lowest $\fg_0$-weight vector $\phi_0$, so $e_{w(-\lambda)} \in \fg_- \op\fann(\phi_0) \subset \fa$.  Thus, $\im(\phi_0) \subset \fa$, so $\phi_0 \in \bigwedge^2 \fg_-^* \ot \fa \inj \bigwedge^2 \fa^* \ot \fa$.  Note $e_{\alpha_j}, e_{\sr_j(\alpha_k)} \in \fg_+ \cong \fg_-^*$ vanish on $\fa_{\geq 0}$.
 
 By Theorem \ref{T:corr-Tanaka}, the Lie algebra structure of $\fa$ is the same if we take $\fp = \fp_j$ if $c_{kj} < 0$ or $\fp = \fp_{j,k}$ if $c_{kj} = 0$.  Assuming this, then by Proposition \ref{P:PR}, $(\fg,\fp,-w\cdot\lambda)$ is PR, i.e.\ $\fa = \fg_- \op \fa_0$.
 
 It suffices to verify the Jacobi identity $\Jac_\ff(x,y,z) = 0$ for $[\cdot,\cdot]_\ff$.  For $x,y,z \in \ff$,
 \begin{align*}
 [[x,y]_\ff,z]_\ff &= [[x,y]-\phi_0(x,y),z]_\ff 
= [[x,y],z] - \phi_0([x,y],z) - [\phi_0(x,y),z] + \phi_0(\phi_0(x,y),z).
 \end{align*}
 Since $\phi_0$ vanishes on $\fa_0$, clearly $\Jac_\ff(x,y,z) = 0$ when at least two of $x,y,z \in \fa_0$.  Suppose $x,y \in \fg_-$ and $z \in \fa_0$.  Since $\Jac_\fa(x,y,z) = 0$ and $z \in \fa_0 = \fann(\phi_0)$, then
   \[
 \Jac_\ff(x,y,z)=[z,\phi_0(x,y)] - \phi_0([z,x],y) - \phi_0([y,z],x) = (z\cdot \phi_0)(x,y) = 0.
 \]
 
 Finally, suppose $x,y,z \in \fg_-$. Since $\phi_0 \in \ker(\Box)$, it is in particular $\p$-closed.  This means
 \begin{align}
 0 = (\p\phi_0)(x,y,z) &= [x,\phi_0(y,z)] - [y,\phi_0(x,z)] + [z,\phi_0(x,y)] \label{E:d-phi}\\
 &\qquad - \phi_0([x,y],z) + \phi_0([x,z],y) - \phi_0([y,z],x), \qquad \forall x,y,z \in \fg_-. \nonumber
 \end{align}
 This equation uses the bracket on $\fg$, but restricts to $\fa$ since $\phi_0$ has image in $\fa$, and the bracket on $\fa$ is inherited from that of $\fg$ by Lemma \ref{L:T-subalg}.  Using \eqref{E:d-phi} and $\Jac_\fa(x,y,z) = 0$, we obtain
 \begin{align*}
 \Jac_\ff(x,y,z) 
 = \phi_0(\phi_0(x,y),z) + \phi_0(\phi_0(y,z),x) + \phi_0(\phi_0(z,x),y).
 \end{align*}
 Since $e_{w(-\lambda)}$ spans $\im(\phi_0)$, it suffices to show $w(-\lambda) \not\in \{ -\alpha_j, -\sr_j(\alpha_k) \}$.  If $w(-\lambda) = -\alpha_j$, then $\lambda = -\sr_k(\alpha_j) \in \Delta^-$.  If $w(-\lambda) = -\sr_j(\alpha_k)$, then $\lambda = -\alpha_k \in \Delta^-$.  These contradict $\lambda \in \Delta^+$.
 \end{proof}
 
 Let us determine all exceptions:
 
 \begin{lemma} \label{L:exceptions} Let $G$ be complex simple.  Let $w \in W^\fp$ satisfy $w(-\lambda_\fg) \in \Delta^+$.
 \Ben[(a)]
 \item If $|w| = 2$, then $G / P$ is one of: \quad $A_2 / P_1, \quad A_2 / P_{1,2},\quad B_2 / P_1,\quad B_2 / P_{1,2}$.  
 \item If $|w| = 1$, then $G / P \cong A_1 / P_1$.
 \Een
 For $B_2 / P_{1,2}$, $w(-\lambda_\fg) \in \Delta^+$ if $w = (12)$, but not for $w = (21)$.
 \end{lemma}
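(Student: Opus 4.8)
The plan is to reduce the question to the combinatorics of the highest root together with the inversion set $\Phi_w = w(\Delta^-)\cap\Delta^+$. By \eqref{E:Phi-w}, $|\Phi_w| = |w|$; and since $-\lambda_\fg \in \Delta^-$ while $w(-\lambda_\fg)\in\Delta^+$ by hypothesis, we conclude $w(-\lambda_\fg) \in \Phi_w$. So in each case it is enough to set $w(-\lambda_\fg)$ equal to one of the (at most two) explicit elements of $\Phi_w$, solve for $\lambda_\fg$, and identify $(\fg,\fp)$. For (b) this is immediate: $w = \sr_j$ with $j \in I_\fp$ and $\Phi_w = \{\alpha_j\}$, so $w(-\lambda_\fg) = \alpha_j$; applying $\sr_j$ gives $\lambda_\fg = \alpha_j$. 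Since the highest root of a simple Lie algebra is supported on the whole Dynkin diagram, $\rkg = 1$, so $\fg = A_1$ and $\fp = \fp_1 = \fb$; conversely $w = \sr_1$ satisfies the hypothesis for $A_1/\fp_1$.

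For (a), write $w = (jk) = \sr_j\sr_k \in W^\fp(2)$, so $j \neq k$, $w^{-1} = \sr_k\sr_j$, and $\Phi_w = \{\alpha_j,\, \sr_j(\alpha_k)\}$ (Recipe \ref{R:Kostant}). If $w(-\lambda_\fg) = \sr_j(\alpha_k)$, applying $w^{-1}$ gives $-\lambda_\fg = \sr_k(\alpha_k) = -\alpha_k$, i.e.\ $\lambda_\fg = \alpha_k$, whence $\fg = A_1$ — which has no length-two Weyl element, a contradiction. Hence $w(-\lambda_\fg) = \alpha_j$, and applying $w^{-1}$ gives $-\lambda_\fg = \sr_k(-\alpha_j) = -(\alpha_j - c_{jk}\alpha_k)$, so $\lambda_\fg = \alpha_j - c_{jk}\alpha_k$. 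Since the highest root is supported on all $\rkg$ nodes and $j \neq k$, this forces $\rkg = 2$ and $c_{jk}\neq 0$; thus $\fg \in \{A_2, B_2, C_2, G_2\}$. The coefficient of $\alpha_j$ in $\lambda_\fg$ equals $1$, which rules out $G_2$ (highest root $3\alpha_1 + 2\alpha_2$) and, for $B_2$ and $C_2$, forces $\alpha_j$ to be the long simple root, so $c_{jk} = -2$. Finally $w \in W^\fp(2)$ requires $j \in I_\fp$ by Recipe \ref{R:Hasse} (the condition $k \in \cN(j)$ is automatic since $c_{jk}\neq 0$); using the diagram flip $A_2/\fp_2 \cong A_2/\fp_1$ and the isomorphism $B_2 \cong C_2$, the admissible pairs are exactly $A_2/\fp_1$, $A_2/\fp_{1,2}$, $B_2/\fp_1$, $B_2/\fp_{1,2}$.

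The last assertion is a direct computation in $B_2 = C_2$ with the Bourbaki labelling ($\alpha_1$ long, $\alpha_2$ short, $c_{12} = -2$, $\lambda_\fg = \alpha_1 + 2\alpha_2$): for $\fp = \fp_{1,2}$ one has $W^\fp(2) = \{(12),(21)\}$; for $w = (12)$, $w^{-1}(\alpha_1) = -(\alpha_1 - c_{12}\alpha_2) = -\lambda_\fg$, so $w(-\lambda_\fg) = \alpha_1 \in \Delta^+$, whereas for $w = (21)$ the two elements of $\Phi_w$ force $\lambda_\fg \in \{\alpha_1 + \alpha_2,\, \alpha_1\}$, and neither is the highest root, so $w(-\lambda_\fg)\notin\Delta^+$. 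Every step here is elementary; the one place requiring genuine care is juggling the $B_2$ versus $C_2$ labellings — which node plays the role of $j$, and hence which parabolics survive — and checking against Recipe \ref{R:Hasse} that each surviving $w$ really lies in $W^\fp(2)$. That bookkeeping, rather than any conceptual obstacle, is the main thing to get right.
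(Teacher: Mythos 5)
Your proposal is correct and follows essentially the same route as the paper: observe $w(-\lambda_\fg)\in\Phi_w=\{\alpha_j,\sr_j(\alpha_k)\}$, split into the two cases to get $\lambda_\fg=\alpha_j-c_{jk}\alpha_k$ (forcing $\rnk(\fg)=2$) or $\lambda_\fg=\alpha_k$ (forcing $\rnk(\fg)=1$, a contradiction), and check the $B_2/\fp_{1,2}$ claim directly. You supply slightly more detail than the paper in pinning down which rank-two parabolics actually occur, but the argument is the same.
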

 
 \begin{proof}  Part (b) is obvious, so we prove (a).
  Since $W^\fp(2) \neq \emptyset$, then $\rnk(\fg) \geq 2$.  Write $w = (jk)$.   If $w(-\lambda_\fg) \in \Delta^+$, then $w(-\lambda_\fg) \in w\Delta^- \cap \Delta^+ = \Phi_w = \{ \alpha_j, \sr_j(\alpha_k) \}$.   There are two cases:
 \begin{enumerate}[(i)]
 \item $w(-\lambda_\fg) = \alpha_j$: Then $\lambda_\fg = \sr_k(\alpha_j) = \alpha_j - c_{jk} \alpha_k$, which is the highest root, so $\rnk(\fg) = 2$. 
 \item $w(-\lambda_\fg) = \sr_j(\alpha_k)$: Then $\lambda_\fg = \alpha_k$.  Hence, $\rnk(\fg) = 1$, a contradiction.
 \end{enumerate}
 For $B_2 / P_2$, $\exists! w = (21) \in W^\fp(2)$, so $w(-\lambda_\fg) = w(-2\lambda_2) = -2\lambda_1 + 2\lambda_2 = -\alpha_1 \not\in \Delta^+$.  We verify the final claim for $B_2 / P_{1,2}$ directly.
 \end{proof}
 
 \begin{remark}Part (b) is relevant only when $G$ is {\em semisimple} and $G / P$ contains $A_1 / P_1 \times G' / P'$.  Let
 $\lambda = 2\lambda_1$ be the highest weight of $A_1$, and $w = (1,k')$, where the $1$ comes from the first factor, $k' \in I_{\fp'}$ is arbitrary.  Note $w(-\lambda) = \alpha_1 \in \Delta^+$, so Lemma \ref{L:def-a} cannot be applied.
 \end{remark}
 
 We first establish a result outside the parabolic context:
 
 \begin{lemma} \label{L:loc-model} Let $G$ be a real or complex Lie group, and $P \subset G$ a closed subgroup. Let $\ff$ be a Lie algebra, $\fk \subset \ff$ a subalgebra, and $\iota : \fk \to \fp$ a Lie algebra injection.  Suppose there exists a linear map $\vartheta : \ff \to \fg$ such that: (i) $\vartheta|_\fk = \iota$; (ii) $\vartheta([z,x]_\ff) = [\iota(z),\vartheta(x)]$, $\forall z \in \fk$ and $\forall x \in \ff$;  (iii) $\vartheta$ induces an isomorphism from $\ff / \fk$ to $\fg / \fp$.  Then there exists a geometry $(\cG \to M, \omega)$ of type $(G,P)$ with $\finf(\cG,\omega)$ containing a subalgebra isomorphic to $\ff$.
 \end{lemma}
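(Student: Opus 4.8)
The plan is to realize $\ff$ as symmetries of a homogeneous $G/P$ geometry built by the standard extension (associated-bundle) construction, working in the local category. First I would invoke Lie's third theorem to choose a (possibly only local) Lie group $F$ with $\Lie(F) = \ff$ and let $K \subseteq F$ be the connected (local) subgroup integrating $\fk$; since $P$ is an honest Lie group, the Lie algebra injection $\iota\colon \fk \to \fp$ integrates to a homomorphism $\iota\colon K \to P$, which we may take injective after shrinking $K$. Set $M := F/K$, a manifold near the base point $\bar o = eK$, and form the (local) principal $P$-bundle $\cG := F \times_K P$, where $K$ acts on $F \times P$ by $k\cdot(f,p) = (fk,\iota(k)^{-1}p)$ and $P$ acts on $\cG$ by $[f,p]\cdot p' = [f,pp']$, writing $[f,p]$ for the $K$-class of $(f,p)$. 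Hypothesis (iii) gives $\dim M = \dim\ff - \dim\fk = \dim\fg - \dim\fp$, hence $\dim\cG = \dim F + \dim P - \dim K = \dim\fg$.

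Next I would write down the Cartan connection. On $F \times P$ define the $\fg$-valued $1$-form $\tilde\omega_{(f,p)} := \Ad_{p^{-1}}\circ \vartheta\circ (\omega_F)_f + (\omega_P)_p$, where $\omega_F,\omega_P$ denote the left Maurer--Cartan forms of $F$ and $P$. Then I check three things: (a) $\tilde\omega$ annihilates the vectors tangent to the $K$-orbits in $F\times P$ (the vertical subspace of $F\times P \to \cG$); this follows directly from $\vartheta|_\fk = \iota$, i.e.\ hypothesis (i). (b) $\tilde\omega$ is invariant under the above $K$-action; a short computation with the transformation rules of $\omega_F,\omega_P$ reduces this to the identity $\Ad_{\iota(k)}\circ\vartheta = \vartheta\circ\Ad_k$ for $k\in K$, which is the integrated form of hypothesis (ii) and holds because $K$ is connected. (c) $\tilde\omega$ is invariant under the left $F$-action $(f,p)\mapsto(f'f,p)$, since $\omega_F$ is left-invariant. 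By (a) and (b), $\tilde\omega$ descends to $\omega\in\Omega^1(\cG,\fg)$. From the $\omega_P$-term one reads off that $\omega$ is $P$-equivariant and reproduces the fundamental vector fields $\zeta_Y$, $Y\in\fp$. For the isomorphism axiom, note that $\tilde\omega\colon T_{(f,p)}(F\times P)\to\fg$ is surjective because $\vartheta(\ff) + \fp = \fg$ (the image of $\vartheta$ in $\fg/\fp$ is all of it, by (iii)), so its kernel has dimension $\dim K$ and, by (a), is exactly the vertical subspace; combined with the dimension count this shows $\omega_u\colon T_u\cG \to \fg$ is a linear isomorphism. Hence $(\cG\to M,\omega)$ is a $G/P$ geometry.

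Finally, by (c) the left $F$-action on $\cG$ preserves $\omega$, so it induces a Lie algebra homomorphism $\ff \to \finf(\cG,\omega)$. To see this is injective — the only place where injectivity of $\iota$ is used — suppose $X\in\ff$ acts as the zero vector field; evaluating at $o = [e,e]$ and projecting to $T_{\bar o}M = \ff/\fk$ forces $X\in\fk$, and then the $F$-action of $X$ at $o$ coincides with $\zeta_{\iota(X)}(o)$, which is nonzero because the $P$-action on $\cG$ is free and $\iota(X)\neq 0$. Thus $\ff \hookrightarrow \finf(\cG,\omega)$ as a subalgebra. The same construction and verifications go through verbatim in the holomorphic category. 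The only genuinely delicate points are the local-group bookkeeping (making $M$, $\cG$ and $\iota\colon K\to P$ well defined as germs) and step (b), where one integrates the infinitesimal equivariance (ii) over the connected local group $K$; all remaining steps are routine checks of the Cartan-geometry axioms against the explicit formula for $\tilde\omega$.
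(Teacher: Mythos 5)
Your overall strategy coincides with the paper's: form $\cG = F\times_K P$ over $M=F/K$ and equip it with the $F$-invariant Cartan connection determined by $\vartheta$. Your explicit formula $\tilde\omega_{(f,p)}=\Ad_{p^{-1}}\circ\vartheta\circ(\omega_F)_f+(\omega_P)_p$, the checks (a)--(c), the nondegeneracy count via (iii), and the injectivity argument for $\ff\to\finf(\cG,\omega)$ are all correct; the paper simply cites \cite{Ham2007}, \cite{CS2009} for this correspondence between invariant Cartan connections and maps $\vartheta$ satisfying (i)--(iii), so your spelled-out verification is a self-contained version of the paper's Case 1.

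The gap is in what you set aside as ``local-group bookkeeping.'' When the connected, simply connected group $K$ integrating $\fk$ embeds as a closed subgroup of $F$, everything is as you say. But in general the homomorphism $\Phi:K\to F$ may fail to be injective, or $\Phi(K)$ may fail to be closed in $F$ (an irrational line in a torus factor is the standard obstruction), and then neither $F/K$ nor $F\times_K P$ is a manifold. Your proposed remedy --- replace $K$ by a connected \emph{local} subgroup --- does not by itself repair this: right translation by a local group $K_0=\exp(B^\fk_\epsilon)$ is not an equivalence relation on $F$ (it is not transitive, since products of elements of $K_0$ leave $K_0$), so ``$F/K_0$'' and ``$F\times_{K_0}P$'' are undefined, and saturating the relation reproduces the full $K$-orbits and hence the original pathology. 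This is exactly the content of the paper's Case 2, which occupies most of its proof: one constructs a ``semi-local Lie group'' $U\simeq B^\fv_\epsilon\times K$ containing the \emph{entire} group $K$ as a closed subgroup acting on the right, with multiplication defined only near $K$, and one must verify via Baker--Campbell--Hausdorff estimates that the gluing relation on $\bigsqcup_{k\in K} U_k$ is transitive and that $U/K\simeq B^\fv_\epsilon$ is a manifold. Without this construction (or a genuine substitute for it), the total space $\cG$ on which your formula for $\tilde\omega$ is to live has not actually been produced in the general case.
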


  \begin{proof} Let $K$ and $F$ be the unique connected, simply-connected Lie groups with Lie algebras $\fk$ and $\ff$, respectively.  There are unique Lie group homomorphisms $\Phi : K \to F$ and $\Psi : K \to P$ whose differentials $\Phi' : \fk \to \ff$ and $\Psi' : \fk \to \fp$ are the given inclusions $\fk \inj \ff$ and $\iota : \fk \inj \fp$.\\

{\em Case 1: $\Phi$ is injective and $\Phi(K) \subset F$ is a Lie subgroup}.  Define $\cG=F\times_K P = (F \times P) / \!\sim_K$, where
 $(u,p) \sim_K (u\cdot\Phi(k),\Psi(k^{-1})\cdot p)$ for any $k \in K$.  This has a natural projection $\cG\to M := F/\Phi(K)$ (note $\Phi(K) \subset F$ is closed).  Since $\Phi$ is injective, identify $K$ with its image $\Phi(K)$.  Any $F$-invariant Cartan connection $\omega$ on $\cG$ corresponds \cite{Ham2007}, \cite{CS2009} to a linear map $\vartheta : \ff \to \fg$ satisfying (i)--(iii).  The curvature function of $\omega = \omega_\vartheta$ corresponds to
 \begin{align} \label{E:kappa-alpha}
 \kappa_\vartheta(x,y) = [\vartheta(x),\vartheta(y)] - \vartheta([x,y]_\ff) \qquad \forall x,y \in \ff.
 \end{align}
 By construction, $\ff\subset\finf(\cG,\omega)$.\\

{\em Case 2: $\Phi$ is not injective or $\Phi(K) \subset F$ is not a Lie subgroup}.  Since $\Phi'$ is injective, then $\Phi$ has a discrete kernel.  In place of $F$ in the definition of the Cartan bundle $\cG$ in Case 1, we will define a {\em semi-local} Lie group $U$, an inclusion $K \hookrightarrow U$ as a closed subgroup which acts on the right on $U$, and such that $\Lie(U) = \ff$.  (``Semi-locality'' will be clarified below.)  Then $\ff$-invariant Cartan connections on $\cG = U \times_K P \to M :=U/K$ correspond to linear maps $\vartheta : \ff \to \fg$ as in Case 1.

Let $\fv$ be a vector space complementary to $\fk$ in $\ff$.  Choose two norms 
$\|\cdot\|_\fv$, $\|\cdot\|_\fk$ on these vector spaces and
define the norm on the Lie algebra $\ff=\fv\oplus\fk$ by the formula
$\|v+k\|_\ff=\max\{\|v\|_\fv,\|k\|_\fk\}$ for $v\in\fv$,
$k\in\fk$. Then the balls in these spaces satisfy: $B^\ff_\epsilon=B^\fv_\epsilon\times B^\fk_\epsilon$.

We choose $\epsilon>0$ so small that the maps $\exp:B^\fk_{5\epsilon}\to K$,
$\exp:B^\ff_{5\epsilon}\to F$ are injective,
$\exp(B^\fk_{5\epsilon})\cap\Phi^{-1}(1_F)=1_K$,
and we will further specify $\epsilon$ below. One of the requirements is that
$\exp\cdot\exp:B^\fv_\epsilon\times B^\fk_\epsilon\to F$ is an embedding.
This holds for small $\epsilon$ because
$\log\bigl(\exp(\epsilon v)\exp(\epsilon k)\exp(-\epsilon v-\epsilon k)\bigr)=o(\epsilon)$
by the Baker--Campbell--Hausdorff formula ($\|v\|_\fv\le1$, $\|k\|_\fk\le1$),
implying that for $\epsilon\ll1$, $U_0:=\exp(B^\fv_\epsilon)\cdot\exp(B^\fk_\epsilon)
\subset\exp(B^\ff_{2\epsilon})$ and
 $U_0^{-1}=\{g^{-1} \mid g\in U_0\}=
\exp(B^\fk_\epsilon)\cdot\exp(B^\fv_\epsilon)
\subset\exp(B^\ff_{2\epsilon})$.

Let $K_0=\exp(B^\fk_\epsilon)\subset K$ and $V_0=\Phi(K_0)\subset U_0$.
Denote $U_0^2:=U_0^{-1}\cdot U_0\subset F$ and
$V_0^2=(U_0^2\cap\im(\Phi))_0$, where the last zero means the connected component of $1_F$.
We have $V_0^2=\Phi(K_0^2)$ for a unique connected neighborhood $K_0^2$ of $1_K$.
Denote also $K_0^4:=K_0^2\cdot K_0^2$. Using the Baker--Campbell--Hausdorff formula as above we get
$U_0^2\subset\exp(B^\ff_{3\epsilon})$, $K_0^2\subset\exp(B^\fk_{3\epsilon})$
and $K_0^4\subset\exp(B^\fk_{5\epsilon})$
for small enough $\epsilon$. This implies that
$\Phi:K_0^4\to V_0^4=V_0^2\cdot V_0^2\subset F$ is an embedding and that
$V_0^4\cap U_0^2=V_0^2$. Indeed, from $V_0^4\subset\exp(B^\ff_{5\epsilon})$ we get:
$\log(V_0^4\cap U_0^2)\subset\fk\cap\log(U_0^2)=\log(V_0^2)\subset\ff$.
Consequently $V_0^4\cap U_0^2\subset V_0^2$ and the reverse inclusion is obvious.

Consider the sheaf of neighborhoods $\{U_k=U_0\cdot\Phi(k)\}_{k\in K}$ over $K$.
We introduce an equivalence relation on it as follows. Take $x_1\in U_{k_1}$ and $x_2\in U_{k_2}$,
i.e.\ $x_1=u_1\cdot \Phi(k_1)$, $x_2=u_2\cdot \Phi(k_2)$ for some $u_1,u_2\in U_0$.
We let $x_1\sim x_2$ iff $u_2=u_1\cdot\Phi(k_1\,k_2^{-1})$ and $k_1\,k_2^{-1}\in K_0^2$.

Let us check that it is an equivalence relation. That $\sim$ is reflexive is obvious and the
symmetry follows from $(K_0^2)^{-1}=K_0^2$ $\Leftrightarrow$ $(U_0^2)^{-1}=U_0^2$.
Let now $x_1\sim x_2$ and $x_2\sim x_3$. Then
$u_3=u_2\cdot\Phi(k_2\,k_3^{-1})=u_1\cdot\Phi(k_1\,k_2^{-1})\cdot\Phi(k_2\,k_3^{-1})=u_1\cdot\Phi(k_1\,k_3^{-1})$
and $k_1\,k_3^{-1}=(k_1\,k_2^{-1})(k_2\,k_3^{-1})\in K_0^4$. On the other hand
$\Phi(k_1\,k_3^{-1})=u_1^{-1}u_3\in U_0^2$, and these two imply $k_1\,k_3^{-1}\in K_0^2$.
Thus we get transitivity.

Define $U = \sqcup_{k \in K} U_k/\!\sim$. This is a manifold with the chart centered at $k$
given by the embedding $R_{\Phi(k)}:U_0\to U$. Overlap maps are given by the equivalence relation.
The group $K$ is embedded into $U$ by the formula $K\ni k\mapsto R_{\Phi(k)}(1_F)\in U_k\hookrightarrow U$.
Indeed, if $k\sim k'$ i.e.\ $1_F\cdot\Phi(k)=1_F\cdot\Phi(k')$ and $k'k^{-1}\in K_0^2$, 
then $\Phi(k'k^{-1})=1_F$ yields $k=k'$. This implies that topologically $U\simeq B^\fv_\epsilon\times K$.

In fact, we can make this isomorphism canonical by strengthening the condition on $\epsilon$
(taking it possibly smaller). Namely let us require that
$\exp\cdot\exp:B^\fv_\epsilon\times B^\fk_{5\epsilon}\to F$ is an embedding.
Every element of $U$ can be represented in the form
$U_k\ni u_0\cdot\Phi(k)=\exp(\epsilon v)\cdot\Phi(\exp(\epsilon\kappa)\cdot k)$, where
$\|v\|_\fv\le1$ and $\|\kappa\|_\fk\le1$.
We map this element to $(\epsilon v,\exp(\epsilon\kappa)\cdot k)$.
Independence of the representative follows from the above condition on the map
$\exp\cdot\exp$.

Moreover, $U$ is a semi-local Lie group, which means that 
for any two points $k_1,k_2\in K$ there exists two neighborhoods $U_{k_i}'\subset U_{k_i}$ containing
 $\exp(B^\fk_\epsilon)\cdot\Phi(k_i)$ in $U$
such that the multiplication (induced from $F$)
acts as $U'_{k_1}\times U'_{k_2}\to U_{k_1\cdot k_2}\subset U$ by
$(u_1\cdot\Phi(k_1))\cdot(u_2\cdot\Phi(k_2))=(u_1\cdot\mathop{Ad}_{\Phi(k_1)}u_2)\cdot\Phi(k_1k_2)$.
Notice that $u_1\cdot\mathop{Ad}_{\Phi(k_1)}u_2\in U_0$ if $u_1,u_2\in U_0$ are sufficiently close
to $1_F$.  (This condition specifies $U_{k_i}'$.)
Similarly, we impose locality of the inverse along $K$, as a map 
$U'_k\to U_{k^{-1}}$ for some neighborhood $U_k'\subset U_k$ of $\exp(B^\fk_\epsilon)\cdot\Phi(k)$ given by
$(u\cdot\Phi(k))^{-1}=\mathop{Ad}_{\Phi(k^{-1})}u^{-1}\cdot\Phi(k^{-1})$.

The semi-local Lie group $U$ is naturally homomorphically mapped to $F$ (this is an immersion,
not necessarily an embedding). The chart $U_{1_K}\hookrightarrow U$ is isomorphically mapped to
$U_0\subset F$. Therefore the Lie algebra of $U$ is naturally identified with $\ff$.

In addition, $K\hookrightarrow U$ is a closed subgroup,
which acts from the right on $U$. 
The orbit of the right $K$-action is given by:
$K\ni k\mapsto u_0\cdot\Phi(k)\in U_k\hookrightarrow U$, $u_0\in U_0$.
All orbits are isomorphic to $K$ and the two orbits either do not intersect or coincide.
Moreover (provided we adopt the stronger condition on $\epsilon$ above) we can
represent the orbit uniquely via $u_0$ by claiming that $u_0\in\exp(B^\fv_\epsilon)$.
The orbit space is then $U/K=U_0/K_0=\exp(B^\fv_\epsilon)\simeq B^\fv_\epsilon$,
and this is a neighborhood of the point $K/K\equiv0\in B^\fv_\epsilon$.
Thus we get the local homogeneous space as the desired model.

Now define the Cartan bundle $\cG=U\times_K P = (U \times P) /\sim_{\Psi}$, where
$(q_1,p_1)\sim_{\Psi}(q_2,p_2)$ iff $q_2=q_1\cdot k$ and $p_2=\Psi(k)^{-1} \cdot p_1$ for some $k \in K$.
This $\cG$ is a principal $P$-bundle over $U/K \simeq B^\fv_\epsilon$,
in particular $\cG\simeq B^\fv_\epsilon\times P$ topologically.  We conclude as in Case 1.
 \end{proof}
 
 The following result is independent of the regularity condition on curvature.

 \begin{thm}[Realizability] \label{T:realize} Let $G$ be a complex \ss Lie group, $P \subset G$ a parabolic subgroup, $\lambda$ the highest weight of a simple ideal in $\fg$, $w \in W^\fp(2)$, and $\mu = -w\cdot \lambda$.  Suppose $w(-\lambda) \in \Delta^-$.  Then there exists a non-flat normal geometry $(\cG \to M, \omega)$ of type $(G,P)$ with $\im(\Kh) \subset \bbV_\mu$ and $\dim(\finf(\cG,\omega)) \geq \dim(\fa(\mu)) = \fU_\mu$.
 \end{thm}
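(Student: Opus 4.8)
The plan is to realize the required geometry as a locally homogeneous model built, via Lemma \ref{L:loc-model}, from the deformed Lie algebra of Lemma \ref{L:def-a}. First I would fix the lowest $\fg_0$-weight vector $\phi_0 \in \bbV_\mu$ and realize it, through Kostant's isomorphism $H^2(\fg_-,\fg) \cong \ker(\Box) \subset \bigwedge^2 \fg_-^* \ot \fg$ (Recipe \ref{R:Kostant}), as the cochain $\phi_0 = e_{\alpha_j} \wedge e_{\sr_j(\alpha_k)} \ot e_{w(-\lambda)}$ with $w = (jk)$. Set $\fa := \fa(\mu) = \fa^{\phi_0}$ and let $\ff$ be the underlying vector space of $\fa$ equipped with the deformed bracket $[\cdot,\cdot]_\ff = [\cdot,\cdot] - \phi_0(\cdot,\cdot)$ of \eqref{E:f-bracket}. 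The hypothesis $w(-\lambda) \in \Delta^-$ is exactly what Lemma \ref{L:def-a} needs in order to conclude both that $\im(\phi_0) \subseteq \fa$ (so $\phi_0$ legitimately lies in $\bigwedge^2 \fa^* \ot \fa$, extended by zero on $\fa_{\geq 0}$) and that $[\cdot,\cdot]_\ff$ is a Lie bracket. Since the covectors $e_{\alpha_j}, e_{\sr_j(\alpha_k)} \in \fg_+$ appearing in $\phi_0$ annihilate $\fa_{\geq 0}$, the cochain $\phi_0$ vanishes whenever one of its arguments lies in $\fa_{\geq 0}$; hence $\fk := \fa_{\geq 0}$ is a subalgebra of $\ff$ on which $[\cdot,\cdot]_\ff$ agrees with the bracket of $\fg$, so $\fk \cong \fa_{\geq 0} \subseteq \fg_{\geq 0} = \fp$.

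Next I would apply Lemma \ref{L:loc-model} with $\iota : \fk \hookrightarrow \fp$ the inclusion and $\vartheta : \ff \to \fg$ the tautological vector-space inclusion of $\fa$ into $\fg$. Condition (i) is immediate. For (ii), if $z \in \fk$ and $x \in \ff$ then $\phi_0(z,x) = 0$, so $\vartheta([z,x]_\ff) = [z,x] = [\iota(z),\vartheta(x)]$. For (iii), $\fa = \fg_- \op \fa_{\geq 0}$ gives $\ff/\fk \cong \fg_- \cong \fg/\fp$, and $\vartheta$ induces the identity, an isomorphism. Lemma \ref{L:loc-model} then produces a $G/P$ geometry $(\cG \to M,\omega)$ whose infinitesimal symmetry algebra contains a copy of $\ff$; since $\dim\ff = \dim\fa = \dim\fa(\mu)$, this yields (b).

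It remains to analyse the curvature. By \eqref{E:kappa-alpha}, at the base point the curvature function evaluated on $\fg_- \times \fg_-$ equals $[x,y] - [x,y]_\ff = \phi_0(x,y)$, and, being horizontal, $\kappa$ descends there to the cochain $\phi_0 \in \bigwedge^2 \fg_-^* \ot \fg$. Because $\phi_0 \in \ker(\Box) \subseteq \ker(\p^*)$ while $\kappa$ and $\p^*$ are $P$-equivariant and $\cG \to M$ has $F$-homogeneous base, $\kappa$ takes values in the $P$-orbit of $\phi_0$, which lies in $\ker(\p^*)$; hence $\p^*\kappa \equiv 0$, i.e.\ the geometry is normal. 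The induced $G_0$-equivariant harmonic curvature $\Kh$ on $\cG_0 = \cG/P_+$ then maps onto the $G_0$-orbit of $\phi_0$ under $\ker(\p^*)/\im(\p^*) \cong H^2(\fg_-,\fg)$, which is contained in $\bbV_\mu$, proving (a) (and in particular $\Kh \not\equiv 0$). Finally $\phi_0$, being a weight vector of weight $\mu$ in the cochain complex, is homogeneous of degree $Z(\mu)$, so $\kappa$ has homogeneity $Z(\mu)$; by the characterization of regularity ($\im(\Kh) \subseteq H^2_+(\fg_-,\fg)$, equivalently $\kappa(\fg_i,\fg_j) \subseteq \fg^{i+j+1}$) the geometry is regular precisely when $Z(\mu) \geq 1$, i.e.\ when $\mu$ has positive homogeneity, which is (c).

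The substantive technical ingredient is Lemma \ref{L:loc-model} itself --- in particular the semi-local Lie group construction required when $K \to F$ fails to be injective with closed image --- but that is established separately, so inside the present argument the work is essentially verification. The one point I expect to require genuine care is the passage from ``$\kappa = \phi_0$ at the base point'' to the global statements of normality and $\im(\Kh) \subseteq \bbV_\mu$: this rests on $P$-invariance of $\ker(\p^*)$ together with $F$-homogeneity, which together force the pointwise value of $\kappa$ everywhere to lie in a single $G_0$-orbit inside $\bbV_\mu$.
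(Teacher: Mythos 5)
Your proposal is correct and follows essentially the same route as the paper's proof: deform the bracket on $\fa(\mu)$ by the lowest weight vector $\phi_0$ (justified by Lemma \ref{L:def-a} under the hypothesis $w(-\lambda)\in\Delta^-$), feed the resulting filtered Lie algebra $\ff$ with $\fk=\fa_{\geq 0}$ and the tautological inclusion $\vartheta$ into Lemma \ref{L:loc-model}, and read off normality, $\im(\Kh)\subset\bbV_\mu$, and the regularity criterion from $\kappa_\vartheta=\phi_0\in\ker(\Box)$. The only difference is that you spell out the equivariance argument promoting the pointwise identity $\kappa=\phi_0$ to the global statements, which the paper leaves implicit.
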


 \begin{proof}  Define $\ff$ to be the vector space $\fa := \fa(\mu)$ with deformed bracket \eqref{E:f-bracket}.  Since $w(-\lambda) \in \Delta^-$, then by Lemma \ref{L:def-a}, $\ff$ is a Lie algebra.  Since $\phi_0$ is trivial on $\fk := \fa_{\geq 0}$, then $\fk \subset \ff$ is a subalgebra.  Also, $\fk \subset \fp = \fg_{\geq 0}$ is a subalgebra.  Since $\ff = \fa$ as vector spaces, and $\fa \subset \fg$, take $\vartheta : \ff \to \fg$ to be the induced {\em vector space} inclusion.  Then (i)--(iii) in Lemma \ref{L:loc-model} hold, so an $\ff$-invariant geometry $(\cG \to M, \omega)$ of type $(G,P)$ exists, i.e.\ $\dim(\ff) \leq \dim(\inf(\cG,\omega))$, with $\kappa$ determined by  
 \begin{align*}
 \kappa_\theta(x,y) = [\vartheta(x),\vartheta(y)] - \vartheta([x,y]_\ff) = [x,y] - [x,y]_\ff = \phi_0(x,y), \qquad \forall x,y \in \ff.
 \end{align*}
 Since $\phi_0 \in \ker(\Box)$, then $\p^*\phi_0 = 0$, so $(\cG \to M, \omega)$ is normal.
 \end{proof}
  
For {\em split real forms}, notions of roots and weights from complex representation theory are similarly defined.  All preceding statements and proofs continue to hold.  Using Theorem \ref{T:realize} when $Z(\mu) > 0$, we have $\fS_\mu = \fU_\mu$ almost always, c.f.\ Lemma \ref{L:exceptions}.  Combined with Corollary \ref{C:transitive}, we obtain:
% Combining Theorems \ref{T:upper} and \ref{T:realize}, Corollary \ref{C:transitive} and Remark \ref{RM:constrained}, together with Lemma \ref{L:exceptions}, we obtain:
 
 \begin{thm} \label{T:main-thm} Let $G$ be a complex or split-real \ss Lie group, $P \subset G$ a parabolic subgroup, $\lambda$ the highest weight of a simple ideal in $\fg$, $w \in W^\fp(2)$, and $\mu = -w\cdot \lambda$ satisfies $Z(\mu) > 0$.  We always have $\fS_\mu \leq \fU_\mu$.  If $w(-\lambda) \in \Delta^-$ or $G/P$ does not contain the factors $A_1 / P_1,\, A_2 / P_1,\, A_2 / P_{1,2},\, B_2 / P_1,\, B_2 / P_{1,2}$, then $\fS_\mu = \fU_\mu = \dim(\fa(\mu))$ and any submaximally symmetric model is locally homogeneous about a non-flat regular point.
 \end{thm}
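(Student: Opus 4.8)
The plan is to obtain the theorem by combining the upper bound of Theorem~\ref{T:upper} (in the curvature-constrained form of Remark~\ref{RM:constrained}) with the realizability result of Theorem~\ref{T:realize}, and then to reduce the second hypothesis to the first by means of the classification of exceptions. For the upper bound: since $Z(\mu)>0$, the $\fg_0$-module $\bbV_\mu$ is a nonzero submodule of $H^2_+(\fg_-,\fg)$, so Remark~\ref{RM:constrained}(i) gives $\fS_\mu\le\fU_\mu$, and Lemma~\ref{L:Tanaka-lw} identifies $\fU_\mu=\dim(\fa^{\phi_0})=\dim(\fa(\mu))$, where $\phi_0\in\bbV_\mu$ is the lowest $\fg_0$-weight vector. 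This half uses no hypothesis beyond $Z(\mu)>0$, which is why the inequality $\fS_\mu\le\fU_\mu$ is asserted unconditionally.

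Next, assuming $w(-\lambda)\in\Delta^-$, I would construct a model attaining the bound. Because $Z(\mu)>0$, Theorem~\ref{T:realize} produces a regular, normal $G/P$ geometry $(\cG\to M,\omega)$ with $\im(\Kh)\subset\bbV_\mu$ and $\dim(\finf(\cG,\omega))\ge\dim(\fa(\mu))$; its curvature function is $\kappa_\vartheta=\phi_0$, and since $\bbV_\mu$ is a nonzero irreducible module $\phi_0\ne 0$, while $\phi_0\in\ker(\Box)$ is already harmonic, so $\Kh=\phi_0\ne 0$ and the geometry is not locally flat. Hence this geometry is a legitimate competitor in the definition of $\fS_\mu$, giving $\fS_\mu\ge\dim(\fa(\mu))=\fU_\mu$; combined with the upper bound, $\fS_\mu=\fU_\mu=\dim(\fa(\mu))$.

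To handle the alternative hypothesis I would show that if $G/P$ contains none of $A_1/P_1,\,A_2/P_1,\,A_2/P_{1,2},\,B_2/P_1,\,B_2/P_{1,2}$, then automatically $w(-\lambda)\in\Delta^-$, so the preceding step applies. This is Lemma~\ref{L:exceptions} together with Remark~\ref{RM:ss-exceptions}: writing $\bbV_\mu$ as one of the Type~I/II/III modules of Table~\ref{F:H2-types} (using the reduction to at most two simple factors, Remark~\ref{E:reduce-to-two}), Type~III modules never have $w(-\lambda)\in\Delta^+$; a Type~I exception forces the simple factor carrying $\lambda$ to be $A_2/P_1$, $A_2/P_{1,2}$, $B_2/P_1$, or $B_2/P_{1,2}$; and a Type~II exception forces an $A_1/P_1$ factor. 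The one point needing genuine care is checking that this list is exhaustive — i.e.\ that the case reduction really does cut the general \ss situation down to the one- and two-factor cases covered by Lemma~\ref{L:exceptions} and Remark~\ref{RM:ss-exceptions}; this bookkeeping is the main (modest) obstacle in the assembly, everything else being a direct quotation of earlier results.

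Finally, for local homogeneity I would invoke Remark~\ref{RM:constrained}(i): once $\fS_\mu=\fU_\mu$, the argument of Corollary~\ref{C:transitive} shows that any model with $\dim(\finf(\cG,\omega))=\fS_\mu$ satisfies $\fs(u)=\fa^{\Kh(u)}\supseteq\fg_-$ at a non-flat regular point $u$, so $\cS$ is transitive there and hence, by Lie's third theorem, the model is locally homogeneous near that point. The split-real case requires no separate work: as noted just before the theorem, the notions of roots and weights carry over to split real forms and all of the cited statements (Kostant's theorem, Lemma~\ref{L:Tanaka-lw}, Theorems~\ref{T:upper} and \ref{T:realize}, Lemma~\ref{L:exceptions}, Corollary~\ref{C:transitive}) hold verbatim in that setting.
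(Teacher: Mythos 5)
Your proposal is correct and follows the paper's own route exactly: the paper proves this theorem precisely by combining Theorem~\ref{T:upper}/Remark~\ref{RM:constrained} (upper bound), Lemma~\ref{L:Tanaka-lw} ($\fU_\mu=\dim(\fa(\mu))$), Theorem~\ref{T:realize} (realizability when $w(-\lambda)\in\Delta^-$), Lemma~\ref{L:exceptions} with Remark~\ref{RM:ss-exceptions} (identifying the excluded factors via the Type I/II/III decomposition), and Corollary~\ref{C:transitive} (local homogeneity), with the split-real case carried along verbatim. Your write-up simply makes explicit the bookkeeping the paper leaves implicit, and does so accurately.
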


 \begin{example}[3rd order ODE up to contact transformations, $B_2 / P_{1,2}$] \label{ex:B2P12} This geometry is PR,  $\lambda_\fg = 2\lambda_2$, and $\dim(\fa(w)) = 5$ for $w \in W^\fp_+(2) = \{ (12), (21) \}$, so $\fS \leq \fU = 5$.  Since $(21)(-\lambda_\fg) \in \Delta^-$, then by Theorem \ref{T:realize}, a model with 5-dimensional symmetry exists, so $\fS = \fU = 5$, and has twistor space type $B_2 / P_2$, i.e.\ a 3-dimensional contact projective structure.  In \S \ref{S:exceptions}, we show that 5 is not realizable in the $(12)$-branch, having twistor type $B_2 / P_1$, i.e.\ a 3-dimensional conformal structure.
 \end{example}

 Consequently, $B_2 / P_{1,2}$ is not exceptional with regard to $\fS = \fU$.  Thus,
 
 %From Theorem \ref{T:upper}, $\fS \leq \fU = \max_\mu \fU_\mu$, and aside from a few exceptions,  this equals $\max_\mu \fS_\mu$, which implies $\fS = \fU$.  By Example \ref{ex:B2P12}, this also holds for $B_2 / P_{1,2}$.  Thus, we have:
 
 \begin{thm} \label{T:main-thm2}
 Let $G$ be a complex or split-real \ss Lie group, $P \subset G$ a parabolic subgroup.  If $G/P$ does not contain any of the factors $A_1 / P_1,\, A_2 / P_1,\, A_2 / P_{1,2},\, B_2 / P_1$, then 
 \[
 \fS = \fU = \max\{ \fU_\mu \mid \bbV_\mu \subset H^2_+(\fg_-,\fg) \}, \qquad \fU_\mu = \dim(\fa(\mu)),
 \]
 and any submaximally symmetric model is locally homogeneous about a non-flat regular point.
 \end{thm}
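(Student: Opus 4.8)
By Theorem \ref{T:upper} together with \eqref{E:fU-mu} we already have $\fS \le \fU = \max\{\fU_\mu \mid \bbV_\mu \subseteq H^2_+(\fg_-,\fg)\}$, and Lemma \ref{L:Tanaka-lw} identifies $\fU_\mu = \dim(\fa(\mu))$; thus the whole content is the reverse inequality $\fS \ge \fU$, after which local homogeneity will follow formally. So fix $\bbV_{\mu^*} \subseteq H^2_+(\fg_-,\fg)$ realizing the maximum, with $\mu^* = -w\cdot\lambda$, $\lambda$ the highest weight of a simple ideal of $\fg$, $w \in W^\fp(2)$, and necessarily $Z(\mu^*) > 0$ (as $\bbV_{\mu^*} \subseteq H^2_+(\fg_-,\fg)$). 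The plan is to produce, in every case allowed by the hypotheses, a regular normal $G/P$ geometry with $\dim(\finf) \ge \dim(\fa(\mu^*)) = \fU$. \emph{Case (a): $w(-\lambda) \in \Delta^-$.} Then Theorem \ref{T:realize} directly yields a normal $G/P$ geometry with $\im(\Kh) \subseteq \bbV_{\mu^*}$, $\dim(\finf) \ge \dim(\fa(\mu^*)) = \fU$, and which is regular since $Z(\mu^*)>0$. Hence $\fS \ge \fU$ in this case.

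\emph{Case (b): $w(-\lambda) \in \Delta^+$.} Here $(\fg,\fp,w)$ is one of the ``exceptional'' configurations of Lemma \ref{L:exceptions}, analysed in the semisimple case in Remark \ref{RM:ss-exceptions}. The hypothesis that $G/P$ contains none of $A_1/P_1,\, A_2/P_1,\, A_2/P_{1,2},\, B_2/P_1$ rules out all exceptional Type II modules (these require an $A_1/P_1$ factor) and all exceptional Type I modules except those arising from a simple factor $(\fg',\fp') \cong (B_2, P_{1,2})$ of $(\fg,\fp)$ with $w$ restricting to $(12) \in W^{\fp'}(2)$ on that factor (Type III modules are never exceptional). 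So $\bbV_{\mu^*}$ must be such a Type I module. On the same $B_2/P_{1,2}$ factor, the element $w' := (21) \in W^{\fp'}(2)$ satisfies $w'(-\lambda_{\fg'}) \in \Delta^-$ by the last line of Lemma \ref{L:exceptions}, and both $(12),(21) \in W^{\fp'}_+(2)$, so the associated Type I module $\bbV_{\mu'}$ with $\mu' = -w'\cdot\lambda_{\fg'}$ (viewed again as a weight of $\fg$) lies in $H^2_+(\fg_-,\fg)$ and is \emph{non-exceptional}. For a Type I module the curvature lowest weight vector $\phi_0$ lies in $\bigwedge^2(\fg'_-)^* \ot \fg'$ and the complementary ideals act trivially on it, so (cf.\ Example \ref{ex:NPR-ss} and Table \ref{F:H2-types}) the graded Lie algebra $\fa(\mu)$ splits along the decomposition of $\fg$ into simple ideals: the contribution of the $B_2$-factor to $\dim(\fa(\mu))$ is exactly $\dim(\fa^{B_2/P_{1,2}}(w))$ and the remaining contribution is independent of which branch $w$ on the $B_2$-factor is chosen. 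By Example \ref{ex:B2P12}, $\dim(\fa^{B_2/P_{1,2}}((12))) = \dim(\fa^{B_2/P_{1,2}}((21))) = 5$, hence $\fU_{\mu'} = \fU_{\mu^*} = \fU$. Since $\mu'$ is non-exceptional and $Z(\mu')>0$, Theorem \ref{T:main-thm} applied to $\mu'$ gives $\fS \ge \fS_{\mu'} = \fU_{\mu'} = \fU$. Combining the two cases with Theorem \ref{T:upper} yields $\fS = \fU = \max\{\dim(\fa(\mu)) \mid \bbV_\mu \subseteq H^2_+(\fg_-,\fg)\}$.

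Finally, local homogeneity is now immediate: having established $\fS = \fU$, Corollary \ref{C:transitive} applies verbatim, so any submaximally symmetric model $(\cG \to M, \omega)$ has $\fs(u) = \fa^{\Kh(u)} \supseteq \fg_-$ at a non-flat regular point $u$, hence $\cS$ is transitive there and the model is locally homogeneous near a non-flat regular point. The only delicate step is the bookkeeping in Case (b) --- namely verifying both that, under the stated exclusions, the $B_2/P_{1,2}$ Type I module is the \emph{sole} surviving exceptional situation, and that the Tanaka prolongation $\fa(\mu)$ genuinely splits over the ideal decomposition so that the standalone computation in Example \ref{ex:B2P12} controls the $B_2$-contribution for \emph{both} branches; everything else is an assembly of Theorems \ref{T:upper}, \ref{T:realize}, \ref{T:main-thm} and Corollary \ref{C:transitive}.
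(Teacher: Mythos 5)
Your proposal is correct and follows essentially the same route as the paper: the upper bound comes from Theorem \ref{T:upper} and \eqref{E:fU-mu}, realizability for non-exceptional modules from Theorem \ref{T:main-thm} (i.e.\ Theorem \ref{T:realize}), the sole surviving exceptional situation under the stated exclusions is the $B_2/P_{1,2}$ Type I module, which is handled by switching to the non-exceptional $(21)$-branch exactly as in Example \ref{ex:B2P12}, and local homogeneity is Corollary \ref{C:transitive}. Your write-up is merely more explicit than the paper's about the bookkeeping in the semisimple product case (the splitting of $\fa(\mu)$ over simple ideals, cf.\ Example \ref{ex:NPR-ss} and Remark \ref{E:reduce-to-two}), which the paper leaves implicit.
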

  
 \begin{remark} \label{RM:reduce-to-two} For each $\bbV_\mu \subset H^2_+(\fg_-,\fg)$, the corresponding $w = (jk) \in W^\fp_+(2)$ is generated from the Weyl group of at most two simple ideals in $\fg$.  Any remaining simple factor $(\hat\fg,\hat\fp)$ yields a flat factor, so $\hat\fg \subset \fa(\mu)$.  Thus, the general (complex or split-real) semisimple (non-simple) case reduces to studying two factors $\fg = \fg' \times \fg''$.
 \end{remark}
 
 \begin{remark} \label{E:coverings}
 Aside from the exceptions in Theorem \ref{T:main-thm}, given $(\fg,\fp,\mu)$, $\fS_\mu$ is the same for all $(G,P)$ with $\Lie(G) = \fg$ and $\Lie(P) = \fp$.  To simplify the calculation of $\fS_\mu = \fU_\mu$, we may assume $P$ is {\em connected} and pass to the minimal twistor space.  (See Corollary \ref{C:P-exists} and Remark \ref{RM:Q-connected}.)
 \end{remark}
 
 %%%%%%%%%%%%%%%%%%%%%%%%%%%%%%%%%%%%%%%%%%%%%%%%%%
 
 \subsection{Deformations}
 \label{S:deform}
 
A graded subalgebra $\fa \subset \fg$ admits a canonical filtration, i.e.\ $\fa^i = \bop_{j \geq i} \fa_j$.
 
 \begin{defn}
 A graded subalgebra $\fa$ is {\em filtration-rigid} if any filtered Lie algebra $\ff$ with $\gr(\ff) = \fa$ as graded Lie algebras has $\ff \cong \fa$ as filtered Lie algebras.
\end{defn}

 \begin{prop} \label{P:FR} Let $G$ be a real or complex \ss Lie group and $P \subset G$ a parabolic subgroup, and suppose that $\prn(\fg_-,\fg_0) \cong \fg$.  Let $(\cG \stackrel{\pi}{\to} M, \omega)$ be a regular, normal $G/P$ geometry which is not flat.  Given $u \in \cG$ with $\Kh(u) \neq 0$, if $\fg_- \subset \fs(u)$, then $\fs(u)$ is not filtration-rigid.
 \end{prop}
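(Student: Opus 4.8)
The plan is to use the filtered symmetry algebra $\ff(u)$ itself as a witness to non-rigidity. By the \v{C}ap--Neusser picture recalled in Section~\ref{S:CNK}, $\ff(u) = \omega_u(\finf(\cG,\omega)) \subseteq \fg$ with the deformed bracket $[X,Y]_{\ff(u)} = [X,Y] - \kappa_u(X,Y)$ ($\kappa_u$ the curvature function) is a filtered Lie algebra with $\gr(\ff(u)) = \fs(u)$ as graded Lie algebras. Hence it suffices to show $\ff(u) \not\cong \fs(u)$ as filtered Lie algebras; the point will be that such an isomorphism would force $\Kh(u) = 0$, contradicting the hypothesis.

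First I would unwind the consequences of a hypothetical filtered isomorphism $\Psi : \ff(u) \to \fs(u)$. After replacing $\Psi$ by $(\gr\Psi)^{-1} \circ \Psi$ we may assume $\gr(\Psi) = \id_{\fs(u)}$. Composing the inclusion $\ff(u) \inj \fg$ with $\Psi^{-1}$ then gives a filtered linear injection $\Theta : \fs(u) \to \fg$ whose associated graded is the inclusion $\fs(u) \inj \fg$, and which satisfies $\Theta([X,Y]) = [\Theta X, \Theta Y] - \kappa_u(\Theta X, \Theta Y)$ for all $X,Y \in \fs(u)$ (here we use that $\fs(u)$ is a genuine subalgebra of $\fg$, so carries the undeformed bracket). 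Writing $\Theta = (\fs(u)\inj\fg) + \theta$ with $\theta$ strictly filtration-increasing and restricting to $\fg_- = \fs_{<0}(u)$, a comparison of lowest homogeneous components of this identity shows that the lowest nonzero homogeneous component $\kappa_u^{(h)}$ of $\kappa_u$ (with $h \geq 1$ by regularity) is a coboundary $\p(\theta^{(h)}|_{\fg_-})$ in the Lie algebra cohomology complex $(\bigwedge^\bullet \fg_-^* \ot \fg, \p)$.

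On the other hand, for a regular normal parabolic geometry the lowest homogeneous component of the curvature function is harmonic (a standard consequence of the Bianchi identity), so $\kappa_u^{(h)} \in \ker(\Box)$. Since $\bigwedge^2 \fg_-^* \ot \fg = \im(\p^*) \op \ker(\Box) \op \im(\p)$ as $\fg_0$-modules, one has $\im(\p) \cap \ker(\Box) = 0$, whence $\kappa_u^{(h)} = 0$; this contradicts the minimality of $h$ unless $\kappa_u = 0$, in which case $\Kh(u) = 0$, again a contradiction. Therefore no such $\Psi$ exists, $\ff(u) \not\cong \fs(u)$ as filtered Lie algebras, and $\fs(u)$ is not filtration-rigid.

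The hard part will be the comparison of homogeneous components in the third paragraph when $h \geq 2$: there $\kappa_u^{(h)}$ equals $\p(\theta^{(h)}|_{\fg_-})$ only up to Gerstenhaber-bracket corrections assembled from the lower components $\theta^{(1)},\dots,\theta^{(h-1)}$, so one must run an inductive order-by-order argument trivializing these corrections rather than reading off exactness directly. This is exactly where the hypothesis $\prn(\fg_-,\fg_0) \cong \fg$ is needed: it ensures the geometry is equivalent to its underlying regular infinitesimal flag structure and that $\fg$ is the full Tanaka prolongation of $(\fg_-,\fg_0)$, so the filtered deformation of $\fs(u)$ encoded by $\Theta$ is realized inside $\fg$ with no extra positive-degree prolongation directions obstructing the inductive trivialization; one then applies the harmonicity-plus-Hodge dichotomy at the first surviving degree to conclude $\Kh(u)=0$.
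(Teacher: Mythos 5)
Your route is genuinely different from the paper's. The paper argues geometrically: if $\fs(u)$ were filtration-rigid then $\ff(u)\cong\fs(u)$, so the symmetry algebra would be graded and, since $\fg_-\subset\fs(u)$, transitive; the underlying structure is then locally the standard Tanaka model of type $(\fg_-,\fg_0)$, whose symmetry algebra is $\prn(\fg_-,\fg_0)\cong\fg$ by Tanaka's theorem, contradicting $\dim(\cS)<\dim(\fg)$ for a non-flat geometry. Your argument stays inside the algebra at the single point $u$ and trades Tanaka's theorem for an order-by-order cohomological analysis. This is a viable and self-contained alternative, but as written it has one step that fails and one step that is only asserted.

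The step that fails is the appeal to harmonicity of the lowest homogeneous component of $\kappa_u$. That statement is about $\kappa$ as a \emph{function} on $\cG$: its proof uses the Bianchi identity, which differentiates $\kappa$, and therefore requires the lower components to vanish on a neighbourhood, not merely at the point $u$ — which is all your algebra gives you. Fortunately you do not need it: normality gives $\p^*\kappa_u=0$ pointwise, the Hodge decomposition gives $\im(\p)\cap\ker(\p^*)=0$, and your identity places the relevant component of $\kappa_u$ in $\im(\p)$; this already forces it to vanish. Replace the Bianchi argument by this.

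The asserted step is the inductive trivialization, which is the actual content of the proof and must be written out; it does close. Suppose $\theta^{(j)}|_{\fg_-}=0$ and $\kappa_u^{(j)}=0$ for $j<k$. The degree-$k$ component of your identity on $\bigwedge^2\fg_-^*\ot\fg$ reads $\kappa_u^{(k)}=\pm\,\p\bigl(\theta^{(k)}|_{\fg_-}\bigr)$ with \emph{no} quadratic corrections, since every correction term contains a factor $\theta^{(j)}|_{\fg_-}$ or $\kappa_u^{(j)}$ with $j<k$. Hence $\kappa_u^{(k)}=0$ by the previous paragraph, and $\theta^{(k)}|_{\fg_-}$ is a degree-$k$ cocycle in $\fg_-^*\ot\fg$. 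This is exactly where $\prn(\fg_-,\fg_0)\cong\fg$ enters: it is equivalent to $H^1(\fg_-,\fg)$ having no components of positive homogeneity, so $\theta^{(k)}|_{\fg_-}=\p Z_k$ for some $Z_k\in\fg_k$, and replacing $\Theta$ by $\exp(-\ad_{Z_k})\circ\Theta$ kills $\theta^{(k)}|_{\fg_-}$ while conjugating $\kappa_u$ by $\exp(Z_k)\in P_+$ (equivalently, moving $u$ within its fibre), which preserves normality, regularity, the vanishing already established, and $\Kh(u)$. The induction terminates because the filtration is finite, yielding $\kappa_u=0$ and hence $\Kh(u)=0$, the desired contradiction; note that since $\kappa_u$ is horizontal and $\fg_-\subset\fs(u)$, restricting the identity to $\bigwedge^2\fg_-$ loses nothing. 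With these two repairs your proof is correct.
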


 \begin{proof} Since $\prn(\fg_-,\fg_0) \cong \fg$, the geometry is completely determined by its regular infinitesimal flag structure.  Assume $\ff(u) \cong \fs(u)$, so $\ff(u)$ and hence $\cS(x)$ is graded, where $x = \pi(u)$.  Then $\cS(x)/\cS(x)^0\cong \fg_- =: \fm$ acts transitively on $M$ at $x$.  Thus, $M$ can be locally identified near $x$ with the simply-connected Lie group $M(\fm)$ with Lie algebra $\fm$.  This is endowed with the $M(\fm)$-invariant distribution generated by $\fg_{-1}$, and there is a $M(\fm)$-invariant principal $G_0$-bundle $\cG_0 \to M(\fm)$, where $G_0 \subseteq \Aut_{gr}(\fm)$, which is a reduction of the graded frame bundle over $M(\fm)$.

The triple $(M(\fm),\fg_{-1},\fg_0)$ is the standard model in Tanaka theory, and since $\prn(\fg_-,\fg_0) \cong \fg$ is finite-dimensional, its symmetry algebra $\cS$ is isomorphic to $\fg$ \cite[\S 6]{Tan1970}. 
 But since $\Kh(u) \neq 0$, then $\dim(\cS) \leq \fS < \dim(\fg)$, c.f. Proposition \ref{P:loc-flat} or Theorem \ref{T:upper}, a contradiction.
 \end{proof}

 In the exceptional cases, we will show that $\fa(\mu)$ is filtration-rigid, and hence $\fS_\mu < \fU_\mu$.

 \begin{example} \label{EX:proj-FR}
 Without the $\prn(\fg_-,\fg_0) \cong \fg$ assumption, Proposition \ref{P:FR} generally fails.  Consider a projective structure $[\nabla]$, c.f.\ \S \ref{S:projective}.  The condition $\fg_- \subset \fs(u)$ implies local homogeneity, and since $\fg_-$ is abelian, then near $\pi(u)$, we have local coordinates $(t,x^1,...,x^m)$, and symmetries $\p_t, \p_{x^1},..., \p_{x^m}$ of $[\nabla]$.  The geodesic equations \eqref{E:proj-ODE} admit these symmetries, so all coefficients in these equations are constants.  Hence, all connection coefficients $\Gamma^a_{bc}$ can be made constant by a projective change.  For $m \geq 2$, this does not imply the structure is flat, e.g. taking all $\Gamma^a_{bc} = 0$ except $\Gamma^1_{01} = -\frac{1}{2}$, the Fels invariants \eqref{E:Fels-inv} satisfy $S \equiv 0$ and $T \neq 0$.
 
 However, for $m=1$, the Tresse invariants \eqref{E:Tresse} vanish everywhere for $\ddot{x} = F(\dot{x})$, where $F$ is a cubic polynomial.  Thus, Proposition \ref{P:FR} is true for 2-dimensional projective structures.
 \end{example}
 
 Consider $\fa = \bop_i \fa_i$ with (graded) basis $\{ e_i^\alpha \}$ and structure equations
$[e_i^\a,e_j^\b]=\sum_\gamma c^{\a\b}_\gamma e^\gamma_{i+j}$.  Any filtered Lie algebra $\ff \subset \fg$ with $\gr(\ff) = \fa$ has a (filtered) basis $\{ f_i^\alpha\}$ with 
 \begin{align} \label{E:f-deform}
 [f_i^\alpha,f_j^\beta]_\ff =\sum_\gamma c^{\alpha\beta}_{ij\gamma} f_{i+j}^\gamma+\sum\limits_{k>i+j} \sum_\gamma b^{\alpha\beta k}_{ij\gamma} f_k^\gamma,
 \end{align}
 and we consider $\ff$ up to filtration-preserving automorphisms $f_i^\alpha\mapsto f_i^\alpha+\sum_{j > i} \sum_\beta \chi^{\alpha j}_{i\beta} f^\beta_j$.  We refer to the terms in the double summation in \eqref{E:f-deform} as {\em tails}.  So establishing filtration-rigidity amounts to eliminating the presence of tails (in some basis).

% \begin{example}
% Let $\fg_0$ be a semi-simple Lie algebra, and $V=\fg_{-1}$ an irreducible $\fg_0$-representation such that $\bigwedge^2 V$ contains neither $V$ nor the adjoint representation of $\fg_0$ as summands.
%Then the graded Lie algebra $\fg=\fg_{-1}\oplus\fg_0$ is filtration-rigid.
%Indeed, let $\ff$ be a filtered Lie algebra with $\gr(\ff) = \fg$.  Then $\ff^0 \cong \fg_0$.
%By semisimplicity of $\fg_0$, the $\fg_0$-module $\ff$ is completely reducible, and so 
%$\ff=\fg_{-1}\oplus\fg_0$ as $\fg_0$-modules.  By hypothesis, the bracket $\bigwedge^2\fg_{-1}\to\fg$ being $\fg_0$-equivariant must vanish.

% The case of $\fg_0=\fso(n)$ and $V = \bbR^n$ has $\bigwedge^2 V\simeq\fg_0$, so there are many filtered deformations.  By Schur's lemma, we have deformations parametrized by $R \in \bbR$, and this yields all the maximally symmetric models in Table \ref{F:submax-Riem}. The corresponding algebras depending on the sign of $R$ are: $\fg = \fso(n+1)$, $ \bbR^n \rtimes\fso(n)$, and $\fso(n,1)$.
% \end{example}

 \begin{prop} \label{P:f-rigid} Let $\ff$ be a filtered Lie algebra, $\gr(\ff) =: \fa = \fa_{-\hat\nu} \op ... \op \fa_0 \op ... \op \fa_{\tilde\nu}$ its associated-graded, and let $\tau_j : \ff^j \to \fa_j$ denote the canonical projections.  Suppose there exists $h \in \fa_0$ such that $\ad_h$ is diagonal in some (graded) basis $\{ e_i^\alpha \}$ of $\fa$, and let $\cE_i = \Spec(\ad_h|_{\fa_i})$.  If $\cE_i \cap \cE_j = \emptyset$ for distinct $i,j$, then for any $\tilde{h} \in \ff^0$ with $\tau_0(\tilde{h}) = h$, there is a (filtered) basis $\{ f_i^\alpha \}$ of $\ff$, with $\tau_i(f_i^\alpha) = e_i^\alpha$, and with respect to which $\ad_{\tilde{h}}$ is diagonal.  Moreover, if:\footnote{Here
$T+T :=\{a+b \mid a,b\in T\}$ and $\bigwedge^2 T :=\{a+b \mid a, b \in T, \, a \neq b\}$.}
  \Ben[(i)]
  \item $\forall i \neq j$: $(\cE_i + \cE_j) \cap \bigcup_{k > i+j} \cE_k = \emptyset$; and
  \item $\forall i$: $\bigwedge^2 \cE_i \cap \bigcup_{k > 2i} \cE_k = \emptyset$,
  \Een
 then $\fa$ is filtration-rigid.
 \end{prop}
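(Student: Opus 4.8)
The plan is to prove the two assertions in turn: first use the eigenvalue–separation hypothesis $\cE_i\cap\cE_j=\emptyset$ ($i\neq j$) to lift $h$ to a \emph{diagonalizable} derivation $\ad_{\tilde h}$ of the filtered Lie algebra $\ff$, producing an eigenbasis adapted to the filtration; then use the stronger hypotheses (i)--(ii) to show that in that eigenbasis every ``tail'' in the structure equations \eqref{E:f-deform} is forced to vanish, so $\ff\cong\fa$ as filtered (indeed graded) Lie algebras. Throughout, the only tools are elementary linear algebra (primary decomposition of operators with block-triangular form) and the fact that $\ad_{\tilde h}$ is a derivation of $[\cdot,\cdot]_\ff$.

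For the first statement: since $\tilde h\in\ff^0$ and $\ff$ is filtered, $\ad_{\tilde h}$ preserves each $\ff^j$, and the induced operator on $\ff^j/\ff^{j+1}\cong\fa_j$ is $\ad_{\tau_0(\tilde h)}|_{\fa_j}=\ad_h|_{\fa_j}$, diagonalizable with spectrum $\cE_j$. I would then prove, by downward induction on $j$, that $\ff^j=\bigl(\bigoplus_{\mu\in\cE_j}\ff_{[\mu]}\bigr)\oplus\ff^{j+1}$, where $\ff_{[\mu]}\subset\ff$ is the $\mu$-eigenspace of $\ad_{\tilde h}$. The base case $j=\tilde\nu$ is immediate as $\ff^{\tilde\nu}=\fa_{\tilde\nu}$. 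For the inductive step, $\ad_{\tilde h}|_{\ff^j}$ is block upper-triangular with respect to $0\subset\ff^{j+1}\subset\ff^j$; the diagonal block on $\ff^{j+1}$ has spectrum contained in $\bigcup_{k>j}\cE_k$, the diagonal block on $\ff^j/\ff^{j+1}$ has spectrum $\cE_j$, and these are disjoint, so the generalized $\cE_j$-eigenspace of $\ad_{\tilde h}|_{\ff^j}$ is a complement to $\ff^{j+1}$, maps isomorphically onto $\fa_j$ intertwining the two $\ad$-actions, and is therefore genuinely semisimple. Assembling over all $j$ shows $\ad_{\tilde h}$ is diagonalizable on $\ff$ with $\ff^j=\bigoplus_{\mu:\,i(\mu)\ge j}\ff_{[\mu]}$, where $i(\mu)$ is the unique index with $\mu\in\cE_{i(\mu)}$; choosing in each $\ff_{[\mu]}$ the preimages of the $\ad_h$-eigenvectors $e_i^\alpha$ gives a basis $\{f_i^\alpha\}$ of $\ff$ with $\ad_{\tilde h}f_i^\alpha=\mu_i^\alpha f_i^\alpha$, $\mu_i^\alpha\in\cE_i$, and $\tau_i(f_i^\alpha)=e_i^\alpha$.

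For filtration-rigidity: given any filtered $\ff$ with $\gr(\ff)=\fa$, pick any lift $\tilde h\in\ff^0$ of $h$ (possible since $\tau_0:\ff^0\to\fa_0$ is onto) and take the basis $\{f_i^\alpha\}$ just built. Write $[f_i^\alpha,f_j^\beta]_\ff=\sum_{k\ge i+j}\sum_\gamma\lambda^{\alpha\beta k}_{ij\gamma}f_k^\gamma$ and apply the derivation $\ad_{\tilde h}$ to both sides; this gives $(\mu_i^\alpha+\mu_j^\beta-\mu_k^\gamma)\,\lambda^{\alpha\beta k}_{ij\gamma}=0$, so a term with $k>i+j$ can survive only if $\mu_i^\alpha+\mu_j^\beta=\mu_k^\gamma\in\cE_k$. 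When $i\neq j$ this contradicts (i), since $\mu_i^\alpha+\mu_j^\beta\in\cE_i+\cE_j$; when $i=j$, either $\alpha=\beta$ and the bracket vanishes by antisymmetry, or $f_i^\alpha,f_i^\beta$ are distinct eigenvectors so $\mu_i^\alpha+\mu_i^\beta\in\bigwedge^2\cE_i$ (with $\cE_i$ read as the multiset of eigenvalues, so that a repeated eigenvalue still contributes), contradicting (ii). Hence there are no tails, $[f_i^\alpha,f_j^\beta]_\ff\in\tspan\{f_{i+j}^\gamma\}$, and applying $\tau_{i+j}$ and comparing with the bracket on $\gr(\ff)=\fa$ shows its coefficients are exactly the structure constants of $\fa$. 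Thus $f_i^\alpha\mapsto e_i^\alpha$ is a graded (hence filtered) Lie algebra isomorphism $\ff\to\fa$, so $\fa$ is filtration-rigid. I expect the main obstacle to be the linear-algebra core of the first step --- running the downward induction cleanly so that the $\cE_j$-part of $\ad_{\tilde h}|_{\ff^j}$ both splits off $\ff^{j+1}$ and is semisimple --- together with the bookkeeping of eigenvalue multiplicities within a single $\fa_i$ that makes the $i=j$ case of (ii) genuinely applicable; everything after that is mechanical.
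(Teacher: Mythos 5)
Your proposal is correct and follows essentially the same route as the paper: a downward induction on the filtration degree to produce an $\ad_{\tilde h}$-eigenbasis lifting $\{e_i^\alpha\}$ (the paper does this by explicit successive corrections $\hat f_r^\alpha = f_r^\alpha + \sum_\beta \sigma^\alpha_\beta f_{r+1}^\beta$ using the eigenvalue separation, where you invoke the primary decomposition --- the same mechanism), followed by applying the derivation $\ad_{\tilde h}$ to the structure equations to kill all tails via (i) and (ii). Your observation that in condition (ii) the spectrum $\cE_i$ must be read with multiplicity (so that two distinct basis vectors with equal eigenvalues still contribute) is a point the paper leaves implicit but relies on in exactly the same way in its applications.
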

 
 \begin{proof}
 Use (reverse) induction on $r$, where $-\hat\nu \leq r \leq \tilde\nu$.  The $r = \tilde\nu$ case is trivial since $\tau_{\tilde\nu}|_{\ff^{\tilde\nu}} : \ff^{\tilde\nu} \to \fa_{\tilde\nu}$ is a Lie algebra isomorphism.  Assume there is a basis $\{ f_i^\alpha \}_{i=r+1}^{\tilde\nu}$ of $\ff^{r+1}$ with $\tau_i(f_i^\alpha) = e_i^\alpha$ such that $\ad_{\tilde{h}}$ is diagonal (with eigenvalues $\mu_i^\alpha$).  We have $[\tilde{h},f_r^\alpha] = \mu^\alpha_r f_r^\alpha + \sum_{k > r} \sum_\gamma  b^{\alpha,k}_\gamma f_k^\gamma$.  Since $\cE_i \cap \cE_j = \emptyset$ for $i \neq j$, then $\mu^\alpha_r \neq \mu^\beta_{r+1}$, so letting $\hat{f}_r^\alpha = f_r^\alpha + \sum_\beta \sigma^\alpha_\beta f_{r+1}^\beta$ with $\sigma^\alpha_\beta = (\mu^\alpha_r - \mu^\beta_{r+1})^{-1} b^{\alpha,r+1}_\beta$, we can normalize $b^{\alpha, r+1}_\beta = 0$.  Iteratively, we normalize all $b^{\alpha,k}_\gamma = 0$.
 
 We have $[\tilde{h}, [f_i^\alpha,f_j^\beta]] = (\mu^\alpha_i + \mu^\beta_j) [f_i^\alpha,f_j^\beta]$.  If $i \neq j$, then by (i), there is no tail.  If $i = j$, then for $\alpha \neq \beta$, (ii) implies there is no tail.  Thus, $\fa$ is filtration-rigid.
 \end{proof}

 %%%%%%%%%%%%%%%%%%%%%%%%%%%%%%%%%%%%%%%%%%%%%%%%%%
 
 \subsection{Exceptional cases and local homogeneity} 
 \label{S:exceptions}

 \subsubsection{Simple exceptions}
 \label{S:simple-ex}
 
 In the complex or split-real case, by passing to twistor spaces the exceptions $A_2 / P_1$, $A_2 / P_{1,2}$, $B_2 / P_1$, $B_2 / P_{1,2}$ (Lemma \ref{L:exceptions}) reduce to\footnote{By Example \ref{ex:B2P12}, only the $(12)$-branch of $B_2 / P_{1,2}$ is exceptional.}: $A_2 / P_1$, $B_2 / P_1$.  (Notice that regularity is preserved here.)  By Recipe \ref{R:red-geom}, we have:
 \[
 \begin{array}{ccccl}
 G/P & H^2_+(\fg_-,\fg) & \fU & \fa = \fg_{-1} \op \fa_0 & \begin{tabular}{c} Eigenvalues for diagonal\\ element in $\fa_0$ \end{tabular} \\ \hline
 A_2 / P_1 & \Atwo{xs}{-5,1} & 4 & 
 {\tiny \l(\begin{array}{c|cc} h & 0 & 0\\ \hline u & 4h & 0 \\ v & s & -5h \end{array}\r)} &
 \begin{array}{l@{\,\,}c@{\,\,}l} \cE_{-1} &=& \{ 3, -6 \}, \\ \cE_0 &=& \{ -9, 0 \} \end{array}\\
 %%%%%%%%
 B_2 / P_1 & \Btwo{xs}{-5,4} & 5 & 
 {\tiny \l(\begin{array}{c|ccc|c}
 2h & 0 & 0 & 0 & 0\\ \hline
 u & 3h & 0 & 0 & 0\\
 v & s & 0 & 0 & 0\\
 w & 0 & -s & -3h & 0\\ \hline
 0 & -w & -v & -u & -2h
 \end{array}\r)} & 
 \begin{array}{l@{\,\,}c@{\,\,}l} \cE_{-1} &=& \{ 1, -2, -5 \}, \\ \cE_0 &=& \{ -3, 0 \} \end{array}
 \end{array}
 \]
 By Proposition \ref{P:f-rigid}, $\fa$ is filtration-rigid for both, and so $\fS \leq \fU - 1$.  (By Example \ref{EX:proj-FR}, this is valid for $A_2 / P_1$.)  The models \eqref{E:2d-proj} and \eqref{E:3d-conf} imply $\fS = \fU - 1$.  See also \eqref{E:scalar-2-ODE} for $A_2 / P_{1,2}$.  Thus:
 
 \begin{prop} For complex or split-real (regular, normal) parabolic geometries of type $A_2 / P_1$, $A_2 / P_{1,2}$, or $B_2 / P_1$, we have $\fS = \fU - 1$.  For $B_2 / P_{1,2}$ with $w = (12)$, we have $\fS_w = \fU_w - 1$.
 \end{prop}
 
 Among the exception list, only $A_2 / P_{1,2}$ and $B_2 / P_1$ admit two non-split-real forms:
 \begin{enumerate}
 \item 3-dim.\ CR:  $\fg \cong \fsu(2,1)$, $\fg_0 \cong \bbC = \bbR \op i\bbR$, $\fg_- = \fg_{-2} \op \fg_{-1} \cong \bbR \op \bbC$ (Heisenberg algebra), and $H^2_+(\fg_-,\fg) \cong \bbC$ (homogeneity +4) has action $z \cdot \phi := z\phi$.  If $\phi \neq 0$, then $\fann(\phi) = 0$, so $\fS \leq \fU = 3$. In \cite{Car1932}, Cartan gave models with 3 symmetries.  Thus, $\fS = 3$.

 \item 3-dim.\ Riem.\ conformal: $\fg \cong \fso_{1,4}$, $\fg_0 \cong \bbR \op \fso(3)$, $\fg_- = \fg_{-1} \cong \bbR^3$, and $H^2_+(\fg_-,\fg) \cong \{ \phi \in \tMat_3(\bbR) \mid  \tr(\phi) = 0,\, \phi^\mathsf{T} = \phi \}$ (homogeneity +3) with $\fso(3)$-action $A \cdot \phi = [A,\phi]$.  Any $\phi$ is orthogonally diagonalizable.
If $\phi$ has three distinct eigenvalues, then $\fann(\phi) = 0$.  Otherwise, $\phi$ has two equal nonzero eigenvalues, and so $\dim(\fann(\phi)) = 1$.  Thus, $\fS \leq \fU = 4$, and indeed $\fS = 4$: see \eqref{E:3d-conf}.
 \end{enumerate}
 
  \subsubsection{Semisimple exceptions}
 \label{S:semisimple}
  
 \begin{prop} \label{P:ex} For complex or split-real (regular, normal) $G/P = A_1 / P_1 \times G'/P'$ geometries with $\im(\Kh) \subset \bbV_{\mu}$, $w = (1,k') \in W^\fp(2)$, and $\mu = -w\cdot 2\lambda_1$, we have $\fU_\mu - 1 \leq \fS_\mu \leq \fU_\mu$.
 \end{prop}
 
 \begin{proof}

 Note $\mu = 2\alpha_1 + \alpha_{k'}$, $\phi_0 = e_{\alpha_1} \wedge e_{\alpha_{k'}} \ot e_{\alpha_1} \in \bbV_\mu$.  Take the standard $\fsl_2$-basis $H,X,Y$ with $[H,X] = 2X$, $[H,Y] = -2Y$, $[X,Y] = H$.  Thus, $\fg_- = \tspan\{ Y \} \op \fg_-'$ and $\alpha_1(H) = 2$.
  By Recipe \ref{R:a0},
 \begin{align} \label{E:ss-a0}
 \fa_0 = \fann(\phi_0) = \fh_0 \op \bop_{\gamma \in \Delta(\fg'_{0,\leq 0})} \fg'_\gamma, \qquad \fh_0 := \l\{ -\frac{1}{4} \alpha_{k'}(h') H + h' \mid h' \in \fh' \r\}.
 \end{align}
 Note $\ker(\alpha_{k'}) \subset \fh' \subset \fh_0$ defines a subalgebra $\tilde\fa_0 \subset \fa_0 \cap \fg_0'$ and $\tilde\fa = \tspan\{ Y \} \op \prn^{\fg'}(\fg_-',\fann(e_{\alpha_{k'}})) \subset \fa$, with $\dim(\tilde\fa) = \fU_\mu - 1$.  As in \S \ref{S:realize}, define $\tilde\ff = \tilde\fa$ as vector spaces, but deform the $\tilde\fa$-brackets by the cochain $\phi = e_{\alpha_1} \wedge e_{\alpha_{k'}} \ot e_{-\alpha_1}$, i.e.\ $[Y,e_{-\alpha_{k'}}]_{\tilde\ff} = Y$.  We need to show that $\tilde\ff$ is a Lie algebra.
 
 We make two simplifications: (i) By Remark \ref{RM:reduce-to-two}, take $G'$ to be simple, and (ii) By Theorem \ref{T:corr-Tanaka}, for the Lie algebra structure of $\fa := \fa(\mu)$, take $\fp' = \fp_{k'} \subset \fg'$. (These are assumed only in this paragraph to check that $\tilde{\ff}$ is a Lie algebra.)  By Proposition \ref{P:PR}, $(\fg,\fp,\mu)$ is PR, so $\fa = \fg_- \op \fa_0$.    Write $[Y,u]_{\tilde\ff} = T(u) Y$, $\forall u \in \tilde\ff$, where $T$ vanishes on $\fh'$ and on all root vectors, except $T(e_{-\alpha_{k'}}) = 1$.  For Jacobi, it suffices to check $0 = \Jac_{\tilde\ff}(Y,u,v)$, $\forall u,v \in \tilde\fa' := \tilde\fa \cap \fg'$, i.e.\ $T([u,v]_{\tilde\ff}) = 0$, or $[u,v]_{\tilde\ff}$ has no $e_{-\alpha_{k'}}$-component.  If $u,v \in \fg_-$, this is clear.  Otherwise, if $u \in \tilde\fa_0$, then $[u,v]_{\tilde\ff} = [u,v]$.  If $u \in \ker(\alpha_{k'})$, then $[u,e_{-\alpha_{k'}}] = 0$ and so $T([u,v]) = 0$, $\forall v \in \tilde\fa'$.  On the other hand, if $u = e_\gamma$, with $\gamma \in \Delta(\fg'_{0,\leq 0})$, then since $u \cdot \phi_0 = 0$ and $[u,e_{\alpha_1}] = 0$, then $[u,e_{\alpha_{k'}}] = 0$, i.e. $\gamma + \alpha_{k'} \not\in \Delta$.  Thus, if $v = e_\beta$, with $\beta \in \Delta(\fa)$, then $\gamma+\beta \neq -\alpha_{k'}$.  Hence, $T([u,v]_{\tilde\ff}) = 0$ again, and $\tilde\ff$ is a Lie algebra.
 
 Now returning to the general setting, let $\tilde\fk = \tilde\fa_{\geq 0} \subset \tilde\ff$.  Define the linear map $\vartheta : \tilde\ff \to \fg$ by $\vartheta=\id+\frac{1}{2} H\otimes e_{\alpha_{k'}} -\frac{1}{2} X \otimes e_{\alpha_1}$.  Thus, $\vartheta|_{\tilde{\fk}}$ is the natural inclusion, $\vartheta(Y) = Y - \frac{1}{2} X$, and $\vartheta(e_{-\alpha_{k'}}) = e_{-\alpha_{k'}} + \frac{1}{2} H$. For $u \in \tilde\fk$ and $v \in \tilde\ff$, since $e_{\alpha_1}([u,v])=0$ and $e_{\alpha_{k'}}([u,v])=0$, then $\vartheta([u,v]_{\tilde\ff}) = \vartheta([u,v]) = [u,v] = [u,\vartheta(v)] = [\vartheta(u),\vartheta(v)]$, so $\vartheta$ is $\tilde\fk$-equivariant.  Also, $\vartheta$ induces a linear isomorphism $\tilde\ff / \tilde\fk \cong \fg / \fp$.  The hypotheses of Lemma \ref{L:loc-model} are satisfied.
 
 By construction, $\vartheta$ defines an $\tilde\ff$-invariant Cartan connection $\omega_\vartheta$ on some $\cG = F \times_K P$ over $M = F/K$ (where $F$ may be a semi-local Lie group as before).
Using \eqref{E:kappa-alpha}, $\kappa_\vartheta$ vanishes except for
 \begin{align*}
 \kappa_\vartheta(Y,e_{-\alpha_{k'}}) = [\vartheta(Y),\vartheta(e_{-\alpha_{k'}})] - \vartheta([Y,e_{-\alpha_{k'}}]_{\tilde\ff}) 
 = \l[Y - \frac{1}{2} X, e_{-\alpha_{k'}} + \frac{1}{2} H\r] - \vartheta(Y) = X,
 \end{align*}
 i.e.\ $\kappa_\vartheta = \phi_0$.  Thus, the geometry is regular and normal, and $\fU_\mu -1 \leq \fS_\mu \leq \fU_\mu$.
 \end{proof}
 
 \subsubsection{Local homogeneity}
 
 \begin{thm} \label{T:transitive}  Among all complex or split-real (regular, normal) parabolic geometries of type $(G,P)$, all submaximally symmetric models are locally homogeneous near a non-flat regular point.  Given a $\fg_0$-irrep $\bbV_\mu \subset H^2_+(\fg_-,\fg)$, the same conclusion holds with the restriction that $\im(\Kh) \subset \bbV_\mu$.
 \end{thm}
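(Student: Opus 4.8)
The strategy is to combine the general transitivity result (Corollary \ref{C:transitive} and Remark \ref{RM:constrained}(i)) with a separate treatment of the exceptional cases identified in Theorem \ref{T:main-thm} and Lemma \ref{L:exceptions}, namely the rank-two exceptions $A_2/P_1$, $A_2/P_{1,2}$, $B_2/P_1$ (collected in Table \ref{F:rk2-ex}) and the Type II exceptions coming from factors $A_1/P_1 \times G'/P'$ with $w = (1,k')$. For all non-exceptional $(\fg,\fp,\mu)$, Theorem \ref{T:main-thm} gives $\fS_\mu = \fU_\mu = \dim(\fa(\mu))$, so by the argument in the proof of Corollary \ref{C:transitive} any model attaining this bound has $\fs(u) = \fa^{\Kh(u)} \supseteq \fg_-$ at some non-flat regular point $x = \pi(u)$ (which exists by Lemma \ref{L:dense} and Theorem \ref{T:upper}), hence $\cS$ is transitive at $x$ and the model is locally homogeneous there by Lie's third theorem. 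So the only real work is the exceptional cases, where $\fS_\mu = \fU_\mu - 1$ (or is pinned between $\fU_\mu - 1$ and $\fU_\mu$), and a submaximal model need not a priori attain $\dim(\fa(\mu))$, so the cheap transitivity argument does not directly apply.

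For each exceptional case I would argue as follows. First, in the rank-two exceptions, the key point (established in the proof of Proposition \ref{P:ex}) is that $\fa^{\phi_0} \subset \fg$ is \emph{filtration-rigid} via Proposition \ref{P:f-rigid}: one exhibits a suitable grading element $h \in \fa_0$ whose $\ad_h$-eigenvalue sets $\cE_i$ on the graded pieces are pairwise disjoint and satisfy the sum conditions (i), (ii) of that proposition. Consequently any filtered Lie algebra $\ff$ with $\gr(\ff) = \fa^{\phi_0}$ is isomorphic to $\fa^{\phi_0}$ as a filtered Lie algebra; in particular the maximal symmetry dimension realizing $\im(\Kh) \subset \bbV_\mu$ with $\fg_- \subseteq \fs(u)$ would force, by Proposition \ref{P:FR} (whose hypothesis $\prn(\fg_-,\fg_0) \cong \fg$ holds for $B_2/P_1$, $A_2/P_{1,2}$, and, as noted in Example \ref{EX:proj-FR}, the relevant analogue holds for $A_2/P_1$), a contradiction with $\dim(\cS) \leq \fS < \dim(\fg)$. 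Hence a submaximal model necessarily has $\dim(\fs(u)) = \fS_\mu = \fU_\mu - 1 < \dim(\fa^{\Kh(u)})$. The subtlety is that this now says $\fs_0(u) \subsetneq \fann(\Kh(u))$ or $\fg_-$ is not fully contained in $\fs(u)$; one must show transitivity survives anyway. The clean route: a submaximally symmetric model is (by Theorem \ref{T:upper}) captured as a graded subalgebra $\fs(u) \subseteq \fa^{\Kh(u)}$ of dimension exactly $\fU_\mu - 1$; one checks directly, case by case, by inspecting which codimension-one graded subalgebras of $\fa(\mu)$ of the right dimension can arise (using that $\fg_- \subset \fa(\mu)$ is generated by $\fg_{-1}$ and that $\fs(u)$ is $\ad$-closed under $\fs_0(u)$), that the only possibility is $\fg_- \subseteq \fs(u)$ with $\fs_0(u)$ of codimension one in $\fann(\Kh(u))$; transitivity then follows as before. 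This direct inspection is the main obstacle, but the algebras involved are small ($\dim \leq 8$) so it is tractable.

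For the Type II exception $A_1/P_1 \times G'/P'$ with $w = (1,k')$, the reduction in Remark \ref{E:reduce-to-two} lets me assume $\fg = \fsl_2 \times \fg'$ with $\fg'$ simple. Here $\fS_\mu \in \{\fU_\mu - 1, \fU_\mu\}$ by Proposition \ref{P:ex}. If $\fS_\mu = \fU_\mu$ the general argument applies verbatim. If $\fS_\mu = \fU_\mu - 1$, I would produce the explicit model constructed in the proof of Proposition \ref{P:ex} (deforming an appropriate codimension-one filtered algebra) and observe that it is, by construction, of the form $F/K$ with $\dim(F/K) = \dim(\fg/\fp)$, hence homogeneous; since a submaximal model has the maximal possible symmetry dimension and this explicit one attains it, \emph{every} submaximal model has $\fs(u)$ of the same dimension $\fU_\mu - 1$, and as above one checks the graded subalgebra of $\fa(\mu)$ it determines still contains $\fg_-$. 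The one genuinely delicate point across all cases is ruling out a hypothetical submaximal model whose $\fs(u)$ is a codimension-one graded subalgebra of $\fa(\mu)$ \emph{not} containing $\fg_-$; this is excluded because such an $\fs(u)$ would fail bracket-generation of the induced distribution on $M$, contradicting the parabolic geometry structure, or would have $\dim(\fs(u)) < \fU_\mu - 1$. Assembling these case checks, together with Lemma \ref{L:dense} to guarantee a non-flat regular point in any neighbourhood of a non-flat point, completes the proof.
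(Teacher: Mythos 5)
Your overall architecture matches the paper's: the non-exceptional cases follow from Corollary \ref{C:transitive} and Remark \ref{RM:constrained}, and the content of the theorem is the exceptional list from Lemma \ref{L:exceptions}, where one reduces (via Lemma \ref{L:Tanaka-lw}, since $\Kh(u)$ outside the closed orbit forces $\dim(\fa^{\Kh(u)}) < \fU_\mu$ and hence $\fs(u) = \fa^{\Kh(u)} \supset \fg_-$ already) to the situation where $\Kh(u) = \phi_0$ and $\fs(u) \subsetneq \fa^{\phi_0}$ has codimension one. However, the step you yourself flag as ``the one genuinely delicate point'' --- ruling out a codimension-one graded subalgebra $\fs(u) \subset \fa^{\phi_0}$ not containing $\fg_-$ --- is handled with reasoning that does not work. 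An intransitive symmetry algebra does not ``fail bracket-generation of the induced distribution on $M$'': the underlying distribution is bracket-generating regardless of how small or intransitive $\cS$ is (most parabolic geometries have tiny symmetry algebras), so no contradiction with the parabolic structure arises. Your fallback, that such a subalgebra ``would have $\dim(\fs(u)) < \fU_\mu - 1$,'' is also unsupported: by hypothesis it has codimension one, i.e.\ dimension exactly $\fU_\mu - 1$.

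The mechanism the paper uses (Remark \ref{RM:transitive}) is different and is the missing idea. Since $\fg_-$ is generated by $\fg_{-1}$ and $\fs(u)$ is graded, intransitivity forces $\fs_{-1}(u) \subsetneq \fg_{-1}$; this already accounts for the single missing dimension, so all other graded pieces are full, in particular $\fs_0(u) = \fa_0 = \fann(\phi_0)$. Now the crucial input is Proposition \ref{P:key-bracket} at a regular point, applied with $i = -1$: it gives $[\fs_0(u),\fg_{-1}] \subset \fs_{-1}(u)$, where the bracket is taken with \emph{all} of $\fg_{-1}$, not merely with $\fs_{-1}(u)$ (your phrase ``$\fs(u)$ is $\ad$-closed under $\fs_0(u)$'' only yields the trivial inclusion $[\fs_0(u),\fs_{-1}(u)] \subset \fs_{-1}(u)$). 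One then verifies case by case --- $A_2/P_1$, $A_2/P_{1,2}$, $B_2/P_1$, and the semisimple Type II exceptions, where $[\fa_0,\fg_{-1}] \supset [\fh_0,\fg_{-1}] = \fg_{-1}$ --- that $[\fa_0,\fg_{-1}] = \fg_{-1}$, contradicting $[\fa_0,\fg_{-1}] \subset \fs_{-1}(u) \subsetneq \fg_{-1}$. Without this bracket computation your case analysis has no teeth. A secondary issue: in the Type II case, exhibiting one homogeneous model attaining $\fU_\mu - 1$ does not show that \emph{every} submaximal model is locally homogeneous; that case still needs the $[\fa_0,\fg_{-1}] = \fg_{-1}$ argument.
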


 \begin{proof}  It suffices to prove the second statement.  For all non-exceptional cases, Theorem \ref{T:main-thm} asserts $\fS_\mu = \fU_\mu$, so the result follows from Corollary \ref{C:transitive} (and Remark \ref{RM:constrained}).
 
 Consider the exceptional geometries. Let $(\cG \stackrel{\pi}{\rightarrow} M, \omega)$ be a submaximally symmetric geometry with $\im(\Kh) \subset \bbV_\mu$.  Given a regular point $x \in M$ and $u \in \pi^{-1}(x)$, we have $\fs(u) \subseteq \fa^{\Kh(u)}$ by Theorem \ref{T:upper}.  We show that $\fg_- \subset \fs(u)$.  From \S \ref{S:simple-ex} and \S \ref{S:semisimple}, $\dim(\fs(u)) = \dim(\inf(\cG,\omega)) \geq \fU_\mu - 1$.  Let $\cO \subset \bbV_\mu$ be the minimal $G_0$-orbit.  We have two cases:
 \begin{enumerate}
 \item[\rm (i)] $\Kh(u) \not\in \cO$: By Proposition \ref{P:Tanaka-lw}, $\dim(\fa^{\Kh(u)}) < \dim(\fa^{\phi_0}) = \fU_\mu$.  Thus, $\fs(u) = \fa^{\Kh(u)} \supset \fg_-$.
 \item[\rm (ii)] $\Kh(u) \in \cO$: We may assume $\Kh(u) = \phi_0$, so $\fs(u) \subseteq \fa^{\phi_0} =: \fa$.  Since $\fg_{-1} \subset \fg_-$ is bracket-generating, then $\fg_- \not\subset \fs(u)$ implies $\fs_{-1}(u) \neq \fg_{-1}$.  Assuming this, $\fs(u) \subsetneq \fa$ must have codimension one, so $\fs_0(u) = \fa_0$.  From Proposition \ref{P:key-bracket}, $[\fs_0(u),\fg_{-1}] \subset \fs_{-1}(u) \neq \fg_{-1}$.  Let $\fh_0 := \fa_0 \cap \fh$.  We easily confirm $[\fh_0,\fg_{-1}] = \fg_{-1}$ for the simple exceptions, and the same follows from \eqref{E:ss-a0} for the semisimple exceptions.  Thus, $[\fs_0(u),\fg_{-1}] = \fg_{-1}$, a contradiction.
 \end{enumerate}
 Thus, in all cases the submaximal symmetric structures are homogeneous near $x$.
 \end{proof}
 
 %%%%%%%%%%%%%%%%%%%%%%%%%%%%%%%%%%%%%%%%%%%%%%%%%%
 
 \section{Results and local models for specific geometries}
 \label{S:specific}

 In almost all complex or split-real cases, our Theorem \ref{T:main-thm} asserts that $\fS_\mu = \fU_\mu$.  Exceptions include 2-dim projective, scalar 2nd order ODE, and 3-dim conformal structures (see \S \ref{S:exceptions}).  Recipe \ref{R:red-geom} describes how to efficiently compute $\fU_\mu$.   Sample new results are given in Table \ref{F:sample-submax}, and a complete classification (when $G$ is simple and complex or split-real) is given in Appendix \ref{App:Submax}.

 In this section, we focus on a discussion of parabolic geometries in terms of underlying structures, and describe local models which realize our results on submaximal symmetry dimensions.  Among the examples in this section, the only NPR geometry is 2nd order ODE systems (\S \ref{S:2-ODE}).  All others are PR, so $\fa_+(w) = 0$, $\forall w \in W^\fp_+(2)$, and Recipe \ref{R:red-geom} reduces to using only Recipes \ref{R:dim} and \ref{R:a0}.

 %%%%%%%%%%%%%%%%%%%%%%%%%%%%%%%%%%%%%%%%%%%%%%%%%%
 
 \subsection{Conformal geometry}
 \label{S:conf}
 
 Let $n = p+q \geq 3$.  The pseudo-conformal sphere $\mathbb{S}^{p,q}$ embedded as the null projective quadric in $\bbP(\mathbb{R}^{p+1,q+1})$ is the flat model for conformal geometry in signature $(p,q)$.  This is equivalently described as a parabolic geometry of type $\SO_{p+1,q+1} / P_1$, where $P_1$ is the stabilizer of a null line.  The Lie algebra $\fg = \fso_{p+1,q+1}$ is a real form of $\fso_{n+2}(\bbC)$, which is $B_\rkg$ or $D_\rkg$ when $n=2\rkg-1$ or $n=2\rkg - 2$, respectively; $\fg$ is 1-graded by $P_1$.  The space $\bbW = H^2(\fg_-,\fg)$ is the space of Weyl tensors for $n \geq 4$ or Cotton tensors  for $n=3$.
 
 First work over $\bbC$, so $\fg = \fso_{n+2}(\bbC)$, and $\fg_0^{ss} \cong B_{\rkg-1}$ or $D_{\rkg-1}$.  For $H^2_+(\fg_-,\fg)$, Recipe \ref{R:Kostant} gives
 \begin{align*}
 & B_2 / P_1: \begin{tiny} \begin{tikzpicture}[scale=\myscale,baseline=-3pt]
 \dbond{r}{0,0};
 \DDnode{x}{0,0}{-5};
 \DDnode{s}{1,0}{4};
 \useasboundingbox (-.4,-.2) rectangle (1.2,0.55);
 \end{tikzpicture} \end{tiny}, \quad
 B_3 / P_1:  \begin{tiny} \begin{tikzpicture}[scale=\myscale,baseline=-3pt]
 \bond{0,0};
 \dbond{r}{1,0};
 \DDnode{x}{0,0}{-4};
 \DDnode{w}{1,0}{0};
 \DDnode{s}{2,0}{4};
 \useasboundingbox (-.4,-.2) rectangle (2.2,0.55);
 \end{tikzpicture} \end{tiny}, \quad
 B_\rkg / P_1\,\, (\rkg \geq 4): \begin{tiny} \begin{tikzpicture}[scale=\myscale,baseline=-3pt]
 \bond{0,0};
 \bond{1,0};
 \bond{2,0};
 \tdots{3,0};
 \dbond{r}{4,0};
 \DDnode{x}{0,0}{-4};
 \DDnode{w}{1,0}{0};
 \DDnode{s}{2,0}{2};
 \DDnode{w}{3,0}{0};
 \DDnode{w}{4,0}{0};
 \DDnode{w}{5,0}{0};
 \useasboundingbox (-.4,-.2) rectangle (2.2,0.55);
 \end{tikzpicture} \end{tiny}, \\
 &  D_3 / P_1: \begin{tiny} \begin{tikzpicture}[scale=\myscale,baseline=-3pt]
 \diagbond{u}{0,0};
 \diagbond{d}{0,0};
 \DDnode{x}{0,0}{\!\!\!\!-4};
 \DDnode{s}{0.5,0.865}{4};
 \DDnode{w}{0.5,-0.865}{0};
 \useasboundingbox (-.4,-.2) rectangle (0.7,0.55);
 \end{tikzpicture} \end{tiny} \op 
 \begin{tiny} \begin{tikzpicture}[scale=\myscale,baseline=-3pt]
 \diagbond{u}{0,0};
 \diagbond{d}{0,0};
 \DDnode{x}{0,0}{\!\!\!\!-4};
 \DDnode{w}{0.5,0.865}{0};
 \DDnode{s}{0.5,-0.865}{4};
 \useasboundingbox (-.4,-.2) rectangle (0.7,0.55);
 \end{tikzpicture} \end{tiny}, \quad
 D_4 / P_1: \begin{tiny} \begin{tikzpicture}[scale=\myscale,baseline=-3pt]
 \bond{0,0};
 \diagbond{u}{1,0};
 \diagbond{d}{1,0};
 \DDnode{x}{0,0}{-4};
 \DDnode{w}{1,0}{0};
 \DDnode{s}{1.5,0.865}{2};
 \DDnode{s}{1.5,-0.865}{2};
 \useasboundingbox (-.4,-.2) rectangle (1.7,0.55);
 \end{tikzpicture} \end{tiny}, \quad
 D_\rkg / P_1\,\, (\rkg \geq 5): \begin{tiny} \begin{tikzpicture}[scale=\myscale,baseline=-3pt]
 \bond{0,0};
 \bond{1,0};
 \bond{2,0};
 \tdots{3,0};
 \diagbond{u}{4,0};
 \diagbond{d}{4,0};
 \DDnode{x}{0,0}{-4};
 \DDnode{w}{1,0}{0};
 \DDnode{s}{2,0}{2};
 \DDnode{w}{3,0}{0};
 \DDnode{w}{4,0}{0};
 \DDnode{w}{4.5,0.865}{0};
 \DDnode{w}{4.5,-0.865}{0};
 \useasboundingbox (-.4,-.2) rectangle (5.4,0.55);
 \end{tikzpicture} \end{tiny}
 \end{align*} 
 Above, $W^\fp_+(2) = \{ (12) \}$ always, except $W^\fp_+(2) = \{ (12), (13) \}$ for $D_3 / P_1$.  Let $n \geq 4$.  The geometry is PR by Corollary \ref{C:g-PR}. Then using Recipes \ref{R:dim} and \ref{R:a0},
 \Ben
 \item $n \geq 5$: $\lambda_\fg = \lambda_2$, $w = (12)$, $J_w = \{ 3 \}$.  For $n \neq 6$, we have $\fp_w^{\opn} \cong \fp_2 \subset \fg_0^{ss}$.
 \begin{align*}
% \dim(\fa) = \dim(\fg_{-1}) + \dim(\fa_0) = \binom{n-1}{2} + 6.
 \dim(\fa_0) &= \dim(\fp_2) 
 = \mycase{
 \half\l(\dim(B_{\rkg-1}) + 1 + \dim(A_1) + \dim(B_{\rkg-3})\r), &\quad n \mbox{ odd};\\ 
 \half\l(\dim(D_{\rkg-1}) + 1 + \dim(A_1) + \dim(D_{\rkg-3})\r), &\quad n \mbox{ even};}\\
 &= \l\{ \begin{array}{ll} 
 2\rkg^2 - 7\rkg + 10, & n \mbox{ odd}; \\
 2\rkg^2 - 9\rkg + 14, & n \mbox{ even} 
 \end{array} \r.  \qRa \dim(\fa) = \dim(\fg_{-1}) + \dim(\fa_0) = \binom{n-1}{2} + 6.
 \end{align*}
 The $n=6$ case agrees with this formula, but here $\fp_w^{\opn} \cong \fp_{2,3} \subset \fg_0^{ss}$.
 \item $n = 4$: $D_3 / P_1$, $\lambda_\fg = \lambda_2 + \lambda_3$, $\fg_0^{ss} = A_1 \times A_1$.  For $w = (12)$, $J_w = \{ 3 \}$, $\fp_w^{\opn} \cong A_1 \times \fp_1 \subset \fg_0^{ss}$, so $\dim(\fa_0) = 5$ and $\dim(\fa(w)) = 9$.  By symmetry, for $w = (13)$, $\dim(\fa(w)) = 9$ as well.
 \Een
 Thus, $\fU^\bbC = \binom{n-1}{2} + 6$ for $n \geq 4$.  In any signature over $\bbR$, $\fS \leq \fU \leq \fU^\bbC$ by Corollary \ref{C:upper}. For $n \geq 4$, $\fa_0$ stabilizes a null 2-plane (in the standard representation of $\fg_0^{ss}$).  These do not exist in Riemannian and Lorentzian signatures, so $\fS \leq \fU < \fU^\bbC$ in these cases.  In all other signatures, we exhibit a model realizing the upper bound $\fU^\bbC$.
 
 \subsubsection{Non-Riemannian and non-Lorentzian signatures} 
 \label{S:nR-nL}
  
 Consider the $(2,2)$ pp-wave metric:\footnote{In  \cite{Kru2012}, the signature $(2,2)$ pp-wave metric was announced as having submaximal conformal symmetry dimension.  No proof was given there, but using tools developed in this paper, we can confirm that this is indeed correct.}
 \begin{align} \label{E:pp-2,2}
 \ppmetric{2,2} = y^2 dw^2 + dw dx + dy dz.
 \end{align}
 This has 9 conformal symmetries, of which $\bX_1, ..., \bX_8$ are Killing fields, and $\bT$ is a homothety:
 \begin{align*}
 \bX_1 &= \p_x, \quad
 \bX_2 = \p_z, \quad
 \bX_3 = \p_w, \quad
 \bX_4 = -y\p_x + w \p_z, \quad \bX_5 = 3(z + yw^2)\p_x - 3w\p_y - w^3 \p_z, \\
 \bX_6 &= 2yw\p_x - \p_y - w^2 \p_z, \quad
 \bX_7 = -x\p_x - y\p_y + w\p_w + z\p_z, \quad
 \bX_8 = 2 y^3 \p_x - 3 y \p_w + 3 x \p_z,\\
  \bT &= 2x \p_x + y \p_y + z\p_z.
 \end{align*}
 
 \begin{lemma} Let $n = p+q +4$, and $\eucmetric{p,q}$ the flat Euclidean metric of signature $(p,q)$.  Then $\metric = \ppmetric{2,2} + \eucmetric{p,q}$ has conformal symmetry algebra of dimension $\binom{n-1}{2} + 6$.
 \end{lemma}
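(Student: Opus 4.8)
The number $\binom{n-1}{2}+6$ is exactly $\fU_\bbC$ for conformal geometry in dimension $n$, as computed just above, so the plan is to prove the two matching bounds and conclude by Theorem \ref{T:upper}. For the \emph{upper} bound, I would first check that $\metric$ is not conformally flat. Writing $u:=w$, $v:=x$, the remaining coordinates $(y,z,s^1,\dots,s^{p+q})$ are transverse with flat metric $h=dy\,dz+\eucmetric{p,q}$ of signature $(p+1,q+1)$ and wave profile $H(y,z,s)=y^2$, so $\metric$ is a pp-wave, indeed a plane wave since $H$ is independent of $w$ and quadratic in the transverse variables. A direct curvature computation (e.g.\ with the {\tt DifferentialGeometry} package) shows that for $n\geq4$ the Weyl tensor of $\metric$ is nonzero: the only nonvanishing transverse Hessian component of $H$ is $\p_y^2 H=2$, whose $h$-tracefree part does not vanish once the transverse dimension is $\geq2$. (For $n=3$, i.e.\ $p=q=0$, this is the $B_2/P_1$ case of Section \ref{S:3d-conformal}.) Hence by Theorem \ref{T:upper} and Remark \ref{RM:upper}, any non-conformally-flat metric of signature $(p+2,q+2)$, and in particular $\metric$, has conformal symmetry algebra of dimension at most $\fS\leq\fU\leq\fU_\bbC=\binom{n-1}{2}+6$.

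For the \emph{lower} bound it suffices to exhibit $\binom{n-1}{2}+6$ linearly independent conformal Killing fields of $\metric$, for then completeness is automatic: conformal Killing fields always form a Lie algebra, and a subspace of the correct dimension must be the whole of it. Writing $\eucmetric{p,q}=\sum_i\epsilon_i\,(ds^i)^2$ with $\epsilon_i=\pm1$, the span of the following works: the eight Killing fields $\bX_1,\dots,\bX_8$ of $\ppmetric{2,2}$, viewed as vector fields on $\bbR^n$ with no component along $\p_{s^i}$ (they remain Killing for $\metric$ since the $\eucmetric{p,q}$ summand is inert under them); the corrected homothety $\bT':=\bT+\sum_i s^i\,\p_{s^i}$, which satisfies $\cL_{\bT'}\metric=2\metric$; the isometry algebra of the flat factor $\eucmetric{p,q}$ (translations $\p_{s^i}$ and rotations), of dimension $\binom{p+q+1}{2}$; and the $2(p+q)$ fields $w\,\p_{s^i}-2\epsilon_i s^i\,\p_x$ and $s^i\,\p_z-\frac{\epsilon_i}{2}\,y\,\p_{s^i}$ for $i=1,\dots,p+q$, each of which a short direct computation shows is Killing for $\metric$. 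These are manifestly linearly independent (they carry $\p_{s^i}$, $\p_x$, $\p_z$ with incompatible coefficient patterns), and the count is $8+1+\binom{p+q+1}{2}+2(p+q)=\binom{n-1}{2}+6$ using $n=p+q+4$.

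The two bounds together give the Lemma, and in fact show that $\fS=\binom{n-1}{2}+6$ for conformal structures of signature $(p+2,q+2)$, i.e.\ in every signature which is neither Riemannian nor Lorentzian; by Theorem \ref{T:transitive} any such submaximal model is moreover locally homogeneous about a non-flat regular point, as is $\metric$ itself. The only step needing real work is the lower bound, and the one mild pitfall there is the bookkeeping: one must remember to replace the $(2,2)$-block homothety $\bT$ by $\bT'$ once the flat factor is adjoined, and to verify the $2(p+q)$ ``mixing'' fields are conformal Killing. An alternative route that avoids any guesswork is to solve the (linear, overdetermined) conformal Killing system for $\metric$ directly --- the plane-wave normal form makes this entirely explicit --- and read off that its solution space has dimension $\binom{n-1}{2}+6$.
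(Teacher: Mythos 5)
Your proof is correct and follows essentially the same route as the paper: the upper bound is the general bound $\fS\le\fU\le\fU_\bbC=\binom{n-1}{2}+6$ applied once non-conformal-flatness of the pp-wave is checked, and the lower bound is obtained by exhibiting exactly the same conformal Killing fields (your mixing fields $w\,\p_{s^i}-2\epsilon_i s^i\p_x$ and $s^i\p_z-\tfrac{\epsilon_i}{2}y\,\p_{s^i}$ are scalar multiples of the paper's $\bW_i$ and $\bY_i$, and your $\bT'$ is the paper's homothety $\tilde\bT$). The only difference is cosmetic: you spell out the non-flatness and independence checks that the paper leaves as ``straightforward.''
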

 
 \begin{proof}  The Weyl tensor of $\metric$ is $4 (dy \wedge dw)^2$, so $\metric$ is not conformally flat.  Writing $\epsilon_i = \pm 1$, the metric $\eucmetric{p,q} = \sum_{i=1}^{p+q} \epsilon_i (du_i)^2$ admits the Killing fields $\bU_i = \p_{u_i}$ and $\bV_{ij} = \epsilon_i u_i \p_{u_j} - \epsilon_j u_j \p_{u_i}$, where $i < j$.  The metric $\metric$ admits the Killing fields $\bU_i$, $\bV_{ij}$, $\bX_k$, as well as $\bY_i = 2\epsilon_i u_i\p_z - y \p_{u_i}$, and $\bW_i = 2\epsilon_i u_i \p_x - w\p_{u_i}$.
 The vector field $\tilde\bT = \bT +  \sum_{i=1}^{p+q} u_i \p_{u_i}$ is a homothety for $\metric$.
 \end{proof}

 Thus, we have proven (see \S \ref{S:3d-conf} for the $n=3$ case):
 
 \begin{thm} \label{T:conf-sharp} For conformal geometry in dimension $n \geq 4$, we have $\fS \leq \binom{n-1}{2} + 6$.  Except for Riemannian and Lorentzian signatures, this upper bound is sharp.  For $n=3$, we have $\fS = 4$.
  \end{thm}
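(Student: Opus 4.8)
The plan is to assemble Theorem \ref{T:conf-sharp} from three ingredients already developed: the complexified upper bound $\fU_\bbC = \binom{n-1}{2}+6$ for $n \geq 4$ computed above via Kostant's recipe and Recipe \ref{R:dim}, the signature obstruction, and the explicit pp-wave model. First I would invoke Remark \ref{RM:upper}: for any real form $\fg = \fso_{p+1,q+1}$ with $p+q = n$, we have $\fS \leq \fU \leq \fU_\bbC$, so the bound $\fS \leq \binom{n-1}{2}+6$ is immediate for all signatures and all $n \geq 4$. This is the ``easy'' half and requires no new work beyond citing the $\fU_\bbC$ computation.

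Next I would address sharpness in the ``generic'' signature range. The key observation, already noted in the excerpt, is that $\fa_0$ in the maximal case is built from $\fp_2 \subset \fg_0^{ss}$, the stabilizer of an isotropic $2$-plane in the standard representation of $\fg_0^{ss} \cong \fso_{p,q}$. Such isotropic $2$-planes exist precisely when $\min(p,q) \geq 2$, i.e. outside Riemannian and Lorentzian signatures. In those excluded cases the real annihilator is strictly smaller than the complex one, forcing $\fS < \fU_\bbC$; this gives the ``except for Riemannian and Lorentzian'' clause. For the remaining signatures (with $\min(p,q) \geq 2$) I would produce a model realizing the bound: the metric $\metric = \ppmetric{2,2} + \eucmetric{p',q'}$ of the Lemma above, where $p' + q' = n-4$ and the $(2,2)$-block supplies the two isotropic directions. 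One checks $\ppmetric{2,2}$ is not conformally flat (so $\metric$ isn't either), lists the $9$ conformal symmetries of the $(2,2)$ block together with the $\binom{n-4}{2} + (n-4)$ isometries of the Euclidean block and the $2(n-4)$ ``mixing'' fields $\bY_i, \bW_i$, plus the combined homothety $\tilde\bT$; a dimension count gives $9 + \binom{n-4}{2} + (n-4) + 2(n-4) = \binom{n-1}{2} + 6$, matching $\fU_\bbC$. Hence $\fS = \binom{n-1}{2}+6$ in every signature with $2 \leq p \leq q$, $p+q = n \geq 4$.

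For $n = 3$ I would simply cite Section \ref{S:3d-conformal}: there $\fg = B_2$, the only real forms are $\fso_{3,1}$ (Lorentzian) and $\fso_{2,2}$ (split), both of which were shown by the filtration-rigidity argument (Proposition \ref{P:f-rigid} applied to $\fa(w)$, then Proposition \ref{P:FR}) to satisfy $\fS \leq 4$, with explicit metrics (the two listed models with nonzero Cotton tensor) achieving $\fS \geq 4$. So $\fS = 4$ when $n = 3$.

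The main obstacle is the sharpness direction in the general signature case: one must verify both that the candidate metric is genuinely not conformally flat and that it has exactly the claimed symmetry dimension (no more, no less), and that the Lie algebra of its conformal symmetries actually has the structure of $\fa(w)$ — i.e. that the counting I sketched is complete and non-redundant. The upper bound from Remark \ref{RM:upper} caps the dimension, so the only real task is exhibiting enough independent symmetries; the verification that $\ppmetric{2,2}$ has nonzero Weyl tensor (for $n \geq 5$; nonzero Cotton for $n=3$, not relevant here since $n \geq 4$) and the explicit enumeration of Killing and homothetic fields is routine but must be done carefully. I would not grind through the Christoffel symbols here but would note that the nine listed vector fields for $\ppmetric{2,2}$ can be checked directly against the Killing/homothety equations, and that linear independence over the six ``blocks'' ($\bX_1,\dots,\bX_8,\bT$ versus $\bU_i,\bV_{ij},\bY_i,\bW_i,\tilde\bT$) is clear from their supports in the coordinate derivatives.
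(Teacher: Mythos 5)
Your proposal is correct and follows the paper's proof essentially verbatim: the upper bound via Remark \ref{RM:upper} and the $\fU_\bbC$ computation, the null-2-plane obstruction excluding Riemannian and Lorentzian signatures, the model $\ppmetric{2,2}+\eucmetric{p,q}$ with the same symmetry enumeration and dimension count, and the $n=3$ case imported from Section \ref{S:3d-conformal}. (Two small slips worth fixing: for $n=3$ the relevant real forms of $B_2=\fso_5(\bbC)$ are $\fso_{4,1}$ and $\fso_{3,2}$, not $\fso_{3,1}$ and $\fso_{2,2}$; and the paper settles the Riemannian $n=3$ upper bound by orthogonally diagonalizing the trace-free symmetric Cotton tensor rather than by the filtration-rigidity argument, which it uses only for the Lorentzian/split case — the conclusion $\fS=4$ is unaffected.)
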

 
 \subsubsection{4-dimensional Lorentzian}
 \label{S:4d-Lor}
 
 Here, $\fg = \fso_{2,4}$ and $\fg_0 \cong \bbR \op \fsl_2(\bbC)_\bbR$, where $\fsl_2(\bbC)_\bbR \cong \fso_{1,3}$ is the {\em real} Lie algebra underlying $\fsl_2(\bbC)$.  The grading element $Z \in \fz(\fg_0) \cong \bbR$ acts by $+2$ on the $\fsl_2(\bbC)_\bbR$-irrep $\bbW = H^2_+(\fg_-,\fg) \cong \bigodot{}^{\!4} (\bbC^2)$.  Weyl tensors are identified with complex binary quartics, and their classification according to root type is the well-known Petrov classification \cite{Pet1954, Pen1960}. 
 
 To each Petrov type, there is a collection $\cO$ of $G_0$-orbits.  We have (by prolongation-rigidity):
 \[
 \fU_\cO := \max\{ \dim(\fa^\phi) \mid 0 \neq \phi \in \cO \} = 4 + \max\{  \dim(\fann(\phi)) \mid 0 \neq \phi \in \cO \}.
 \]
 It suffices to maximize $\dim(\fann(\phi))$ among representative elements from $\cO$.  Then $\fS_\cO \leq \fU_\cO$ by Remark \ref{RM:constrained}.  Let $\bbC^2 = \tspan_\bbC\{ x,y \}$, and
 $H = \pmat{1 & 0 \\ 0 & -1}$, 
 $X = \pmat{0 & 1 \\ 0 & 0}$, 
 $Y = \pmat{0 & 0 \\ 1 & 0}$, so $\fsl_2(\bbC)_\bbR$ has $\bbR$-basis $\{ H, iH, X, iX, Y, iY \}$.  A simple case analysis yields Table \ref{tbl:Petrov} (except the last column).
 
 \begin{table}[h]
 $\begin{array}{|c|c|c|c|c|c|} \hline
 \mbox{Type} & \mbox{Normal form $\phi$} & \bbR\mbox{-basis for } \fann(\phi) & \dim(\fa^\phi)& \fa^\phi \mbox{ filtration-rigid?} \\ \hline
 \mbox{N} & y^4 & Y, iY, 2Z+ H & 7 & \times\\
 \mbox{III} & xy^3 & Z+ H & 5 & \checkmark\\
 \mbox{D} & x^2 y^2 & H, iH & 6 & \times\\
 \mbox{II} & x^2 y(x-y) & \cdot & 4 & \times\\
 \mbox{I} & xy(x-y)(x-ky) & \cdot & 4 & \times\\ \hline
 \end{array}$
 \caption{Petrov types and upper bounds on conformal symmetry dimensions}
 \label{tbl:Petrov}
 \end{table}

 The conformal structures below establish sharpness of the upper bounds (and filtration non-rigidity) in Table \ref{tbl:Petrov} in all cases except type III.
 \Ben
 \item[N:] $\ppmetric{3,1} = dy^2 + dz^2 + dwdx + y^2 dw^2$ (signature $(3,1)$ pp-wave):
 \begin{align*}
 \bX_1 &= \p_x, \quad \bX_2 = \p_z, \quad \bX_3 = \p_w, \quad
 \bX_4 = -2z\p_x + w\p_z, \\
 \bX_5 &= e^{-w}(2y\p_x + \p_y), \quad
 \bX_6 = e^w(-2y\p_x + \p_y), \quad \bT = 2x\p_x + y\p_y + z\p_z.
 \end{align*}
 \item[III:] $\metric = \frac{3}{|\Lambda|} dz^2 + e^{4z} dx^2 + 4 e^z dx dy + 2e^{-2z}(dy^2 + du dx)$ (Kaigorodov metric \cite[(12.35)]{SKMH2003}):
 \[
 \bX_1 = \p_u, \quad \bX_2 = \p_x,  \quad \bX_3 = \p_y, \quad  \bX_4 = 2x \p_x - y \p_y - \p_z - 4u\p_u.
 \]
 Another metric is
 $\metric = \frac{r^2}{x^3} (dx^2 + dy^2) - 2 du dr + \frac{3}{2} x du^2$ (Siklos metric \cite[(38.1)]{SKMH2003}):
 \[
 \bX_1 = \p_y, \quad \bX_2 = \p_u, \quad \bX_3 = 2(x\p_x + y\p_y) + r\p_r - u\p_u, \quad \bT = u\p_u + r\p_r.
 \]
 \item[D:] $\metric = a^2(dx^2 + \sinh^2(x) dy^2) + b^2(dz^2 - \sinh^2(z) dt^2)$ ($a,b$ constant)
 \begin{align*}
 \bX_1 &= \p_y, \quad \bX_2 = \p_t, \quad
 \bX_3 = e^{-t} (\p_z + \coth(z) \p_t), \quad
 \bX_4 = e^t (\p_z - \coth(z) \p_t), \\
 \bX_5 &= -\cos(y) \p_x + \coth(x) \sin(y) \p_y, \quad
 \bX_6 = \sin(y) \p_x + \coth(x) \cos(y) \p_y.
 \end{align*}
 This is a product of two spaces of constant curvature \cite[(12.8)]{SKMH2003}.
 \item[II:] $\metric = dz^2+ e^{-2z} (dy^2 + 2dxdu) - 35 e^{4z} dx^2 + e^{-8z} du^2$
 \[
  \bX_1 = \p_u, \quad \bX_2 = \p_x, \quad \bX_3 = \p_y, \quad  \bX_4 = 2x \p_x - y \p_y - \p_z - 4u\p_u.
 \]
 This metric is not Einstein, has Ricci scalar a nonzero constant, and appears to be new.\footnote{In \cite{SKMH2003}, Table 38.3 incorrectly lists (12.29) as a type II metric with 4-dimensional isometry group, while (12.29) is in fact type D, as indicated at the bottom of p.179.  Also, the type II metric (13.65) is indicated as having four Killing vectors (13.66), but the fourth listed vector field is incorrect.  Professor Malcolm MacCallum has indicated to us that no type II metric with 4-dimensional isometry group appears to have been known in the literature.}  Symmetry-preserving deformations of the type III Kaigorodov metric led to the ansatz
 \[
 \metric = dz^2 + a_1 e^{-2z} dy^2 + a_2 e^{-2z} dxdu + a_3 e^{4z} dx^2 + a_4 e^z dy dx + a_5 e^{-8z} du^2, \quad a_i \in \bbR.
 \]
 Imposing the type II condition led to the metric indicated above.
 \item[I:] $\metric = dx^2 + e^{-2x} dy^2 + e^x \cos(\sqrt{3} x) (dz^2 - dt^2) - 2e^x\sin(\sqrt{3}x) dz  dt$ (Petrov metric \cite[(12.14)]{SKMH2003}):
 \[
  \bX_1 = \p_y, \quad \bX_2 = \p_z, \quad \bX_3 = \p_t, \quad \bX_4 = \p_x + y \p_y + \half (\sqrt{3} t - z) \p_z - \half ( t + \sqrt{3} z) \p_t.
 \]
 \Een

 \begin{thm} \label{T:Petrov} In 4-dimensional conformal Lorentzian geometry, the maximal dimension of the conformal symmetry algebra for metrics of constant Petrov type is:
 \begin{center}
 \begin{tabular}{|c||c|c|c|c|c|} \hline
 Petrov type & N & III & D & II & I\\ \hline
 max. sym. dim.  & 7 & 4 & 6 & 4 & 4 \\ \hline
 \end{tabular}
 \end{center}
 All models realizing these upper bounds are locally homogeneous near a regular point.
 \end{thm}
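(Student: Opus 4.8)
The proof assembles three ingredients already in place above: the annihilator bound recorded in Table \ref{tbl:Petrov}, the explicit metrics listed after it, and the transitivity machinery of Section \ref{S:exceptions}.

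\emph{Upper bounds.} Since $(\fg,\fp) = (\fso_{2,4},\fp_1)$ is $1$-graded, it is \PRp by Corollary \ref{C:g-PR}, so $\fa^\phi = \fg_- \op \fann(\phi)$ and $\dim(\fa^\phi) = 4 + \dim(\fann(\phi))$ for every nonzero Weyl tensor $\phi$. Petrov type is a $G_0$-invariant notion, and on each Petrov stratum $\dim(\fann(\phi))$ is constant: for types N, III, D, II all such quartics lie in a single $G_0$-orbit in $\bbP(\bbW)$ (e.g. any triple-root-plus-simple-root binary quartic is $\SL_2(\bbC)$-equivalent, up to scale, to $x^3 y$), and for type I one has $\fann(\phi) = 0$ whenever $\phi$ has four distinct roots. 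Hence, writing $\cO$ for the cone of Weyl tensors of a fixed Petrov type, $\fU_\cO = \dim(\fa^{\phi_0})$ for the corresponding normal form $\phi_0$, which is exactly the short $\fsl_2(\bbC)_\bbR$-computation yielding the values $7,5,6,4,4$ in the fourth column of Table \ref{tbl:Petrov}; by Remark \ref{RM:constrained}, $\fS_\cO \le \fU_\cO$. For Petrov type III the bound $5$ is not attained: one checks that $\fa^{\phi_0} = \fg_{-1}\op\bbR(Z-H)$ is filtration-rigid, using Proposition \ref{P:f-rigid} with $h = Z-H$ (and noting that, since the graded bracket on $\bigwedge^2\fg_{-1}$ is trivial, the Jacobi identity alone already eliminates any tail, because $\ad_{Z-H}$ is invertible on $\fg_{-1}$ with $-3$ not a sum of two of its eigenvalues). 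As $\prn(\fg_-,\fg_0)\cong\fg$ for conformal geometry (Theorem \ref{T:Y-pr-thm}), Proposition \ref{P:FR} then forbids $\fg_-\subseteq\fs(u)$ whenever $\dim(\fs(u)) = \dim(\fa^{\phi_0})$ at a non-flat point; combined with $\fs(u)\subseteq\fa^{\Kh(u)}$ at a regular point (Theorem \ref{T:upper}) and density of regular points (Lemma \ref{L:dense}), this gives $\fS_{III} \le 4$.

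\emph{Sharpness and local homogeneity.} The metrics displayed above realize these bounds once we verify that each is not conformally flat and has the stated Petrov type and conformal Killing fields --- routine curvature and Lie-derivative computations. This establishes the maxima $7,4,6,4,4$. For local homogeneity, types N, D, II, I have $\fS_\cO = \fU_\cO$, so the argument of Remark \ref{RM:constrained} (the $G_0$-invariant-$\cO$ analogue of Corollary \ref{C:transitive}) applies verbatim: at a non-flat regular point $x$ with $u\in\pi^{-1}(x)$, the inequalities $\dim(\fs(u)) = \fU_\cO \ge \dim(\fa^{\Kh(u)}) \ge \dim(\fs(u))$ force $\fs(u) = \fa^{\Kh(u)}\supseteq\fg_-$, i.e. transitivity at $x$. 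For type III, where $\fS_{III} = \fU_{III}-1$, I follow Remark \ref{RM:transitive}: acting by $G_0$ we may assume $\Kh(u) = \phi_0$, so $\fs(u)\subsetneq\fa^{\phi_0}$ has codimension one; if $\fg_-\not\subseteq\fs(u)$ then $\fs_{-1}(u)\subsetneq\fg_{-1}$ and $\fs_0(u) = \bbR(Z-H)$, but $[\,\bbR(Z-H),\,\fg_{-1}\,] = (-\id - \ad_H)(\fg_{-1}) = \fg_{-1}$ since $-1\notin\Spec(\ad_H|_{\fg_{-1}}) = \{0,\pm 2\}$, contradicting $[\fs_0(u),\fg_{-1}]\subseteq\fs_{-1}(u)$ from Proposition \ref{P:key-bracket}. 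Hence $\fg_-\subseteq\fs(u)$ and $\cS$ is transitive at $x$ in this case as well.

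\emph{Expected obstacle.} The real difficulty is the realizability step for type II: unlike the other four types, no model with the maximal number of symmetries was previously recorded, and producing one requires a good ansatz --- here, a symmetry-preserving deformation of the Kaigorodov type III metric --- together with imposing the type II condition. Once the metric is in hand, checking its properties is mechanical; by comparison the filtration-rigidity bookkeeping for type III and the transitivity arguments are light.
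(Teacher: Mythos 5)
Your proposal is correct and follows essentially the same route as the paper: the $\fann(\phi)$ computation of Table \ref{tbl:Petrov} together with Remark \ref{RM:constrained} for the upper bounds, the explicit metrics for sharpness, filtration-rigidity of the five-dimensional type III algebra (via Propositions \ref{P:f-rigid} and \ref{P:FR}, Theorem \ref{T:upper} and Lemma \ref{L:dense}) to cut that bound to $4$, and the Remark \ref{RM:transitive} bracket argument for homogeneity in the type III case. The one step to tighten is the type III rigidity verification: hypothesis (ii) of Proposition \ref{P:f-rigid} in fact \emph{fails} there, since $-1+1=0\in\cE_0$, so the tails $[f_{-1}^2,f_{-1}^4]=a\tilde{e}_0$, $[f_{-1}^3,f_{-1}^4]=b\tilde{e}_0$ survive the eigenvalue bookkeeping and must be killed by an explicit Jacobi identity --- your ``$\ad_{Z-H}$ invertible plus Jacobi'' does accomplish this (e.g.\ the $f_{-1}^1$-component of $\Jac_\ff(f_{-1}^1,f_{-1}^2,f_{-1}^4)$ is $-3a$), but the reason you actually state, that $-3$ is not a sum of two eigenvalues, only excludes tails valued in $\fg_{-1}$ and does not address the $\tilde{e}_0$-valued ones.
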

 
 \begin{proof}  For type III metrics, we must show that the associated (5-dimensional) $\fa = \fg_{-1} \op \fa_0$ is filtration-rigid.
  As a $\fsl_2(\bbC)_\bbR$-representation, $\fg_{-1}$ is the space of Hermitian $2 \times 2$-matrices $\cH$ with the action of $A \in \fsl_2(\bbC)_\bbR$ given by $M \mapsto AM + M A^*$.  For $\cH$, take the standard basis
 \[
 e_{-1}^1 = \pmat{1 & 0\\ 0 & 0}, \quad 
 e_{-1}^2 = \pmat{0 & 1\\ 1 & 0}, \quad
 e_{-1}^3 = \pmat{0 & i\\ -i & 0}, \quad
 e_{-1}^4 = \pmat{0 & 0\\ 0 & 1}.
 \]
 Since $H$ acts by $\diag(2,0,0,-2)$, and $Z$ acts as $-1$ on $\fg_{-1}$, then $e_0 := Z+H$ acts diagonally with eigenvalues $\cE_{-1} = \{ 1, -1, -1, -3 \}$, $\cE_0 = \{ 0 \}$.  (All other brackets on $\fa$ are trivial.)  Let $\ff$ be a filtered deformation of $\fa$.  The first hypothesis of Proposition \ref{P:f-rigid} is satisfied, as is (i), but (ii) is not.  Following the argument there, we still have $\tilde{e}_0 \in \ff^0$ which acts diagonally in some basis $\{ f_i^\alpha \}$ with the same eigenvalues as above, and with the only other non-trivial brackets $[f_{-1}^1,f_{-1}^2] = a \tilde{e}_0$, $[f_{-1}^1,f_{-1}^3] = b \tilde{e}_0$.  By the Jacobi identity, $0 = \Jac_\ff(f_{-1}^1, f_{-1}^2,f_{-1}^3) = bf_{-1}^2 - af_{-1}^3$, so $a = b=0$.  Thus, $\fa$ is filtration-rigid.
 In non-type III cases where the upper bound is realized, $\fg_- \subset \fs(u)$, so local homogeneity follows.  For type III, note $[\fa_0,\fg_{-1}] = \fg_{-1}$.  By Proposition \ref{P:key-bracket}, $[\fs_0(u),\fg_{-1}] \subset \fs_{-1}(u)$ with $\fs_0(u) \supset \fa_0$.  Thus, we must have $\fs_0(u) = 0$ and $\fs_{-1}(u) = \fg_{-1}$, and local homogeneity follows.
 \end{proof}
 
 \subsubsection{General Lorentzian and Riemannian}
 
 For the Lorentzian case, let $\metric = \ppmetric{3,1} + \eucmetric{p,0}$, where $\ppmetric{3,1}$ was given in \S \ref{S:4d-Lor}, and $\eucmetric{p,0} = \sum_{i=1}^p (du^i)^2$.  Then $\metric$ has Killing fields $\bX_k$ (see those for $\ppmetric{3,1}$), $\bU_i = \p_{u^i}$, $\bV_{ij} = u^i \p_{u^j} - u^j \p_{u^i}$, $\bY_i = u^i \p_z - z \p_{u^i}$, $\bW_i = 2 u^i \p_x - w\p_{u^i}$, and the homothety $\wt\bT = \bT +  \sum_{i=1}^{p} u_i \p_{u_i}$, so $\dim(\cS) = \binom{n-1}{2} + 4$.  Since $\fU^\bbC = \binom{n-1}{2} + 6$ is not realizable, it remains to show that neither is $\binom{n-1}{2} + 5$.  This is done in \cite{DT-Weyl}, from which it follows that $\fS = \binom{n-1}{2} + 4$.

 Given a Riemannian metric $\metric$ with nowhere vanishing Weyl tensor, there exists a conformally equivalent metric $\tilde\metric = \lambda^2 \metric$ such that all conformal Killing fields of $\metric$ are Killing fields for $\tilde\metric$ \cite{Nag1958}.  Since the symmetry algebra is determined by restriction to any neighborhood, $\fS$ in the conformal Riemannian case is equal to the maximal dimension of the isometry algebra among metrics which are not conformally flat.  According to Egorov's final result in \cite{Ego1962} (incorporating results in his earlier article \cite{Ego1956}), this number is exactly $\binom{n-1}{2} + 3$.  This is realized by the product of spheres $\bbS^2 \times \bbS^{n-2}$ with their round metrics which is not conformally flat for $n \geq 4$, and has $\SO_3 \times \SO_{n-1}$ symmetry.  However, we caution that there are omitted exceptions to Egorov's statement when $n=4$ and $n=6$.\footnote{For $n=4$, the submaximally symmetric structure is {\em unique}: $\bbC\bbP^2$, cf. Egorov \cite{Ego1955}.} In particular, with its Fubini--Study metric, $\bbC\bbP^m$ is not conformally flat, has real dimension $n=2m$, and has $\SU(m+1)$ symmetry group of dimension $m^2 + 2m$.  This is strictly less than $\binom{n-1}{2} + 3$ for $n > 8$, equal to it when $n=8$, and greater than it when $n=4$ or $6$.

 For an algebraic proof of $\fS$ in the conformal Lorentzian and Riemannian cases, see \cite{DT-Weyl}.
 
 \subsubsection{3-dimensional conformal} 
 \label{S:3d-conf}
 
 In \S \ref{S:exceptions}, we showed $\fS \leq 4$.  Indeed, $\fS = 4$ via the models:
 \begin{align} \label{E:3d-conf}
 \begin{array}{c|c}
 dx^2 + dy^2 + (dz - xdy)^2 & \begin{array}{l} \bX_1 = \p_y, \quad \bX_2 = \p_z, \quad \bX_3 = \p_x + y \p_z, \\ \bX_4 = y\p_x - x\p_y + \half (y^2 - x^2) \p_z \end{array}\\ \hline
 dxdy + (dz - xdy)^2 & \begin{array}{l} \bX_1 = \p_y, \quad \bX_2 = \p_z, \quad \bX_3 = \p_x + y \p_z, \\ \bX_4 = x\p_x - y\p_y \end{array}
 \end{array}
 \end{align}

  %%%%%%%%%%%%%%%%%%%%%%%%%%%%%%%%%%%%%%%%%%%%%%%%%%
 
 \subsection{Bracket-generating distributions}
 
 \subsubsection{$(2,3,5)$-geometry} \label{S:G2P1} The structure underlying a $G_2 / P_1$ geometry is a $(2,3,5)$-distribution $D$.  Locally, these can always be put in Goursat form: there exist coordinates $(x,y,p,q,z)$, such that $D$ is spanned by $\p_q$ and $\frac{d}{dx} := \p_x + p \p_y + q\p_p + F \p_z$, where $F = F(x,y,p,q,z)$. We have
 \[
 \l[\p_q,\frac{d}{dx}\r] = \p_p + F_q \p_z, \quad 
 \l[\p_q, \p_p+F_q\p_z\r] = F_{qq} \p_z, \quad
 \l[\frac{d}{dx}, \p_p + F_q\p_z\r] = - \p_y + H \p_z,
 \]
 for some $H = H(F)$.  Thus, $D$ has generic growth $(2,3,5)$ iff $F_{qq} \neq 0$.
The flat model is obtained when $F = q^2$, and this has 14-dimensional symmetry algebra isomorphic to $\Lie(G_2)$ \cite{Car1910}.  
 
 We work over $\bbC$. We have a 3-grading $\fg = \fg_{-3} \op ... \op \fg_3$ with $\fg_0 = \fgl(\fg_{-1}) \cong \fgl_2(\bbC)$ and 
 \[
 \fg_{-1} \cong \bbC^2, \quad \fg_{-2} \cong \bbC, \quad \fg_{-3} \cong \bbC^2, \quad \fg_j \cong (\fg_{-j})^*,
 \]
 as $\fg_0$-modules.  In Example \ref{EX:G2P1}, we derive $H^2_+(\fg_-,\fg) = \Gdd{xs}{-8,4} \cong \bigodot^4(\bbC^2)^*$ and $\fS = 7$, which recovers Cartan's result \cite{Car1910}.  (The geometry is PR.)  To each root type, we have a collection of $G_0 \cong \GL_2(\bbC)$ orbits $\cO$ in $\bigodot^4(\bbC^2)^*$ and we analyze each (as in \S \ref{S:4d-Lor}).  Let $\{ e_1, e_2\}$ and $\{ \omega_1, \omega_2 \}$ be dual bases in $\fg_{-1}$ and $\fg_1$ respectively.  
 %Any element of $\fg_0 = \fgl(\fg_{-1})$ acts on $\bigodot^4 (\fg_{-1})^*$, e.g. $\pmat{a & b \\ c & d} \cdot \omega_1^4 = -4(a \omega_1 + b\omega_2) \omega_1^3$. 
  Take 
 $X= \pmat{0 & 1\\ 0 & 0}$,
 $Y= \pmat{0 & 0\\ 1 & 0}$,
 $H= \pmat{1 & 0\\ 0 & -1}$,
 $I= \pmat{1 & 0\\ 0 & 1}$ for a $\fg_0 = \fgl(\fg_{-1})$ basis.
 (Note the grading element $Z = -I$.)  Symmetry bounds for each root type are given in Table \ref{F:G2-P1}.
 Some models, found either by Cartan \cite{Car1910} or Strazzullo \cite[\S 6.10]{Str2009}, are given in Table \ref{F:G2-P1-ex}.  We refer to \cite{Str2009} for the symmetry algebras.
 
 \begin{table}[h] 
 $\begin{array}{|c|c|c|c|c|} \hline
 \mbox{Type} & \mbox{Normal form $\phi$} & \mbox{Basis for } \fann(\phi) & \dim(\fa^\phi) & \fa^\phi \mbox{ filtration-rigid?} \\ \hline
 {}(4) & \omega_1^4 & Y, H-I & 7 & \times\\
 {}(3,1) & \omega_1^3 \omega_2 & 2H - I & 6 & \checkmark\\
 {}(2,2) & \omega_1^2 \omega_2^2 & H & 6 & \times\\
 {}(2,1,1) & \omega_1^2 \omega_2 (\omega_1-\omega_2) & \cdot & 5 & \times\\
 {}(1,1,1,1) & \omega_1 \omega_2 (\omega_1 - \omega_2)(\omega_1-k\omega_2) & \cdot & 5 & \times\\ \hline
 \end{array}$
 \caption{Root types and upper bounds on symmetry dimensions for $G_2 / P_1$ geometries}
  \label{F:G2-P1}
 \end{table}

 \begin{table}[h]
 $\begin{array}{|c|c|c|c|c|} \hline
 \mbox{Type} & F(x,y,p,q,z) & \dim(\cS)& \mbox{Reference in \cite{Str2009}}\\ \hline
 {}(4) & q^3 & 7 & 6.1.1\\
 {}(3,1) & p^3 + q^2 & 4 & 6.1.3\\
 {}(2,2) & y + \ln(q) & 6 &  6.3.4\\
 {}(2,1,1) & pq^2 & 5 & 6.1.6\\
 {}(1,1,1,1) & pq^3 & 5 & 6.1.7\\ \hline
 \end{array}$
 \caption{$(2,3,5)$-distributions realizing each root type}
  \label{F:G2-P1-ex}
 \end{table} 
 
  \begin{thm} \label{T:G2P1-bounds} Among $(2,3,5)$-distributions with constant root type:
 \begin{center}
 \begin{tabular}{|c||c|c|c|c|c|} \hline
 Root type & $(4)$ & $(3,1)$ & $(2,2)$ & $(2,1,1)$ & $(1,1,1,1)$\\ \hline
 max. $\dim(\cS)$  & $7$ & $4$ or $5$ & $6$ & $5$ & $5$ \\ \hline
 \end{tabular}
 \end{center}
 Except for type $(3,1)$, all models realizing these upper bounds are locally homogeneous near a regular point.\footnote{Using Cartan's reduction method \cite{Car1910}, R.L. Bryant (private communication) showed that there are no type $(3,1)$ models with 5 symmetries. Thus, $4$ is the maximal symmetry dimension for root type $(3,1)$.}
 \end{thm}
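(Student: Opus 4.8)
The plan is to dispose of the five root types one at a time, combining the prolongation bound of Section~\ref{S:main} with the explicit models already tabulated. Fix a $(2,3,5)$-distribution whose Cartan quartic $\Kh$ has constant root type $t$; then $\Kh$ is nowhere zero, so the geometry is not locally flat, and $\im(\Kh)$ lies in the $G_0$-invariant subset $\cO_t \subset \bbV_\mu = \bigodot^4(\fg_1)$ of binary quartics of type $t$. By Remark~\ref{RM:constrained}(ii) together with Lemma~\ref{L:dense}, $\fS_{\cO_t} \leq \fU_{\cO_t} := \max\{\dim(\fa^\phi) \mid 0 \neq \phi \in \cO_t\}$, and since $G_2/P_1$ is \PRp, for any orbit representative $\phi$ one has $\fa^\phi = \fg_- \op \fann_{\fg_0}(\phi)$ (and $\dim(\fa^\phi)$ is constant on $G_0$-orbits by Lemma~\ref{L:Tanaka-lw}). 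A short orbit analysis of $G_0 \cong \GL_2$ on binary quartics --- $\cO_{(4)}$, $\cO_{(3,1)}$, $\cO_{(2,2)}$ and $\cO_{(2,1,1)}$ are single orbits, while $\cO_{(1,1,1,1)}$ is a one-parameter family of orbits, all with trivial annihilator --- together with the direct stabiliser computations of $\fann_{\fg_0}(\phi)$ in Table~\ref{F:G2-P1}, yields the bounds $\dim(\cS) \leq 7, 6, 6, 5, 5$ in types $(4), (3,1), (2,2), (2,1,1), (1,1,1,1)$ respectively.

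The bound $6$ for type $(3,1)$ is not sharp, and I would reduce it to $5$. Here $\fa := \fa^{\phi_0}$ with $\phi_0 = \omega_1^3 \omega_2$, $\fann_{\fg_0}(\phi_0) = \langle h\rangle$, $h = 2H - I$, and the claim is that $\fa$ is filtration-rigid. With respect to $\ad_h$ the summands $\fg_{-3}, \fg_{-2}, \fg_{-1}, \fa_0$ carry the pairwise disjoint spectra $\{-1,-5\}, \{-2\}, \{1,-3\}, \{0\}$, so Proposition~\ref{P:f-rigid} furnishes a semisimple lift $\tilde h \in \ff^0$ and a filtered basis $\{f_i^\alpha\}$ diagonalising $\ad_{\tilde h}$ with the same eigenvalues; disjointness of the spectra then kills every potential tail except those of the two ``overflow'' brackets $[\ff^{-1},\ff^{-3}]_\ff$ and $[\ff^{-2},\ff^{-3}]_\ff$, whose graded images lie in $\fg_{-4} = \fg_{-5} = 0$. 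These residual tails are eliminated by a direct Jacobi-identity computation --- writing the relevant $\fg_{-3}$-vectors as $\ff$-brackets of $\fg_{-1}$ with $\fg_{-2}$ and iterating --- in the spirit of the type III case of Theorem~\ref{T:Petrov}. Since $\prn(\fg_-,\fg_0) \cong \fg$ for $G_2/P_1$ (Theorem~\ref{T:Y-pr-thm}), Proposition~\ref{P:FR} then forbids $\fg_- \subset \fs(u)$ whenever $\Kh(u) \neq 0$ and $\fs(u) \cong \fa$; evaluating at a regular point (Lemma~\ref{L:dense}) and normalising $\Kh(u)$ into the orbit $\cO_{(3,1)}$ of $\phi_0$ by $G_0$, this excludes $\dim(\cS) = 6$, so $\fS_{\cO_{(3,1)}} \leq 5$.

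For the lower bounds I would invoke the models of Table~\ref{F:G2-P1-ex}, due to Cartan~\cite{Car1910} and Strazzullo~\cite{Str2009}: the Hilbert--Cartan equation $F = q^3$ realises $7$ in type $(4)$; $F = y + \ln q$ realises $6$ in type $(2,2)$; $F = pq^2$ and $F = pq^3$ realise $5$ in types $(2,1,1)$ and $(1,1,1,1)$; and $F = p^3 + q^2$ realises $4$ in type $(3,1)$, whence $\fS_{\cO_{(3,1)}} \in \{4,5\}$ (the exact value $4$ being supplied by Bryant's continuation of Cartan's equivalence method, as noted in the footnote). For local homogeneity: outside type $(3,1)$ the realised value equals $\fU_{\cO_t}$, so for any submaximal model and any regular non-flat point $u$ one has $\dim(\fs(u)) = \fU_{\cO_t} = \dim(\fa^{\Kh(u)})$ (using $\fs(u) \subseteq \fa^{\Kh(u)}$ and $\dim(\fa^{\Kh(u)}) \leq \fU_{\cO_t}$), hence $\fs(u) = \fa^{\Kh(u)} \supseteq \fg_-$, $\cS$ is transitive at $\pi(u)$, and Lie's third theorem applies exactly as in Corollary~\ref{C:transitive}. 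In type $(3,1)$ the realised value $4$ is strictly below $\fU_{\cO_{(3,1)}}$, so this argument is unavailable --- consistently with the fact that $F = p^3 + q^2$ is not locally homogeneous.

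The main obstacle is the filtration-rigidity of $\fa^{\phi_0}$ in the type $(3,1)$ case: the generic criterion of Proposition~\ref{P:f-rigid} does not close the argument, since its eigenvalue hypotheses fail precisely for the two brackets mapping into the vanishing graded pieces of degree $\leq -4$, and ruling out the corresponding tails requires a hands-on Jacobi computation with the $G_2$ structure constants. This is the same delicate point that Bryant addressed by prolonging Cartan's method of equivalence, and it is exactly what separates type $(3,1)$ from type $(2,2)$, where the $\ad_h$-spectra overlap and genuine filtered deformations (hence the sharp value $6$) occur.
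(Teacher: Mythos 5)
Your proposal is correct and follows essentially the same route as the paper: per-root-type upper bounds from the stabiliser data of Table \ref{F:G2-P1} via Remark \ref{RM:constrained}, the Cartan/Strazzullo models of Table \ref{F:G2-P1-ex} for realisability, filtration-rigidity of $\fa^{\phi_0}$ in type $(3,1)$ to cut $6$ down to $5$, and the Corollary \ref{C:transitive}-style transitivity argument for local homogeneity. The only step you assert rather than execute is the final Jacobi computation killing the two tail coefficients (the paper does it explicitly: $\Jac_\ff(f_{-1}^1,f_{-3}^1,f_{-3}^2)=-5af_{-3}^2$ and $\Jac_\ff(f_{-1}^1,f_{-2},f_{-3}^1)=(2a-b)f_{-2}$ force $a=b=0$), and your identification of which eigenvalue hypothesis of Proposition \ref{P:f-rigid} fails --- the cross-degree condition, for the brackets $[\ff^{-1},\ff^{-3}]$ and $[\ff^{-2},\ff^{-3}]$ --- is in fact more accurate than the paper's own labelling.
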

 
 \begin{proof} We show that in type $(3,1)$, $\fa = \fg_- \op \fa_0$ is filtration-rigid.  Here, $e_0 := 2H-I$ spans $\fa_0$.  Choose root vectors such that
 \[
 e_{-1}^1 = e_{-\alpha_1}, \quad e_{-1}^2 = e_{-\alpha_1 - \alpha_2}, \quad e_{-2} = e_{-2\alpha_1 - \alpha_2}, \quad
 e_{-3}^1 = e_{-3\alpha_1 - \alpha_2}, \quad e_{-3}^2 = e_{-3\alpha_1 - 2\alpha_2}
 \]
 satisfy commutator relations $[e_{-1}^1,e_{-1}^2] = e_{-2}$ and $[e_{-1}^i,e_{-2}] = e_{-3}^i$.
 Then $\ad_{e_0}$ is diagonal with 
 \[
 \cE_0 = \{ 0 \}, \quad \cE_{-1} = \{ 1, -3 \}, \quad \cE_{-2} = \{ -2 \}, \quad \cE_{-3} = \{ -1, -5 \}.
 \]
  Let $\ff$ be a filtered deformation of $\fa$.  The first hypothesis of Proposition \ref{P:f-rigid} is satisfied, as is (i), but (ii) is not.  Similar to the Petrov type III case, there is $\tilde{e}_0 \in \ff^0$ which acts diagonally in some basis $\{ f_i^\alpha \}$ with the same eigenvalues as above, and with the only other non-trivial brackets 
  \[
  [f_{-1}^1,f_{-1}^2] = f_{-2}, \quad [f_{-1}^1, f_{-2}] = f_{-3}^1, \quad [f_{-1}^2, f_{-2}] = f_{-3}^2, \quad
  [f_{-1}^1,f_{-3}^1] = a \tilde{e}_0, \quad [f_{-2},f_{-3}^1] = b f_{-1}^2.
  \]
 Since $0 = \Jac_\ff(f_{-1}^1, f_{-3}^1, f_{-3}^2 ) = -5a f_{-3}^2$ and $0 = \Jac_\ff(f_{-1}^1, f_{-2}, f_{-3}^1 ) = (2a - b) f_{-2}$, then $a=b=0$ and so $\fa$ is filtration-rigid. Aside from type $(3,1)$, when the upper bound is realized, $\fg_- \subset \fs(u)$.
  \end{proof}
  
 Theorem \ref{T:G2P1-bounds} recovers the bounds found by Cartan \cite{Car1910} using his method of equivalence.

 \subsubsection{$(3,6)$-geometry} Bryant studied $(3,6)$-distributions in \cite{Bry1979,Bry2006}, and constructed the associated $B_3 / P_3$ geometry, where $B_3 = \SO_7(\bbC)$ (or $\SO_{3,4}$).  This is a 2-graded geometry with $\fg_0  \cong \fgl_3(\bbC)$, $\fg_{-1} = \bbC^3$, and $\bigwedge^2 \fg_{-1} \cong \fg_{-2} \cong \bbC^3$.  From $H^2_+(\fg_-,\fg) \cong \Bthree{ssx}{2,2,-6} = \bbV_{-w\cdot\lambda_2}$, generated by $w = (32)$, we obtain $\fS = 11$.  On $(x_1,x_2,x_3,y_1,y_2,y_3)$-space, we give three models $D = \{ \theta_1 = \theta_2 = \theta_3 = 0 \}$, with $\theta_1 = dy_1 - x_3 dx_2$, $\theta_2 = dy_2 - x_1 dx_3$, and:
 \begin{enumerate}
 \item[(a)] $\theta_3 = dy_3 - x_2 dx_1$\quad (21 symmetries -- flat model);
 \item[(b)] $\theta_3 = dy_3 - x_2 dx_1 - (x_1 x_3)^2 dx_3$\quad (11 symmetries -- submaximally symmetric model);
 \item[(c)] $\theta_3 = dy_3 - (x_2 + y_3) dx_1$\quad (10 symmetries).
 \end{enumerate}
 The symmetries can explicitly be found in {\tt Maple 17}.  The following code does this for model (b):
 \begin{verbatim}
 with(DifferentialGeometry): with(GroupActions): DGsetup([x1,x2,x3,y1,y2,y3],M):
 dist:=evalDG([dy1-x3*dx2,dy2-x1*dx3,dy3-x2*dx1-(x1*x3)^2*dx3]):
 InfinitesimalSymmetriesOfGeometricObjectFields([dist],output="list"); nops(%);
 \end{verbatim}

 Model (b) was obtained by following the construction in Lemma \ref{L:def-a}.  Take $\bbC^7$ with the standard anti-diagonal symmetric $\bbC$-bilinear form, define $\epsilon_i \in \fh^*$ by $\epsilon_i( \diag(a_1,a_2,a_3,0,-a_3,-a_2,-a_1)) = a_i$.  The simple roots are $\alpha_1 = \epsilon_1 - \epsilon_2$, $\alpha_2 = \epsilon_2 - \epsilon_3$, $\alpha_3 = \epsilon_3$.
 The lowest weight of $H^2_+(\fg_-,\fg)$ is $-w\cdot\lambda_2 = -\alpha_1 + 3\alpha_3 = -\epsilon_1 + \epsilon_2 + 3\epsilon_3$.  Since $\Phi_w = \{ \alpha_3, \alpha_2 + 2\alpha_3 \}$, and $w(-\lambda_2) = -\lambda_1 - \lambda_2 + 2\lambda_3 = -\alpha_1 - \alpha_2$, then $\phi_0 = e_{\alpha_3} \wedge e_{\alpha_2 + 2\alpha_3} \ot e_{-\alpha_1 - \alpha_2}$.
Letting $\fa_0 = \fann(\phi_0)$, then $\fa = \fg_- \op \fa_0$ has general element:
 \[
 \mat{ccc|c|ccc}{ a_{22} + 3a_{33} & 0 & 0 & 0 & 0 & 0 & 0\\
 a_{21} & a_{22} & 0 & 0 & 0 & 0 & 0\\
 a_{31} & a_{32} & a_{33} & 0 & 0 & 0 & 0\\ \hline
 a_{41} & a_{42} & a_{43} & 0 & 0 & 0 & 0\\ \hline
 a_{51} & a_{52} & 0 & -a_{43} & -a_{33} & 0 & 0\\
 a_{61} & 0 & -a_{52} & -a_{42} & -a_{32} & -a_{22} & 0\\
 0 & -a_{61} & -a_{51} & -a_{41} & -a_{31} & -a_{21} & -(a_{22}+3a_{33})}.
 \]
 Let $E_{ij}$ denote the matrix with $a_{ij} = 1$ and zeros elsewhere.  If we take the root vectors
 \[
 e_{-\alpha_3} = E_{43} \in \fg_{-1}; \qquad
 e_{-\alpha_2 - 2\alpha_3} = E_{52} \in \fg_{-2}; \qquad
 e_{-\alpha_1 - \alpha_2} = E_{31} \in \fg_0,
 \]
 then $\phi_0$ yields a filtered deformation $\ff$ of $\fa$ via $[E_{43},E_{52}] = E_{31}$.  (Note that $[E_{43}, E_{52}] = 0$ in $\fa$.)  Letting $\fk = \fa_0$, the distribution is $\fg_{-1} / \fk \subset \ff / \fk$.  Integrating the structure equations gives model (b).

 \subsection{Systems of 2nd order ODE}
 \label{S:2-ODE}
 
 Let $m \geq 2$.  Consider a 2nd order ODE system
 \begin{align} \label{E:2-ODE}
 \ddot{x}^i = f^i(t,x^j,\dot{x}^j), \qquad 1 \leq i \leq m
 \end{align}
 in $m = \rkg - 1$ dependent variables, up to {\em point transformations}.  Given the jet spaces $J^k = J^k(\bbR,\bbR^m)$ with contact systems $\cC^k$,   \eqref{E:2-ODE} defines a submanifold $M \subset J^2$ transverse to $\pi^2_1 : J^2 \to J^1$.  Point transformations are diffeomorphisms of $J^2$ which preserve $\cC^2$; all are prolongations of diffeomorphisms of $J^0$.  On $J^2$, in canonical coordinates $(t,x^j,p^j,q^j)$, $\cC^2$ is spanned by $\{ dx^j - p^j dt, \, dp^j - q^j dt \}$ and $M = \{ q^i = f^i(t,x^j,p^j) \}$ fibers over $J^0$.  This is an ``external'' description of the geometry.  Pulling back $\cC^2$ to $M$ yields an ``internal'' description: a manifold of dimension $n = 2m+1$, with coordinates $(t,x^i,p^i)$, and a distribution $D = L \op \Pi$ consisting of:
 \begin{enumerate}
 \item the line field $L$ spanned by $\frac{d}{dt} := \p_t + p^i \p_{x^i} + f^i \p_{p^i}$ whose integral curves correspond to solutions of \eqref{E:2-ODE}, and
 \item the integrable $m$-dimensional distribution $\Pi$ spanned by $\{ \p_{p^i} \}_{i=1}^m$.  This is the fibre of the submersion $M \to J^0$, which is preserved by all point transformations.
 \end{enumerate}
 The flat model is $\ddot{x}^i = 0$, $1 \leq i \leq m$, with symmetry algebra isomorphic to $\fg = A_{m+1} = \fsl(m+2,\bbR)$.

 The {\em Fels invariants} \cite{Fels1995}, namely the Fels curvature and torsion, are respectively:
 \begin{align} \label{E:Fels-inv}
 S^i_{jkl} = G^i_{jkl} - \frac{3}{m+2} G^r_{r(jk} \delta^i_{l)}, \qquad T^i_j = F^i_j - \frac{1}{m} \delta^i_j F^k_k,
 \end{align}
 where
 \begin{align*}
G^i_{jkl} =  \frac{\p^3 f^i}{\p p^j \p p^k \p p^l}, \qquad  F^i_j = \half \frac{d}{dt} \l( \frac{\p f^i}{\p p^j} \r) - \frac{\p f^i}{\p x^j} - \frac{1}{4} \frac{\p f^i}{\p p^k} \frac{\p f^k}{\p p^j}.
 \end{align*}

 When $m \geq 3$, the structure above (also called {\em path geometry}) underlies an $A_{m+1} / P_{1,2}$ geometry, where $A_{m+1} = \SL(m+2,\bbR)$.  The geometry is 2-graded, with $\fg_0 = \bbR^2 \op \fsl(\Pi_{-1})$, $\fg_{\pm 1} = L_{\pm 1} \op \Pi_{\pm 1}$, $\fg_{\pm 2} = L_{\pm 1} \ot \Pi_{\pm 1}$, and $W^\fp_+(2) = \{ (21), (12) \}$ generate homogeneity $+3$ and $+2$ modules:
 \begin{align} \label{E:H2-ODE}
 H^2_+(\fg_-,\fg) &= \pathwts{qxswws}{0,-4,3,0,0,1} \op \pathwts{xxswws}{-4,1,1,0,0,1}\\
 &= L_1 \ot \l(\bigodot{}^3 \Pi_1 \ot \Pi_{-1}\r)_0 \op L_1^{\ot 2} \ot\l( \Pi_1 \ot \Pi_{-1}\r)_0, \nonumber
 \end{align}
 where $(\Pi_{-1})^* \cong \Pi_1$ via the Killing form, so the ``$0$'' subscripts indicate trace-free with respect to contractions.
 For the tensor descriptions above, we used
 \begin{align*}
 L_{-1} &= \pathwts{xxwwww}{2,-1,0,0,0,0}, \qquad
 L_1 = \pathwts{xxwwww}{-2,1,0,0,0,0}, \\
 \Pi_{-1} &= \pathwts{xxwwww}{-1,1,0,0,0,1}, \qquad
 \Pi_{1} = \pathwts{xxwwww}{1,-2,1,0,0,0}. \nonumber
 \end{align*}
 (e.g. The lowest roots of $\Pi_1$ and $\Pi_{-1}$ are $\alpha_2$ and $-\alpha_2 - ... - \alpha_\rkg$, so by the ``minus lowest weight convention'', we convert their negatives into weight notation using the Cartan matrix.)
 
The tensors $S$ and $T$ correspond to $(\Kh)_{+3}$ and $(\Kh)_{+2}$ respectively.  From \S \ref{S:corr-twistor}, we have:
 \begin{itemize}
 \item $S \equiv 0$ iff $(\Kh)_{+3} \equiv 0$ iff the geometry admits an $A_\rkg / P_1$ description, i.e.\ \eqref{E:2-ODE} are the equations for (unparametrized) geodesics of a projective connection, cf. \S \ref{S:projective}.
 \item $T \equiv 0$ iff $(\Kh)_{+2} \equiv 0$ iff the geometry admits an $A_\rkg / P_2 \cong \tGr(2,\bbR^{\rkg+1})$ description.  This is a 1-graded geometry with underlying structure a manifold $N^{2m}$, endowed with a field $\cV \subset \bbP(TN)$ of type $(2,m)$-Segr\'e varieties.  This is a {\em Segr\'e} (or {\em almost Grassmannian}) {\em structure}.\footnote{When $m=2$, this is equivalent to a signature $(2,2)$ conformal structure.  Note $A_3 / P_2 \cong D_3 / P_1$.}
 \end{itemize}
 In \cite{Gro2000}, Grossman studies {\em torsion-free} path geometries, i.e.  \eqref{E:2-ODE} satisfying $T \equiv 0$.  If the geometry is also ``geodesic'', i.e.\ $S\equiv 0$, then $\Kh \equiv 0$, and so the geometry is flat, cf. \cite[Thm.\ 1]{Gro2000}.

 \begin{remark} \label{RM:A3P12}
 For $m = 2$, i.e.\ $A_3 / P_{1,2}$, the underlying structure is a rank 3 distribution $D$ on a 5-manifold with a splitting $D = L \op \Pi$, but now $\Pi$ (rank 2) may not be integrable; instead $[\Pi,\Pi] \subseteq D$ \cite{CS2009,BEGN2012}.  Indeed, $w = (23) \in W^\fp_+(2)$ generates $\Athree{xxw}{4,-4,0} = \bigwedge^2 \Pi_1 \ot L_{-1}$ (homogeneity +1).   For pairs of 2nd order ODE, the same discussion above holds provided $(\Kh)_{+1} \equiv 0$.  Having nowhere vanishing $(\Kh)_{+1}$ is equivalent to $[\Pi,\Pi] = D$, so $\Pi$ becomes a $(2,3,5)$-distribution (\S \ref{S:G2P1}).  In this case, our results establish 9 as an upper bound on the symmetry dimension.  On a 5-manifold $(x,y,p,q,z)$, this is realized by $\Pi = \{ \p_q, \p_x + p \p_y + q\p_p + q^2 \p_z \}$ (i.e.\ the flat model for $(2,3,5)$-distributions) and $L = \{ \p_p + 2q\p_z \}$.   Abstractly, on $\Lie(G_2) / \fp_1$, $\Pi = \{ e_{-\alpha_1}, e_{-\alpha_1 - \alpha_2} \}$ and $L = \{ e_{-2\alpha_1 - \alpha_2} \}$ (mod $\fp_1$).  Only $\fgl_2(\bbC) \cong \fg_0 \subset \fp_1$ stabilizes both $\Pi$ and $L$.  Hence, $\cS \cong \fp_1^{\opn}$.
 \end{remark}
 
 When $m \geq 2$, $\fS_{(21)} = m^2+5$ and $\fS_{(12)} = m^2 + 4$, e.g. for $w = (21)$, \eqref{E:H2-ODE} yields by Recipe \ref{R:red-geom},
 \begin{align*}
 J_w &= \{ 3, m+1 \}, \quad I_w = \{ 1 \}, \quad \bar\fg / \bar\fp \cong A_1 / P_1 \qRa \dim(\fa_+) = 1 \quad\mbox{[NPR]},\\
  \fp_w^{\opn} &\cong \fp_{1,m-1} \subset A_{m-1} \qRa \dim(\fa_0) = 1 + \dim(\fp_w^{\opn}) = m^2 - 2m + 3,\\
  \fS_w &= \fU_w = \dim(\fa(w)) = 2m + 1 + (m^2 - 2m + 3) + 1 = m^2 + 5.
  \end{align*}
 
 \begin{prop}
 Any submaximally symmetric 2nd order ODE system is torsion-free (not geodesic).  A model, along with its symmetries, is:
 \begin{align} \label{E:2-ODE-sm}
 \begin{array}{c|c}
 \l\{ \begin{array}{l@{\,}c@{\,\,}ll}
 \ddot{x}^1 &=& 0, \\
 &\vdots\\
 \ddot{x}^{m-1} &=& 0, \\
 \ddot{x}^m &=& (\dot{x}^1)^3
 \end{array} \r. & 
 \begin{array}{lll}
  \bT = \p_t, \quad \bX_i = \p_{x^i}, \quad  \bA_j = t\p_{x^j}, \quad
 \bB_j^k = x^k \p_{x^j},\\
 \qquad (1 \leq i \leq m, \quad 2 \leq j \leq m, \quad 1 \leq k < m)\\
 \bC = 2 t\p_{x^1} + 3 (x^1)^2\p_{x^m}, \quad
 \bD_1 = x^1 \p_{x^1} + 3 x^m\p_{x^m}, \\
 \bD_2 = t \p_t - x^m \p_{x^m}, \quad
 \bS = t^2 \p_t + t \sum_{i=1}^m x^i\p_{x^i}+\half (x^1)^3 \p_{x^m}.
 \end{array}\\
 \end{array}
 \end{align}
 \end{prop}

 The symmetries are stated as vector fields $\bV = \xi \p_t + \varphi^i \p_{x^i}$ on $J^0$, but these admit unique prolongations $\bV^{(1)} = \bV + \varphi^i_{(1)} \p_{p^i}$ on $J^1$ and $\bV^{(2)} = \bV^{(1)} + \varphi^i_{(2)} \p_{q^i}$ on $J^2$ via the formula
 \begin{align} \label{E:prolongation}
 \varphi^i_{(1)} = \frac{d\varphi^i}{dt} - p^i \frac{d\xi}{dt}, \qquad
 \varphi^i_{(2)} = \frac{d\varphi^i_{(1)}}{dt} - q^i \frac{d\xi}{dt}.
 \end{align}
 Externally, $\bV^{(2)}$ are everywhere tangent to $M \subset J^2$.  Internally, $\bV^{(1)}$ are identified with vector fields on $M$ which preserve the geometric data described earlier.
 
  \begin{remark}
 The model \eqref{E:2-ODE-sm} was found in the $m=2$ case by Casey et al. \cite[(3.5)]{CDT2013}.
 \end{remark} 

 For the scalar case $\ddot{x} = f(t,x,\dot{x})$, i.e.\ $m=1$ and $A_2 / P_{1,2}$, we have $H^2_+(\fg_-,\fg) = \Atwo{xx}{-5,1} \op \Atwo{xx}{1,-5}$.  The two 1-dimensional irreps correspond to the Tresse relative invariants \cite{Tresse1896}:
 \begin{align} \label{E:Tresse}
 I_1 = \l( \frac{d}{dt} \r)^2 (f_{pp}) - f_p \frac{d}{dt} (f_{pp}) - 4 \frac{d}{dt} (f_{xp}) + 4 f_p f_{xp} - 3 f_x f_{pp} + 6 f_{xx}, \qquad I_2 = f_{pppp},
 \end{align}
 where $p = \dot{x}$, and $\frac{d}{dt} = \p_t + p \p_x + f \p_p$.  From \S \ref{S:exceptions}, $\fS \leq 3$, and $\fS = 3$ \cite{Tresse1896} is established by:
 \begin{align} \label{E:scalar-2-ODE}
 \begin{array}{l|l|c}
 \ddot{x} = \displaystyle\frac{\dot{x} - (\dot{x})^3 + a(1-(\dot{x})^2)^{3/2}}{t}, \quad a \neq 0 &
 \begin{array}{l} \bX_1 = \p_x, \quad \bX_2 =t\p_t+x\p_x, \\ \bX_3 = 2tx \p_t + (t^2 + x^2)\p_x \end{array} & I_1 I_2 \neq 0\\ \hline
 \ddot{x} = \displaystyle\frac{\varepsilon (\dot{x})^3 - \half \dot{x}}{t}, \quad \varepsilon = \pm 1 & \begin{array}{l} \bX_1 = \p_x, \quad \bX_2 = t\p_t + x\p_x,\\ \bX_3 = 2tx\p_t + x^2 \p_x \end{array} & I_2 = 0
  \end{array}
 \end{align}

 %%%%%%%%%%%%%%%%%%%%%%%%%%%%%%%%%%%%%%%%%%%%%%%%%%
 
 \subsection{Projective structures}
 \label{S:projective}
 
 A projective connection $[\nabla]$ is an equivalence class of torsion-free affine connections, where $\nabla \sim \widehat\nabla$ iff their unparametrized geodesics are the same.  These structures underlie an $A_\rkg / P_1$ geometry, or equivalently an $A_\rkg / P_{1,2}$ geometry with $S\equiv 0$ (see \S \ref{S:2-ODE}).

  Locally, on an $\rkg$-manifold $N$ with coordinates $(x^a)_{a=0}^{\rkg-1}$, define $\nabla$ via its Christoffel symbols
\begin{align*}
 \nabla_{\p_{x^a}} \p_{x^b} = \Gamma^c_{ab} \p_{x^c}, \qquad \Gamma^c_{ab} = \Gamma^c_{(ab)}, \qquad 0 \leq a,b,c \leq \rkg-1 =: m.
\end{align*}
 A curve $\sigma(t)$ has geodesic equation $(x^a)'' + \Gamma^a_{bc} (x^b)' (x^c)' = 0$.  Locally, we may assume $(x^0)' \neq 0$ and solve for $t = t(x^0)$.  For $i > 0$, regard $x^i$ as functions of $x^0$, so the geodesic equations become
 \begin{align}
 \ddot{x}^i &= \Gamma^0_{ab} \dot{x}^i \dot{x}^a \dot{x}^b - \Gamma^i_{ab} \dot{x}^a \dot{x}^b, \label{E:proj-ODE}\\
 &= \Gamma^0_{jk} \dot{x}^i \dot{x}^j \dot{x}^k + 2\Gamma^0_{0j} \dot{x}^i \dot{x}^j -\Gamma^i_{jk} \dot{x}^j \dot{x}^k + \Gamma^0_{00} \dot{x}^i  - 2\Gamma^i_{0j} \dot{x}^j -\Gamma^i_{00},\qquad 1 \leq i,j,k \leq m. \nonumber
 \end{align}
 Projective equivalence $\nabla \mapsto \widehat\nabla$ is given operationally by
$\Gamma^a_{bc}\mapsto\widehat\Gamma^a_{bc} := \Gamma^a_{bc}+\delta^a_b\Upsilon_c+\delta^a_c\Upsilon_b$, where $\Upsilon$ is an arbitrary 1-form, and \eqref{E:proj-ODE} is invariant under these changes.

 Now $A_\rkg / P_1$ gives $\fg = \fg_{-1} \op \fg_0 \op \fg_1$, $W^\fp_+(2) = \{ (12) \}$, and for $\rkg \geq 3$,
 \begin{align*}
 & H^2_+(\fg_-,\fg) = \projwts{xsswws}{-4,1,1,0,0,1} = \l( \bigwedge{\!}^2 \fg_1 \ot \fsl(\fg_{-1}) \r)_0, \quad J_w = \{ 2, 3, \rkg \}, \quad I_w = \emptyset,\\
 & \fp_w^{\opn} \cong \fp_{1,2,\rkg-1} \subset A_{\rkg-1}, \quad \dim(\fa_0) = \rkg^2 - 3\rkg + 5, \quad \fU_w = \dim(\fg_{-1}) + \dim(\fa_0) =  (\rkg-1)^2 + 4,
  \end{align*}
 which recovers Egorov's result $\fS = (\rkg-1)^2 + 4$ for $\rkg \geq 3$ in \cite{Ego1951}.  Egorov also gave the model:
 \begin{align} \label{E:Egorov-model}
 \Gamma^0_{12} = \Gamma^0_{21} = x^1, \quad
 \Gamma^c_{ab} = 0 \mbox{ otherwise}
 \quad\leftrightarrow\quad \ddot{x}^i = 2 x^1 \dot{x}^1 \dot{x}^2 \dot{x}^i, \quad 1 \leq i \leq m,
 \end{align}
% The Weyl curvature of this connection has components
% \begin{align*}
% W_{121}{}^0 = -W_{211}{}^0 = 1, \qquad W_{abc}{}^d = 0 \mbox{ otherwise}.
% \end{align*}
 which has the following $m^2 + 4$ point symmetries (writing $t := x^0$):
 \begin{align} \label{E:Egorov-syms}
\l\{ \begin{array}{ll}
\p_t,\quad \p_{x^2},\ \dots,\ \p_{x^m},\quad x^i\p_t,\quad x^i\p_{x^j}\quad (i\ge1,\ j\ge3),\\
2t\p_t+x^1\p_{x^1},\quad t\p_t+x^2\p_{x^2},\quad x^1x^2\p_t - \p_{x^1},\quad (x^1)^3\p_t - 3x^1\p_{x^2}.
\end{array} \r.
 \end{align}
 On $(t,x^j,p^j)$-space $M$, the vertical subspace for the projection $M \to N$ is spanned by $\{ \p_{p^i} \}_{i=1}^m$, so \eqref{E:Egorov-syms} are also the symmetries of $[\nabla]$ determined by \eqref{E:Egorov-model}.  The Fels torsion $T = (T^i_j)$ of \eqref{E:Egorov-model} has matrix rank 1 with nontrivial components $T^i_1=-p^1p^2 p^i$ and $T^i_2=(p^1)^2 p^i$.

 When $m=1$, i.e. $A_2 / P_1$, we have $\fS \leq 3$ from \S \ref{S:exceptions}.  Writing $(x^0,x^1) = (x,y)$, $\varepsilon = \pm 1$, the model
 \begin{align} \label{E:2d-proj}
 \Gamma^0_{00} = -\frac{1}{2x}, \quad \Gamma^0_{11} = \frac{\varepsilon}{x}, \quad  \Gamma^c_{ab} = 0 \mbox{ otherwise}, \qquad \begin{array}{l} \bX_1 = \p_x, \quad \bX_2 = t\p_t + x\p_x,\\ \bX_3 = 2tx\p_t + x^2 \p_x \end{array}
 \end{align}
 confirms $\fS = 3$ \cite{Tresse1896}.  Indeed, the second model in \eqref{E:scalar-2-ODE} is the geodesic equation of $[\nabla]$.

 %%%%%%%%%%%%%%%%%%%%%%%%%%%%%%%%%%%%%%%%%%%%%%%%%%
 
 \subsection{$(2,m)$-Segr\'e structures} A {\em Segr\'e} (or {\em almost Grassmannian}) structure is a $k(\rkg+1-k)$-manifold $N$ with a field $\cV \subset \bbP(TN)$ of $(k,\rkg+1-k)$-Segr\'e varieties.  These arise as underlying structures of $A_\rkg / P_k \cong \tGr(k,\bbR^{\rkg+1})$ geometries (for $k \neq 1,\rkg$).  We focus on the $(k,\rkg) = (2,m+1)$ case. The structure is encoded by a $2 \times m$ matrix of 1-forms $\Theta = \{ \Theta^A_i \}$ on $N^{2m}$: its $2\times 2$ minors 
 \begin{align} \label{E:Segre-eqns}
 \Theta^A_i \Theta^B_j - \Theta^A_j \Theta^B_i = 0, \quad 1 \leq i, j \leq m, \quad 1 \leq A,B \leq 2,
 \end{align}
 cut out the image $\cV|_z \subset \bbP(T_z N)$ of the Segr\'e embedding $\sigma : \bbP^1 \times \bbP^{m-1} \inj \bbP^{2m-1} \cong \bbP(T_z N)$, $[u] \times [v] \mapsto [u\ot v]$.  We have $W^\fp_+(2) = \{ (21), (23) \}$, and for $m \geq 2$,
 \[
 \fS_{(21)} = m^2 + 5, \qquad \fS_{(23)} = \l\{ \begin{array}{cl} m^2 - m + 8, & m \geq 3;\\ 9, & m=2. \end{array} \r.
 \]
 Thus, $\fS = \fS_{(21)}$, but when $m=2$ or 3, we also have $\fS = \fS_{(23)}$.
 
 The $(21)$-branch for 2nd order ODE systems corresponds to the $(21)$-branch for Segr\'e structures.  We exhibit a Segr\'e structure in this branch.  The twistor space $N$ of the ODE \eqref{E:2-ODE-sm} on $M$ is its solution space, i.e. the quotient by the integral curves of $\frac{d}{dt}$.  Any solution $c(t)$ of \eqref{E:2-ODE-sm} is given by
 \begin{align*}
 x^i = a_i t + b_i + \half (a_1)^3 t^2 \delta_{im}, \qquad p^i = a_i + (a_1)^3 t\, \delta_{im},
 \end{align*}
 where $z = (a_1,...,a_m, b_1,..., b_m)$ are parameters (local coordinates on $N$).  The map $\Psi : M \to N$ is
 \begin{align*}
 a_i = p^i - (p^1)^3 t\, \delta_{im}, \qquad b_i = x^i - p^i t + \half (p^1)^3 t^2 \delta_{im}.
 \end{align*}
 This induces a curve $\zeta(t) = \Psi_*(\Pi|_{c(t)})$ in $\tGr(m,T_z N)$ with $\zeta(t)$ spanned by:
 \begin{align} \label{E:Push-Pi}
 \bZ_i(t) = \Psi_*(\p_{p^i}|_{c(t)}) = \p_{a_i} - t \p_{b_i} 
 + \delta_{i1} \left( - 3 (a_1)^2 t \p_{a_m} + \frac{3}{2} (a_1)^2 t^2 \p_{b_m} \right).
 \end{align} 
 This curve satisfies \eqref{E:Segre-eqns}, where $\Theta = \{ \Theta^A_i \}$ is given by
 \begin{align} \label{E:Segre-sm}
 \Theta^1_i = da_i - \frac{3}{2} (a_1)^2 \delta_{im} db_1, \qquad \Theta^2_i = db_i.
 \end{align}
 Thus, $\zeta(t) \subset \cV|_z$ as $t$ varies (and moreover fills it when regarding $t$ as a projective parameter).
 
 Given any symmetry $\bV$ of \eqref{E:2-ODE-sm}, $\widetilde\bV = \Psi_*(\bV^{(1)})$ is a symmetry of the Segr\'e structure \eqref{E:Segre-sm}, i.e.\ the ideal generated by \eqref{E:Segre-eqns} is preserved under $\cL_{\widetilde\bV}$.  We obtain:
 \begin{align} \label{E:Segre-syms}
 \left\{ \begin{array}{l}
 \widetilde\bT = -(a_1)^3 \p_{a_m} - \sum_{i=1}^m a_i \p_{b_i}, \quad \widetilde\bX_i = \p_{b_i}, \quad
 \widetilde\bA_j = \p_{a_j}, \quad \widetilde\bB_j^k = a_k \p_{a_j} + b_k \p_{b_j}, \\
  \qquad\qquad\qquad (1 \leq i \leq m, \quad 2 \leq j \leq m, \quad 1 \leq k < m) \\
 \widetilde\bC = 2\p_{a_1} + 6 a_1 b_1 \p_{a_m} + 3(b_1)^2 \p_{b_m}, \quad
 \widetilde\bD_1 = a_1 \p_{a_1} + 3 a_m \p_{a_m} + b_1 \p_{b_1} + 3 b_m \p_{b_m},\\
 \widetilde\bD_2 = -\sum_{k=1}^{m-1} a_k \p_{a_k} - 2a_m \p_{a_m} - b_m \p_{b_m},\quad
 \widetilde\bS = \sum_{i=1}^m b_i \p_{a_i} + \frac{3}{2} a_1 (b_1)^2 \p_{a_m} + \half (b_1)^3 \p_{b_m}.
 \end{array} \right.
 \end{align}
 When $m=2$, $\metric := \Theta^1_1 \Theta^2_2 - \Theta^1_2 \Theta^2_1$ is (after changing coordinates) the $(2,2)$ pp-wave \eqref{E:pp-2,2}.  Thus,
 
 \begin{prop}
 If $m \neq 3$, any submaximally symmetric $(2,m)$-Segr\'e structure is necessarily of ODE type, i.e.\ in the $(21)$-branch (possibly after using duality in the $m=2$ case).  For $m \geq 2$, a submaximally symmetric model $\Theta = \{ \Theta^A_i \}$ is given by \eqref{E:Segre-sm} with symmetries \eqref{E:Segre-syms}.
 \end{prop}

 Any Segr\'e variety admits two {\em rulings}: writing $\sigma_u(v) = \sigma(u,v) = \sigma^v(u)$, we have for \eqref{E:Segre-sm}:
 \begin{itemize}
 \item $m$-ruling: $\sigma_u : \bbP^{m-1} \to \bbP^{2m-1}$ for $[u] \in \bbP^1$.  Each $m$-plane in $\cR|_z \subset \tGr(m,T_z N)$ has basis
 \begin{align*}
 u_1\p_{a_i} + u_2 \left( \p_{b_i} + \delta_{i1} \frac{3}{2} (a_1)^2 \p_{a_m} \right), \quad 1 \leq i \leq m.
 \end{align*}
 \item $2$-ruling: $\sigma^v : \bbP^1 \to \bbP^{2m-1}$ for $[v] \in \bbP^{m-1}$. Each $2$-plane in $\cP|_z \subset \tGr(2,T_z N)$ has basis
 \begin{align*}
 v_1 \p_{a_1} + ... + v_m \p_{a_m}, \quad v_1 \l( \frac{3}{2} (a_1)^2 \p_{a_m} + \p_{b_1}\r) + v_2 \p_{b_2} ... + v_m \p_{b_m}.
 \end{align*}
 \end{itemize}
 This gives submanifolds $\cR \subset \tGr(m,TN)$ and $\cP \subset \tGr(2,TN)$.  From \eqref{E:Push-Pi}, we see that $\Psi_*(\Pi) \subset \cR$.  The image under $\Psi$ of each fibre of $M \to J^0$ is an $m$-dimensional submanifold $\Sigma \subset N$, with $T_z \Sigma \in \cR|_z \subset T_z N$ for any $z \in \Sigma$.  Conversely, every element of $\cR$ is tangent to such a submanifold, so $\cR$ is {\em integrable}.  In fact, Grossman \cite{Gro2000} shows that integrability of the $m$-ruling $\cR$ is a general feature of Segr\'e structures arising from 2nd order ODE systems by establishing an isomorphism of the bundles $M \to N$ and the $\bbP^1$-bundle $P_\Delta \to N$ whose fibres parametrize the $m$-ruling.   The $2$-ruling $\cP$ is never integrable for such structures, except for the flat model.  For Segr\'e structures in the $(23)$-branch, $\cR$ is non-integrable while $\cP$ is integrable.
 
 \newpage
 \appendix
  
 %%%%%%%%%%%%%%%%%%%%%%%%%%%%%%%%%%%%%%%%%%%%%%%%%%
 \section{Yamaguchi's prolongation and rigidity theorems}
 \label{App:Yam}

 Let $\fg$ be a complex simple Lie algebra and $\fp$ a parabolic subalgebra.  In \cite{Yam1993}, Yamaguchi proved:\footnote{The rigidity list in \cite{Yam1993} contains some minor errors, which were corrected in \cite{Yam1997}.}
 
  \begin{appthm}[Prolongation theorem] \label{T:Y-pr-thm} We have $\fg \cong \prn(\fg_-)$ except for:
 \begin{enumerate}[(a)]
 \item 1-gradings: \quad $A_\rkg / P_k, \quad B_\rkg / P_1, \quad C_\rkg / P_\rkg, \quad 
  D_\rkg / P_1, \quad D_\rkg / P_\rkg,\quad E_6 / P_6, \quad E_7 / P_7$.
 \item contact gradings, i.e.\ 2-gradings with $\dim(\fg_{-2}) = 1$, and non-degenerate bracket $\bigwedge^2 \fg_{-1} \to \fg_{-2}:$
 \[
 A_\rkg / P_{1,\rkg}, \quad B_\rkg / P_2, \quad C_\rkg / P_1, \quad D_\rkg / P_2, \quad G_2 / P_2, \quad F_4 / P_1, \quad E_6 / P_2, \quad E_7 / P_1, \quad E_8 / P_8.
 \]
 \item $(\fg,\fp) \cong (A_\rkg, P_{1, i})$ with $\rkg \geq 3$ and $i \neq 1, \rkg$, or $(C_\rkg, P_{1,\rkg})$ with $\rkg \geq 2$.
 \end{enumerate}
 Moreover, we always have $\fg \cong \prn(\fg_-,\fg_0)$ except when $(\fg,\fp) \cong (A_\rkg,P_1)$ or $(C_\rkg,P_1)$.
 \end{appthm}

 \begin{appthm}[Rigidity theorem] \label{T:Y-rigid} We have $H^2_+(\fg_-,\fg) \neq 0$ if and only if $(\fg,\fp)$ is: (i) 1-graded, (ii) a contact gradation, or (iii) listed in Table \ref{table:non-rigid}.
 \end{appthm}
 
  \begin{table}[h]
 $\begin{array}{|l|c|c|c|c|c|c|} \hline
 G & \mbox{Range} & \mbox{2-graded} & \mbox{3-graded} & \mbox{4-graded} & \mbox{5-graded} & \mbox{6-graded} \\ \hline\hline
 A_\rkg & \rkg \geq 4 & P_{1,s}, P_{2,s}, P_{s,s+1} & P_{1,2,s}, P_{1,s,\rkg} & P_{1,2,s,t} & - & -\\
 & \rkg = 3 & P_{1,2} & P_{1,2,3} & - & - & -\\ \hline
 B_\rkg & \rkg \geq 4 & P_3, P_\rkg & P_{1,2} & P_{2,3} & - & -\\ 
	& \rkg = 3 & P_3 & P_{1,2}, P_{1,3} & P_{2,3} & P_{1,2,3} & -\\
	& \rkg = 2 & - & P_{1,2} & - & - & - \\ \hline
 C_\rkg & \rkg \geq 4 & P_2, P_{\rkg-1} & P_{1,\rkg}, P_{2,\rkg}, P_{\rkg-1,\rkg} & P_{1,2} & P_{1,2,\rkg} & P_{1,2,s}\, (s < \rkg)\\
 	& \rkg = 3 & P_2 & P_{1,3}, P_{2,3} & P_{1,2} & P_{1,2,3} & -\\ \hline
 D_\rkg & \rkg \geq 5 & P_3, P_{1,\rkg} & P_{1,2} & P_{2,3}, P_{1,2,\rkg} & - & -\\
	& \rkg = 4 & P_{1,4} & P_{1,2} & P_{1,2,4} & - & - \\ \hline
 G_2 & -& - & P_1 & - & P_{1,2} & -\\ \hline
 \end{array}$
 \caption{Yamaguchi-nonrigid geometries, excluding 1-graded and parabolic contact geometries}
 \label{table:non-rigid}
 \end{table}

 %%%%%%%%%%%%%%%%%%%%%%%%%%%%%%%%%%%%%%%%%%%%%%%%%%

 \section{Dynkin diagram recipes}
%:Recipes

 \label{App:Dynkin}
 
 We use the notations of \S \ref{S:rep-th}. The following Dynkin diagram recipes are well-known \cite{BE1989,CS2009}.
 
 \begin{recipe} \label{R:dim} 
 Deleting the $I_\fp$ nodes from $\cD(\fg,\fp)$ yields $\cD(\fg_0^{ss})$, and $\dim(\fz(\fg_0)) = |I_\fp|$.  Also, 
 \[
 \dim(\fg_-) = \half(\dim(\fg) - \dim(\fg_0)), \qquad \dim(\fp) = \half(\dim(\fg) + \dim(\fg_0)).
 \]
 \end{recipe}
 
 \begin{recipe} \label{R:reflection} Let $\lambda \in \fh^*$ be the weight with coefficient $r_i = \langle \lambda, \alpha_j^\vee \rangle$ inscribed on the $i$-th node of $\cD(\fg)$.  To compute $\sr_j(\lambda)$, add $r_j$ to adjacent coefficients, with multiplicity if there is a multiple edge directed towards the adjacent node; then replace $r_j$ by $-r_j$.
 \end{recipe}

 \begin{recipe} \label{R:Hasse} $w = (jk) \in W^\fp(2)$ iff $j \in I_\fp$ and $j \neq k \in I_\fp \cup \cN(j)$, where $\cN(j) = \{ i \mid c_{ij} \leq -1 \}$.
 \end{recipe}
 
 By the ``minus lowest weight'' convention, the $\fg_0$-irrep $\bbV_\mu$ with lowest weight $\mu$ is denoted by the Dynkin diagram notation for $-\mu$.  Below is Kostant's theorem for $H^2(\fg_-,\bbU)$.

  \begin{recipe} \label{R:Kostant} Let $\bbU$ be a $\fg$-irrep with minus lowest weight $\lambda$.  Then
 $H^2(\fg_-,\bbU) \cong \bop_{w \in W^\fp(2)} \bbV_{-w\cdot \lambda}$
 as $\fg_0$-modules.
 Let $w = (jk) \in W^\fp(2)$, so $\Phi_w = \{ \alpha_j, \sr_j(\alpha_k) \}$.  Via the isomorphism $H^2(\fg_-,\bbU) \cong \ker(\Box) \subset \bigwedge^2 \fg_-^* \ot \bbU$, the $\fg_0$-module $\bbV_{-w\cdot \lambda}$ has the unique (up to scale) lowest weight vector
 \begin{align} \label{E:phi-0}
 \phi_0 := e_{\alpha_j} \wedge e_{\sr_j(\alpha_k)} \ot v,
 \end{align}
 where $e_\gamma \in \fg_\gamma$ are root vectors, and $v \in \bbU$ is a weight vector with weight $w(-\lambda)$.
 \end{recipe}
 
 Let $-\mu$ be $\fp$-dominant, and define $J_\mu := \{ j \not\in I_\fp \mid \langle \mu, \alpha_j^\vee \rangle \neq 0 \}$.
 
 \begin{recipe} \label{R:a0} Let $\phi_0 \in \bbV_\mu$ a lowest weight vector, $\fa_0 = \fann(\phi_0)$, $\fp_\mu^{\opn} := (\fg_0^{ss})_{\leq 0}$ be the (opposite) parabolic in $\fg_0^{ss}$ in the $Z_{J_\mu}$ grading.
   Then $ \dim( \fa_0) = \max\{ \dim(\fann(\phi) ) \mid 0 \neq \phi \in \bbV_{\mu} \}$, with:
 \begin{align}
 \fa_0 & = \{ H \in \fh \mid \mu(H) = 0 \} \op \bop_{\gamma \in \Delta(\fg_{0,\leq0})} \fg_\gamma, \label{E:a0}\\
 \dim( \fa_0) &= \dim(\fp_\mu^{\opn}) + |I_\fp| - 1. \label{E:dim-a0}
 \end{align}
 \end{recipe}
 
  The notations and recipes below are new (see \S \ref{S:PR-analysis}). Define $I_\mu := \{ j \in I_\fp \mid \langle \mu, \alpha_j^\vee \rangle = 0 \}$.
 \begin{itemize}
 \item  Denote by $\cD(\fg,\fp,\mu)$ the Dynkin diagram notation for $-\mu$ marked with an {\em asterisk} on uncrossed nodes with a nonzero coefficient, and a {\em square} on crossed nodes with a zero coefficient.
 \item Let $\fa(\mu) := \prn^\fg(\fg_-,\fann(\phi_0))$, where $\phi_0$ is a lowest weight vector in $\bbV_\mu$.
 \item If $\fg$ is simple, $w \in W^\fp(2)$, and $\mu = -w\cdot \lambda_\fg$, write $J_w := J_\mu$, $I_w := I_\mu$, $\fa(w) = \fa(\mu)$, etc.
 \end{itemize}
 
 \begin{recipe} \label{R:squares} $(\fg,\fp,\mu)$ is PR iff $I_\mu = \emptyset$ iff $\cD(\fg,\fp,\mu)$ has no squares.
 \end{recipe}

 \begin{recipe} \label{R:red-geom} If $I_\mu \neq \emptyset$, then on $\cD(\fg,\fp,\mu)$, remove all nodes corresponding to $I_\fp \backslash I_\mu$ and $J_\mu$, and any edges connected to these nodes.  In the resulting diagram, remove any connected components which do not contain an $I_\mu$ node.  This is $\cD(\overline\fg,\overline\fp)$ for the reduced geometry $(\overline\fg, \overline\fp)$, where $\overline\fp$ corresponds to crosses on the $I_\mu$ nodes, and $\dim(\fa_k(\mu)) = \dim(\overline\fg_k)$, $\forall k > 0$.  Combine with Recipes \ref{R:dim} and \ref{R:a0} to compute $\fU_\mu = \dim(\fa(\mu)) = \dim(\fg_-) + \dim(\fa_0) + \dim(\fa_+)$.
 \end{recipe}

 \begin{ex}[$G_2 / P_1$] \label{EX:G2P1} The highest weight of $\fg = \Lie(G_2)$ is $\lambda_\fg = \lambda_2 = 3\alpha_1 + 2\alpha_2 = \Gdd{ww}{0,1}$.  For $G_2 / P_1$, i.e.\ $\Gdd{xw}{}$, the grading element is $Z = Z_1$, $\fg$ is 3-graded, $\fz(\fg_0) = \tspan\{ Z_1 \}$, $\fg_0^{ss} \cong \fsl_2(\bbC)$, and $W^\fp(2) = \{ (12) \}$.  Given $w = (12)$, we compute $w \cdot \lambda_\fg = w(\lambda_\fg + \rho) - \rho$ using Recipe \ref{R:reflection}:
 \[
% w \cdot \lambda_\fg = -8\lambda_1 + 4\lambda_2 = -4\alpha_1, 
  w \cdot \lambda_\fg = (12) (\Gdd{xw}{1,2}) - \rho = (1) (\Gdd{xw}{7,-2}) - \rho= \Gdd{xw}{-8,4} = -8\lambda_1 + 4\lambda_2 = -4\alpha_1,
 \]
 so by the minus lowest weight convention, $\bbV_{-w\cdot\lambda_\fg} = \Gdd{xw}{-8,4}$ has homogeneity $+4$, and $W^\fp(2) = W^\fp_+(2)$.  Since the lowest root of $\fg_1$ is $\alpha_1 = 2\lambda_1 - \lambda_2$, then as a $\fg_0 \cong \fgl_2(\bbC)$ module,
 \[
 H^2(\fg_-,\fg) = \bbV_{-w\cdot \lambda_\fg} = \Gdd{xw}{-8,4} = \bigodot{}^4( \Gdd{xw}{-2,1} ) = \bigodot{}^4(\fg_1) = \bigodot{}^4(\fg_{-1})^* = \bigodot{}^4 (\bbC^2)^*.
 \]
 This recovers Cartan's result \cite{Car1910} that the fundamental (harmonic) curvature tensor for $(2,3,5)$-geometries is a binary quartic field defined on the 2-plane distribution.  Furthermore,
 \[
 \Phi_w = \{ \alpha_1, 3\alpha_1 + \alpha_2 \}, \qquad w(-\lambda_\fg) = 3\lambda_1 - 2\lambda_2 = -\alpha_2.
 \]
 Thus, $H^2(\fg_-,\fg) \cong \ker(\Box) \subset \bigwedge^2 \fg_-^* \ot \fg$ has {\em lowest} weight vector $\phi_0 = e_{\alpha_1} \wedge e_{3\alpha_1 + \alpha_2} \ot e_{-\alpha_2}$.

For $\Gdd{xs}{-8,4}$, $J_w = \{ 2 \}$, $I_w = \emptyset$, i.e.\ no squares.  Thus,  $G_2 / P_1$ is PR, so $\fa(w) = \fg_- \op \fa_0$.  Hence, $\fp_w^{\opn} \cong \fp_1 \subset A_1$, so $\dim(\fa_0) = \dim(\fp_w^{\opn}) = 2$.  Thus, $\fS = \dim(\fa(w)) = \dim(\fg_-) + \dim(\fa_0) = 5 + 2 = 7$.

 \end{ex}
  
  See the examples in \S \ref{S:Dynkin} for applications of Recipe \ref{R:red-geom}.
  
 %%%%%%%%%%%%%%%%%%%%%%%%%%%%%%%%%%%%%%%%%%%%%%%%%%
 
 \section{Submaximal symmetry dimensions}
 \label{App:Submax}

  The tables to follow summarize data associated to the gap problem for all {\em regular, normal} parabolic geometries of {\em complex} or {\em split-real} type $(G,P)$ with $G$ {\em simple}.  Let $\fS$ be the submaximal symmetry dimension, and define $\fS_w$ similarly but require $\im(\Kh) \subset \bbV_{-w\cdot\lambda_\fg}$.  Then $\fS = \max_{w \in W^\fp_+(2)} \fS_w$.
For each $w \in W^\fp_+(2)$, compute $\fU_w = \dim(\fa(w))$ by Recipe \ref{R:red-geom}.  Other than the exceptional $A_2 / P$ and $B_2 / P$ case (Table \ref{F:A2B2}), we always have $\fS_w = \fU_w$, c.f. Theorem \ref{T:main-thm}.  Also, by Remark \ref{E:coverings}, we may simplify the calculation of $\fS_w$ by always passing to the minimal twistor space.
 
 Note that $\fa_1 = 0$ in Tables \ref{F:A2B2}, \ref{F:1-graded}, \ref{F:contact}.

 \begin{tiny}
 \begin{table}[h]
 $\begin{array}{|c|c|c|c|c|c|c|c|c|c|c|} \hline
 G & P & \dim(G/P) & w & J_w & \begin{tabular}{c}Twistor\\space type\end{tabular} & \fS_w & \fU_w\\ \hline\hline
 A_2 & P_1 & 2 & (12) & \{ 2 \} & - & 3 & 4\\
 	& P_{1,2} & 3 & (12) & \emptyset & A_2 / P_1 & 3 & 4\\
		&&& (21) & \emptyset & A_2 / P_2 & 3 & 4\\ \hline\hline
 B_2 & P_1 & 3& (12) & \{ 2 \} & - & 4 & 5\\
 	& P_2 & 3 & (21) & \{ 1 \} & - & 5 & 5\\
	& P_{1,2} & 4 & (12) & \emptyset & B_2 / P_1 & 4 & 5\\
		&&& (21) & \emptyset & B_2 / P_2 & 5 & 5\\
  \hline
 \end{array}$
 \caption{Submaximal symmetry dimensions for geometries of type $A_2/P$ and $B_2 / P$}
 \label{F:A2B2}
 \end{table}
 
 \begin{table}[h]
 $\begin{array}{|c|c|c|c|c|c|c|}\hline
 G & P &\mbox{Range} & \dim(G/P) & w & J_w & \fS_w = \fU_w\\ \hline\hline
 A_\rkg & P_1 & \rkg \geq 3 & \rkg & (12) & \{ 2,3,\rkg \} & (\rkg-1)^2 + 4\\
 	& P_2 & \rkg \geq 3 & 2(\rkg -1)  & (21) & \{ 3, \rkg \} & (\rkg-1)^2 + 5 \\
 	&&&&(23) & \mycase{\{ 1, 4, \rkg \}, & \rkg \geq 4;\\ \{ 1 \}, & \rkg = 3} & \mycase{\rkg^2 - 3\rkg + 10, & \rkg \geq 4;\\ 9, & \rkg = 3}\\ 
		& P_k & 3 \leq k \leq \lceil \frac{\rkg}{2}\rceil & k(\rkg+1-k)  & (k,k+1) & \{ 1, k-1, k+2, \rkg \} & \rkg(\rkg-1) - k(\rkg-k) + 6\\
 	&&&& (k,k-1) & \{ 1, k-2, k+1, \rkg \} & \rkg(\rkg - k) + (k-1)^2+6\\ \hline
 B_\rkg & P_1 & \rkg \geq 3 & 2\rkg-1  & (12) & \{ 3 \} & 2\rkg^2 - 5\rkg + 9\\ \hline
 C_\rkg & P_\rkg & \rkg \geq 3 & \binom{\rkg+1}{2} & (\rkg,\rkg-1) & \{ 1, \rkg - 2, \rkg - 1 \} & \frac{\rkg(3\rkg - 5)}{2} + 5\\[0.02in] \hline
 D_\rkg & P_1 & \rkg \geq 4 & 2\rkg-2 & (12) & \mycase{\{ 3 \}, & \rkg \geq 5; \\ \{ 3, 4 \}, & \rkg = 4} & 2\rkg^2 - 7\rkg + 12 \\
		& P_\rkg & \rkg \geq 5 & \binom{\rkg}{2} & (\rkg,\rkg-2) & \{ 2, \rkg-3, \rkg-1 \} & \frac{\rkg(3 \rkg - 11)}{2} + 16\\[0.02in] \hline
 E_6 & P_6 & - & 16 & (65) & \{ 2, 4 \} & 45\\
 E_7 & P_7 & - & 27 & (76) & \{ 1, 5 \} & 76\\ \hline
 \end{array}$
 \caption{Submaximal symmetry dimensions for 1-graded geometries}
 \label{F:1-graded}
 \end{table}

 \begin{table}[h]
 $\begin{array}{|c|c|c|c|c|c|c|c|}\hline
 G & P &\mbox{Range} & \dim(G/P) & w & J_w & \fS_w = \fU_w\\ \hline\hline
 A_\rkg & P_{1,\rkg} & \rkg \geq 3 & 2\rkg-1 & (1,\rkg) & \{ 2, \rkg-1 \} & (\rkg-1)^2 + 4\\
 &&&& (12) & \mycase{ \{ 2, 3 \}, & \rkg \geq 4; \\ \{ 2 \}, & \rkg = 3 } & (\rkg-1)^2 + 4\\
 &&&& (\rkg, \rkg-1) & \mycase{ \{ \rkg-2, \rkg-1 \}, & \rkg \geq 4; \\ \{ 2 \}, & \rkg = 3 } & (\rkg-1)^2 + 4\\ \hline
 B_\rkg & P_2 & \rkg \geq 3 & 4\rkg - 5 &(21)& \{ 1, 3 \} & 2\rkg^2 - 5\rkg + 8\\ 
 &&&&(23) & \mycase{\{ 1, 3, 4 \}, & \rkg \geq 4;\\ \{ 1, 3 \}, & \rkg = 3} & \mycase{2\rkg^2 - 7\rkg + 15, & \rkg \geq 4;\\ 11, & \rkg = 3}\\ \hline
 C_\rkg & P_1 & \rkg \geq 2 & 2\rkg-1 & (12) & \mycase{\{ 2, 3\}, & \rkg \geq 3;\\ \{ 2\}, & \rkg = 2} & \mycase{2\rkg^2 - 5\rkg + 8, & \rkg \geq 3;\\ 5, & \rkg = 2}\\ \hline
 D_\rkg & P_2 & \rkg \geq 4 & 4\rkg-7 &(21)& \mycase{\{ 1, 3 \}, & \rkg \geq 5;\\ \{ 1, 3, 4 \}, & \rkg = 4} & 2\rkg^2 - 7\rkg + 11\\
 &&&&(23) & \mycase{\{ 1, 3, 4 \}, & \rkg \neq 5;\\ \{ 1, 3, 4, 5\}, & \rkg = 5} & 2\rkg^2 - 9\rkg + 19\\ 
 &&\rkg = 4&&(24) & \{ 1, 3, 4 \} & 15\\ \hline
 G_2 & P_2 & - & 5 & (21) & \{ 1 \} & 7\\
 F_4 & P_1 & - & 15 & (12) & \{ 2 , 3 \} & 28\\
 E_6 & P_2 & - & 21 & (24) & \{ 3, 4, 5 \} & 43\\
 E_7 & P_1 & - & 33 & (13) & \{ 3, 4 \} & 76\\
 E_8 & P_8 & - & 57 & (87) & \{ 6, 7 \} & 147\\ \hline
 \end{array}$
 \caption{Submaximal symmetry dimensions for parabolic contact geometries}
 \label{F:contact}
 \end{table}
 \end{tiny}
 
 \begin{tiny}
 \begin{landscape}
 \begin{table}[h]
 $\begin{array}{|c|c|c|c|c|c|c|c|c|c|@{}c@{}|c|} \hline
 G & P & \mbox{Range} &\dim(G/P) & w & J_w & I_w & (\overline\fg,\overline\fp) & \fa_1 & \fa_2 & \begin{tabular}{c}Twistor\\space type\end{tabular} & \fS_w = \fU_w\\ \hline\hline
  A_\rkg & P_{1,2} & \rkg \geq 3 & 2\rkg - 1 & (21) & \{ 3, \rkg \} & \{ 1 \} & A_1 / P_1 & \checkmark & - & A_\rkg / P_2 & (\rkg-1)^2 + 5\\ 
   &&&&(12) & \{ 3, \rkg \} & \emptyset & - & - & - & A_\rkg / P_1 & (\rkg-1)^2 + 4\\
 &&\rkg = 3&& (23) & \emptyset & \emptyset & - & - & - & A_3 / P_2 & 9\\
 		& P_{1,3} & \rkg \geq 4 & 3\rkg - 4 & (31) & \{ 2, 4, \rkg \} & \emptyset & - & - & - & - & \rkg^2 - 3\rkg + 9\\
 	&&&& (12) & \{ 2, \rkg \} & \emptyset & - & - & - & A_\rkg / P_1 & (\rkg-1)^2 + 4\\
		&& \rkg = 4 && (34) & \{ 2 \} & \emptyset & - & - & - & A_4 / P_3 & 14\\
         & P_{1,s}& 4 \leq s \leq \rkg-1 & \rkg s - (s-1)^2 & (12) & \{ 2,3, \rkg\} & \{ s \} & A_{\rkg - 4} / P_{s-3} & \checkmark & - & A_\rkg / P_1 & (\rkg-1)^2 + 4\\
         &&&&(1,s) & \{ 2, s-1, s+1, \rkg \} & \emptyset & - &- & - & - & (\rkg-2)s + (\rkg-s)^2 + 6\\
         && 4 \leq s = \rkg-1 & & (\rkg-1,\rkg) & \{ \rkg-2 \} & \emptyset & - & - & - & A_\rkg / P_{\rkg-1} & (\rkg-1)^2 + 5\\
         & P_{2,s} & 4 \leq s \leq \rkg-1 & s(\rkg+3-s) - 4 & (21) & \{ 3, \rkg \} & \{ s \} & A_{\rkg - 4} / P_{s-3} & \checkmark & - & A_\rkg / P_2 & (\rkg-1)^2 + 5 \\
         & & 4 \leq s = \rkg-1 & & (\rkg-1,\rkg) & \{ 1, \rkg-2 \} & \{ 2 \} & A_{\rkg-4} / P_1 & \checkmark & - & A_\rkg / P_{\rkg-1} & (\rkg-1)^2 + 5\\
         & P_{s,s+1} & 2 \leq s \leq \rkg-2 & (\rkg-s)(s+1) + s & (s,s+1) & \{ 1, s-1,s+2,\rkg \} & \{ s+1 \} & A_1 / P_1 & \checkmark & - & A_\rkg / P_s & \rkg(\rkg-1) - s(\rkg-s) + 6\\
         &&&& (s+1,s) & \{ 1, s-1,s+2,\rkg \} & \{ s \} & A_1 / P_1 & \checkmark & - & A_\rkg / P_{s+1} & \rkg(\rkg-1) - s(\rkg-s) + 6\\
         && s= 2 \leq \rkg - 2 & & (21) & \{ \rkg \} & \emptyset & - & - & - & A_\rkg / P_2 & (\rkg-1)^2 + 5\\
         & P_{1,s,\rkg} & 3 \leq s \leq \rkg-2 & \rkg - 1 + (\rkg+1-s) s & (1,\rkg) & \{ 2, \rkg-1 \} & \{ s \} & A_{\rkg - 4} / P_{s-2} &  \checkmark & - & A_\rkg / P_{1,\rkg} & (\rkg-1)^2 + 4\\
         & P_{1,2,3} & \rkg \geq 3 & 3(\rkg - 1) & (21) & \{ \rkg \} \backslash \{ 3 \} & \{ 1 \} & A_1 / P_1 & \checkmark & - & A_\rkg / P_2 & (\rkg-1)^2 + 5\\
         &&&& (12) & \{ \rkg \} \backslash \{ 3 \} & \emptyset & - & - & - & A_\rkg / P_1 & (\rkg-1)^2 + 4\\
	& & \rkg = 3 &6 & (23) & \emptyset & \{ 3 \} & A_1 / P_1 & \checkmark & - & A_3 / P_2 & 9\\
	&&&& (32) & \emptyset & \emptyset & - & - & - & A_3 / P_3 & 8\\
	&&&& (13) & \emptyset & \emptyset & - & - & - & A_3 / P_{1,3} & 8\\
         & P_{1,2,s} & 4 \leq s < \rkg & s(\rkg+3-s)-3 & (21) & \{ 3, \rkg \} & \{ 1, s \} & A_1 / P_1 \times A_{\rkg - 4} / P_{s-3} & \checkmark & - & A_\rkg / P_2 & (\rkg-1)^2 + 5\\
         &&&& (12) & \{ 3,\rkg \}& \{ s \} & A_{\rkg - 4} / P_{s-3} & \checkmark & - & A_\rkg / P_1 & (\rkg-1)^2 + 4\\ 
         & P_{1,2,\rkg} & \rkg \geq 4 & 3(\rkg - 1) & (21) & \{ 3 \}& \{ 1 \} & A_1 / P_1 & \checkmark & - & A_\rkg / P_2 & (\rkg-1)^2 + 5\\
         &&&& (12) & \{ 3 \}& \emptyset & - & - & - & A_\rkg / P_1 & (\rkg-1)^2 + 4\\ 
         &&&& (1,\rkg) & \{ \rkg-1\} & \emptyset & - & - & - & A_\rkg / P_{1,\rkg} & (\rkg-1)^2 + 4\\
         & P_{1,2,3,s} & 4 \leq s < \rkg & 3(\rkg-1) + (\rkg+1-s)(s-3) & (21) & \{ \rkg \} & \{ 1,s \} & A_1 / P_1 \times A_{\rkg - 4} / P_{s-3} & \checkmark & - & A_\rkg / P_2 & (\rkg-1)^2 + 5\\ 
         & P_{1,2,3,\rkg} & \rkg \geq 4 & 4\rkg - 6 & (21) & \emptyset & \{ 1 \} & A_1 / P_1 & \checkmark & - & A_\rkg / P_2 & (\rkg-1)^2 + 5\\ 
         && \rkg = 4 & & (34) & \emptyset & \{ 4 \} & A_1 / P_1 & \checkmark & - & A_\rkg / P_3 & 14\\
         & P_{1,2,s,t} & 4 \leq s < t < \rkg & \begin{array}{c} 2\rkg-1 + (\rkg+1-s)(s-2) \\ + (\rkg+1 - t)(t-s) \end{array}& (21) & \{ 3, \rkg \}& \{ 1,s,t \} & A_1 / P_1 \times A_{\rkg - 4} / P_{s-3,t-3} & \checkmark & \checkmark & A_\rkg / P_2 & (\rkg-1)^2 + 5 \\
         & P_{1,2,s,\rkg} & 4 \leq s < \rkg & 3(\rkg-1) + (\rkg-s)(s-2) & (21) & \{3\} & \{ 1, s \} & A_1/P_1 \times A_{\rkg - 4} / P_{s-3}  & \checkmark & - & A_\rkg / P_2 & (\rkg-1)^2 + 5\\
         & & 4\leq s=\rkg-1 & & (\rkg-1,\rkg) & \{ \rkg-2 \} & \{ 2, \rkg \} & A_{\rkg -4} / P_1 \times A_1 / P_1 & \checkmark & - & A_\rkg / P_{\rkg-1} & (\rkg-1)^2 + 5\\ \hline\hline
%%%%%%%%%%%%%%%%%%%%%%%%%%%%%%%%%%%%%%%%%%%%%%%%%%%%%%%%%%%%
 B_\rkg & P_3 & \rkg \geq 4 & 6\rkg - 12 & (32) & \{ 1, 4 \} & \emptyset & - & - & - & - & 2\rkg^2 - 7\rkg + 16\\
 	&& \rkg = 3 && (32) & \{ 1, 2 \} & \emptyset & - & - & - & - & 11 \\
	& P_\rkg & \rkg \geq 4 & \binom{\rkg + 1}{2} & (\rkg, \rkg - 1) & \{ 2, \rkg - 2, \rkg -1 \} & \emptyset & - & - & - & - & \frac{\rkg(3\rkg - 7)}{2} + 10\\[0.02in]
 	& P_{1,2} & \rkg \geq 3 & 4\rkg - 4 & (12) & \{ 3 \} & \{ 2 \} & A_1 / P_1 & \checkmark & - & B_\rkg / P_1 & 2\rkg^2 - 5\rkg + 9\\
	&&&& (21) & \{ 3 \} & \emptyset & - & - & - & B_\rkg / P_2 & 2\rkg^2 - 5\rkg + 8\\
 	& P_{2,3} & \rkg \geq 4 & 6\rkg - 10 & (32) & \{ 1,4 \} & \{ 2 \} & A_1 / P_1 & \checkmark & - & B_\rkg / P_3 & 2\rkg^2 - 7\rkg + 16\\
	&& \rkg = 3 & & (32) & \{ 1 \} & \emptyset &-&-& -& B_3 / P_3 & 11 \\
 	& P_{1,3} & \rkg = 3 & 8 & (32) & \{ 2 \} & \emptyset &-&-& -& B_3 / P_3 & 11 \\
	& P_{1,2,3}& \rkg = 3 & 9 & (32) & \emptyset & \emptyset &-& -&-& B_3 / P_3 & 11 \\ \hline
\end{array}$
 \caption{Submaximal symmetry dimensions for Yamaguchi-nonrigid geometries in type $A$ and $B$, excluding 1-graded \& parabolic contact geometries}
 \label{F:AB}
\end{table}
\end{landscape}
\end{tiny}
%%%%%%%%%%%%%%%%%%%%%%%%%%%%%%%%%%%%%%%%%%%%%%%%%%%%%%%%%%%%
\begin{tiny}
\begin{landscape}
 \begin{table}[h]
 $\begin{array}{|c|c|c|c|c|c|c|c|c|c|@{}c@{}|c|} \hline
 G & P & \mbox{Range} & \dim(G/P) & w & J_w & I_w & (\overline\fg,\overline\fp) & \fa_1 & \fa_2 & \begin{tabular}{c}Twistor\\space type\end{tabular} & \fS_w = \fU_w\\ \hline\hline
 C_\rkg & P_2 & \rkg \geq 3 & 4\rkg - 5 & (21) & \{ 3 \} & \emptyset & - & - & - & - & 2\rkg^2 - 5\rkg + 9\\
 	&&\rkg = 3 &  & (23) & \{ 1, 3 \} & \emptyset & - & - & - & - & 11\\
	& P_{\rkg - 1} & \rkg \geq 4 & \frac{(\rkg+4)(\rkg-1)}{2} & (\rkg - 1, \rkg) & \{ 1, \rkg -2, \rkg\} & \emptyset & - & - & - & - & \frac{\rkg(3 \rkg - 5)}{2}+5\\[0.02in]
	& P_{1,2} & \rkg \geq 3 & 4\rkg - 4 & (21) & \{ 3 \} & \{ 1 \} & A_1 / P_1 & \checkmark & - & C_\rkg / P_2 & 2\rkg^2-5\rkg+9\\
	&&&& (12) & \{ 3 \} & \emptyset & - & - & - & C_\rkg / P_1 & 2\rkg^2-5\rkg+8\\
 	& P_{1,\rkg} & \rkg \geq 3 &\frac{\rkg^2 + 3\rkg - 2}{2} & (1,\rkg) & \{ 2, \rkg - 1 \} & \emptyset & - & - & - & -& \frac{\rkg(3\rkg-5)}{2} + 5\\[0.02in]
	&& \rkg \geq 4 && (12) & \{ 2, 3\} & \{ \rkg \} & C_{\rkg - 3} / P_{\rkg - 3} &\checkmark& - & C_\rkg / P_1& 2\rkg^2 - 5\rkg + 8\\ 
	&& \rkg = 3 && (12) & \{ 2 \} & \emptyset & - & - & - & C_3 / P_1 & 11 \\
 	& P_{2,\rkg} & \rkg \geq 4 & \frac{\rkg^2 + 5\rkg - 8}{2} & (21) & \{ 3 \} & \{ \rkg \} & C_{\rkg - 3} / P_{\rkg - 3} &\checkmark & - & C_\rkg / P_2 & 2\rkg^2-5\rkg+9\\
	&& \rkg = 3 && (21) & \emptyset & \emptyset & - & - & - & C_3 / P_2 & 12\\
	&& \rkg = 3 && (23) & \{ 1 \} & \emptyset & - & - & - & C_3 / P_2 & 11 \\
        & P_{\rkg - 1, \rkg} & \rkg \geq 4 & \frac{\rkg^2 + 3\rkg - 2}{2} & ( \rkg -1, \rkg ) & \{ 1, \rkg - 2 \} & \emptyset & - & - & - & C_\rkg / P_{\rkg - 1} & \frac{\rkg(3\rkg - 5)}{2} + 5\\[0.02in]
         & P_{1,2,3} & \rkg \geq 3 & 6\rkg - 9 & (21) & \emptyset & \{ 1 \} & A_1 / P_1 & \checkmark & - & C_\rkg / P_2 & 2\rkg^2 - 5\rkg + 9\\
         & P_{1,2,s} & 4\leq s<\rkg & \frac{-6 - 3s^2 + 5s + 4s \rkg}{2} & (21) & \{ 3 \} & \{ 1, s\} & A_1 / P_1 \times C_{\rkg - 3} / P_{s - 3} & \checkmark & \checkmark & C_\rkg / P_2 & 2\rkg^2-5\rkg+9\\
         & P_{1,2,\rkg} & \rkg \geq 4 &\frac{\rkg^2 + 5\rkg -6}{2} & (21) & \{ 3 \} & \{ 1, \rkg \} & A_1 / P_1 \times C_{\rkg - 3} / P_{\rkg - 3} & \checkmark & - & C_\rkg / P_2 & 2\rkg^2-5\rkg+9\\ \hline\hline
%%%%%%%%%%%%%%%%%%%%%%%%%%%%%%%%%%%%%%%%%%%%%%%%%%%%%%%%%%%%
 D_\rkg & P_3 & \rkg \geq 5 & 6\rkg - 15 & (32) & \mycase{\{ 1, 4 \}, & \rkg \geq 6; \\ \{ 1, 4, 5 \}, & \rkg = 5} & \emptyset & - & - & - & - & 2\rkg^2 - 9\rkg + 20\\	
 	& P_{1,2} & \rkg \geq 4 & 4\rkg - 6 & (12) & \mycase{\{ 3 \}, & \rkg \geq 5;\\ \{ 3, 4 \}, & \rkg = 4} & \{ 2 \} & A_1 / P_1 & \checkmark & - & D_\rkg / P_1 & 2\rkg^2-7\rkg + 12\\
	&&&& (21) &  \mycase{\{ 3 \}, & \rkg \geq 5;\\ \{ 3, 4 \}, & \rkg = 4} & \emptyset & - & - & - & D_\rkg / P_2 & 2\rkg^2-7\rkg + 11\\
         & P_{1,\rkg} & \rkg \geq 5 & \frac{(\rkg+2)(\rkg - 1)}{2} & (12) & \{ 3 \} & \{ \rkg \} & D_{\rkg-3} / P_{\rkg-3} & \checkmark & - & D_\rkg / P_1 & 2\rkg^2-7\rkg + 12\\
	&& \rkg = 4 && (12) & \{ 3 \} & \emptyset & - & - & - & D_4 / P_1 & 16\\
	&&&& (42) & \{ 3 \} & \emptyset & - & - & - & D_4 / P_4 & 16\\
  	& P_{2,3} & \rkg \geq 5 & 6\rkg - 13 & (32) & \mycase{\{ 1, 4 \}, & \rkg \geq 6; \\ \{ 1, 4, 5 \}, & \rkg = 5} & \{ 2 \} & A_1 / P_1 & \checkmark & - & D_\rkg / P_3 & 2\rkg^2 - 9\rkg + 20\\
	& P_{1,2,\rkg} & \rkg \geq 5 &  \frac{\rkg^2 + 3\rkg - 6}{2} & (12) & \{ 3 \} & \{ 2, \rkg \} & A_1 / P_1 \times D_{\rkg - 3} / P_{\rkg - 3} & \checkmark & - & D_\rkg / P_1 & 2\rkg^2-7\rkg + 12\\
 	&& \rkg = 4 && (12) & \{ 3 \} & \{ 2 \} & A_1 / P_1 & \checkmark & - & D_4 / P_1 & 16\\ 
 	&&&& (42) & \{ 3 \} & \{ 2 \} & A_1 / P_1 & \checkmark & - & D_4 / P_4 & 16\\ 
	\hline\hline
  G_2 & P_1 & - & 5 & (12) & \{ 2 \} & \emptyset &-&-& - & -& 7\\
	& P_{1,2} & - & 6 & (12) & \emptyset & \emptyset &-&-& -& G_2 / P_1 & 7\\ \hline
 \end{array}$
 \caption{Submaximal symmetry dimensions for Yamaguchi-nonrigid geometries in type $C$, $D$, $G$, excluding 1-graded \& parabolic contact geometries}
 \label{F:CDG}
 \end{table}
 \end{landscape}
 \end{tiny}
 
 %%%%%%%%%%%%%%%%%%%%%%%%%%%%%%%%%%%%%%%%%%%%%%%%%%
 
 \section{Direct proof of prolongation-rigidity for conformal geometry}
 
 Consider $\bbC^{n+2}$ with an anti-diagonal symmetric $\bbC$-bilinear form $g$, i.e. $g_{ij} = \delta_{i, j'}$, where $j' := n-j+1$.  Then 
 \[
 \fso(n+2,\bbC) = \l\{ \mat{c|c|c}{a & \omega & 0\\ \hline v & A & -\omega'\\ \hline 0 & -v' & -a} : a \in \bbC, A \in \fso(n,\bbC), \omega \in (\bbC^n)^*, v \in \bbC^n \r\}
 \]
 consists of matrices skew with respect to the anti-diagonal.  We have $\fg_0 = \fz(\fg_0) \op \fg_0^{ss} = \bbC \op \fso(n,\bbC)$. Symbolically, the brackets are:
 \begin{enumerate}
 \item $\fg_0 \times \fg_0 \to \fg_0$: $[(a,A), (b,B)] = (0,[A,B])$.
 \item $\fg_0 \times \fg_1 \to \fg_1$: $[(a,A),\omega] = \omega(a-A)$.
 \item $\fg_0 \times \fg_{-1} \to \fg_{-1}$: $[(a,A),v] = (A-a)v$.
 \item $\fg_1 \times \fg_{-1} \to \fg_0$: $[\omega,v] = (\omega v, \omega'v' - v\omega)$.
 \end{enumerate}

 The $\fg_0$-module decomposition of $H^2(\fg_-,\fg)$ is given by
 \[
  H^2(\fg_-,\fg) \cong
   \l\{ \begin{array}{lll} 
   \l(\bigwedge^2 \fg_1 \ot \fg_0^{ss} \r)_0, & n \geq 4 & \mbox{(Weyl tensors)};\\ 
    \yng(2,1)(\fg_1) & n = 3 & \mbox{(Cotton tensors)}. \end{array} \r.
 \]
 If $n \neq 4$, $H^2(\fg_-,\fg)$ is $\fg_0$-irreducible; if $n=4$, it splits into self-dual and anti-self-dual submodules.  
 
 Let $\{ e_i \}$ and $\{ \omega^j \}$ be the standard (dual) bases of $\bbC^n$ and $(\bbC^n)^*$ respectively.  Define $E_i{}^j = e_i \ot \omega^j$ and $R_\ell{}^k = E_\ell{}^k - E_{k'}{}^{\ell'} \in \fg_0^{ss}$.  Note $R_\ell{}^k = -R_{k'}{}^{\ell'}$, and we require $k \neq \ell'$.  
 Let $\omega^{ijk}{}_\ell = \omega^i \wedge \omega^j \ot R_\ell{}^k$.
 An element $W = W_{ijk}{}^\ell \omega^{ijk}{}_\ell \in \bigwedge^2 \fg_1 \ot \fg_0^{ss}$ has the index symmetries $ W_{ijk}{}^\ell = -W_{jik}{}^\ell$, $W_{ijk}{}^\ell = -W_{ij\ell'}{}^{k'}$.

 Letting $A = \sum_{i < j'} A^i_j R_i{}^j$ and using the bracket relations above, the $\fg_0$-action on $\bigwedge^2 \fg_1 \ot \fg_0^{ss}$ is
 \begin{align*}
 \wW &:= (a,A) \cdot W 
 = \l(2a W_{ijk}{}^\ell - W_{rjk}{}^\ell A^r_i - W_{irk}{}^\ell A^r_j - W_{ijr}{}^\ell A^r_k + W_{ijk}{}^r A^\ell_r \r) \omega^{ijk}{}_\ell\\
 \wW_{ijk}{}^\ell 
 &= 2a W_{ijk}{}^\ell - W_{rjk}{}^\ell A^r_i - W_{irk}{}^\ell A^r_j + \half\l( - W_{ijr}{}^\ell A^r_k + W_{ijr}{}^{k'} A^r_{\ell'} + W_{ijk}{}^r A^\ell_r - W_{ij\ell'}{}^r A^{k'}_r\r),
 \end{align*}
 where we have accounted for ``anti-diagonal skew-symmetry'' in the last line above.

 Similarly when $n=3$, given $C = C_{ijk} \omega^i \wedge \omega^j \ot \omega^k$, and $\wC = (a,A) \cdot C$, we have
 \[
 \wC_{ijk} = 3aC_{ijk} - C_{rjk} A^r_i - C_{irk} A^r_j - C_{ijr} A^r_k.
 \]
 \begin{lemma} Conformal geometry in dimension $\geq 3$ is prolongation-rigid.
 \end{lemma}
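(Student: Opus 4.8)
The goal is to show that for conformal geometry in dimension $n \geq 3$, the triple $(\fg, \fp, \mu)$ is \PRp, meaning $\fa_+^\phi = 0$ for every nonzero harmonic curvature $\phi \in H^2_+(\fg_-,\fg)$. By Corollary \ref{C:PRw} and Lemma \ref{L:Tanaka-lw}, it suffices to check $I_\mu = \emptyset$ on the lowest weight vector, but here I will instead give the direct computational verification using the explicit matrix realization of $\fso(n+2,\bbC)$ set up above. The plan is to show directly that if $\phi \in H^2_+(\fg_-,\fg)$ is nonzero and $X \in \fg_1$ satisfies $[X, \fg_{-1}] \subset \fann(\phi)$, then $X = 0$; equivalently, $\fa_1^\phi = 0$, whence $\fa_k^\phi = 0$ for all $k \geq 1$ by Tanaka's axiom \eqref{T2} (or Lemma \ref{L:DD-a1}).

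First I would fix a nonzero $W = W_{ijk}{}^\ell\, \omega^{ijk}{}_\ell \in \bigwedge^2\fg_1 \ot \fg_0^{ss}$ in the Weyl submodule (and separately a nonzero Cotton tensor $C$ when $n = 3$), together with an arbitrary $X = \omega = \omega_m \omega^m \in \fg_1$. Using the bracket $\fg_1 \times \fg_{-1} \to \fg_0$, namely $[\omega, v] = (\omega v,\, \omega' v' - v\omega)$, the condition $[\omega, v] \in \fann(W)$ for all $v \in \fg_{-1}$ becomes, after substituting $(a, A) = [\omega, e_p]$ into the formula for $(a,A)\cdot W$ displayed above, a system of quadratic equations in the components $\omega_m$ and linear in $W_{ijk}{}^\ell$, one for each index $p$. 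The key step is then to show that this system forces $\omega = 0$. The cleanest route is to contract the resulting tensor identity with $W$ itself (or with an appropriate partial trace of $W$) to produce a scalar or low-rank identity of the form $(\text{positive combination of }\omega_m\text{'s}) \cdot (\text{nonzero norm of }W) = 0$; since $W \neq 0$ this forces all $\omega_m = 0$. One should take care that the trace-free conditions (the ``$0$'' subscript, i.e. $W_{rjk}{}^r = 0$ and its variants) are used to eliminate the terms that would otherwise obstruct such a contraction argument, and that the argument is insensitive to the splitting into (anti-)self-dual pieces when $n = 4$.

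For the $n = 3$ Cotton case the same strategy applies but is simpler: with $\wC_{ijk} = 3aC_{ijk} - C_{rjk}A^r_i - C_{irk}A^r_j - C_{ijr}A^r_k$ and $(a, A) = [\omega, e_p] = (\omega_p,\, \omega'v' - v\omega|_{v = e_p})$, the requirement that this vanish for every $p$ for a fixed nonzero $C$ in the space $\yng(2,1)(\fg_1)$ (a $5$-dimensional $\fso(3)$-irreducible) should again be reducible, by contracting against $C$ and using the irreducibility/Schur, to $\|C\|^2 \omega_p = 0$ for all $p$. I would organize the write-up so that $n = 3$ and $n \geq 4$ are handled in parallel, isolating the one genuinely algebraic lemma: a nonzero element of an irreducible $\fg_0^{ss}$-submodule of $\bigwedge^2\fg_1 \ot \fg$ cannot be annihilated by $[\omega, \fg_{-1}]$ for nonzero $\omega \in \fg_1$.

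The main obstacle I anticipate is the bookkeeping in the Weyl case: the index symmetries $W_{ijk}{}^\ell = -W_{jik}{}^\ell = -W_{ij\ell'}{}^{k'}$ combined with anti-diagonal skew-symmetry make the raw vanishing conditions lengthy, and one must choose the right contraction (likely contracting the free index of $[\omega, e_p]$'s output against two of the four slots of $W$ to build something proportional to $W_{ijk}{}^\ell W^{ijk}{}_\ell\, \omega_m$) so that no spurious cancellation hides a nonzero $\omega$. A secondary subtlety is making sure the conclusion $\fa_1^\phi = 0$ for the specific lowest-weight $\phi_0$ propagates to all nonzero $\phi$ — but this is exactly what Lemma \ref{L:Tanaka-lw} provides, since $\dim(\fa_1^\phi) \leq \dim(\fa_1^{\phi_0}) = 0$. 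Everything else is routine substitution into the bracket formulas (1)--(4) listed above.
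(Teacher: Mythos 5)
Your logical skeleton is right (reduce to showing $\fa_1^\phi=0$ for every nonzero $\phi$; note $\fa_1=0$ forces $\fa_k=0$ for $k>1$; the reduction to the lowest weight vector via Lemma \ref{L:Tanaka-lw} is legitimate), but the one step you defer --- the contraction --- is precisely where the argument breaks. If you contract the vanishing condition $([\omega,e_p])\cdot W=0$ against $W$ using the invariant pairing on $\bigwedge^2\fg_1\ot\fg_0^{ss}$, the $\fso(n)$-part of $[\omega,e_p]$ contributes nothing (invariance gives $\langle A\cdot W,W\rangle=0$ for $A\in\fg_0^{ss}$), and you are left with exactly $2\omega_p\,\langle W,W\rangle=0$. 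Over $\bbC$ (or in the split-real setting) the invariant form on a nontrivial irreducible $\fg_0$-module is never anisotropic: every weight vector of nonzero weight is isotropic, and in particular the lowest weight vector $\phi_0$ --- the only case you actually need after invoking Lemma \ref{L:Tanaka-lw}, and the case where $\fann(\phi)$ is largest --- satisfies $\langle\phi_0,\phi_0\rangle=0$. So the identity $\omega_p\langle W,W\rangle=0$ is vacuous exactly where it matters; your phrase ``positive combination'' signals that you are implicitly assuming a definite form, which is not available here. The unspecified ``appropriate partial trace'' does not obviously rescue this, and the trace-free conditions do not remove the obstruction.

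The paper sidesteps this by normalizing the other variable: for a fixed nonzero $\omega\in\fg_1$ it uses the $G_0$-action to put $\omega$ into one of two normal forms (null or non-null) and then checks by hand that $\fk=[\omega,\fg_{-1}]$ annihilates no nonzero $W$ (resp.\ no nonzero $C$ when $n=3$). The non-null case is immediate because $\fk$ then contains the element $(1,0)$, which acts by a nonzero scalar; the null case is a short explicit elimination using two specific elements of $\fk$. If you want to keep your structure, replace the contraction step with this case split on $\omega$, or restrict attention to $\phi_0$ and verify $I_\mu=\emptyset$ directly via Corollary \ref{C:PRw}, which is how the body of the paper handles all such checks.
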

 
 \begin{proof} Let $\omega \in \fg_1$ and $\fk = [\omega,\fg_{-1}]$.  Use the $G_0 = CO(\fg_{-1})$ action to normalize $\omega$ (over $\bbC$) to one of two values, namely a null vector or a non-null vector.  Suppose first that $n \geq 4$.
 
 {\em Case 1:} $\omega$ is null.  Take $\omega = \omega^1$.  Then $\fk = \{ (v^1, -v^i R_i{}^1) : v \in \fg_{-1} \}$ and $\dim(\fk) = n-1$.  (Note $R_n{}^1 = 0$.)  Letting $(a,A) = (1,E_n^n - E_1^1)$, we obtain
 \begin{align*}
 0 &= \wW_{ijk}{}^\ell = W_{ijk}{}^\ell \l(2  - A^i_i - A^j_j + \half\l( - A^k_k + A^\ell_\ell - A^{\ell'}_{\ell'}  +  A^{k'}_{k'}\r)\r).
 \end{align*}
 Since $A^n_n - A^1_1 = 2$, then the only possible nonzero components are $W_{njn}{}^\ell = -W_{nj\ell'}{}^1 = -W_{jnn}{}^\ell$ for $1 < j,\ell < n$, i.e. $ W = W_{njn}{}^\ell \omega^{njn}{}_\ell$.  Now let $(a,A) = (0, R_c{}^1)$, $1 < c < n$.  Note $A^c_1 = 1 = -A^n_{c'}$.  Then for $1 < j, \ell < n$ with $j \neq c'$, we have $ 0 = \tilde{W}_{c'jn}{}^\ell = - W_{rjn}{}^\ell A^r_{c'} = W_{njn}{}^\ell$.  Thus, $W = 0$.
 
  {\em Case 2:} $\omega$ is non-null.  Take $\omega = \omega^1 + \omega^n$.  Then $\fk = \{ (v^1+v^n, v^{i'} (R_1{}^i + R_n{}^i)) : v \in \fg_{-1} \}$ and $\dim(\fk) = n$.  In particular, note that $(1,0_{n \times n}) \in \fk$, and $0 = \wW = (1,0_{n\times n}) \cdot W = 2W$.

 Now consider $n=3$.  If $\omega$ is null, then letting $(a,A) = (1,E_n{}^n - E_1{}^1)$ yields $0 = \wC_{ijk} = C_{ijk}( 3 - A^i_i - A^j_j - A^k_k )$, so $C_{ijk} = 0$ since $i \neq j$.  If $\omega$ is not null, then as above $(1,0_{n \times n}) \in \fk$, so again $C=0$.
 \end{proof}
 
 %%%%%%%%%%%%%%%%%%%%%%%%%%%%%%%%%%%%%%%%%%%%%%%%%%
 
 \section{NPR geometries}
 \label{App:NPR}
 
 The classification of NPR geometries in Table \ref{F:NPR} follows immediately from the results in Appendix \ref{App:Submax}, but as we do not show the extensive calculations there, we sketch here an independent proof.
 
 By Proposition \ref{P:PR} and Corollary \ref{C:g-PR}, if $G$ is of exceptional type or $|I_\fp| = 1$, then $G/P$ is PR.  Thus, we henceforth assume that $G/P$ is a classical complex flag variety, with $|I_\fp| \geq 2$ (and $G$ simple).   Regularity $Z(-w\cdot\lambda_\fg) \geq 1$ is equivalent to \eqref{E:reg} which simplifies in each case of \eqref{E:F1} to:
  \begin{align}
 \l\{\begin{array}{rll} 
 {\rm (a):} & Z(\lambda_\fg) \leq r_j - (r_k +1) c_{kj};\\
 {\rm (b):} & Z(\lambda_\fg) \leq r_j + (r_k +1)(1 - c_{kj});\\
 {\rm (c):} & Z(\lambda_\fg) \leq 2(r_i +1).
 \end{array} \r. \label{E:F2}
 \end{align}
 The proof is simply an analysis of \eqref{E:F1} and \eqref{E:F2}.  We leave the $A_\rkg$ and $C_\rkg$ cases to the reader.
 
 \framebox{{\bf $B_\rkg$ or $D_\rkg$ case}}:  Here, $c_{kj} \in \{ 0,-1\}$ iff $\fg = D_\rkg$.  Also, $0 \leq r_j + r_k \leq 1$ (since $j \neq k$) using:
 \begin{align} \label{E:BD-lambda}
 \lambda_\fg= \lambda_2 = \l\{ \begin{array}{lll} 
 \alpha_1 + 2\alpha_2 + ... + 2\alpha_\rkg, & \fg = B_\rkg & (\rkg \geq 3);\\
 \alpha_1 + 2\alpha_2 + ... + 2\alpha_{\rkg-2} + \alpha_{\rkg-1} + \alpha_\rkg, & \fg = D_\rkg & (\rkg \geq 4).
\end{array} \r.
 \end{align}
 
 \begin{enumerate}[(a)]
 \item Assume $c_{kj} = -2$, so $\fg = B_\rkg$ and $(k,j) = (\rkg-1,\rkg)$.  From \eqref{E:F1}, $c_{ki} = 0$, so $\rkg \geq 4$, $r_j = r_k = 0$.  From \eqref{E:F2}, $|I_\fp| = Z(\lambda_\fg) = 2$, which contradicts \eqref{E:BD-lambda}.  Thus, $c_{kj} \in \{ 0, -1 \}$.  Then $r_j - (r_k+1) c_{kj} \leq 2$, so from \eqref{E:F2}, $|I_\fp| = Z(\lambda_\fg) = 2$ and exactly one of $j,k$ is $2$.  If $\fg = B_\rkg$ or $j=2$, then $Z(\lambda_\fg) \geq 3$, so $\fg = D_\rkg$ and $k=2 \not\in I_\fp$.  Hence, \framebox{$D_\rkg / P_{1,\rkg}$, $\rkg \geq 5$; $w = (12)$; $i=\rkg$}
 
 \item Assume $\fg = B_\rkg$.  From \eqref{E:F2}, $|I_\fp| \geq 3$, so $Z(\lambda_\fg) \geq 5$.  But since $0 \leq r_j + r_k \leq 1$ and $c_{kj} \leq -1$, then $r_j + (r_k+1)(1-c_{kj}) \geq 5$ only if $r_j = 0$, $r_k = 1$, $c_{kj} = -2$, and hence $(k,j) = (\rkg-1,\rkg)$.  From \eqref{E:F1}, $c_{ki} = 0$, so $\rkg \geq 4$, $r_j = r_k = 0$.  From \eqref{E:F2}, $Z(\lambda_\fg) = 3$, a contradiction.
  
 Thus, $\fg = D_\rkg$.  Assuming $r_j = 1$, then $r_k = 0$ and \eqref{E:F2} implies $|I_\fp|=Z(\lambda_\fg) = 3$.  But $j = 2\in I_\fp$, so by \eqref{E:BD-lambda}, $Z(\lambda_\fg) \geq 4$, a contradiction.  Hence, $r_j = 0$, and \eqref{E:F2} implies $r_k = 1$, $c_{kj} = -1$, and $3 \leq |I_\fp| \leq Z(\lambda_\fg) \leq 4$.  From \eqref{E:BD-lambda}, $I_\fp = \{ 1,2, \rkg \}$, so: \framebox{$D_\rkg / P_{1,2,\rkg}$, $\rkg \geq 5$; $w = (12)$; $i=\rkg$}
 
 \item Assume $r_i = 0$, so from \eqref{E:F2}, $|I_\fp| = Z(\lambda_\fg) = 2$.  Using \eqref{E:BD-lambda}, this is impossible given $c_{ij} = -1$ by \eqref{E:F1}.  Thus, $r_i = 1$, and $2 \leq |I_\fp| \leq Z(\lambda_\fg) \leq 4$.  Keeping in mind $c_{ij} = -1$, we have:
  \begin{itemize}
 \item \framebox{$B_\rkg / P_{1,2}$, $\rkg \geq 3$; $w = (12)$; $i=2$} and \framebox{$D_\rkg / P_{1,2}$, $\rkg \geq 4$; $w = (12)$; $i=2$}
 \item \framebox{$B_\rkg / P_{2,3}$, $\rkg \geq 4$; $w = (32)$; $i=2$} and \framebox{$D_\rkg / P_{2,3}$, $\rkg \geq 4$; $w = (32)$; $i=2$}
 \item \framebox{$D_\rkg / P_{1,2,\rkg}$, $\rkg \geq 5$; $w = (12)$; $i=2$}.  Also, \framebox{$D_4 / P_{1,2,4}$; $w = (12)$ or $(42)$; $i=2$}
 \end{itemize}
 \end{enumerate}

\end{document}